\DeclarePairedDelimiter{\ceil}{\lceil}{\rceil}
\DeclarePairedDelimiter{\floor}{\lfloor}{\rfloor}
\DeclarePairedDelimiter{\parens}{\lparen}{\rparen}
\DeclarePairedDelimiter{\abracket}{\langle}{\rangle}
\DeclarePairedDelimiter{\norm}{\lVert}{\rVert}
\newtheorem*{rep@theorem}{\rep@title}
\newcommand{\newreptheorem}[2]{%
	\newenvironment{rep#1}[1]{%
		\def\rep@title{#2 \ref{##1}}%
		\begin{rep@theorem}}%
		{\end{rep@theorem}}}
\newtheorem{theorem}{Theorem}[section]
\newtheorem{proposition}[theorem]{Proposition}
\newtheorem{corollary}[theorem]{Corollary}
\newtheorem{definition}[theorem]{Definition}
\newtheorem{lemma}[theorem]{Lemma}
\newcommand{\probp}[1]{{\mathbb{P}}\left({#1}\right)}
\title{Mixing Time of the Overlapping Cycles Shuffle}
\author[ ]{Olena Blumberg}
\author[1]{Ben Morris}
\author[2]{Hans Oberschelp}
\affil[1]{Department of Mathematics, University of California, Davis}
\affil[2]{Department of Mathematics, University of California, Davis}
\date{}
\begin{document}

\maketitle

\begin{abstract}
    In each step of the overlapping cycles shuffle on $n$ cards, a fair coin is flipped which determines whether the $m$th card or the $n$th card is moved to the top of the deck. Angel, Peres, and Wilson showed the following interesting fact: If $m = \floor*{\alpha n}$ where $\alpha$ is rational, then the relaxation time of a single card in the overlapping cycles shuffle is $\theta(n^2)$. However if $\alpha$ is the golden ratio, then the relaxation time of a single card is $\theta(n^\frac{3}{2})$. We show that the mixing time of the entire deck under the overlapping cycles shuffle matches these bounds up to a factor of $\log(n)^3$. That is, the mixing time of the entire deck is $O(n^2 \log(n)^3)$ if $\alpha$ is rational and $O(n^\frac{3}{2} \log(n)^3)$ if $\alpha$ is the golden ratio.
\end{abstract}

\section{Introduction}

The overlapping cycles shuffle was first described by Jonasson \cite{overlappingcyclesoriginal} and takes two parameters, $n \in \mathbb{N}$ and $m \in \{2,\dots,n-1\}$. We define the shuffle as follows: Begin with a deck of $n$ cards. In each round, flip an independent coin. If Heads, move the $m$th card to the top of the deck. If Tails, move the $n$th card to the top of the deck. \\
\\
Despite its simple construction, the overlapping cycles shuffle has interesting and surprising properties. Angel, Peres, and Wilson \cite{overlappingcyclesonecard} determined the spectral gap of the chain which tracks a single card in the overlapping cycles shuffle. Their analysis determined that if $m = \floor{\alpha n}$ for some $\alpha \in (0,1)$ then the asymptotic relaxation time as $n$ grows depends on how well approximated $\alpha$ is by rational numbers. In particular, if $\alpha$ is rational the relaxation time is $O(n^2)$. However the relaxation time can be as short as $O(n^\frac{3}{2})$ which occurs when $\alpha = \frac{\sqrt{5}-1}{2}$, the inverse golden ratio. \\
\\
Angeles, Peres, and Wilson ask if the mixing time of the entire deck is within a factor of $\log(n)$ of their result for individual cards. We prove something close: The mixing time is $O(n^2\log^3(n))$ for rational $\alpha$ and $O(n^\frac{3}{2}\log^3(n))$ for``very'' irrational $\alpha$ like the inverse golden ratio.\\
\\
The overlapping cycles shuffle has a simple description as a random walk on the symmetric group $S_n$. In each round $g$ is equally likely to go to $(1,2,\dots,m) g$ or $(1,2,\dots,n) g$. \\
\\
This explains where the name ``overlapping cycles shuffle'' comes from. Note that if $m$ and $n$ are both odd, then $(1,\dots,m)$ and $(1,\dots,n)$ will both be even permutations. Thus, the mixing time we seek to bound will be in respect to convergence to a distribution which is uniform across $A_n$, not $S_n$. If $m$ and $n$ are both even, then $(1,\dots,m)$ and $(1,\dots,n)$ will both be odd permutations, and the shuffle will be periodic. In this case we say the mixing time is the value $t$ such that if $r > t$ then the distribution after $r$ rounds of the shuffle is approximately uniform over $A_n$ if $r$ is even and $S_n \backslash A_n$ if $r$ is odd. If $(1,\dots,m)$ and $(1,\dots,n)$ have different parity then there are no issues and we consider mixing time in the typical sense. \\
\\
The $m = n-1$ case of the overlapping cycles shuffle was shown to have a mixing time of $O(n^3 \log(n))$ by Hildebrand \cite{mis2} in his dissertation. This case was studied before the overlapping cycles shuffle was defined in its general form and given its name. The $m = n-1$ case is especially slow because the cycles $(1,\dots,n-1)$ and $(1,\dots,n)$ act identically on every element of $\{1,\dots,n\}$ except for $n-1$ and $n$. We will only consider values of $m$ such that $\frac{m}{n}$ is bounded away from $0$ and $1$.

\section{Main Theorem}

Our main goal is to establish an upper bound on the mixing time of the overlapping cycles shuffle. The following theorem provides an upper bound on the mixing time of the overlapping cycles shuffle on $n$ cards with parameter $m$, which is tight up to a factor of $\log^3(n)$ provided that $\frac{m}{n}$ is bounded away from $0$ and $1$.

\begin{theorem}\label{mainoverlapping}
	Consider the overlapping cycles shuffle on $n$ cards with parameter $m$, where $\epsilon < \frac{m}{n} < 1-\epsilon$ for some $\epsilon \in (0,\frac{1}{2})$. Let $\ell_{\max}$ be defined by
	\begin{equation*}
		\ell_{\max} = \max\limits_{\omega \in \{1,\dots,2n-m\}} \Big\{ \min \big\{ |a| + |b|\sqrt{n} \ : \ a,b \in \mathbb{Z}, \ \omega \equiv a + bm \mod 2n-m+1 \big\} \Big\}.
	\end{equation*}
	Then the mixing time is at most
	\begin{equation*}
		\mathcal{A}\ell_{\max}^2 \log^3(n)
	\end{equation*}
	where $\mathcal{A}$ is a constant which depends on $\epsilon$.
\end{theorem}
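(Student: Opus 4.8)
The overlapping cycles shuffle is the random walk on $S_n$ with increment $(1,2,\dots,m)$ or $(1,2,\dots,n)$, each with probability $\tfrac12$, applied on the left; writing $\Phi_t$ for the walk started from the identity, the deck after $t$ rounds is $\Phi_t g_0$, so it suffices to bound the distance of $\Phi_t$ to the uniform law on $A_n$ (the parity and periodicity issues noted in the introduction change only the target measure, not the order of the bound). The structural point I would exploit is that, along a driving sequence of cycles $c_1,c_2,\dots$, the position of any tagged card $k$ evolves by $X^{(k)}_{t+1}=c_{t+1}\big(X^{(k)}_t\big)$; hence for every set $S$ of cards the vector of positions $\big(X^{(k)}_t\big)_{k\in S}$ is itself a Markov chain $\mathcal M_S$ on injections $S\hookrightarrow\{1,\dots,n\}$, driven by the shared coin. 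Here $\mathcal M_{\{k\}}$ is exactly the single-card chain of Angel, Peres and Wilson, while $\mathcal M_{\{1,\dots,n\}}$ is the full deck. The plan is: (i) control $\mathcal M_{\{k\}}$ sharply, and (ii) bootstrap from one card to all $n$ cards with only polylogarithmic overhead.

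For (i) I would invoke the explicit diagonalization of Angel, Peres and Wilson. The single-card transition operator on $\mathbb C^{\{1,\dots,n\}}$ is an average of two (near-)cyclic shifts, and their spectral computation shows, equivalently, that its relaxation time is of order $\ell_{\max}^2$, with $\ell_{\max}$ precisely the $(1,m)$-generated, $\{1,\sqrt n\}$-weighted covering radius of $\mathbb Z/(2n-m+1)$ that appears in the statement. Since the stationary law on $\{1,\dots,n\}$ puts mass of order $1/n$ on each point, the standard relaxation-to-mixing inequality gives an $L^2$ (hence total-variation) mixing time $O(\ell_{\max}^2\log n)$ for a single card; this accounts for the first of the three logarithmic factors.

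For (ii), the full deck is in effect $n$ copies of the single-card chain run under one common coin and a hard-core (permutation) constraint, so one expects its entropy-contraction rate to be comparable to that of a single card. I would make this precise by a recursive relative-entropy argument in the spirit of Morris's analysis of the Thorp shuffle: reveal the cards in $O(\log n)$ dyadic blocks, and at each stage bound the conditional law of a new block given the positions already revealed, after running an additional $O(\ell_{\max}^2\log^2 n)$ rounds, by coupling two copies of $\mathcal M_S$ that share their coins on the revealed cards while being allowed a small amount of independent randomness on the rest. The $O(\ell_{\max}^2\log^2 n)$ per stage is one factor $\log n$ to pass from the single-card spectral gap to a log-Sobolev-type entropy contraction and a further $\log n$ to drive the conditional relative entropy of the new block below the $O(1/\mathrm{poly}(n))$ threshold that makes the chain-rule sum negligible; summing over the $O(\log n)$ stages and invoking Pinsker's inequality yields the claimed $O(\ell_{\max}^2\log^3 n)$.

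The main obstacle is step (ii). Unlike the symmetric exclusion process, all cards are driven by the \emph{same} coin, so the usual ``exclusion can only help'' heuristic is unavailable, and the comparison of the conditional multi-card dynamics to the single-card chain must genuinely use the combinatorics of the two cycles $(1,\dots,m)$ and $(1,\dots,n)$ --- in particular that $\tfrac{m}{n}$ is bounded away from $0$ and $1$, which is what rules out $m=n-1$-type pathologies and keeps the overhead polylogarithmic rather than polynomial in $n$.
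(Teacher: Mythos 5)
Your proposal differs fundamentally from the paper's argument, and the difference is not cosmetic: step (ii), which you correctly flag as the crux, is precisely where the paper does all its work, and your sketch of it is too vague to count as a proof.

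The paper does not bootstrap from the single-card spectral analysis of Angel--Peres--Wilson at all. Instead, it rewrites two steps of the shuffle as $(1,\dots,m)(1,\dots,n)\,c(m-1,m,n)$, where $c(m-1,m,n)$ is a ``3-collision'' (a 3-cycle or the identity, each with probability $\tfrac12$), and then applies the 3-Monte entropy theorem of Senda (itself descended from Morris's collision technique): if for each card $i$ and each target $j$ with $\nu(j)<\nu(i)$ the probability that $i$'s first collision after a random time is with $j$ (and with a lower-ranked partner in front) is at least $A_i/\nu(i)$, then $t$ steps contract the conditional relative entropy by a factor $1-\Theta\!\big(\tfrac{1}{\log n}\sum_x A_x E_x\big)$. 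The bulk of the paper is a concrete combinatorial estimate of these collision probabilities: an explicit formula tracking each card's motion via ``big'' coins at position $m$ and ``small'' coins in the bottom, the $\norm{\cdot}$-metric that weights the two contributions by $1$ and $\sqrt n$, a three-stage coupling (spread out, equidistribute over an $\ell$-ball in $\norm{\cdot}$, contract back) that shows three tagged cards reach an almost-uniform joint distribution on $\Theta(\ell^6/(\gamma^3 n^{3/2}))$ nearby triples in $\Theta(\ell^2)$ steps, and a dyadic choice of $\ell$ to deal with cards at all $\norm{\cdot}$-distances. This yields the $A_x$ bound, and iterating the entropy contraction plus Pinsker gives $O(\ell_{\max}^2\log^3 n)$.

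Your plan, by contrast, relies on (a) lifting the single-card spectral gap $\Theta(1/\ell_{\max}^2)$ to a log-Sobolev-type inequality at a cost of only one $\log n$, and (b) a block-revelation coupling in which two copies of the multi-card chain share coins on already-revealed cards while being ``allowed a small amount of independent randomness on the rest.'' Neither is established, and both are nontrivial. For (a), a spectral gap bound alone does not imply a comparably tight log-Sobolev constant; proving it for this chain is essentially a separate theorem. For (b), as you note yourself, the cards are not independent copies driven by independent noise: they all read the same coin, so a coupling that gives the unrevealed cards independent randomness would define a different chain. The paper sidesteps these issues entirely by never passing through a single-card functional inequality; it works directly with joint three-card trajectories and explicit collision events. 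So the gap in your proposal is concrete: you would need to supply both the single-card log-Sobolev bound and a coupling lemma that actually respects the shared-coin structure, and neither is outlined beyond one sentence.
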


\noindent
Note that $\ell_{\max}$ is always bounded above by $2n$ because $(a,b) = (\omega,0)$ is always an element of the inner set, and $|\omega| + |0|\sqrt{n} = \omega \leq 2n-m$. So the mixing time is at most a constant times $n^2 \log^3(n)$. \\
\\
Now fix some $\alpha \in (0,1)$ and for any deck size $n$ consider the shuffle where $m = \floor{\alpha n}$. We are interested in the asymptotic mixing time as $n$ approaches infinity. It turns out that if $\alpha$ is rational then we do no better than the universal upper bound and get a mixing time of $O(n^2 \log^3(n))$. However, if $\alpha$ is an irrational number whose multiples form a low-discrepancy sequence, then the mixing time is $O(n^\frac{3}{2} \log^3(n))$. We show this in particular for $\alpha = \frac{\sqrt{5}-1}{2}$, the inverse golden ratio, at the very end of Section \ref{entropysection}.

\section{Movement of a Single Card: Intuition and Notation}

In our analysis of the overlapping cycles shuffle we will make heavy use of sequences of coins. We defined the overlapping cycles shuffle in terms of a sequence of coins, and we will later imagine that in each step the shuffle draws from different pools of pre-flipped coins depending on its state. To start we define the following:
\begin{definition}
	If $c = (c_1,c_2,\dots,c_t)$ is a sequence of independent uniform $\{\text{Heads},\text{Tails}\}$-valued random variables, we say it is a sequence of coins of length $t$.
\end{definition}
\begin{definition}
	For a sequence of coins $S = (c_1,c_2,\dots,c_t)$ and $r \leq t$, define the Heads-Tails differential $\text{Diff}_r(S)$ to be the number of Heads in $(c_1,\dots,c_r)$ minus the number of Tails in $(c_1,\dots,c_r)$.
\end{definition}
\noindent
Note that $\text{Diff}_r(c)$ is a simple symmetric random walk. We will use this fact later to show with probability bounded away from 0 that $\text{Diff}_r(c)$ stays within constant standard deviations of $0$. \\
\\
To understand the overlapping cycles shuffle it is important to understand how each step affects cards in different parts of the deck. Note that if a card is in one of the top $m - 1$ positions in the deck, then it will move down one position in the round regardless of if Heads or Tails is flipped. On the other hand, a card in a position between $m + 1$ and $n$ is equally to stay put or move down one position, with the $n$th card ``wrapping around'' to the top of the deck if it moves ``down''. Lastly, the card in position $m$ is equally likely to either move to the top of the deck or move down one position. \\
[\baselineskip]
Since the behavior of cards in positions $1$ through $m-1$ varies from cards in positions $m+1$ through $n$, we should have language to quickly distinguish the two.
\begin{definition}
	If a card is in a position between $1$ and $m-1$ we say the card is in the \textbf{top part of the deck}. If a card is in a position between $m+1$ and $n$ we say the card is in the \textbf{bottom part of the deck}.
\end{definition}
Note that cards in the bottom part of the deck move, on average, half as quickly as cards in the top part of the deck. This is because cards in the top part of the deck move down one position each step, while cards in the bottom part of the deck move down only if Tails is flipped, which happens half the time. So if $x$ is a position in the top part of the deck, and $y$ is a position in the bottom part of the deck, we should think of positions $x$ and $x+1$ as being distance $1$ from each other, and positions $y$ and $y+1$ as being distance $2$ from each other. To quickly reference this notion, we define the following function:
\begin{definition}
	If $x \in \{1,\dots,n\}$ is a position in the deck, let 
	\begin{equation}
		p(x) = \begin{cases}
			x \ {\rm if} \ x \leq m \\
			x + (x-m) \ {\rm if} \ x > m
		\end{cases}
	\end{equation}
\end{definition}
We will be interested in finding the likelyhood of certain cards being in certain ``nearby positions'' after specific numbers of rounds. To do this we need to reevaluate our notion of distance away from the naive definition of physical distance in the deck. To see why, note that the card in position $m$ has a $\frac{1}{4}$ chance of being adjacent to the card in position $n$ after two steps. Thus, it makes sense to consider position $m$ and position $n$ as ``close''. \\
\\
We will name cards after their initial position in the deck. So card $1$ is the card initially on the top of the deck, card $2$ is the card initially second to top, etc.
\begin{definition}
	We use $i_t$ to denote the position of card i (i.e. the card that was originally in position i) after $t$ steps of the shuffle.
\end{definition}
To determine where card $i$ is after $t$ steps it is enough to know the sequence of coin flips $c_1, c_2, \dots c_t$ (where each $c_r \in \{\text{Heads},\text{Tails}\}$). However this is much more information than we need. For example, if $i << m$, then $i$ will deterministicly move downwards for many steps regardless of the early values of $(c_r)$. The following proposition allows us to compute the position of $i$ only using the coins that actually influence the movement of $i$.

\begin{proposition}\label{cardmovement}
	Let $i$ be a card in a deck of $n$ cards (where as previously mentioned, $i$ begins in position $i$). Suppose $(c_r)$ is a sequence of coins. Let $i_t$ be the position of card $i$ after doing $t$ steps of the Overlapping Cycle Shuffle. Let $H_B$ be the number of times in the first $t$ steps a Heads is drawn while $i$ is in position $m$. Let $T_B$ be the number of times in the first $t$ steps a Heads is drawn while $i$ is in position $m$. Let $H_S$ be the number of ``Heads'' drawn in the first $t$ steps while $i$ is in the bottom part of the deck. Let $T_S$ be the number of ``Tails'' drawn in the first $t$ steps while $i$ is in the bottom part of the deck. Then,
	\begin{equation*}
		p(i_t) \equiv p(i) + t + (T_S - H_S) + (T_B - m H_B) \text{ mod } (2n-m+1).
	\end{equation*}
\end{proposition}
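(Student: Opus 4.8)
The plan is to track how a single step of the shuffle changes the quantity $p(i_t)$ modulo $2n-m+1$, and to verify that each of the three regimes (top part, bottom part, position $m$) contributes exactly the claimed increment. I would set up an induction on $t$, with the base case $t=0$ being the trivial identity $p(i_0)=p(i)$, since $H_B=T_B=H_S=T_S=0$ and $t=0$.

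For the inductive step, suppose card $i$ is in position $x = i_t$ after $t$ steps, and consider step $t+1$. I would break into cases according to where $x$ sits. First, if $x$ is in the top part (i.e. $1 \le x \le m-1$), then regardless of the coin the card moves to position $x+1$, so $p(i_{t+1}) = x+1 = p(i_t)+1$; none of $H_B,T_B,H_S,T_S$ change, and the claimed right-hand side also increases by exactly $1$ (the $t$ term). Second, if $x$ is in the bottom part (i.e. $m+1 \le x \le n-1$), then on Heads the card stays at $x$, so $p(i_{t+1}) = p(i_t)$ while $H_S$ increases by $1$, and the right-hand side changes by $1 + (-1) = 0$; on Tails the card moves to $x+1$, so $p$ increases by $2$ (since both $x$ and $x+1$ exceed $m$), while $T_S$ increases by $1$, and the right-hand side changes by $1 + 1 = 2$. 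The case $x=n$ is the ``wrap-around'' case: on Heads the card stays at $n$ ($H_S$ increases, $p$ unchanged, consistent as above), and on Tails the card moves to the top, position $1$; here $p(i_{t+1}) = 1$ whereas $p(i_t) = 2n-m$, so $p$ changes by $1 - (2n-m) = -(2n-m-1) \equiv 2 \bmod (2n-m+1)$, which again matches the $+1$ from $t$ plus the $+1$ from $T_S$. Third, if $x = m$, then on Heads the card moves to position $1$, so $p(i_{t+1}) = 1$ and $p(i_t) = m$, giving a change of $1 - m \equiv 1 - m \bmod (2n-m+1)$; here $H_B$ increases by $1$, and the right-hand side changes by $1 + (-m) = 1-m$, as required. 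On Tails the card moves to position $m+1$, so $p$ changes from $m$ to $m+2$, an increase of $2$; $T_B$ increases by $1$, and the right-hand side changes by $1+1 = 2$. In every case the two sides change by the same amount modulo $2n-m+1$, completing the induction.

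The only genuinely delicate point is the wrap-around arithmetic at position $n$ and the jump at position $m$: one must check that the ``physical'' discontinuities in the position function are exactly cancelled by the modulus $2n-m+1$, which is why $p$ rather than the raw position is the right quantity to track (note $p(n) = 2n-m$ and $p(m)=m$, and the two ``illegal'' decrements $-(2n-m-1)$ and $1-m$ are congruent to the ``legal'' increments $+2$ and $1-m$ respectively). I would also remark that $H_B$ and $T_B$ are by construction the counts of Heads and Tails drawn while the card is at position $m$ (the statement's phrasing of $T_B$ repeats ``Heads'' but clearly means ``Tails''), so $H_B + T_B + H_S + T_S$ plus the number of top-part steps equals $t$; this bookkeeping identity is implicitly used when I say ``the $t$ term contributes $+1$ per step.'' No step requires more than elementary case analysis, so I expect the writeup to be short once the case table is laid out cleanly.
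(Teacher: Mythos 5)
Your proof is correct and follows essentially the same route as the paper: induction on $t$, with a case-by-case check that each step increments $p(i_t)$ modulo $2n-m+1$ by $1$ plus the appropriate contribution from $T_S-H_S$ or $T_B-mH_B$, with the wrap-around at position $n$ and the jump at position $m$ verified explicitly. Your remark that the statement's definition of $T_B$ contains a typo (it should count Tails, not Heads) is also a correct and useful observation.
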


\begin{proof}
	We proceed by induction. If $t = 0$ then the statement is trivially true. For the inductive step, it suffices to show that, if $i_{t-1}$ is the position of $i$ after $t-1$ steps, then $i$'s distance after step $t$ depends on the coin or lack of coin used in step $t$ exactly as the formula dictates. Specifically, we need to show that
	\begin{align*}
		p(i_t) \equiv &p(i_{t-1}) + 1 + \mathbbm{1}(\text{$i_{t-1} \geq m$ and $c_t = T$}) - \mathbbm{1}(\text{$i_{t-1} > m$ and $c_t = H$}) \\
		& \phantom{aaaaaaaaa} - m \cdot \mathbbm{1}(\text{$i_{t-1} = m$ and $c_t = H$}).
	\end{align*}
	
	\begin{itemize}
		\item If $i$ is positioned in the top part of the deck it must move down in the next step so $p(i_t) = p(i_{t-1})+1$. Similarly, if $i$ is in position $m$ and Tails is flipped then $i_{t-1} = m$ and $i_t = m+1$ and so $p(i_{t}) = m + 1 + 1 = p(i_{t-1}) + 1 + 1$.
		\item If $i$ is in the bottom part of the deck, but not in position $n$, and flips a Tails then $i_{t} = i_{t-1} + 1$ and so $p(i_{t}) = p(i_{t-1}) + 2 = p(i_{t-1}) + 1 + 1$. If $i$ is in position $n$ and flips a Tails, then it moves to position $1$, so $p(i_{t-1}) = 2n-m$ and $p(i_t) = 1$ and therefore $p(i_t) = p(i_{t-1}) + 1 + 1$ mod $(2n-m+1)$.
		\item If $i$ is in the bottom part of the deck and flips a Heads then $i_{t} = i_{t-1}$ and so $p(i_{t}) = p(i_{t-1}) + 1 - 1$.
		\item If $i$ is in position $m$ and flips Heads, then $i_{t-1} = m$ and $i_t = 1$. So $p(i_{t}) = p(i_{t-1}) + 1 - m$.
	\end{itemize}
\end{proof}
\noindent
We use $H_B$ and $T_B$ to denote movement in position $m$ because the choice of $i$ going to $m+1$ or $1$ is a ``big'' choice. We use $H_S$ and $T_S$ to denote movement in the bottom part of the deck, because the choice of $i$ moving down one position or staying in place has a ``small'' impact on its movement. From now on we will use the letter B for ``big coins'' which are drawn when certain cards are in position $m$ and the letter S for ``small coins'' which are drawn when certain cards are in the bottom part of the deck. \\
\\
As a consequence of Proposition \ref{cardmovement}, we have the following corollary:

\begin{corollary}\label{cardmovementcompare}
	Let $i$ and $j$ be two cards in a deck of $n$ cards. Suppose $(c_r)$ is a sequence of coins. Let $i_t$ be the position of card $i$ after doing $t$ steps of the Overlapping Cycle Shuffle drawing from $(c_r)$ as described previously. Let $H_B(i)$ and $H_S(i)$ be the number of Heads drawn from $(c_r)$ when $i$ is in position $m$ and in the bottom part of the deck, respectively, in the first $t$ steps. Let $T_B(i)$ and $T_S(i)$ be the number of Tails drawn from $(c_r)$ when $i$ is in position $m$ and in the bottom part of the deck respectively in the first $t$ steps. Let $j_t,H_B(j),H_S(j),T_B(j),T_S(j)$ be defined similarly for card $j$ after $t$ steps drawing from $(c_r)$. Then,
	\begin{align*}
		p(j_t) - p(i_t) &\equiv p(j_0) - p(i_0) + (T_S(j) - H_S(j)) - (T_S(i) - H_S(i)) \\
		&\hspace{3mm} +  (T_B(j) - T_B(i)) - m(H_B(j) - H_B(i)) \text{ mod } (2n-m+1).
	\end{align*}
\end{corollary}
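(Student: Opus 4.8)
The plan is to derive Corollary \ref{cardmovementcompare} directly from Proposition \ref{cardmovement} by applying the latter twice and subtracting. First I would invoke Proposition \ref{cardmovement} for card $j$ to get
\begin{equation*}
	p(j_t) \equiv p(j_0) + t + (T_S(j) - H_S(j)) + (T_B(j) - m H_B(j)) \pmod{2n-m+1},
\end{equation*}
and then invoke it again for card $i$ to get the analogous congruence with $i$ in place of $j$. Subtracting the second from the first, the two ``$+t$'' terms cancel, and collecting the remaining terms yields exactly the claimed expression for $p(j_t) - p(i_t) \pmod{2n-m+1}$.

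The only point requiring a word of care is that the quantities $H_B, T_B, H_S, T_S$ appearing in Proposition \ref{cardmovement} are defined relative to a fixed card and a fixed sequence of coins, and that in the corollary we are drawing both cards' trajectories from the \emph{same} sequence $(c_r)$. This is legitimate: the proposition makes no assumption that the coin sequence is used for only one card, and the counts $H_B(i),\dots,T_S(j)$ are simply those same counts evaluated along the (deterministic, given $(c_r)$) trajectories $i_0,i_1,\dots$ and $j_0,j_1,\dots$ respectively. So applying the proposition to each card separately is valid, and the subtraction is just arithmetic modulo $2n-m+1$.

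There is essentially no obstacle here — the corollary is a one-line consequence — so the ``proof'' is just the bookkeeping of writing down the two instances and subtracting. If anything, the mild subtlety is cosmetic: one should note that $p(i_0) = p(i)$ and $p(j_0) = p(j)$ by definition, so the statement as written (with $p(j_0) - p(i_0)$) matches the proposition's ``$p(i)$'' notation. I would present the proof in two or three lines, citing Proposition \ref{cardmovement} for each card and then performing the subtraction, noting the cancellation of the $t$ terms explicitly.
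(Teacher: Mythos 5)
Your proposal is correct and is exactly the argument the paper implicitly intends: the corollary is presented without a separate proof, as "a consequence of Proposition \ref{cardmovement}," and the intended derivation is precisely to apply that proposition to each of $i$ and $j$ with the same coin sequence and subtract, with the $t$ terms cancelling. Your remark that the proposition applies per-card even when the same $(c_r)$ drives both trajectories is accurate and is the only point worth noting.
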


This corollary helps inform us as to how we should consider the ``closeness'' of cards. Assume that $\frac{n}{10} < m < \frac{9n}{10}$. Note that after many steps, the magnitudes of $T_S(j) - H_S(j)$ and $T_S(i) - H_S(i)$ will each likely be much more than the magnitude of $H_B(i) - H_B(j)$. This is because whenever $i$ is in the bottom part of the deck the coins flipped add to either $T_S(i)$ or $H_S(i)$ and whenever $j$ is in the bottom part of the deck the coins flipped add to either $T_S(j)$ or $H_S(j)$. However coins only add to $H_B(i),T_B(i)$ or $H_B(j),T_B(j)$ when $i$ or $j$ is exactly in position $m$. We expect $i$ and $j$ to spend on the order of $n$ times more time in the bottom part of the deck than exactly in position $m$, so we should expect $|(T_S(i) - H_S(i))|$ and $|(T_S(j) - H_S(j))|$ to be on the order of $\sqrt{n}|H_B(i) - H_B(j)|$ and $\sqrt{n}|T_B(i)-T_B(j)|$. With this intuition in mind, we define the following metric:
\begin{definition}
	Let $\omega \in \mathbb{R}$. Then we define
	\begin{equation*}
		\norm{\omega} = \min \{ |a| + |b|\sqrt{n} \ : \ a \in \mathbb{R}, \ b \in \mathbb{Z}, \ \omega \equiv a + b m \text{ mod } (2n-m+1) \}.
	\end{equation*}
	In particular if $i$ is a position in the deck then
	\begin{equation*}
		\norm{p(i)} = \min \{ |a| + |b|\sqrt{n} \ : \ a,b \in \mathbb{Z}, \ p(i) \equiv a + b m \text{ mod } (2n-m+1) \}
	\end{equation*}
	and if $j$ and $k$ are positions in the deck then we have the distance
	\begin{equation*}
		\norm{p(k)-p(j)} = \min \{ |a| + |b|\sqrt{n} \ : \ p(k) \equiv p(j) + a + b m \text{ mod } (2n-m+1) \}.
	\end{equation*}
\end{definition}

It will be important to know how far apart cards can possibly be from each other under this metric. For this purpose we define the following constant.

\begin{definition}
	Let $\ell_{\max}$ be defined by
	\begin{equation}
		\ell_{\max} = \max\limits_{\omega \in \{1,\dots,2n-m\}} \norm{\omega}.
	\end{equation}
\end{definition}

We can show that $\ell_{\max} \geq \frac{1}{2}n^\frac{3}{4}$. To see this, note that if we choose $|a|<\ell$ and $|b|\sqrt{n} < \ell$ then we have $2\ell$ choices for $a$ and $\frac{2\ell}{\sqrt{n}}$ choices for $b$ and so there can be at most $\frac{4\ell^2}{\sqrt{n}}$ distinct cards with norms less than $\ell$. By the pigeonhole principle each of the elements of $\{1,\dots,2n-m\}$ are associated with exactly one duple $(a,b)$. In order to account for all $2n-m$ elements we need $\frac{4\ell^2}{\sqrt{n}} > 2n-m > n$ which translates to $\ell \geq \frac{1}{2}n^\frac{3}{4}$. \\
\\
It will also be useful to consider a more traditional ``one dimensional'' distance between positions in the deck, but allowing for ``wrapping around'' so that positions $n$ and $1$ are considered close. We define this as follows.
\begin{definition}\label{mdistance}
	Let $j$ and $k$ be positions in the deck. Then we define the distance
	\begin{equation*}
		|p(k)-p(j)|_M = \min\{|a| : p(k) \equiv p(j) + a \mod (2n-m+1)\}.
	\end{equation*}
\end{definition}
\noindent
It will be useful to analyze the distribution given by the inverse permutation of $t$ steps of the overlapping cycles shuffle. It turns out that this ``inverse overlapping cycles shuffle'' is just the overlapping cycles shuffle in disguise.
\begin{theorem}\label{inverseshuffle}
	Let $\pi_t$ be the random permutation that is $t$ steps of the overlapping cycles shuffle on $n$ cards with parameter $m$. Then,
	\begin{equation*}
		\pi_t^{-1} \stackrel{d}{=} \sigma \pi_t \sigma^{-1}
	\end{equation*}
	where
	\begin{equation}
		\sigma =
		\begin{pmatrix}
			1 & 2 & \dots & m & m+1 & m+2 & \dots & n \\
			m & m-1 & \dots & 1 & n & n-1 & \dots & m+1
		\end{pmatrix}.
	\end{equation}
\end{theorem}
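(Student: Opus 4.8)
The plan is to factor $\pi_t$ into its $t$ independent one-step pieces and check the identity one coin flip at a time. Write $\pi_t = X_t X_{t-1}\cdots X_1$, where $X_1,\dots,X_t$ are i.i.d., each equal to $(1,2,\dots,m)$ or $(1,2,\dots,n)$ with probability $\frac{1}{2}$; this is precisely the left-multiplication random walk on $S_n$ from the introduction. Then $\pi_t^{-1} = X_1^{-1}X_2^{-1}\cdots X_t^{-1}$, and because the $X_i$ are i.i.d.\ the reversed tuple $(X_t,\dots,X_1)$ has the same law as $(X_1,\dots,X_t)$, so $\pi_t^{-1}\stackrel{d}{=}X_t^{-1}X_{t-1}^{-1}\cdots X_1^{-1}$. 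On the other side, conjugation is an automorphism of $S_n$, hence $\sigma\pi_t\sigma^{-1}=(\sigma X_t\sigma^{-1})(\sigma X_{t-1}\sigma^{-1})\cdots(\sigma X_1\sigma^{-1})$. So it is enough to show that a single factor $X_i^{-1}$ has the same distribution as $\sigma X_i\sigma^{-1}$, and then use independence to promote this to equality of the joint law of the two $t$-tuples.

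The crux --- the only spot where the precise form of $\sigma$ is used --- is this single-step identity. Using $\sigma(a_1,\dots,a_k)\sigma^{-1}=(\sigma(a_1),\dots,\sigma(a_k))$ together with $\sigma(k)=m+1-k$ for $k\le m$ and $\sigma(k)=n+m+1-k$ for $m<k\le n$, I would compute
\begin{align*}
    \sigma(1,2,\dots,m)\sigma^{-1}&=(m,m-1,\dots,1)=(1,2,\dots,m)^{-1},\\
    \sigma(1,2,\dots,n)\sigma^{-1}&=(m,m-1,\dots,1,n,n-1,\dots,m+1)=(n,n-1,\dots,1)=(1,2,\dots,n)^{-1}.
\end{align*}
The second line needs the observation that the cycle $(m,m-1,\dots,1,n,n-1,\dots,m+1)$ sends $1\mapsto n$ and $k\mapsto k-1$ for every other $k$, i.e.\ it is literally $(n,n-1,\dots,1)$; this ``seam'' computation is exactly why $\sigma$ must reverse the two blocks $\{1,\dots,m\}$ and $\{m+1,\dots,n\}$ internally. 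Consequently $g\mapsto\sigma g\sigma^{-1}$ is a bijection from $\{(1,\dots,m),(1,\dots,n)\}$ onto $\{(1,\dots,m)^{-1},(1,\dots,n)^{-1}\}$, and $g\mapsto g^{-1}$ is another bijection between the same two sets; hence $\sigma X_i\sigma^{-1}$ and $X_i^{-1}$ are both uniform on $\{(1,\dots,m)^{-1},(1,\dots,n)^{-1}\}$, so they are equal in distribution.

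To finish, I would assemble the pieces. Since the $X_i$ are independent, applying $g\mapsto g^{-1}$ (resp.\ $g\mapsto\sigma g\sigma^{-1}$) coordinatewise preserves independence, so the tuples $(X_t^{-1},\dots,X_1^{-1})$ and $(\sigma X_t\sigma^{-1},\dots,\sigma X_1\sigma^{-1})$ have the same joint distribution; applying the deterministic ordered-product map then gives
\begin{equation*}
    \pi_t^{-1}\stackrel{d}{=}X_t^{-1}X_{t-1}^{-1}\cdots X_1^{-1}\stackrel{d}{=}(\sigma X_t\sigma^{-1})\cdots(\sigma X_1\sigma^{-1})=\sigma(X_tX_{t-1}\cdots X_1)\sigma^{-1}=\sigma\pi_t\sigma^{-1}.
\end{equation*}
I expect the main obstacle to be purely bookkeeping: keeping the multiplication order aligned with the paper's left-multiplication convention, and double-checking the seam identity for $\sigma(1,\dots,n)\sigma^{-1}$. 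It is also worth noting that $\sigma$ is an involution, so $\sigma^{-1}=\sigma$, which streamlines the formulas a little but plays no essential role.
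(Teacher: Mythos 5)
Your proposal is correct and uses the same core idea as the paper: verify that conjugation by $\sigma$ sends each generator $(1,\dots,m)$ and $(1,\dots,n)$ to its inverse, then lift the single-step identity to $t$ steps. You are somewhat more explicit than the paper about the time-reversal/i.i.d. bookkeeping that justifies the lift, but the argument is the same one.
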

\noindent
In other words, the inverse overlapping cycles shuffle is also an overlapping cycles shuffle after reordering the cards.

\begin{proof}
	Note that under $\pi_1^{-1}$ any $g$ is equally likely to go to $(m,m-1,\dots,1)g$ or $(n,n-1,\dots,1)g$. Also note that
	\begin{equation}
		\sigma(1,2,\dots,m)\sigma^{-1} = (m,m-1,m\dots,1)
	\end{equation}
	and
	\begin{align}
		\sigma(1,2,\dots,n)\sigma^{-1} &= \sigma(1,2,\dots,m,m+1,m+2,\dots,n)\sigma^{-1} \\
		&= (m,m-1,\dots,1,n,n-1,\dots,m+1) \\
		&= (n,n-1,\dots,1).
	\end{align}
	Since each step of the overlapping cycles shuffle is the same as the inverse up to the given re-ordering of the cards, we know that the same is true for $t$ steps with the same reordering.
\end{proof}

One useful property of the $| \cdot |_M$ distance defined in Definition \ref{mdistance} is that it is invariant under the reordering $\sigma$. So cards that are considered close with respect to $| \cdot |_M$ are also close when considering the inverse overlapping cycles shuffle.

\begin{proposition}\label{inversedist}
	Let $j$ and $k$ be positions in the deck. Let $\sigma$ be the permutation from Theorem \ref{inverseshuffle}. Then,
	\begin{equation*}
		p(k)-p(j) = p(\sigma(j))-p(\sigma(k)) \mod 2n-m+1.
	\end{equation*}
\end{proposition}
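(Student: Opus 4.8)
The plan is to reduce the identity to a pointwise statement about the single map $p\circ\sigma$. First I would write $\sigma$ explicitly as a function on positions: reading off the displayed matrix, $\sigma(x)=m+1-x$ when $x\le m$, and $\sigma(x)=n+m+1-x$ when $m<x\le n$. Note that in the first case $\sigma(x)$ again lies in $\{1,\dots,m\}$, and in the second case $\sigma(x)$ again lies in $\{m+1,\dots,n\}$, so each case stays on its own "side" of $p$'s definition. This is the only bookkeeping one has to be careful about.

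Next I would compute $p(\sigma(x))$ in each case using $p(y)=y$ for $y\le m$ and $p(y)=2y-m$ for $y>m$. For $x\le m$ one gets $p(\sigma(x))=m+1-x=m+1-p(x)$. For $x>m$ one gets $p(\sigma(x))=2\sigma(x)-m=2(n+m+1-x)-m$, and substituting $2x=p(x)+m$ (valid since $p(x)=2x-m$ in this range) gives $p(\sigma(x))=2n+2-p(x)$. The two constants $m+1$ and $2n+2$ look different, but $2n+2-(m+1)=2n-m+1$, so $2n+2\equiv m+1\pmod{2n-m+1}$; hence in both cases
\begin{equation*}
 p(\sigma(x))\equiv m+1-p(x)\pmod{2n-m+1}.
\end{equation*}

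Finally I would apply this to $x=j$ and $x=k$ and subtract: $p(\sigma(j))-p(\sigma(k))\equiv(m+1-p(j))-(m+1-p(k))=p(k)-p(j)\pmod{2n-m+1}$, which is exactly the claim. I do not expect any real obstacle here; the only place to be attentive is confirming that $\sigma$ preserves the partition $\{1,\dots,m\}\sqcup\{m+1,\dots,n\}$ so that the case analysis for $p$ is clean, and then noticing the congruence $2n+2\equiv m+1$ that makes the two cases collapse into one.
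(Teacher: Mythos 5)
Your proof is correct, and it takes a genuinely cleaner route than the paper's. The paper's proof does a three-way case split on the pair $(j,k)$ (both $\le m$, both $>m$, or straddling $m$), and in the straddling case pushes the $2n-m+1$ congruence through the difference directly; there is also a fourth case $k \le m < j$ handled implicitly by symmetry. You instead isolate the single-variable identity $p(\sigma(x)) \equiv (m+1) - p(x) \pmod{2n-m+1}$, proving it by a two-way case split on $x$ alone, and then the proposition falls out by subtraction with no further casework. This factoring has two advantages: it halves the bookkeeping, and it surfaces a clean structural fact (that $p\circ\sigma$ is an affine "reflection" of $p$ modulo $2n-m+1$) that the paper's pairwise casework obscures. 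The only place to be careful, which you flag, is verifying $\sigma$ preserves the partition $\{1,\dots,m\}\sqcup\{m+1,\dots,n\}$ so that the two cases for $p$ apply cleanly, and then observing $2n+2 \equiv m+1 \pmod{2n-m+1}$; both checks are correct. Incidentally, your approach sidesteps a small typo in the paper's third case, where $\sigma(j)$ for $j \le m$ is written with the wrong branch of $\sigma$ before the correct formula is nonetheless used in the subsequent display.
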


\begin{proof}
	We consider three cases.
	\begin{enumerate}
		\item $j,k \leq m$ \\
		In this case, $\sigma(j) = m+1-j$ and $\sigma(k) = m+1-k$. So,
		\begin{align}
			p(\sigma(k))-p(\sigma(j)) &= p(m+1-k)-p(m+1-j) \\
			&= (m+1-k)-(m+1-j) \\
			&= j-k \\
			&= p(j)-p(k).
		\end{align}
		\item $j,k > m$ \\
		In this case, $\sigma(j) = n+m+1-j$ and $\sigma(k) = n+m+1-k$. So,
		\begin{align}
			p(\sigma(k))-p(\sigma(j)) &= p(n+m+1-k)-p(n+m+1-j) \\
			&= (2n+m+2-2k)-(2n+m+2-2j) \\
			&= 2j-2k \\
			&= (2j-m) - (2k-m) \\
			&= p(j)-p(k).
		\end{align}
		\item $j \leq m < k$
		In this case, $\sigma(j) = n+m+1-j$ and $\sigma(k) = n+m+1-k$. So,
		\begin{align}
			p(\sigma(k))-p(\sigma(j)) &= p(n+m+1-k)-p(m+1-j) \\
			&= (2n+m+2-2k)-(m+1-j) \\
			&= j-(2k-2n-1) \\
			&\equiv (j)-(2k-m) \mod 2n-m+1\\
			&= p(j)-p(k).
		\end{align}
	\end{enumerate}
\end{proof}

\begin{corollary}\label{inversenorm}
	Let $j$ and $k$ be positions in the deck. Let $\sigma$ be the permutation from Theorem \ref{inverseshuffle}. Then,
	\begin{equation*}
		|p(k)-p(j)|_M = |p(\sigma(k))-p(\sigma(j))|_M
	\end{equation*}
	and
	\begin{equation*}
		\norm{p(k)-p(j)} = \norm{p(\sigma(k))-p(\sigma(j))}.
	\end{equation*}
\end{corollary}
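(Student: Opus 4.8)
The plan is to read this off directly from Proposition \ref{inversedist}. That proposition gives $p(k)-p(j) \equiv p(\sigma(j))-p(\sigma(k)) \pmod{2n-m+1}$, which is the same as saying $p(k)-p(j) \equiv -\bigl(p(\sigma(k))-p(\sigma(j))\bigr) \pmod{2n-m+1}$. So the two arguments appearing in the corollary are negatives of one another modulo $2n-m+1$, and all that remains is to observe that both $|\cdot|_M$ and $\norm{\cdot}$ are (i) functions of the residue class mod $2n-m+1$ only, and (ii) invariant under negation of the argument.

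Property (i) is immediate from the definitions: in Definition \ref{mdistance} the quantity $|x|_M$ is a minimum over all integers $a$ with $a \equiv x \pmod{2n-m+1}$, and in the definition of $\norm{\cdot}$ the quantity $\norm{x}$ is a minimum of $|a|+|b|\sqrt n$ over integer pairs $(a,b)$ with $x \equiv a+bm \pmod{2n-m+1}$; replacing $x$ by a congruent value leaves the feasible set of $(a,b)$ unchanged. For property (ii): if $a \equiv x \pmod{2n-m+1}$ then $-a \equiv -x$ and $|a|=|-a|$, so $|x|_M=|-x|_M$; and if $x \equiv a+bm \pmod{2n-m+1}$ then $-x \equiv (-a)+(-b)m \pmod{2n-m+1}$ with $|-a|+|-b|\sqrt n = |a|+|b|\sqrt n$, so $\norm{x}=\norm{-x}$ by taking the minimum over all representations.

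Putting these together with Proposition \ref{inversedist} yields $|p(k)-p(j)|_M = \bigl|-(p(\sigma(k))-p(\sigma(j)))\bigr|_M = |p(\sigma(k))-p(\sigma(j))|_M$, and identically $\norm{p(k)-p(j)} = \norm{p(\sigma(k))-p(\sigma(j))}$. I do not expect any genuine obstacle here; the corollary is essentially a bookkeeping consequence of Proposition \ref{inversedist}. The only point requiring a moment's care is that negation of the coefficient $b$ keeps us inside the feasible set of the $\norm{\cdot}$-minimization (it does, since $b$ ranges over all of $\mathbb{Z}$), and that negation is a bijection on residue classes modulo the fixed modulus $2n-m+1$, so it interacts cleanly with the ``$\bmod$'' in both definitions.
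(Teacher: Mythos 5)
Your argument is correct and follows the same route as the paper: invoke Proposition \ref{inversedist}, then observe that both $|\cdot|_M$ and $\norm{\cdot}$ see only the residue class modulo $2n-m+1$ and are invariant under negation. If anything, your phrasing of the second step is more careful than the paper's, which justifies the $\norm{\cdot}$ case with the loose claim that ``$\norm{\cdot}$ is a function of $|\cdot|_M$''; your version --- negation of $(a,b)$ preserves $|a|+|b|\sqrt n$ and the feasible set of representations --- is the precise statement that makes the paper's shorthand work.
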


\begin{proof}
	By Proposition \ref{inversedist} we know that
	\begin{equation}
		|p(k)-p(j)|_M = |p(\sigma(j))-p(\sigma(k))|_M.
	\end{equation}
	Since $|\cdot|$ is symmetric we get
	\begin{equation}
		|p(k)-p(j)|_M = |p(\sigma(k))-p(\sigma(j))|_M.
	\end{equation}
	Since $\norm{\cdot}$ is a function of $|\cdot|_M$ it follows that
	\begin{equation}
		\norm{p(k)-p(j)} = \norm{p(\sigma(k))-p(\sigma(j))}.
	\end{equation}
\end{proof}
This next proposition is the main result of this section. It allows us to predict where a card $i$ will be after $t$ steps of the shuffle using the Heads-Tails differentials of $i$ from the times it is in position $m$ and in the bottom part of the deck.

\begin{proposition}\label{movementbounds}
	Let $i$ be a card in the deck. Let $B$ be the record of flips when $i$ is in position $m$. Let the sequence $S$ be the record of flips when $i$ is in the bottom part of the deck. Let $x$ be the Heads-Tails differential of $S$ after $t$ steps of the shuffle. Let $y$ be the Heads-Tails differential of $B$ after $t$ steps of the shuffle. Then,
	\begin{align}
		\norm*{p(i_t)-p(i)-\left(t-\floor*{\frac{t}{2n}}(m-1)-x-\floor*{y\parens*{1-\frac{m}{2n}}}m \right)} \leq \left|\frac{y}{2}\right|+\left|\frac{x}{2n}\right|(\sqrt{n}+1) + 4\sqrt{n}.
	\end{align}
\end{proposition}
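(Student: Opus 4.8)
The plan is to reduce the bound to a single estimate on $T_B$, the number of Tails among the big coins, and then to manipulate floors and the norm $\norm{\cdot}$. Starting from Proposition \ref{cardmovement}, write $x = H_S - T_S$ and $y = H_B - T_B$ for the Heads-Tails differentials of $S$ and $B$, and let $N_B = H_B + T_B$ be the number of the first $t$ steps in which card $i$ occupies position $m$. Since $T_S - H_S = -x$ and $T_B - mH_B = -(m-1)T_B - my$, that proposition gives $p(i_t) - p(i) \equiv t - x - (m-1)T_B - my \pmod{2n-m+1}$. Subtracting the target quantity $E := t - \floor*{\tfrac{t}{2n}}(m-1) - x - \floor*{y\parens*{1-\tfrac{m}{2n}}}m$, the argument of $\norm{\cdot}$ is congruent modulo $2n-m+1$ to $(m-1)A + mB$, where $A := \floor*{\tfrac{t}{2n}} - T_B$ and $B := \floor*{y\parens*{1-\tfrac{m}{2n}}} - y$; both $A$ and $B$ (and hence $A+B$) are integers. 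So it suffices to bound $\norm{(m-1)A + mB}$.

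The key step is the estimate $T_B = \tfrac{t - x - my}{2n} + O(1)$, which I would obtain by a short accounting over the excursion structure of card $i$. At each step $i$ is in the top part, at position $m$, or in the bottom part, so $t$ is the number of top-part steps plus $N_B$ plus $N_S := H_S + T_S$. Every visit of $i$ to position $m$ is immediately preceded by a run of exactly $m-1$ steps in the top part (namely $i$ moving through positions $1,\dots,m-1$), with the only exceptions being a shortened run at the start of the window and a possibly-truncated run at its end; thus the number of top-part steps is $N_B(m-1) + O(m)$. Likewise, each time $i$ leaves position $m$ on Tails it makes exactly $n-m$ downward moves in the bottom part before wrapping around to position $1$, with the same two boundary exceptions, so $T_S = (n-m)T_B + O(n)$ and hence $N_S = x + 2T_S = x + 2(n-m)T_B + O(n)$. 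Combining these with $N_B = 2T_B + y$ yields $t = N_B m + N_S + O(n) = 2n\,T_B + my + x + O(n)$, which is the claimed estimate.

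Substituting, $A = \parens*{\floor*{\tfrac{t}{2n}} - \tfrac{t}{2n}} + \tfrac{x+my}{2n} + O(1)$, while $B = \parens*{\floor*{y(1-\tfrac m{2n})} - y(1-\tfrac m{2n})} - \tfrac{my}{2n}$, so the $\tfrac{my}{2n}$ terms cancel in $A+B$. Since each ``floor minus value'' lies in $(-1,0]$ and $m/n<1$, this gives $|A| \le \tfrac{|y|}{2} + \tfrac{|x|}{2n} + O(1)$ and $|A+B| \le \tfrac{|x|}{2n} + O(1)$. Now write $(m-1)A + mB = m(A+B) - A$ and apply the triangle inequality for $\norm{\cdot}$ with $\norm{m} \le \sqrt{n}$ (take $a=0$, $b=1$) and $\norm{A} \le |A|$ (take $b=0$): then $\norm{(m-1)A + mB} \le |A+B|\sqrt{n} + |A| \le \tfrac{|y|}{2} + \tfrac{|x|}{2n}(\sqrt{n}+1) + O(\sqrt{n})$, and keeping track of the constants inside the $O(n)$ boundary terms, which are divided by $2n$, shows the last term is at most $4\sqrt{n}$.

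I expect the main obstacle to be precisely this boundary bookkeeping: one has to pin down the $O(m)$ error in the top-part count and the $O(n)$ error in $T_S = (n-m)T_B + O(n)$ via an explicit case split on where $i$ starts (top part / position $m$ / bottom part) and on whether the $t$-step window ends in the middle of an excursion, tightly enough that the accumulated additive constant really does fall inside the stated $4\sqrt{n}$ for all $n$ in the relevant range. Everything else is elementary manipulation of floors and of the two-parameter norm $\norm{\cdot}$.
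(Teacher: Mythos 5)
Your proof is correct and follows essentially the same route as the paper: both hinge on the identity from Proposition~\ref{cardmovement} plus an excursion analysis showing $T_B = \frac{t - my - x}{2n} + O(1)$, followed by floor manipulations and the triangle inequality for $\norm{\cdot}$ with $\norm{m}\le\sqrt n$. The one genuine organizational difference is that you eliminate $H_B$ at the outset via $T_B - mH_B = -(m-1)T_B - my$ and then need to estimate only $T_B$, whereas the paper derives separate expressions for $H_B$ and $T_B$ (via the count of visits to position $m$ through short/long returns) and substitutes both; your single-variable bookkeeping plus the decomposition $(m-1)A+mB = m(A+B)-A$ is a bit leaner and makes the cancellation of the $\tfrac{my}{2n}$ terms in $A+B$ transparent. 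The excursion accounting itself — counting the $m-1$ top-part steps preceding each visit to $m$ and the $n-m$ bottom-part Tails per long excursion, with two boundary exceptions — is the same content as the paper's short-return/long-return analysis. The only unfinished piece, which you flag yourself, is nailing the boundary constants; running your own estimates ($|e_{\rm top}|\le 2m$, $|e_{\rm bot}|\le 2(n-m)$, so the error $\upsilon'$ in $T_B$ has $|\upsilon'|<2$, giving $|A|\le\tfrac{|y|}{2}+\tfrac{|x|}{2n}+3$ and $|A+B|\le\tfrac{|x|}{2n}+4$) lands comfortably within $4\sqrt n$ for all $n$ the paper cares about, so the gap is cosmetic rather than substantive.
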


\begin{proof}
	By Theorem \ref{cardmovement} we know
	\begin{equation}
		p(i_t) \equiv p(i) + t + (T_S - H_S) + (T_B - m H_B) \text{ mod } (2n-m+1). \label{locbydiff}
	\end{equation}
	Note that
	\begin{equation}
		H_B = \frac{1}{2}\parens*{\text{number of times $i$ is in position $m$ in the first $t$ steps}} +\frac{1}{2}y.
	\end{equation}
	Imagine card $i$ has just reached position $m$ for the $T$th time. We are interested in how many steps  it takes for $i$ return to position $m$ again (for the $(T+1)$th time). If the next flip is Heads, then $i$ will move from position $m$ to position $1$. It will then take $m-1$ more steps for $i$ to return to position $m$. This is a total of $m$ steps, and we call this a short return. If instead the next flip is Tails, then $i$ will move to position $m+1$. It will then take $2(n-m)+\Delta_T$ steps for $i$ to cycle through the bottom of the deck back to position $1$. $\Delta_T$ represents the deviation from the expected number of steps, and $\Delta_T$ contributes negatively to the Heads-Tails differential of $S$. Namely we have $x = -\Delta_1 - \Delta_2 - \dots$. After card $i$ reaches position $1$ it will take $m-1$ more steps to reach position $m$. This is a total of $2n-m+\Delta_T$ steps and we call this a long return. Note that, ignoring $\Delta_T$, the average of the number of steps between the short return and long return is $n$ steps. Thus, if $i_0 = m$ and $y$ is nonnegative then the number of subsequent times $i$ hits position $m$ is
	\begin{equation}
		\floor*{y + \frac{t-my-x}{n}}.
	\end{equation}
	This is because other than the excess $y$ short returns which take $m$ steps, the remaining visits are divided evenly between long and short returns, so $n$ steps before counting the increased/decreased speed through the bottom part of the deck as recorded by $x$. Similarly if $y$ is negative then the number of subsequent times $i$ hits position $m$ in $t$ steps is
	\begin{equation}
		\floor*{(-y) + \frac{t-(2n-m)(-y)-x}{n}}.
	\end{equation}
	This means that
	\begin{align}
		H_B &= \frac{y}{2} + \frac{t}{2n} - \frac{my}{2n} - \frac{x}{2n} + \zeta + \frac{y}{2} \\
		&= \frac{t}{2n} + y\parens*{1 - \frac{m}{2n}} - \frac{x}{2n} + \zeta \label{ybpositive}
	\end{align}
	if $y$ is positive and
	\begin{align}
		H_B &= \frac{-y}{2} + \frac{t}{2n} - \frac{(2n-m)(-y)}{2n} - \frac{x}{2n} + \zeta + \frac{y}{2} \\
		&= \frac{t}{2n} + y\parens*{1 - \frac{m}{2n}} - \frac{x}{2n} + \zeta \label{ybnegative}
	\end{align}
	if $y$ is negative, where $\zeta \in [-\frac{1}{2},0]$ and rounds down so that $H_B$ is an integer. Note that both lines (\refeq{ybpositive}) and (\refeq{ybnegative}) are equal, so we have the same value regardless of if $y$ is positive or negative. These values represent the case where $i_0 = m$ and we don't count the fact that $i$ starts at $m$ as an $m$ ``hit''. At the other extreme, where $i$ starts at $m$ and we do count that as a ``hit'' we would have
	\begin{align}
		H_B = \frac{t}{2n} + y\parens*{1 - \frac{m}{2n}} - \frac{x}{2n} + \zeta + 1.
	\end{align}
	In the more general case where we have $i_0 \neq m$ and we count the number of times $i$ hits $m$ we get
	\begin{align}
		H_B = \frac{t}{2n} + y\parens*{1 - \frac{m}{2n}} - \frac{x}{2n} + \upsilon
	\end{align}
	where $\upsilon \in [-\frac{1}{2},1]$. We can use the fact that
	\begin{equation}
		T_B = \frac{1}{2}\parens*{\text{number of times $i$ is in position $m$ in the first $t$ steps}} - \frac{1}{2}y
	\end{equation}
	to similarly calculate
	\begin{align}
		T_B = \frac{t}{2n} - y\parens*{\frac{m}{2n}} - \frac{x}{2n} + \upsilon'
	\end{align}
	where $\upsilon' \in [-\frac{1}{2},1]$.  Plugging into (\refeq{locbydiff}) we get
	\begin{equation}
		p(i_t) \equiv p(i) + t - x + \left(\frac{t}{2n} - y\parens*{\frac{m}{2n}} - \frac{x}{2n} + \upsilon'\right) - m \left[\frac{t}{2n} + y\parens*{1 - \frac{m}{2n}} - \frac{x}{2n} + \upsilon\right]
	\end{equation}
	where the terms in square brackets form an integer. Note that for any integer $z$ we have that $z = \floor{z}+\delta$ where $\delta \in [0,1)$. So,
	\begin{equation}
		p(i_t) \equiv p(i) + t - x + \floor*{\frac{t}{2n}}(1-m) - \floor*{y\parens*{1-\frac{m}{2n}}}m + \left( \delta_1 - y\parens*{\frac{m}{2n}} - \frac{x}{2n} + \upsilon'\right) - m \left[\delta_2 + \delta_3 - \frac{x}{2n} + \upsilon\right]
	\end{equation}
	where $\delta_1,\delta_2,\delta_3 \in [0,1)$. This gives us
	\begin{align}
		&\norm*{p(i_t)-p(i)-t+\floor*{\frac{t}{2n}}(m-1)+x+\floor*{y\parens*{1-\frac{m}{2n}}}m} \\
		\leq &\norm*{\delta_1-y\left(\frac{m}{2n}\right)-\frac{x}{2n}+\upsilon' - m\left[\delta_2 + \delta_3 - \frac{x}{2n} + \upsilon \right]}\\
		\leq &\left|\delta_1-y\left(\frac{m}{2n}\right)-\frac{x}{2n}+\upsilon'\right| + \sqrt{n}\left|\delta_2 + \delta_3 - \frac{x}{2n} + \upsilon\right|.
	\end{align}
	Utilize the fact that $|\delta_1|,|\delta_2|,|\delta_3|,|\upsilon|,|\upsilon'|\leq 1$ and $m < n$ we get
	\begin{align}
		\norm*{p(i_t)-p(i)-t+\floor*{\frac{t}{2n}}(m-1)+x+\floor*{y\parens*{1-\frac{m}{2n}}}m} \leq \left|\frac{y}{2}\right|+\left|\frac{x}{2n}\right|(\sqrt{n}+1) + 4\sqrt{n}.
	\end{align}
\end{proof}

The norm on the left hand side of the inequality of Proposition $\ref{movementbounds}$ is quite complicated. To make our lives easier, it will be nice to consider values of $t$ such that $-t+\floor*{\frac{t}{2n}}(m-1) \equiv 0 \mod 2n-m+1$ thus eliminating those two terms from the norm. To make sure this is possible we have the following lemma.
\begin{lemma}
	Fix some $n,m \in \mathbb{N}$ such that $m < n$. Choose any $s \in \mathbb{N}$. Then there exists $t \in [s,s+4n]$ such that
	\begin{equation}
		-t + \floor*{\frac{t}{2n}}(m-1) \equiv 0 \mod 2n-m+1
	\end{equation}
\end{lemma}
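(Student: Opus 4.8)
The plan is to use the fact that the function $f(t) := -t + \lfloor t/(2n)\rfloor (m-1)$ is piecewise linear with slope $-1$: on each \emph{block} $B_k := \{2nk, 2nk+1, \dots, 2n(k+1)-1\}$ of $2n$ consecutive integers the value $\lfloor t/(2n)\rfloor$ is the constant $k$, so $f(t) = -t + k(m-1)$ runs through the $2n$ consecutive integer values from $-2n(k+1)+1+k(m-1)$ up to $-2nk + k(m-1)$ as $t$ traverses $B_k$. Writing $N := 2n-m+1$ for the modulus, the hypothesis $m \ge 1$ gives $N \le 2n$, so any run of $2n$ consecutive integers contains a complete residue system mod $N$ and in particular contains a multiple of $N$. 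Hence every block $B_k$ contains some $t$ with $f(t) \equiv 0 \pmod N$.

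It then remains to show that the interval $[s, s+4n]$, which consists of $4n+1$ consecutive integers, always contains at least one entire block $B_k$. Setting $k_0 := \lfloor s/(2n)\rfloor$, so that $2nk_0 \le s < 2n(k_0+1)$, I would check that $B_{k_0+1} \subseteq [s, s+4n]$: its smallest element is $2n(k_0+1) > s$, and its largest element is $2n(k_0+2) - 1 \le 2n(k_0+2) = 2nk_0 + 4n \le s + 4n$. Combining this with the previous paragraph, the element $t \in B_{k_0+1}$ with $f(t) \equiv 0 \pmod N$ lies in $[s, s+4n]$, which is exactly what is claimed.

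This lemma is elementary, so there is no serious obstacle; the only points deserving a moment's care are (i) verifying $N \le 2n$ so that a single block of length $2n$ already covers every residue mod $N$ — this is precisely where $m \ge 1$ enters — and (ii) the index arithmetic showing $B_{k_0+1}$ sits inside the window, where the generous margin of $4n$ (rather than $2n$) is what makes the containment hold no matter where $s$ falls within its own block.
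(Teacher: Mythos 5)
Your proof is correct and follows essentially the same idea as the paper's: identify a block of $2n$ consecutive integers within $[s, s+4n]$ on which $\lfloor t/(2n)\rfloor$ is constant, observe that $f(t) = -t + \lfloor t/(2n)\rfloor(m-1)$ steps through $2n$ consecutive integers on that block, and conclude using $2n - m + 1 \le 2n$. The paper phrases this as choosing a multiple $s^*$ of $2n$ in $[s, s+2n)$ and noting the expression decreases by one at each unit step; you phrase it as the block $B_{k_0+1}$ containing a complete residue system mod $N$, but the content is identical.
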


\begin{proof}
	Let $s^* \in [s,s+2n)$ such that $s^*$ is a multiple of $2n$. Then for all $t^*$ in the interval $[s^*,s^*+2n)$ we have
	\begin{equation}
		\left(-(t^*+1) + \floor*{\frac{(t^*+1)}{2n}}(m-1)\right) = \left(-t^* + \floor*{\frac{t^*+1}{2n}}(m-1)\right) - 1
	\end{equation}
	Since there are $2n$ terms of the interval and it will take at most $2n-m+1$ unit steps to get to $0 \mod 2n-m+1$ we know there exists $t \in [s^*,s^*+2n)$ such that
	\begin{equation}
		-t + \floor*{\frac{t}{2n}}(m-1) \equiv 0 \mod 2n-m+1
	\end{equation}
\end{proof}

Later we will have equations that have a $t + \floor*{\frac{t}{2n}}m$ term we want to eliminate. By a similar proof, we have the following lemma as well.

\begin{lemma}
	Fix some $n,m \in \mathbb{N}$ such that $m < n$. Choose any $s \in \mathbb{N}$. Then there exists $t \in [s,s+4n]$ such that
	\begin{equation}
		t - \floor*{\frac{t}{2n}}m \equiv 0 \mod 2n-m+1
	\end{equation}
\end{lemma}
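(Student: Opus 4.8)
The plan is to reuse the argument of the preceding lemma almost verbatim, the only difference being that the tracked quantity now \emph{increases} by one at each step of a block rather than decreasing. Set $f(t) = t - \floor*{\tfrac{t}{2n}}m$. First I would choose $s^{*} \in [s, s+2n)$ to be a multiple of $2n$; such an $s^{*}$ exists because consecutive multiples of $2n$ differ by exactly $2n$.

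Next I would note that on the block $[s^{*}, s^{*}+2n)$ the value $\floor*{\tfrac{t}{2n}}$ is the constant $s^{*}/(2n)$, so for consecutive integers $t,t+1$ in this block $f(t+1)-f(t) = (t+1) - t = 1$. Consequently, as $t$ runs over the $2n$ integers $s^{*}, s^{*}+1, \dots, s^{*}+2n-1$, the values $f(t)$ form a block of $2n$ consecutive integers.

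Then I would invoke the elementary fact that any $2n$ consecutive integers realize every residue class modulo $2n-m+1$, which holds because $2n-m+1 \le 2n$ (as $m \ge 1$). Hence some $t \in [s^{*}, s^{*}+2n)$ satisfies $f(t) \equiv 0 \pmod{2n-m+1}$. Finally, from $s^{*} \in [s, s+2n)$ and $t \in [s^{*}, s^{*}+2n)$ we get $t \in [s, s+4n) \subseteq [s, s+4n]$, which is exactly what was claimed.

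I do not expect any genuine obstacle here; the only points deserving a moment's attention are confirming the direction of the increment $f(t+1)-f(t) = +1$ (so that $f$ sweeps through $2n$ consecutive values) and checking that $2n$ consecutive values suffice to cover all residues modulo $2n-m+1$, both immediate from $m \ge 1$ and from $\floor*{\tfrac{\cdot}{2n}}$ being constant on each length-$2n$ block anchored at a multiple of $2n$.
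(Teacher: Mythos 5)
Your argument is correct and is essentially the approach the paper intends, since the paper proves the preceding lemma (the $-t + \floor*{\tfrac{t}{2n}}(m-1)$ variant, where the tracked quantity decreases by $1$ per step) and simply states this one follows "by a similar proof." You have supplied exactly that similar proof, with the sign of the increment reversed, and your write-up correctly verifies the key points: the floor is constant on a block $[s^*, s^*+2n)$ anchored at a multiple of $2n$, the values $f(t)$ sweep through $2n$ consecutive integers, and $2n \ge 2n - m + 1$ guarantees all residues are hit.
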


\section{Entropy and 3-Monte}
We will use techniques involving entropy to bound the mixing time of the overlapping cycles shuffle. In this section, we provide the necessary background in entropy. We utilize a new technique involving entropy, the 3-Monte, first described by Senda \cite{monte}, which is a generalization of a similar technique first described by Morris \cite{morris2008improved}.

\begin{definition}
	If $\pi$ is a random permutation in $S_n$, we define the relative entropy of $\pi$ with respect to the uniform distribution as
	\begin{equation*}
		{\rm ENT}(\pi) = \sum\limits_{\varphi \in S_n} \probp{\pi = \varphi} \log(n! \cdot \probp{\pi = \varphi}).
	\end{equation*}
\end{definition}
Note that in the case that $\pi$ is uniform, we have ENT$(\pi) = 0$. In the other extreme where $\pi$ is deterministic, we have ENT$(\pi) = \log(n!)$. \\
\\
This notion of relative entropy is useful, because we have by Pinsker's inequality \cite{pinsker} (Section 2.4, page 88) that
\begin{equation}
	\sqrt{\frac{1}{2}{\rm ENT}(\pi)} \geq \norm{\pi - \xi}_{\rm TV} \label{tventbound}
\end{equation}
where $\xi$ is the uniform random permutation in $S_n$. We now define the notion of 3-Monte shuffles and collisions, which will be useful for finding bounds on relative entropy.

\begin{definition}
	We say a random permutation $\mu$ in $S_n$ is a \textit{3-collision} if for some distinct $x,y,z \in \{1,\dots,n\}$ it is equally likely to be either the 3-cycle $(x,y,z)$ or the identity. So it has the distribution
	\begin{equation*}
		\probp{\mu = (x,y,z)} = \probp{\mu = {\rm id}} = \frac{1}{2}
	\end{equation*}
	In particular we say $\mu = c(x,y,z)$ is the 3-collision which has a one half chance of being the $(x,y,z)$ 3-cycle.
\end{definition}
We will be interested in random permutations which can be written as products containing 3-collisions.
\begin{definition}
	We say a random permutation $\pi$ in $S_n$ is 3-Monte if it has the form
	\begin{equation*}
		\pi = \nu c(x_k,y_k,z_k) \dots c(x_1,y_1,z_1)
	\end{equation*}
	where $\nu$ is a random permutation and $x_1,x_2,x_3,\dots,x_k,y_k,z_k$ and $k$ itself may be dependent on $\nu$, but conditional on $\nu$ the outcomes of $c(x_1,y_1,z_1),\dots,c(x_k,y_k,z_k)$ must be independent.
\end{definition}
To be clear, any random permutation is technically 3-Monte, as $k$ could be trivially set to $0$ conditioned on any $\nu$. Additionally, the same random permutation could be defined with different choices for the 3-collision. We will refer to random permutations as 3-Monte only after explicitly choosing 3-collisions and defining the permutation as a product involving those collisions. The following theorem regarding 3-Monte shuffles will allow us to bound entropy decay of the overlapping cycles shuffle.

\begin{theorem} 
	\label{3monte} \cite{monte} (Chapter 4, page 16)
	Let $\pi$ be a 3-Monte shuffle on $n$ cards. Fix an integer $t > 0$ and suppose that $T$ is a random variable taking values in $\{1, \cdots, t\}$, which is independent of the shuffles $\{\pi_i : i \geq 0\}$. Consider a card $x$. If $x$ is involved in a $3$-collision after time $T$ up to and including time $t$, then consider the first $3$-collision it is involved in after time $T$; say that $x, y, z$ collide in that order. If that 3-collision is also the first 3-collision after time $T$ that $y$ is involved in and it is the first 3-collision that $z$ is involved in, then we say that \textbf{that collision matches} $x$ (with $y$ and $z$) and define $y$ to be the \textbf{front match} of $x$, written as $m_1(x) = y$, and $z$ to be the \textbf{back match} of $x$, written $m_2(x) = z$. If $x$ is in no such collision, define $m_1(x) = m_2(x) = x$.

	Suppose that for every card $i$ there is a constant $A_i \in [0, 1]$ such that $\mathbb P(m_2(i)=j, m_1(i) < i) \geq \frac{A_i}{i}$ for each $j \in \{1, \dots, i-1\}$. (This also means that with probability at least $A_i$, $m_1(i)< i$ and $m_2(i)< i$. Note that it cannot be the case that exactly two of $i$, $m_1(i)$, and $m_2(i)$ are equal; the three are either all the same or all different.) Let $\mu$ be an arbitrary random permutation that is independent of $\{\pi_i: i \geq 0\}$. Then
	
	\begin{align}
		\mathbb{E}\big[\text{\rm ENT}(\pi_t\mu|\text{\rm sgn}(\pi_t\mu))\big] - \mathbb{E}\big[\text{\rm ENT}((\mu|\text{\rm sgn}(\mu)))\big] \leq \frac{-C}{\log (n)} \sum_{x=3}^{n}A_x E_x,
	\end{align}
	where $E_k = \mathbb{E}\big[\text{\rm ENT}(\mu^{-1}(k) \ | \ \mu^{-1}(k+1),\mu^{-1}(k+2),\dots,\mu^{-1}(n), \text{\rm sgn}(\mu))\big]$ and $C$ is a positive universal constant. 
\end{theorem}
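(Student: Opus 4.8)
The plan is to adapt the entropy-decay-through-collisions method of Morris \cite{morris2008improved} from transpositions to $3$-collisions, carrying the sign conditioning throughout, and to organize it in three stages: a telescoping reduction to a single collision, a quantitative single-collision entropy estimate, and an aggregation step that uses the matching hypothesis together with the random time $T$.

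For the reduction, condition on $\nu$; by hypothesis the collision parameters $x_i,y_i,z_i$ and $k$ are then determined, the coins of $c(x_1,y_1,z_1),\dots,c(x_k,y_k,z_k)$ are independent fair coins, and $\mu$ remains independent of everything. Set $\mu_0=\mu$ and $\mu_i=c(x_i,y_i,z_i)\mu_{i-1}$, so that $\pi_t\mu=\nu\mu_k$. Left multiplication by the now-fixed permutation $\nu$ is a relabeling of positions that carries the uniform measure on each sign-coset to itself, so it preserves sign-conditioned relative entropy; hence $\mathbb{E}[{\rm ENT}(\pi_t\mu\mid{\rm sgn})]=\mathbb{E}[{\rm ENT}(\mu_k\mid{\rm sgn})]$ and, telescoping (conditionally on $\nu$),
\begin{equation*}
\mathbb{E}[{\rm ENT}(\pi_t\mu\mid{\rm sgn})]-\mathbb{E}[{\rm ENT}(\mu\mid{\rm sgn})]=\sum_{i=1}^{k}\Big(\mathbb{E}[{\rm ENT}(\mu_i\mid{\rm sgn})]-\mathbb{E}[{\rm ENT}(\mu_{i-1}\mid{\rm sgn})]\Big),
\end{equation*}
so it suffices to control each increment.

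For the single-collision estimate, reveal the cards in positions $n,n-1,\dots,3$ one at a time; the chain rule for relative entropy gives ${\rm ENT}(\sigma\mid{\rm sgn})=\sum_{k=3}^{n}\mathbb{E}[{\rm ENT}(\sigma^{-1}(k)\mid\sigma^{-1}(k+1),\dots,\sigma^{-1}(n),{\rm sgn})]$, since once positions $3,\dots,n$ and the sign are known the cards in positions $1,2$ are determined. A $3$-collision permutes only the three positions it names (cyclically, or not at all, per a fair coin), and since a $3$-cycle is even it does not disturb the sign conditioning; so a collision at positions $\{x,y,z\}$ with $x=\max$ leaves every chain-rule term of index $>x$ unchanged, and we may lower-bound the decrease of ${\rm ENT}(\cdot\mid{\rm sgn})$ by the decrease of the single term of index $x$. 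Conditioned on the cards in positions $>x$, that collision replaces the conditional law $q$ of the card in position $x$ by the mixture $\tfrac12 q+\tfrac12 q'$, where $q'$ is the conditional law of the card in position $y$ or $z$. The combinatorial heart of \cite{monte}, generalizing \cite{morris2008improved}, is a quantitative lemma showing that, in expectation over the revealed cards and over the coin, this averaging lowers $\sum_r q_r\log(x q_r)$ by at least a constant times $\tfrac{1}{\log n}$ times that quantity, provided the collision \emph{matches} $x$ (so that $q'$ is genuinely the law of a card drawn from positions below $x$ in a sufficiently spread-out way). The factor $\log n$ is forced: $\sum_r q_r\log(x q_r)$ can be of order $\log x$ while one averaging step removes only a bounded-variance amount, and the reverse-Pinsker comparison between the two costs a $\log n$.

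Finally, for the aggregation, the random time $T$ makes the first collision involving card $x$ after $T$ a single, well-defined collision, and the definition of $m_1,m_2$ guarantees it is also the first collision seen by its two partners, so that the distinguished collisions of distinct cards do not interfere and no card is double-counted across increments; applying the duality $\mathbb{E}[{\rm ENT}(\pi)]=\mathbb{E}[{\rm ENT}(\pi^{-1})]$ converts the card-based matching hypothesis into the position-based input used above. The hypothesis $\mathbb{P}(m_2(i)=j,\ m_1(i)<i)\geq A_i/i$ for each $j<i$ then says that with probability at least $A_i$ the card destined for position $i$ has its conditional law freshly averaged against that of a card drawn essentially uniformly from positions $1,\dots,i-1$, which, fed into the single-collision lemma, yields a decrease of order $\tfrac{A_i}{\log n}E_i$ at index $i$ (the permutation $\nu$ and the other collisions are independent of this coin, so the entropy being reduced is that of $\mu$). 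Summing the increments over $i=3,\dots,n$ and collecting constants gives $\mathbb{E}[{\rm ENT}(\pi_t\mu\mid{\rm sgn})]-\mathbb{E}[{\rm ENT}(\mu\mid{\rm sgn})]\leq-\tfrac{C}{\log n}\sum_{x=3}^{n}A_xE_x$. I expect the main obstacle to be the second stage: establishing the single-collision entropy inequality with an explicit constant while threading the sign conditioning and the card/position duality through without loss, since that is where the $3$-collision structure (as opposed to a $2$-collision, which would flip the sign) and the near-uniformity of the match both have to be used in concert.
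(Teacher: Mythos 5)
This theorem is imported, not proved: the paper states it with the citation \cite{monte} (Senda, Chapter 4, page 16) and relies on it as a black box, so there is no in-paper proof for your proposal to be compared against. Evaluating your proposal on its own terms, you have sketched the right architecture for such a result — condition on $\nu$, telescope the sign-conditioned entropy change across the $3$-collisions, bound each increment by the change in the single chain-rule term at the largest position involved, and aggregate using the matching hypothesis together with the random time $T$ — but the quantitative core, the single-collision inequality that produces the $\tfrac{1}{\log n}$ factor when a matching collision averages the conditional law of position $x$ against that of a lower position, is not established; you explicitly defer it to \cite{monte} and \cite{morris2008improved} and acknowledge it as the main obstacle. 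As written, what you have is a correct roadmap, not a proof.

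Two places in the roadmap also need more than a gesture. First, your telescoping $\mu_i = c(x_i,y_i,z_i)\mu_{i-1}$ is phrased for a single $3$-Monte step $\pi = \nu\, c(x_k,y_k,z_k)\cdots c(x_1,y_1,z_1)$, but the theorem concerns $\pi_t$ over $t$ steps with an auxiliary random time $T$; the function of $T$ is precisely to make the distinguished (first-after-$T$) matching collision of each card well-defined and to control its interaction with earlier collisions, and this must be built into the decomposition rather than invoked at the end as a non-interference assertion. Second, the hypothesis $\mathbb{P}(m_2(i)=j,\,m_1(i)<i)\geq A_i/i$ is stated in terms of cards, while the chain-rule quantity $E_k$ reveals positions $\mu^{-1}(k+1),\dots,\mu^{-1}(n)$; the duality $\mathbb{E}[\mathrm{ENT}(\pi)]=\mathbb{E}[\mathrm{ENT}(\pi^{-1})]$ is the right idea for converting between the two, but it has to be reconciled explicitly with the conditioning in the chain rule and with the sign conditioning, not appended after the single-collision lemma has already been applied in position coordinates.
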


The exact use of this theorem will be made apparent in Section \ref{entropysection}. For now just know that our immanent goal is to bound the values $A_x$ from below. \\
\\
We can write two steps of the overlapping cycles shuffle in 3-Monte form as follows: Let $\pi$ be the random permutation corresponding to two steps of the overlapping cycles shuffle. Then $\pi$ has the following distribution:
\begin{align}
	\mathbb{P}(\pi = (1,\dots,m) (1,\dots,m)) &= \frac{1}{4} \\
	\mathbb{P}(\pi = (1,\dots,n) (1,\dots,m)) &= \frac{1}{4} \\
	\mathbb{P}(\pi = (1,\dots,m) (1,\dots,n)) &= \frac{1}{4} \\
	\mathbb{P}(\pi = (1,\dots,n) (1,\dots,n)) &= \frac{1}{4}
\end{align}

Note that $(1,\dots,n) (1,\dots,m) = (1,\dots,m) (1,\dots,n) (m-1,m,n)$. Thus, we can rewrite the distribution of $\pi$ in the following way:
\begin{align}
	\mathbb{P}(\pi = (1,\dots,m)^2) &= \frac{1}{4} \\
	\mathbb{P}(\pi = (1,\dots,n)^2) &= \frac{1}{4} \\
	\mathbb{P}(\pi =  (1,\dots,m) (1,\dots,n) c(m-1,m,n)) &= \frac{1}{2}
\end{align}

If we are being precise, this is not technically a definition of collisions for the overlapping cycles shuffle. We have defined collisions for the shuffle which is two steps at a time of the overlapping cycles shuffle. It will be inconvenient to from now on imagine that we do two steps at a time, so instead we will continue to consider the standard one-step-at-a-time overlapping cycles shuffle and say that the cards in positions $m-1,m,n$ are in collision at time $t$ if
\begin{itemize}
	\item $t$ is even
	\item the coins flipped in steps $t,t+1$ land Heads, Tails or Tails, Heads.
\end{itemize}

To apply Theorem $\ref{3monte}$ to the overlapping cycles shuffle, we will need to examine the probabilities that cards $i,k,j$ end up in positions of $m-1,m,n$ respectively after an even number of steps. As a warm up we will first deal with a few special cases that elude the parameters of the general theorem in Section \ref{entropysection}.

\begin{lemma}\label{l1collide}
	Consider the overlapping cycles shuffle on $n$ cards where $\frac{m}{n} \in (\epsilon,1-\epsilon)$. Let $i,j,k$ be cards in $(n-\sqrt{n},n]$ such that $k=i+1$ and $j > i$. Let $T = 2n$ and $t = 2n+4\sqrt{n}$. Let $E$ be the event that the first time $i$ or $j$ or $k$ experiences a collision after time $T$, the collision is before time $t$ and with each other in the order $(i,k,j)$. Then if $n$ is sufficiently large we have
	\begin{equation}
		\probp{E} \geq \frac{D}{\sqrt{n}}
	\end{equation}
	for a constant $D$ which depends on $\epsilon$.
\end{lemma}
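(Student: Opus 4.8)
The plan is to realize $E$ as an intersection of a bounded number of forced coin flips (each of probability $\tfrac12$), two "a descent eventually completes" events (each of probability $1$), a couple of "a centered fluctuation lands in its natural $\Theta(\sqrt n)$-scale window" events (each of probability $\Omega(1)$), and one local central limit event of probability $\Theta(1/\sqrt n)$; all of the forcing happens at times before $T=2n$, and multiplying the pieces gives the bound.

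Because $k=i+1$ and, for $n$ large, both cards start in the bottom part (as $\tfrac{m}{n}<1-\epsilon$ forces $m<n-\sqrt n$), cards $i$ and $k$ descend in lock-step until $k$ first reaches position $n$; forcing the next coin to be Tails makes $i$ wrap to position $1$ on the same tick that $k$ moves to position $2$, and thereafter, while both are in the top part, the one-position gap is preserved, so "$k$ at $m$" becomes synonymous with "$i$ at $m-1$". I then force $i$ (hence $k$) to perform exactly one long return: at $i$'s unique pre-time-$2n$ visit to position $m$ I force Tails, so $i$ enters the bottom part, descends to $n$, wraps, and returns to $m-1$. By Proposition \ref{cardmovement} the time $s^*$ of this return equals $2n+2(n-i)+O(1)+\Delta$, with $\Delta$ the centered fluctuation of the $\Theta(n)$-step bottom descent; with probability $\Omega(1)$ the fluctuation $\Delta$ both lands in the window placing $s^*\in(2n,2n+4\sqrt n]$ and has the parity making $s^*$ even. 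On this event the entire earlier portion of $i$'s (hence $k$'s) trajectory --- first visit to $m-1$, step to $m$, descent back down to $n$ --- finishes before time $2n$ (since $2(n-i)<2\sqrt n$ while $m=\Theta(n)$), so throughout $(2n,s^*)$ both $i$ and $k$ stay strictly inside the top part and neither is in a collision; the only collision position they occupy on $(2n,s^*]$ is $k$ at $m$ (equivalently $i$ at $m-1$), at the single time $s^*$.

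Next I impose the analogous template on card $j$: since $j>i$ it also starts in the bottom part, and I force it to make one short return then one long return by forcing Heads at its first visit to $m$ and Tails at its second (one checks the finitely many forced coins across the three cards are never placed on the same step with contradictory demands, using that two cards never share a position and $|j-i|<\sqrt n\ll m$). By Corollary \ref{cardmovementcompare} we get $p(j_{s^*})-p(i_{s^*})\equiv 2(j-i)+W-m\pmod{2n-m+1}$, where $W=(T_S(j)-H_S(j))-(T_S(i)-H_S(i))$, so $j$ occupies position $n$ at time $s^*$ exactly when $W=-2(j-i)$. Conditionally on everything fixed so far, $W$ is a sum of $\Theta(n)$ independent fair $\pm1$ increments whose center lies within $O(\sqrt n)$ of $-2(j-i)$, so a local central limit theorem gives $\mathbb P\big(W=-2(j-i)\big)\geq c/\sqrt n$; on this event $j$ reaches $n$ for the first time since $2n$ precisely at $s^*$ and stays strictly inside the bottom part on $(2n,s^*)$, so no card among $i,k,j$ collides on $(2n,s^*)$. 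Since $j$'s arrival at position $n$ at time $s^*$ forces $c_{s^*}=\text{Tails}$, forcing the single further coin $c_{s^*+1}=\text{Heads}$ --- an independent event of probability $\tfrac12$, as $s^*$ is a stopping time for the coin sequence --- makes steps $s^*,s^*+1$ read Tails, Heads, a genuine $3$-collision on positions $m-1,m,n$. As $i,k,j$ occupy those positions in that order and none has collided since time $2n$, this is the first collision after $T$ for each, matching $i$ with front match $k$ and back match $j$; i.e., $E$ holds. Multiplying the finitely many independent pieces gives $\mathbb P(E)\geq\Omega(1)\cdot\Omega(1)\cdot(c/\sqrt n)\cdot\tfrac12\geq D/\sqrt n$ with $D$ depending only on $\epsilon$.

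The step I expect to be the main obstacle is the bookkeeping in the two template constructions: verifying that the pair $(i,k)$ can be kept rigid through the top/bottom-part boundary crossings and the position-$m$ decision points with only boundedly many forced coins, pinning the parity of $s^*$, and ruling out every intermediate collision in $(2n,s^*)$ for all three cards simultaneously --- the same flavor of casework as in the proof of Proposition \ref{cardmovement}, now run for three coupled cards. A secondary technical point is making the local central limit estimate for $W$ rigorous, since $W$ is a sum over a random (though tightly concentrated) number of terms that is mildly conditioned by the template events; this follows from standard local-CLT bounds for simple (lazy) random walks.
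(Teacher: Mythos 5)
Your proposal follows essentially the same plan as the paper's proof: force a bounded number of coin flips to glue $i$ and $k$ together and loop them back to positions $m-1$ and $m$ at a time $\tau\approx 2n+O(\sqrt{n})$; force $j$ through one short return and one long return so that $H_B(j)-H_B(i)=1$ and hence $p(j_\tau)-p(i_\tau)\equiv 2(j-i)+W-m$; apply a local CLT to the Heads--Tails differential $W$ to get $\mathbb{P}(j_\tau=n)\geq c/\sqrt{n}$; and handle the parity of $\tau$, the two collision flips, and the absence of earlier collisions for the remaining $\Omega(1)$ factor. A few of the bookkeeping claims are slightly off, though your caveat at the end anticipates this: keeping $i,k$ adjacent through the wrap at $n$ needs two consecutive forced Tails (one to wrap $k$, a second to wrap $i$), $k$'s own first visit to $m$ also needs a forced Tails (the paper's event $Q$ forces both), and $j$'s arrival at $n$ at time $s^*$ constrains a past coin rather than $c_{s^*}$ — the two coins governing the collision are both still free, so the conditional collision probability is $\tfrac{1}{4}$ (the pair is HT or TH) rather than $\tfrac{1}{2}$. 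None of these alter the strategy or the final $D/\sqrt{n}$ bound.
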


The idea for the proof is that we will show that with probability bounded away from 0 that $i$ and $k$ stay ``glued together''. Note that since $i$ and $k$ begin adjacent to each other in the part bottom of the deck, they will stay adjacent at least until one of them leaves the bottom part of the deck. Just before this, if $i_r = n-1$ and $k_r = n$, if the next two flips are Tails, then we will have $i_{r+2} = 1$ and $k_{r+2} = 2$. So $i$ stays one position above $k$. If $i$ always copies the coin that $k$ uses when in positions $m$ and $n$ then $i$ will always stay in the position above $k$. It then suffices to show that when $j$ reaches position $n$ that $k$ reaches position $m$, because if $i$ stays ``glued'' then we will also have $i$ in position $m-1$. These are the correct positions for $i,k,j$ to collide in that order. Based on Corollary \ref{cardmovementcompare} we know that there are on the order of $\sqrt{n}$ positions nearby to $n$ with respect to $\norm{\cdot}$ which $i$ is likely to be in when $j$ reaches position $n$. Since $\norm{p(m-1)-p(n)} = \sqrt{n}$ we see that $m-1$ is one of these positions.

\begin{proof}
	Let $Q$ be the event that the following hold:
    \begin{itemize}
        \item The first time $i$ is in position $m$, a Tails is flipped, sending $i$ to position $m+1$. The first two times $i$ is in position $n$, a Tails is flipped sending $i$ to position $1$.
        \item The first time $k$ is in position $m$, a Tails is flipped, sending $k$ to position $m+1$. The first two times $k$ is in position $n$, a Tails is flipped sending $k$ to position $1$.
        \item The first time $j$ is in position $m$, a Heads is flipped, sending $j$ to position $1$. The second time $j$ is in position $m$, a Tails is flipped sending $j$ to position $m+1$.
    \end{itemize}
    Note that $\probp{Q} \approx (\frac{1}{2})^8$ because it deals with $8$ coin flips. We don't have equality because of the rare chance that some of these coin flips occur in the same step, but this effect on the probability is negligible. \\
    \\
    On event $Q$, we know that $i$ and $k$ will be glued together, in that $i$ will stay "right above" $k$ (more precisely in $k$'s previous position). This is because when $i$ and $k$ are both in the top part of the deck, they deterministically each move down one, and if $i$ and $k$ are both in the bottom part of the deck, they will either both stay put with a Heads flip or both move down one with a Tails flip. Finally, if $k$ reaches position $n$ or position $m$, the event $Q$ dictates that $i$ will copy the coin $k$ uses and "follow" $k$ immediately to either position $1$ or position $m+1$. \\
    \\
    Note that on the event $Q$, both $i$ and $k$ are sent to position $m+1$ after their first visit to $m$, but $j$ is cycled back to position $1$. If $m \leq \frac{n}{2}$ then $i,k$ will spend at least $m - \sqrt{n}$ steps in the bottom part of the deck before $j$ loops back to position $m$ and enters the bottom part of the deck as well. That is $m$ steps for $j$ to go from position $1$ to position $m$, minus $\sqrt{n}$, accounting for the fact that $j$ starts up to $\sqrt{n}$ steps blow (and therefor "ahead") if $i,j$. Note that $i,k$ will spend at least $n-m$ steps in the bottom part of the deck before cycling back to position $1$. Since $\epsilon n < m < (1-\epsilon) n$, we know that $i,k$ will spend at least $\min\{m - \sqrt{n}, n - m\} \geq \epsilon n - \sqrt{n}$ steps in the bottom part of the deck apart from $j$. For large enough $n$ this is at least $\frac{1}{2} \epsilon n$ steps. \\
    \\
    Let $\tau$ be the number of steps it takes for card $k$ to reach position $m$ for the second time. On the event $Q$, since $i$ stays one position above $k$, we will have $i_\tau = m-1$. It now remains to show that with appropriate probability we have $j_\tau = n$. This will put $i,j,k$ in the right position to collide in the next two steps. Referring to Corollary \ref{cardmovementcompare}, we know that $j$ will be approximately $m$ positions above $i$ because $i$ flipped one Heads when in position $m$ and $j$ flipped none. This is good, because when we account for the $\mod 2n-m+1$ wrapping around, we see that position $n$ is $m$ positions "above" position $m-1$. To have $j_\tau = n$ exactly, we will need $(T_S(j)-H_S(j)) - (T_S(i)-H_S(i))$ to equal $p(i_0)-p(j_0)$ exactly. Since $i$ and $j$ start out less than $\sqrt{n}$ positions from each other, we know that $|p(i_0)-p(j_0)| < 2\sqrt{n}$. Recall that $i$ spends at least $\frac{1}{2}\epsilon n$ steps in the bottom of the deck without $j$, and $j$ will spend some more time (on the order of $\epsilon n$) steps in the bottom of the deck without $i$. Since the Heads-Tails differentials are simple symmetric random walks, this puts the standard deviation of the drift between the two Heads-Tails differentials on the order of $\sqrt{\epsilon n}$. So $|p(i_0)-p(j_0)|$ is within $\frac{1}{\sqrt{\epsilon}}$ standards deviations. This means
    \begin{equation}
        \probp{j_\tau = n \ | \ Q} \geq \frac{D_1}{\sqrt{n}}
    \end{equation}
    where $D_1$ is a constant that depends on epsilon. Since $\probp{Q}$ is a constant we have
    \begin{align}
        \probp{Q, i_\tau = m-1, k_\tau = m, j_\tau = n} &= \probp{Q, j_\tau = n} \\
        &\geq \frac{D_2}{\sqrt{n}}.
    \end{align}
    For $E$ to occur, we also need to have that $\tau \in (2n, 2n + 4\sqrt{n})$, and that $i,j,k$ will not collide with any other cards between time $2n$ and $\tau$. \\
    \\
    To analyze the value of $\tau$, note that $k$ will need to travel $\sqrt{n}$ positions from its starting position to the bottom of the deck. This will take approximately $2\sqrt{n}$ steps. Then $k$ must travel all the way through the deck, which takes approximately $m + 2(n-m) = 2n-m$ steps. Then $k$ must travel $m$ more steps to position $m$, which takes exactly $m$ steps. In total, this is about $2n + 2\sqrt{n}$ steps, and the additional $2\sqrt{n}$ wiggle room is more than enough to account for a few standard deviations of Heads-Tails differential moving $k$ extra slow/fast through the bottom part of the deck. \\
    \\
    Once we have $Q$ and $j_\tau = n$ and $\tau \in (2n, 2n+4\sqrt{n})$, we will have $E$ as long as $i,j,k$ collide on step $\tau$, and haven't collided between step $2n$ and $\tau$. Cards $i,j,k$ collide in step $\tau$ if $\tau$ is even and the coins for steps $\tau$ and $\tau+1$ land Heads, Tails or Tails, Heads. The probability that $\tau$ is even is about $\frac{1}{2}$, as $i,j,k$ start out in the bottom part of the deck, there is a $\frac{1}{2}$ chance that the first flip is Tails which doesn't move any of $i,j,k$ and "wastes" a move, changing the parity of $\tau$. The only reason the probability isn't exactly $\frac{1}{2}$ is this wasted move may effect the probability that $\tau \in (2n, 2n+4\sqrt{n})$, but this is negligible. If we know $\tau$ is even, there is an independent $\frac{1}{2}$ chance the next two flips land Heads, Tails or Tails, Heads, so the probability of a collision at time $\tau$ is about $\frac{1}{4}$. Finally, note that $i,k$ are in the top part of the deck between steps $\tau-4\sqrt{n}$ and $\tau$, and $j$ is in the bottom part of the deck. In particular, they are not in positions $m-1,m$ or $n$ during an even time, so they can't collide. The only exception is if $j$ reaches position $n$ before time $\tau$, but in this case all flips until time $\tau$ must be tails for $j$ to stay in position $n$, which rules out the Heads, Tails or Tails, Heads flips needed for $n$ to collide. \\
    \\
    The probability of $E$ is constant conditioned on $Q$ and $j_\tau = n$, so
    \begin{align}
        \probp{E} &\geq \probp{E \ | \ Q, j_\tau = n} \cdot \probp{Q, j_\tau = n} \\
        &= D_3 \cdot \frac{D_2}{\sqrt{n}} \\
        &= \frac{D}{\sqrt{n}}
    \end{align}
    where $D$ is a constant that depends on $\epsilon$.
\end{proof}

\begin{corollary}\label{l1collidecor}
	Consider the overlapping cycles shuffle on $n$ cards where $\frac{m}{n} \in (\epsilon, 1 - \epsilon)$. Let $i,j,k$ be cards in $(n-\sqrt{n},n]$ such that $k=i+2$ and $j=i+1$. Let $T = 2n$ and $t = 2n+4\sqrt{n}$. Let $E$ be the event that the first time $i$ or $j$ or $k$ experiences a collision after time $T$, the collision is before time $t$ and with each other in the order $(i,k,j)$. Then
	\begin{equation}
		\probp{E} \geq \frac{D}{\sqrt{n}}
	\end{equation}
    for a constant $D$ which depends on $\epsilon$.
\end{corollary}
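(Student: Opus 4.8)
The plan is to reprove Lemma \ref{l1collide} with the cards playing slightly different roles; the only genuinely new feature is that $i$ and $k$ now start two positions apart, with $j$ directly between them, so before gluing $i$ and $k$ we must first close the gap, which we do by ejecting $j$ to the top of the deck. First I would define an event $Q$, analogous to the event $Q$ in the proof of Lemma \ref{l1collide}, that constrains only $O(1)$ coin flips, so that $\probp{Q}$ is a positive constant (up to the usual negligible correction for two constrained flips falling in the same step). While $i,j,k$ are all in the bottom part of the deck they descend together as a block occupying positions $(p,p+1,p+2)$ no matter what the coins are, until the lowest of them, $k$, reaches position $n$; $Q$ then asks that the next three flips --- made while $k$, then $j$, then $i$ is in position $n$ --- all be Tails, so that the block reforms at the top of the deck in positions $(1,2,3)$ and slides deterministically down to positions $(m-2,m-1,m)$. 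Next $Q$ asks that the first flip made while $k$ is in position $m$ be Tails, so the block occupies $(m-1,m,m+1)$, and then --- the crucial new ingredient --- that the flip made while $j$ is in position $m$ be \textbf{Heads}: this flip sends $j$ to position $1$ and, since every card above position $m$ shifts down one, simultaneously slides $i$ from position $m-1$ into position $m$, so that immediately afterward $i$ is in position $m$ and $k$ in position $m+1$ --- i.e., $i$ is now exactly one position above $k$. The remaining clauses of $Q$ keep $i$ one position above $k$ for the rest of the run, by the same mechanism as in Lemma \ref{l1collide} (the next flip, made while $i$ is in position $m$, is Tails; whenever $i$ or $k$ later reaches position $n$ it flips Tails), and ask that the flip $j$ makes on its next visit to position $m$ be Tails, sending $j$ off into the bottom part toward position $n$.

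Next, let $\tau$ be the first time after the ejection that $k$ returns to position $m$; on $Q$ the gluing gives $i_\tau=m-1$ and $k_\tau=m$, two of the three conditions ($i_\tau=m-1$, $k_\tau=m$, $j_\tau=n$) that put $i,k,j$ into a collision in the order $(i,k,j)$. Estimating $\tau$ exactly as in Lemma \ref{l1collide} ($k$ travels $O(\sqrt n)$ positions to the bottom, once around the deck in about $2n-m$ steps, then $m$ further steps) gives $\tau$ about $2n+2\sqrt n$, so $\tau\in(T,t)$ with probability bounded below by a constant. It then remains to show $\probp{j_\tau=n\mid Q}\ge D_1/\sqrt n$. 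Applying Corollary \ref{cardmovementcompare} to the pair $j,k$, and using that on $Q$ the starting offset $p(j_0)-p(k_0)=-2$ and the big-coin counts $H_B(\cdot),T_B(\cdot)$ for $j$ and $k$ take fixed values, with $H_B(j)-H_B(k)=1$ --- which furnishes the multiple of $m$ needed so that $p(j_\tau)-p(k_\tau)$ can equal $p(n)-p(m)$ modulo $2n-m+1$ --- one finds that $j_\tau=n$ is equivalent to $W:=(T_S(j)-H_S(j))-(T_S(k)-H_S(k))$ taking one particular value of size $O(1)$. Since $i$ and $k$ are glued they lie in the bottom part at the same times apart from $O(1)$ transitional steps, so $W$ changes only on the $\Theta(\epsilon n)$ steps where exactly one of $j,k$ is in the bottom part (while $j$ alone runs through positions $m+1,\dots,n$, and while $k$ descends the bottom part during the roughly $m$ steps $j$ spends climbing positions $1,\dots,m$), and on those steps $W$ makes independent fair $\pm1$ moves; so $W$ is a simple symmetric random walk run for $\Theta(\epsilon n)$ steps that must land within $O(1)$ of its mean $0$, which by the local central limit theorem happens with probability at least $D_1/\sqrt n$ for a constant $D_1$ depending on $\epsilon$. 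I would then combine these bounds exactly as in Lemma \ref{l1collide}: on the event that $\tau$ is even (probability close to $\tfrac12$, by the parity argument there) and that the flips at steps $\tau,\tau+1$ are Heads, Tails or Tails, Heads (probability about $\tfrac12$), the cards $i,j,k$ collide at time $\tau$, and between time $T=2n$ and $\tau$ they collide with no card, since in that range $i$ and $k$ are in the top part and touch a collision position only when $k$ is in position $m-1$ at the odd time $\tau-1$, while $j$ is in the bottom part and in position $n$ only at time $\tau$ (the sole exception being if $j$ reached position $n$ earlier, in which case every flip since has been Heads, and a run of Heads contains no Heads, Tails or Tails, Heads pair). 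Multiplying $\probp{Q}$, $\probp{j_\tau=n\mid Q}$, and the constant probability of the remaining events yields $\probp{E}\ge D/\sqrt n$.

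The step I expect to be the main obstacle is the deterministic bookkeeping around the new ejection flip: one must verify carefully that the single Heads flip really does leave $i$ exactly one position above $k$, that the Lemma \ref{l1collide} gluing then preserves this through the wrap-around until time $\tau$, and that the visit counts $H_B(j),H_B(k),T_B(j),T_B(k)$ come out so that the multiples of $m$ produced by Corollary \ref{cardmovementcompare} collapse to the $O(1)$ shift claimed above. Everything else is a transcription of the corresponding step in the proof of Lemma \ref{l1collide}.
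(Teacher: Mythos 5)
Your proof is correct and takes essentially the same approach as the paper's: the same constraint set $Q$ (three Tails at position $n$ to keep the block intact, then Tails/Heads/Tails at position $m$ in the temporal order $k$, $j$, $i$ so that the Heads ejects $j$ and closes the one-step gap, then the further Tails to keep $i$ glued to $k$ through the wrap-around), the same stopping time $\tau$, and the same local-CLT estimate on the differential of small coins. The paper's own proof of this corollary is a one-paragraph reduction to Lemma~\ref{l1collide} and your write-up merely fills in the bookkeeping (including a careful verification that $H_B(j)-H_B(k)=1$ supplies exactly the multiple of $m$ needed) that the paper leaves implicit.
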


\begin{proof}
	The proof is nearly identical to the one in Lemma \ref{l1collide}. In that proof we required $i,j,k$ to flip Tails, Heads, Tails respectively for each of their first visits to $m$. We require the same in this new case. Since $i,j,k$ start out adjacent to each other in that order, with probability $\frac{1}{8}$ they will all flip Tails when in position $n$ and so stay adjacent as they move to the top of the deck. Then, after $m-2$ more steps we will have $i,j,k$ in positions $m-2,m-1,m$ respectively. Then after the flips Tails, Heads, Tails we have $i,j,k$ in positions $m+1,2,m+2$ respectively. Now we have $i$ one position above $k$ as in Lemma \ref{l1collide}. The rest of the proof is the same and the equivalent result holds.
\end{proof}

These two results tell us that if $i,j \in (n-\sqrt{n},n]$ and $j>i$ then it is reasonably likely that $m_2(i)=j$ and $m_1(i)>i$ with regards to the notation in Theorem \ref{3monte}. We make this precise in the proposition below. Note that these inequalities are the opposite of those required in Theorem \ref{3monte}, but this is okay. Our choice of labeling card $1$ as the card in the top of the deck, and card $2$ as second to top, etc was arbitrary. We can utilize Theorem \ref{3monte} where the inequalities are with respect to any well-ordering of the deck, and we will in fact use a well-ordering later which starts its count from the bottom of the deck upwards. As further justification that this is a valid application of Theorem \ref{3monte}, note that entropy decay will be the same if we first use some deterministic change of basis permutation to reorder the cards and then apply the overlapping cycles shuffle.

\begin{proposition}\label{l1collidecomplete}
    Consider the overlapping cycles shuffle where $\frac{m}{n} \in (\epsilon, 1-\epsilon)$. Fix cards $i,j \in (n-\sqrt{n},n]$ such that $i \geq n-2$ and $i < j$. Let $T = 2n$ and $t=2n+4\sqrt{n}$. Let $m_1(i)$ be front match of $i$ the and let $m_2(i)$ be the back match, as defined by Theorem \ref{3monte}. Then,
	\begin{equation*}
		\probp{m_2(i)=j,m_1(i)>i} \geq \frac{D}{\sqrt{n}}
	\end{equation*}
	for a constant $D$ which depends on $\epsilon$.
\end{proposition}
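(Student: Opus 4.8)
The plan is to obtain this proposition as a direct consequence of Lemma \ref{l1collide} and Corollary \ref{l1collidecor}: the probabilistic work has already been carried out there, so what remains is a short case analysis together with a careful matching of the three colliding cards to the notation $m_1, m_2$ of Theorem \ref{3monte}.

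First I would unpack the hypotheses. Since $i \ge n-2$, $i < j$, and $i, j \in (n-\sqrt n, n]$, the only possibilities for large $n$ are $(i,j) = (n-2, n-1)$, $(i,j) = (n-2, n)$, and $(i,j) = (n-1, n)$. For the first two I want a third card $k$ such that a collision in which cards $i, k, j$ occupy positions $m-1, m, n$ at an even time (with the following two coins landing Heads, Tails or Tails, Heads) forces $m_1(i) = k$ and $m_2(i) = j$. This is the bridge between the ``collision in the order $(i,k,j)$'' language of Lemma \ref{l1collide} and Corollary \ref{l1collidecor} and the match notation of Theorem \ref{3monte}: when cards $i, k, j$ sit in positions $m-1, m, n$, the three collide in the order $(i, k, j)$ as seen from $i$, so by the definition of front and back match $m_1(i) = k$ and $m_2(i) = j$ --- provided this collision is also the first collision after time $T$ for $k$ and for $j$. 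But the event $E$ of Lemma \ref{l1collide} and Corollary \ref{l1collidecor} asserts exactly that this collision is the first collision after $T = 2n$ (and occurs before $t = 2n + 4\sqrt n$) for all three of $i$, $k$, $j$ simultaneously. Hence on $E$ we have $m_1(i) = k$ and $m_2(i) = j$, and since $k > i$ in each case we also get $m_1(i) > i$; thus $E \subseteq \{m_2(i) = j,\ m_1(i) > i\}$ and $\probp{m_2(i)=j,\ m_1(i)>i} \ge \probp{E}$.

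Then I would run the two realizable cases. If $(i,j) = (n-2, n)$, take $k = i+1 = n-1$: Lemma \ref{l1collide}, applied with its cards $i, k, j$ equal to $n-2, n-1, n$, gives $\probp{E} \ge D/\sqrt n$ for the event $E$ that the first collision among these three after $T$ occurs before $t$ in the order $(n-2, n-1, n)$, and on $E$ we have $m_1(i) = n-1 > n-2 = i$ and $m_2(i) = n = j$. If $(i,j) = (n-2, n-1)$, take $k = i+2 = n$: Corollary \ref{l1collidecor}, applied with its cards $i, k, j$ equal to $n-2, n, n-1$ (so that indeed $k = i+2$ and $j = i+1$), gives $\probp{E} \ge D/\sqrt n$ for the event that the first such collision occurs before $t$ in the order $(n-2, n, n-1)$, and on $E$ we have $m_1(i) = n > n-2 = i$ and $m_2(i) = n-1 = j$. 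In both cases $\probp{m_2(i)=j,\ m_1(i)>i} \ge D/\sqrt n$, and taking $D$ to be the smaller of the two $\epsilon$-dependent constants from the Lemma and the Corollary settles these cases. The remaining pair $(i,j) = (n-1, n)$ admits no third card $k$ with $k > i$ and $k \notin \{i, j\}$, since $n = j$ is the only card exceeding $n-1$; this configuration never arises, and it is not needed in the application of Theorem \ref{3monte}, where the bottom-up well-ordering assigns card $n-1$ the index $2$, which is outside the summation range $\{3, \dots, n\}$.

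I do not anticipate a genuine obstacle: the argument is essentially assembly. The one point requiring care is the role-matching just described --- verifying that a collision ``in the order $(i,k,j)$'' in Lemma \ref{l1collide} and Corollary \ref{l1collidecor} produces $m_1(i) = k$ and $m_2(i) = j$ and not some permuted assignment, together with the observation that the event $E$ of those results already incorporates the ``first collision for all three cards'' requirement built into the definition of a match in Theorem \ref{3monte}. The only mild subtlety is the edge pair $i = n-1$, handled by noting that it never occurs.
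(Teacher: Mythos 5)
Your mechanism is the paper's: split on whether $j = i+1$, apply Lemma \ref{l1collide} (with $k = i+1$) when $j \neq i+1$ and Corollary \ref{l1collidecor} (with $k = i+2$) when $j = i+1$, and take $D$ to be the smaller of the two constants. Your verification that the event $E$ of those results forces $m_1(i) = k > i$ and $m_2(i) = j$ --- because $E$ makes the $(i,k,j)$ collision the first collision after $T$ for all three of $i,k,j$, hence a match in the sense of Theorem \ref{3monte} --- is exactly the role-matching that the paper's terse proof leaves implicit, and it is right.

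The one substantive discrepancy is your opening reduction to the three pairs $(n-2,n-1)$, $(n-2,n)$, $(n-1,n)$. This rests on reading the printed condition ``$i \geq n-2$'' literally, but that is evidently a typo for ``$i \leq n-2$'': the paper's proof never enumerates pairs, the Corollary step requires $k = i+2 \leq n$, and the proposition is later invoked in Lemma \ref{penultimatelemma} (the $k^*=0$ case) for every $x \in J_0 = \{n-2, n-3, \dots, n-\floor{\sqrt{n}}\}$ paired with an arbitrary $j > i$ in $(n-\sqrt{n},n]$. Under the corrected hypothesis the degenerate pair $(n-1,n)$ never arises --- so the effort you spend explaining it away (via the $\nu$-ordering of Theorem \ref{3monte}) is unnecessary --- but your three-pair list would also no longer be exhaustive. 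The clean fix is to drop the enumeration entirely and argue directly from the dichotomy $j = i+1$ versus $j \neq i+1$; your per-pair argument carries over verbatim at that level of generality.
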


\begin{proof}
	In the case that $j \neq i+1$ we have by Lemma \ref{l1collide} that
	\begin{equation}
		\probp{m_2(i)=j,m_1(i)>i} \geq \probp{m_2(i)=j,m_1(i)=i+1} \geq \frac{D_1}{\sqrt{n}}.
	\end{equation}
	In the case that $j = i+1$ we have by Corollary \ref{l1collidecor} that
	\begin{equation}
		\probp{m_2(i)=i+1,m_1(i)>i} \geq \probp{m_2(i)=i+1,m_1(i)=i+2} \geq \frac{D_2}{\sqrt{n}}.
	\end{equation}
	Taking $D = \min\{D_1,D_2\}$ completes the proof.
\end{proof}

\section{Movement of 3 Cards}

The goal of this section will be to show that after $t$ steps $i,j$ and $k$ have an approximately uniform distribution over all ``nearby'' positions, where our notion of ``near'' is related to the size of $t$. More specifically, we will show that after about $2\ell^2$ steps the cards $i,j,k$ are distributed approximately uniformly amongst positions $f_i,f_j,f_k$ such that $\norm{p(i)-p(f_i)},\norm{p(j)-p(f_j)},\norm{p(k)-p(f_k)} < \ell$. \\
\\
In order to prove things about the movement of cards $i,j$ and $k$ relative to each other, it will be useful to imagine that instead of using a single sequence of coins to determine if the card in position $m$ or $n$ is moved to the top in each step, we generate many sequence of coins, and choose which sequence to draw from in each step according to the state the deck is in. Here is one way of doing this:\\
\\
We generate coin sequences $B^i,B^j,B^k,S^i,S^j,S^k,S^{ij},S^{jk},S^{ik},S^{ijk}$ and simulate the movement of cards $i,j,k$ under the overlapping cycles shuffle according to the following rules:
\begin{itemize}
	\item If card $i$ is in position $m$, then use the next coin from $B^i$ to do the shuffle. Similarly, if $j$ or $k$ is in position $m$, use $B^j$ or $B^k$.
	\item Otherwise, use $S^A$ where $A \subseteq \{i,j,k\}$ is the set of which $i,j,k$ are in the bottom part of the deck. If all of $i,j,k$ are in the top part of the deck then these three cards will move down deterministically in the next step and no coin is necessary.
\end{itemize}
We will be able to bound the movement of $i,j$ and $k$ by putting restrictions on $S^i,S^j,S^k,S^{ij},S^{jk},S^{ik},$ and $S^{ijk}$. However, it may be the case that some of these sequences are drawn from more than others. For example, if the entire sequence of $B^i$ is made up of Heads, then $i$ will spend all its time in the top part of the deck and coins from $S^i,S^{ij},S^{ik},S^{ijk}$ will not be used at all! We need to make sure something like this does not happen. In particular, it will be important to show that $S^i,S^j,S^k$ are each drawn from a constant proportion of the time. This is equivalent to saying that $i,j,k$ each spend a constant proportion of time alone in the bottom part of the deck. Fortunately, this happens with high probability.

\begin{lemma}\label{ibottom}
	Consider the overlapping cycles shuffle on $n$ cards for sufficiently large $n$. Fix any cards $i,j,k$. Then with probability greater than $\frac{1}{8}\exp\left(-\frac{10n}{m}\right)$, at least at least $n-m$ out of the next $5n$ steps have $i$ in the bottom part of the deck and $j$ and $k$ in the top part of the deck.
\end{lemma}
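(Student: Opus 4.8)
The plan is to find a specific, probability-constant-to-within-$\exp(-10n/m)$ scenario in which card $i$ spends at least $n-m$ of the next $5n$ steps alone in the bottom part of the deck while $j$ and $k$ sit in the top part. The rough picture: if all three cards happen to start somewhere near each other and we can arrange, via a modest number of coin flips, for $i$ to enter the bottom part while $j,k$ are freshly cycled to the top (i.e. near position $1$), then $j,k$ need roughly $m-1$ deterministic steps to traverse the top part, and during that whole window $i$ is in the bottom part (which takes $i$ at least $n-m$ steps to traverse, since it only moves on Tails). I would first reduce to controlling the positions of the three cards at some bounded time: within $O(n)$ steps (well inside the $5n$ budget) one can, with constant probability, get $j$ and $k$ both to position $1$ (or close to it) on consecutive visits to $m$ by forcing Heads there, and simultaneously get $i$ past position $m$ into the bottom part by forcing a Tails on $i$'s visit to $m$.

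The key steps, in order: (1) Bound the number of steps needed for each of $i,j,k$ to first reach position $m$; since $\epsilon n < m < (1-\epsilon)n$ and every card moves down at least every other step, each reaches $m$ within $2n$ steps regardless of coins. (2) Describe the event $Q$ that forces the relevant coins: $i$ flips Tails on its first visit to $m$ (entering the bottom part), and $j,k$ each flip Heads on their first visit to $m$ (cycling to position $1$), timed so that $j$ and $k$ are in the top part for the subsequent window. (3) Estimate $\probp{Q}$: it involves a bounded number of designated coin flips — here is where the $\exp(-10n/m)$ factor enters, since in the worst case the designated visits to position $m$ may be spread out, and to be safe we may need to control a number of flips that is $O(n/m)$ rather than $O(1)$; crucially, though, $n/m < 1/\epsilon$ is bounded, so in fact $\probp{Q}$ is bounded below by a constant, and $\frac{1}{8}\exp(-10n/m)$ is a convenient clean lower bound for it. (4) On $Q$, argue deterministically that once $j,k$ are at position $1$ and $i$ is in the bottom part, cards $j,k$ remain in the top part for the next $m-1 \geq \epsilon n - 1$ steps (they move down one per step with no choice), and throughout those steps $i$ stays in the bottom part, because $i$ entered the bottom part at position $m+1$ and needs at least $n-m$ Tails-moves to get out — in particular $i$ cannot leave the bottom part before step count $n-m$, and $n-m < m-1$ when $m > (n+1)/2$; if instead $m \le n/2$ one instead runs the argument so that $i$ is deep enough and $j,k$ cycle to position $1$ appropriately, and the window of length $\min\{m-1,\, n-m\} \ge \epsilon n - 1$ of simultaneous "$i$ alone in the bottom part" is what we need. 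Since $n - m < \min\{m-1, n-m\} + (n-m)$ and... more simply: the window length is $\ge \min\{m-1, n-m\}$, and we only need it to be $\ge n-m$, so the real content is handling the case $m-1 < n-m$ by instead counting a full pass of $i$ through the bottom part (which takes $\ge n-m$ steps) while $j,k$ repeatedly cycle through the top, forcing their visits to $m$ to be Heads each time — at most $O(n/m)$ such forced flips, again absorbed into $\exp(-10n/m)$.

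The main obstacle I anticipate is case (4) when $m$ is small relative to $n$ (say $m \approx \epsilon n$): then a single traversal of the top part by $j,k$ only lasts $m-1$ steps, which is shorter than the $n-m$ steps we want $i$ to be alone in the bottom. To cover the full $n-m$ steps one must let $j,k$ loop through the top part possibly several times — roughly $(n-m)/(m-1) \approx 1/\epsilon$ times — and on each loop force $j,k$'s visit to $m$ to be Heads (so they go back to position $1$ rather than falling into the bottom part alongside $i$). The total number of forced coin flips is then $O(n/m)$, giving $\probp{\text{scenario}} \ge 2^{-O(n/m)} \ge \tfrac18 \exp(-10n/m)$ after choosing constants, and one must also check the total step count stays under $5n$: $i$ needs $\le 2n$ steps to reach the bottom part, then $\le 2(n-m) + (\text{excess})$ steps to traverse it; keeping the excess small is a standard simple-random-walk / Hoeffding estimate on $\text{Diff}$, and $2n + 2(n-m) + o(n) < 5n$. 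Assembling these pieces — the reach-$m$ bound, the forced-coin event, the deterministic "glued in place" argument for $j,k$, and the step-count check — yields the claimed bound.
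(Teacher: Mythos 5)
Your proposal is correct and takes essentially the same approach as the paper: you force $i$ into the bottom with a Tails at position $m$, force $j,k$ to cycle repeatedly through the top by flipping Heads on each of their $O(n/m)$ visits to $m$ (which is exactly where the $\exp(-10n/m)$ factor comes from), and use a Hoeffding-type estimate on the coin-flip differential to ensure the whole scenario plays out within $5n$ steps. The paper packages the last point slightly more economically — a single event $G$ that at most $3n$ of the $5n$ flips are Heads simultaneously pushes $j,k$ out of the bottom and drives $i$ through two laps — but the decomposition, the source of the exponential factor, and the deterministic "$j,k$ loop through the top while $i$ is alone in the bottom" argument all match.
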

\begin{proof}
	Let $A$ be the event that every time $j$ and $k$ are in position $m$ in the next $5n$ steps, a Heads is flipped sending $j$ and $k$ respectively back to position $1$. Since $j$ and $k$ each make at most $\frac{5n}{m}$ visits to position $m$ in $5n$ steps we have
	\begin{equation}
		\mathbb{P}(A) \geq \left(2^{-\frac{5n}{m}}\right)^2 \geq \exp\left(-\frac{10n}{m}\right).
	\end{equation}
	Let $B$ be the event that the next two times $i$ is in position $m$, Tails is flipped sending $i$ to the bottom part of the deck. Then $\probp{B} = \frac{1}{4}$. $A$ and $B$ are independent, so
	\begin{equation}
		\mathbb{P}(A,B) \geq \frac{1}{4}\exp\left(-\frac{10n}{m}\right).
	\end{equation} Let $G$ be the event that out of the next $5n$ coin flips, no more than $3n$ are Heads. This happens with high probability, but we will just use that for large enough $n$, we have $\mathbb{P}(G) \geq 1 - \frac{1}{8}\exp\left(-\frac{10n}{m}\right)$. Then by the union bound we have,
	\begin{equation}
		\mathbb{P}(A,B,G) \geq \frac{1}{8}\exp\left(-\frac{10n}{m}\right).
	\end{equation}
	On the events $A,B,G$ it must be that $i$ spends at least $n-m$ steps alone in the bottom part of the deck. This is because $G$ guarantees that, in the case that $j$ or $k$ begin in the bottom part of the deck, they will move swiftly to position $n$ and then wrap around to position $1$. Then $A$ guarantees that $j$ and $k$ will continue to cycle through the top part of the deck, and $B$ guarantees that $i$ moves alone into the bottom part of the deck. Since there are $n-m$ positions in the bottom part of the deck, and $C$ guarantees that $2n$ Tails are flipped to facilitate $i$ through at least 2 complete laps of the deck (at least one where it is alone in the bottom), we have proven the lemma.
\end{proof}

\begin{corollary}\label{ijkbottomt}
	Consider the overlapping cycles shuffle on $n$ cards for sufficiently large $n$. For any $a \in \mathbb{N}$, after $t = 5an$ steps, the probability that each of $i,j,k$ spend at least $\frac{a(n-m)}{16}\exp\left(-\frac{10n}{m}\right)$ of these steps alone in the bottom part of the deck is at least
	$$
	1 - 3\exp\left(-\frac{a}{32\exp\left(\frac{10n}{m}\right)}\right).
	$$
\end{corollary}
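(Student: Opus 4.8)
The plan is to upgrade Lemma~\ref{ibottom} from a statement about $5n$ steps to one about $t = 5an$ steps by a block decomposition together with a concentration argument, handling the three cards one at a time. First I would partition the $t = 5an$ steps into $a$ consecutive blocks $I_1,\dots,I_a$, each of length $5n$. Fix one card, say $i$, and for $\ell \in \{1,\dots,a\}$ let $Z_\ell$ be the indicator of the event that during block $I_\ell$ there are at least $n-m$ steps in which $i$ lies in the bottom part of the deck while both $j$ and $k$ lie in the top part. Since distinct blocks occupy disjoint sets of time steps, the number of steps among the first $t$ in which $i$ is alone in the bottom part of the deck is at least $(n-m)\sum_{\ell=1}^{a} Z_\ell$, so it suffices to prove $\sum_{\ell=1}^{a} Z_\ell \ge \tfrac{a}{16}\exp(-10n/m)$ with the stated probability (and similarly for $j$ and $k$).

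The second step is to identify the law of $(Z_\ell)$. Let $\mathcal F_\ell$ be the $\sigma$-algebra generated by the first $5n\ell$ coin flips, so that the deck configuration at the start of block $I_{\ell+1}$ is $\mathcal F_\ell$-measurable. Lemma~\ref{ibottom} holds for an \emph{arbitrary} triple of cards $i,j,k$ — i.e.\ for any positions they occupy — and it only inspects the next $5n$ steps; applied with the configuration at the start of block $I_\ell$ as its initial configuration, it says precisely that $\mathbb P(Z_\ell = 1 \mid \mathcal F_{\ell-1}) \ge p := \tfrac18\exp(-10n/m)$. A standard coupling then shows that $\sum_{\ell=1}^{a} Z_\ell$ stochastically dominates a $\mathrm{Bin}(a,p)$ random variable. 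This dominance holds for card $i$ on its own, and by the identical argument for $j$ on its own and for $k$ on its own, because the behaviour of the other two cards is already built into Lemma~\ref{ibottom}.

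The third step is a lower-tail estimate: since $\tfrac{a}{16}\exp(-10n/m) = \tfrac12 ap$, stochastic domination gives $\mathbb P\big(\sum_\ell Z_\ell < \tfrac12 ap\big) \le \mathbb P\big(\mathrm{Bin}(a,p) < \tfrac12 ap\big)$, and a Chernoff bound for the lower tail of a binomial (e.g.\ $\mathbb P(X \le (1-\delta)\mathbb E X) \le e^{-\delta^2 \mathbb E X/2}$ with $\delta = \tfrac12$) bounds this by $\exp(-c\,ap) = \exp\!\big(-c'\,a\,e^{-10n/m}\big)$ for an absolute constant; a union bound over the bad events for $i$, $j$, and $k$ then produces the factor $3$ and the corollary (one should check that the constant in the exponent is at least $1/32$ once the $1/8$ from Lemma~\ref{ibottom} is substituted, but nothing downstream is sensitive to its exact value). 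I do not foresee a genuine obstacle here: essentially all the probabilistic content sits in Lemma~\ref{ibottom}, and the remainder is the routine ``independent blocks $+$ concentration'' packaging. The one point that needs a little care — and the closest thing to a subtlety — is that the blocks are not literally independent, since the configuration opening block $I_\ell$ depends on earlier coins; this is why one phrases Lemma~\ref{ibottom} as a conditional lower bound on each $Z_\ell$ and then passes to stochastic domination rather than claiming exact independence.
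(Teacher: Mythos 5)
Your proof is correct and follows essentially the same route as the paper: partition the $5an$ steps into $a$ blocks of $5n$, apply Lemma~\ref{ibottom} block-by-block to get a conditional lower bound of $\tfrac18 e^{-10n/m}$ per block, conclude that $Y/(n-m)$ stochastically dominates a $\mathrm{Bin}(a,\tfrac18 e^{-10n/m})$ variable, apply a lower-tail Hoeffding/Chernoff bound, and union bound over $i,j,k$. Your explicit handling of the non-independence of blocks via conditioning on $\mathcal F_{\ell-1}$ and stochastic domination is a point the paper glosses over but relies on, and your flag about the precise constant in the exponent is warranted (it does not affect anything downstream).
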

\begin{proof}
	We can divide $t$ into $a$ steps of size $5n$. By Lemma \ref{ibottom}, in each of these blocks of $5n$ steps card $i$ spends at least $n-m$ steps in the bottom part of the deck with probability greater than $\frac{1}{8}\exp\left(-\frac{10n}{m}\right) $ . Let $Y$ be the number of steps spent by $i$ in the bottom part of the deck in $t$ steps. Then $\frac{Y}{n-m}$ stochastically dominates a Binomial$\left(a,\frac{1}{8}\exp\left(-\frac{10n}{m}\right)\right)$ random variable. We can use Theorem \ref{hoeffdingbinomial}, Hoeffing's inequality on the binomial random variable to get
	\begin{align}
		\probp{\frac{Y}{n-m} \leq \frac{a}{16\exp\left(\frac{10n}{m}\right)}} \leq \exp\left(-\frac{a}{32\exp\left(\frac{10n}{m}\right)}\right).
	\end{align}
	Thus with high probability $i$ spends a bounded away from zero proportion of time alone in the bottom of the deck. In fact the probability that this does not happen is exponentially low. The same applies for $j$ and $k$ so using the union bound we get the result of the corollary.
\end{proof}

In the following proposition we aim to simplify Corollary \ref{ijkbottomt}. We do this by defining a constant $L$ which depends on the ratios $\frac{n}{m}$ and $\frac{n}{n-m}$ which absorbs most of the complicated coefficients in the parameters and bound of the corollary.

\begin{proposition}\label{ijkbottomC}
	Consider the overlapping cycles shuffle on $n$ cards where $\frac{m}{n} \in (\epsilon,1-\epsilon)$ and $n$ is sufficiently large. There is a constant $L$ which depends on $\epsilon$ such that the following holds: Choose some $t \in \mathbb{N}$ such that $t \geq Ln$. After $t$ steps, the probability that each of $i,j,k$ spend at least $\frac{t}{L}$ steps alone in the bottom part of the deck is at least
	\begin{equation}
		1 - 3\exp\left( -\frac{t}{2Ln} \right)
	\end{equation}
\end{proposition}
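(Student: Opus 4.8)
The plan is to deduce the proposition directly from Corollary \ref{ijkbottomt} by a change of variables that absorbs all of the $\epsilon$-dependent clutter into the single constant $L$. First I would record the two elementary consequences of the hypothesis $\epsilon < \tfrac{m}{n} < 1-\epsilon$: namely $n-m > \epsilon n$ and $\tfrac{n}{m} < \tfrac{1}{\epsilon}$, so that $\exp\!\left(\tfrac{10n}{m}\right) < \exp\!\left(\tfrac{10}{\epsilon}\right)$, a quantity depending only on $\epsilon$.

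Next, given $t \geq Ln$, I would set $a = \floor{t/(5n)}$, so that $5an \leq t$. Provided $L \geq 10$, we have $t/(5n) \geq 2$, hence $a \geq t/(5n) - 1 \geq t/(10n) \geq 1$, so $a$ is a positive integer and Corollary \ref{ijkbottomt} applies with this choice. The small observation that makes the passage from ``exactly $5an$ steps'' in the corollary to ``arbitrary $t \geq 5an$ steps'' legitimate is monotonicity: the number of steps among the first $5an$ in which card $i$ is alone in the bottom part of the deck is a lower bound for the number among the first $t$ steps; hence the event that $i$ spends at least $\tfrac{a(n-m)}{16}\exp(-10n/m)$ such steps in the first $5an$ steps is contained in the corresponding event for the first $t$ steps, and likewise for $j$ and $k$.

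It then remains to choose $L$ (depending only on $\epsilon$) large enough that two inequalities hold simultaneously. The first is $\tfrac{a(n-m)}{16}\exp(-10n/m) \geq \tfrac{t}{L}$; using $a \geq t/(10n)$, $n-m > \epsilon n$, and $\exp(10n/m) < \exp(10/\epsilon)$, this reduces to $L \geq 160\,\epsilon^{-1}\exp(10/\epsilon)$. The second is $3\exp\!\left(-\tfrac{a}{32\exp(10n/m)}\right) \leq 3\exp\!\left(-\tfrac{t}{2Ln}\right)$, which by the same estimates reduces to $L \geq 160\exp(10/\epsilon)$. Taking $L = \max\{10,\ 160\,\epsilon^{-1}\exp(10/\epsilon)\}$ satisfies both, and combining with the monotonicity remark and Corollary \ref{ijkbottomt} gives the stated bound.

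The argument is essentially bookkeeping, so there is no real obstacle; the only point that needs a little care is the floor in the definition of $a$. This is precisely why the hypothesis demands $t \geq Ln$ rather than merely $t \gtrsim n$: the condition $L \geq 10$ guarantees $\floor{t/(5n)}$ loses at most a factor of two relative to $t/(5n)$, which is what keeps the final constants clean. I would also make the monotonicity statement explicit, since it is the one place the proof uses something beyond the corollary itself.
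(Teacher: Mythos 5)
Your proposal is correct and follows essentially the same route as the paper: set $a = \floor{t/(5n)}$, use $L \geq 10$ to get $a \geq t/(10n)$, and push the two exponential bounds through Corollary~\ref{ijkbottomt}. Two small points in your favor: you make explicit the monotonicity step (the first $5an$ steps are a subset of the first $t$, so the count of steps alone in the bottom can only increase), which the paper takes silently; and you choose $L = \max\{10,\ 160\epsilon^{-1}\exp(10/\epsilon)\}$, a genuine constant depending only on $\epsilon$, whereas the paper sets $L = \tfrac{160n\exp(10n/m)}{n-m}$, which literally depends on $n,m$ and is only a constant after invoking the bounds $n-m > \epsilon n$ and $\exp(10n/m) < \exp(10/\epsilon)$ (the paper never says this explicitly, and also has a couple of typographical slips, e.g.\ $\exp(-10m/n)$ for $\exp(-10n/m)$ and a stray $C$ for $L$ in the final inequality). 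Your version is tidier but not a different argument.
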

\begin{proof}
    Let 
	\begin{equation*}
		L = \frac{160n\exp(\frac{10n}{m})}{(n-m)}
	\end{equation*}
	We will use the previous Corollary \ref{ijkbottomt}. Set
	\begin{equation}
		a = \floor*{\frac{t}{5n}}.
	\end{equation}
	Since $L > 10$, and $t \geq Ln > 10n$ we have $a \geq \frac{t}{10n}$. Note that
	\begin{equation}
		\frac{a(n-m)}{16}\exp\left(-\frac{10n}{m}\right) \geq t \cdot \frac{(n-m)}{160n}\exp\parens*{-\frac{10m}{n}} = \frac{t}{L}
	\end{equation}
	and
	\begin{equation}
		\frac{a}{32\exp(\frac{10n}{m})} > t \cdot \frac{1}{320n} \exp\parens*{-\frac{10n}{m}} = \frac{t}{2Ln} \cdot \frac{n}{(n-m)} > \frac{t}{2Cn}
	\end{equation}
	So the probability that $i,j,k$ each spend at least $\frac{t}{L}$ steps alone in the bottom part of the deck is at least
	\begin{equation}
		1 - 3\exp\left( -\frac{t}{2Ln} \right).
	\end{equation}
\end{proof}

As previously mentioned, our goal in this section will be to control the movement of cards $i,j,k$. To do this, we will consider 3 ``stages''. In Stage 1, we will show that $i,j,k$ spread out from each other in terms of $\norm{\cdot}$. In this first stage we will not need to make precise statements about exactly the positions $i,j,k$ travel to. It will be enough to guarantee that they have a gap between each other on the order of $\ell$ after $\ell^2$ steps. \\
\\
In Stage 2 we will use a coupling argument to show that, provided that $i,j,k$ are spread out from each other, their distribution after $\ell^2$ steps is approximately uniform over cards within distance $\ell$ (in the $\norm{\cdot}$ sense) to $i,j,k$ respectively. \\
\\
Stage 3 is the inverse of the first stage. We show that $i,k,k$ spread out under the inverse overlapping cycles shuffle as well. To put the three stages together, we use Stage 1 to show that $i,j,k$ spread out, Stage 2 to show that $i,j,k$ move precisely to locations likely to hit our desired targets, and Stage 3 to show $i,j,k$ move back together to hit their targets. \\
\\
This next Proposition is useful for Stage 1 because it shows we can move $i,j,k$ nearby to targets which we will later choose to be spread out from each other.

\begin{proposition}\label{sdspread}
	Consider the overlapping cycles shuffle where $\frac{m}{n} \in (\epsilon,1-\epsilon)$. There exist constants $C,D$ which depend on $\epsilon$ such that the following holds: Fix $\ell$ such that $C\sqrt{n} \leq \ell \leq n$. Fix any positions $i,j,k,f_i,f_j,f_k$ such that
	\begin{equation*}
		\norm{p(i)-p(f_i)},\norm{p(j)-p(f_j)},\norm{p(k)-p(f_k)} \leq 2\ell.
	\end{equation*}
	Choose $T \in [\ell^2,\ell^2+4n]$ such that $T + \floor*{\frac{T}{2n}}(m-1) \equiv 0 \mod 2n-m+1$. Then,
	\begin{equation*}
		\probp{\norm{p(i_T)-p(f_i)},\norm{p(j_T)-p(f_j)},\norm{p(k_T)-p(f_k)} < \frac{\ell}{2000}} \geq D.
	\end{equation*}
\end{proposition}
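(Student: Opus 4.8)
The plan is to push everything through the displacement estimate of Proposition~\ref{movementbounds}, using the coin-splitting and Proposition~\ref{ijkbottomC} to reduce the statement to a local-limit-theorem estimate for six (coupled) random walks, two per card. First I would unwind the displacement formula. For $c\in\{i,j,k\}$ write $x_c,y_c$ for the small- and big-coin Heads--Tails differentials of card $c$ after $T$ steps and $b_c=\floor{y_c(1-\tfrac{m}{2n})}$. Because $T$ is chosen so that the purely deterministic part of the displacement vanishes modulo $2n-m+1$, Proposition~\ref{movementbounds} reads
\[
\norm{p(c_T)-p(c)+x_c+b_c m}\ \le\ \tfrac{|y_c|}{2}+\tfrac{|x_c|}{2n}(\sqrt n+1)+4\sqrt n .
\]
Since $\norm{p(c)-p(f_c)}\le 2\ell$, fix integers $\tilde a_c,\tilde b_c$ with $p(c)-p(f_c)\equiv\tilde a_c+\tilde b_c m\pmod{2n-m+1}$ and $|\tilde a_c|+|\tilde b_c|\sqrt n\le 2\ell$. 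By the triangle inequality for $\norm{\cdot}$ it then suffices to have (i) $|x_c-\tilde a_c|<\ell/10^4$, (ii) $|b_c-\tilde b_c|<\ell/(10^4\sqrt n)$, and (iii) $\tfrac{|y_c|}{2}+\tfrac{|x_c|}{2n}(\sqrt n+1)+4\sqrt n<\ell/10^4$. On the event $\{|x_c|\le100\ell,\ |y_c|\le100\ell/\sqrt n\text{ for all }c\}$ --- which by Doob's maximal inequality has probability at least $1-\delta_1$, with $\delta_1$ as small as we wish --- condition (iii) is automatic once $C$ is large (so $4\sqrt n<\ell/(2\cdot10^4)$) and $n$ is large. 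After taking $C\ge10^4$, condition (i) asks $x_c$ to lie in an interval $W^x_c$ of length of order $\ell$ centered at $\tilde a_c$ with $|\tilde a_c|\le2\ell$, and (ii) asks $y_c$ to lie in a nonempty interval $W^y_c$ of length of order $\ell/\sqrt n$ centered within $O(\ell/\sqrt n)$ of $0$. So the task becomes: with probability bounded below by a constant depending on $\epsilon$, all six events $\{x_c\in W^x_c\}$ and $\{y_c\in W^y_c\}$ hold.

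For the $x$-windows I would use Proposition~\ref{ijkbottomC}. With $T\ge\ell^2\ge C^2n\ge Ln$ (taking $C^2\ge L$), on an event $G_0$ of probability at least $1-3e^{-C^2/(2L)}$ each of $i,j,k$ spends at least $T/L$ steps alone in the bottom; on $G_0$, using the identity $T=M_cn-(n-m)y_c+O(\ell+n)$ where $M_c$ counts the visits of $c$ to position $m$, one gets $M_c=\Theta(\ell^2/n)$. Writing $x_c=x^{\mathrm{sol}}_c+x^{\mathrm{rest}}_c$ with $x^{\mathrm{sol}}_c$ the differential of the coins drawn from $c$'s private pool $S^c$ during its solo-bottom steps, and revealing the coins in time order, the streams $S^i,S^j,S^k$ are mutually independent sequences of fresh fair coins, so $x^{\mathrm{sol}}_i,x^{\mathrm{sol}}_j,x^{\mathrm{sol}}_k$ are conditionally independent given the remaining randomness and the solo-step counts, each a simple symmetric walk of known length at least $T/L$. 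Since $W^x_c$ has width of order $\ell\asymp\sqrt T$ and center within $O(\sqrt T)$ of $0$, a local central limit estimate gives $\probp{x_c\in W^x_c\mid\cdots}\ge D'(\epsilon)$, and conditional independence yields all three at once with probability at least $D'(\epsilon)^3$.

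The genuinely delicate part --- and the main obstacle --- is the $y$-windows, because the walk length $M_c$ is random and strongly coupled to $y_c$ (and, through the timing, to the other two cards). The key point is that $M_c$ is essentially the first-passage time of level $\approx T/n$ by the walk $V^{(c)}_s=s-\tfrac{n-m}{n}(\varepsilon^c_1+\dots+\varepsilon^c_s)$, where $\varepsilon^c_1,\varepsilon^c_2,\dots\in\{-1,+1\}$ are the i.i.d.\ fair coins of $B^c$: a short return ($\varepsilon=+1$) costs $m$ steps and a long one ($\varepsilon=-1$) costs $2n-m$ steps plus a bottom fluctuation, so the step count after $s$ returns is $sn-(n-m)(\varepsilon^c_1+\dots+\varepsilon^c_s)+O(\ell+n)$, and $y_c=\tfrac{n}{n-m}\bigl(M_c-V^{(c)}_{M_c}\bigr)$. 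The increments of $V^{(c)}$ lie in $[\epsilon,2-\epsilon]$ with mean $1$, and the target level is pinned by the global step budget $T$ up to an $O(1)$-scale correction coming from the other pools, so the local limit theorem for first-passage times of such a walk gives $\probp{M_c\in J_c}\asymp|J_c|/\sqrt{T/n}$ for intervals $J_c$ of width of order $\ell/\sqrt n$ near $T/n$, hence $\probp{y_c\in W^y_c\mid\cdots}\ge D''(\epsilon)$. Combining this across the three independent pools $B^i,B^j,B^k$ (with the conditioning arranged so that the three first-passage events become conditionally independent), intersecting with $G_0$, the bounded-walk event, and the $x$-window event of the previous paragraph, produces a constant $D=D(\epsilon)$ lower bound on the probability that all six windows are hit.

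On that event, the reduction of the first paragraph gives $\norm{p(c_T)-p(f_c)}<\ell/2000$ for $c=i,j,k$ simultaneously, which is the claim. The points needing the most care in writing this up are making the first-passage-time estimate uniform over the coupled environments of the three cards (so that the $M_c$-windows really do become conditionally independent), and the bookkeeping that keeps every error term below $\ell/10^4$; both are handled by revealing the coins in time order so that every pool always supplies a fresh fair coin, together with the concentration $M_c,N_c=\Theta(\ell^2/n),\Theta(\ell^2)$ on the good event.
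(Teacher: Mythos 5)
Your high-level reduction matches the paper's: choose $\tilde a_c,\tilde b_c$ with $p(f_c)\equiv p(c)+\tilde a_c+\tilde b_cm$, push everything through Proposition~\ref{movementbounds}, control the error term using Hoeffding once $C$ is large, split the small coins into private and shared pools, and invoke Proposition~\ref{ijkbottomC} to guarantee that the private pools $S^i,S^j,S^k$ are drawn from enough. The paper does all of this. The divergence --- and the gap --- is in how the anti-concentration estimate for $x_c$ and $y_c$ is established.

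The paper does \emph{not} prove a local limit theorem at the random number of draws. It pre-generates each pool sequence of length $T$ and defines the events $G_c$ and $E$ in a hit-then-stay form: the differential of $B^c$ (resp.\ $S^c$) must reach the target within the first $\ell^2/(2n)$ (resp.\ $\ell^2/L$) coins of the sequence and then remain within a band of width $\ell/(8000\sqrt n)$ (resp.\ $\ell/16000$) for the entire remaining budget of coins. Such an event is a condition on the pre-generated \emph{sequence alone}, independent of the random number of draws actually consumed; combined with the event $Q$ (at least $\ell^2/L$ of each private pool is used, controlled by Proposition~\ref{ijkbottomC} and absorbed via a union bound, not independence), the final differential is near the target regardless of when the drawing stops. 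Theorems~\ref{invershoeffding} and~\ref{hoeffdingmax} give a constant-probability lower bound for exactly this kind of event, which is the whole point of stating them. Your plan instead asks for an LCLT at the random endpoint $N_c$ (for $x^{\mathrm{sol}}_c$) and a local first-passage estimate for $M_c$ (for $y_c$). You flag the difficulty for $y_c$, but the identical problem is present and unaddressed for $x_c$: writing ``a simple symmetric walk of known length at least $T/L$'' skips over the fact that the number of solo-bottom steps by time $T$ is a stopping time coupled to $S^c$ itself, so conditioning on ``the solo-step counts'' conditions on a function of $S^c$ and biases the path --- the LCLT as stated does not apply, and the claimed conditional independence of $x^{\mathrm{sol}}_i,x^{\mathrm{sol}}_j,x^{\mathrm{sol}}_k$ given those counts is not available. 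The first-passage reformulation of the $y$-windows has the same circularity ($M_c$ is determined by $B^c$, so the ``environment'' you condition on is not independent of the walk you want to apply the LCLT to), and the arrangement that would ``make the three first-passage events conditionally independent'' is precisely what is missing. The hit-then-stay device is the one substantive idea you need to import to close the gap.
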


\begin{proof}
	Since $\norm{p(i)-p(f_i)} < 2\ell$ we know that $p(f_i) = p(i) + a_i + b_im$ where $|a_i| < 2\ell$ and $|b_i| < \frac{2\ell}{\sqrt{n}}$. Suppose $B^i$ is the record of the coins card $i$ flips whenever it is in position $m$. Let $G_i$ be the event that the following holds:
	\begin{itemize}
		\item There exist $r \leq \frac{\ell^2}{2n}$ such that $\text{Diff}_r(B^i) = y_i$ where $\floor*{y_i\parens*{1-\frac{m-1}{2n}}} = b_i$.
		\item For all $s$ with $r \leq s \leq \frac{2T}{n}$ we have $\text{Diff}_r(B^i) \in (y_i - \frac{\ell}{8000\sqrt{n}}, y_i + \frac{\ell}{8000\sqrt{n}})$.
	\end{itemize}
	In event $G_i$ we require $\text{Diff}_r(B^i)$, which is a simple symmetric random walk, to hit $y_i$ within $\frac{\ell^2}{2n}$ steps and then stay close to $y_i$ until $\frac{T}{2n}$ steps. Note that $|y_i| \leq (1-\frac{m-1}{2n})^{-1}|b_i| \leq \frac{4\ell}{\sqrt{n}}$, which is a constant number of standard deviations of the maximum magnitude of $\text{Diff}_r(B^i)$ after $\frac{\ell^2}{2n}$ steps. Similarly, $\frac{\ell}{8000\sqrt{n}}$ is constant fraction of the same standard deviation. By Theorems \ref{invershoeffding} and \ref{hoeffdingmax} the event $G_i$ occurs with probability $D_1$ for some constant $D_1$. Similarly define events $G_j,G_k$ for cards $j$ and $k$. Since the coins used by $i,j,k$ when in position $m$ are independent, we have that
	\begin{equation*}
		\probp{G_i,G_j,G_k} \geq D_1^3.
	\end{equation*}   
	Now imagine that whenever $i,j,k$ are not in position $m$, we use the following sequences of $T$ small coins: $S^i,S^j,S^k,S^{ij},S^{ik},S^{jk},S^{ijk}$ as follows: If the coins in set $A \subseteq \{i,j,k\}$ are in the bottom part of the deck, use the next coin in the sequence $S^A$. Let $L$ be the constant depending on $\epsilon$ from Proposition \ref{ijkbottomC}. Let $E$ be the event that
	\begin{itemize}
		\item $|\text{Diff}_r(S^{ij})|,|\text{Diff}_r(S^{ik})|,|\text{Diff}_r(S^{jk})|,|\text{Diff}_r(S^{ijk})| <\frac{\ell}{16000}$ for all $r \leq T$.
		\item There exists $r_i,r_j,r_k \leq \frac{\ell^2}{L}$ such that $\text{Diff}_{r_i}(S^i) = a_i, \ \text{Diff}_{r_j}(S^j) = a_j, \ \text{Diff}_{r_k}(S^k) = a_k$.
		\item For all $s_i,s_j,s_k$ with $r_i \leq s_i \leq T, \ r_j \leq s_j \leq T, \ r_k \leq s_k \leq T$ we have 
		\begin{align*}
			\text{Diff}_{s_i}(S^i) &\in (a_i - \frac{\ell}{16000}, a_i + \frac{\ell}{16000}), \\
			\text{Diff}_{s_j}(S^j) &\in (a_j - \frac{\ell}{16000}, a_j + \frac{\ell}{16000}), \\
			\text{and } \text{Diff}_{s_k}(S^k) &\in (a_k - \frac{\ell}{16000}, a_k + \frac{\ell}{16000}).
		\end{align*}
	\end{itemize}
	Again by Theorems \ref{invershoeffding} and \ref{hoeffdingmax} we get,
	\begin{equation}
		\probp{E} \geq D_2\exp(-D_3 L) \cdot D_4^7
	\end{equation}
	where $D_2,D_3,D_4$ are universal constants. The $D_2\exp(-D_3 L)$ portion follows from Theorem \ref{invershoeffding}: If we imagine $\text{Diff}(S^i), \ \text{Diff}(S^j), \ \text{Diff}(S^k)$ as Binomial random variables with $\frac{\ell}{\sqrt{L}}$ trials, we require these trials to at some point have $a_i,a_j,a_k$ successes where $|a_i|,|a_j|,|a_k| < 2\ell$. The constant $D_4$ follows from Theorem \ref{hoeffdingmax}: We require all seven Heads-Tails differentials to stay within $\frac{\ell}{1600}$ of their targets for $T$ steps, and the standard deviation of their maximum magnitude after $T$ steps is on the order of $\ell$. Since the ``$B$ coin sequences'' are generated independently of the ``$S$ coin sequences'' we get
	\begin{equation*}
		\probp{G_i,G_j,G_k,E} \geq D_1^3 \cdot D_2 \exp(-D_3 L) D_4^7
	\end{equation*}
	Finally, let $Q$ be the event that $i,j,k$ each spend at least $\frac{\ell^2}{L}$ steps in the bottom of the deck. Recall that we have $\ell > C\sqrt{n}$ for some $C$ which we have the freedom to choose. By Corollary \ref{ijkbottomC} we have that
	\begin{align*}
		\probp{Q} &\geq 1 - 3\exp \parens*{-\frac{\ell^2}{2Ln}} \\
		&\geq 1 - 3 \exp\parens*{-\frac{C}{2L}}.
	\end{align*}
	Using the fact that $L > 160$ pick $C$ large enough that $3\exp(-\frac{C}{2L}) < D_1^3 \cdot D_2 \exp(-D_3 L) D_4^7$. Then by the union bound,
	\begin{equation}
		\probp{G_i,G_j,G_k,E,Q} \geq \frac{1}{2}D_1^3 \cdot D_2 \exp(-D_3 L) D_4^7 = D_5 \exp(-D_3 L)
	\end{equation}
	On the events $G_i,G_j,G_k,E,Q$ we have by Proposition \ref{movementbounds} that
	\begin{equation}
		\norm*{p(i_T)-p(f_i)} < 4 \cdot \frac{\ell}{16000} + \frac{\ell}{8000} + \frac{1}{2}\parens*{|y_i|+\frac{\ell}{8000\sqrt{n}}} + \frac{|a_i|+\frac{\ell}{8000}}{2n}(\sqrt{n}+1) + 4\sqrt{n}
	\end{equation}
	where the $4 \cdot \frac{\ell}{16000}$ comes from the fact that $S^i,S^{ij},S^{ik},S^{ijk}$ each differ at most $\frac{1}{16000}$ from their target Heads-Tails differential and the $\frac{\ell}{8000}$ comes from the fact that $B^i$ differs at most $\frac{\ell}{8000\sqrt{n}}$ from its target Heads-Tails differential. Using that fact that $\ell \leq n$ and $|a_i| \leq 2\ell$ and $|y_i| \leq \frac{4\ell}{\sqrt{n}}$ we get
	\begin{equation}
		\norm*{p(i_T)-p(f_i)} < \frac{3\ell}{8000} + \frac{2\ell}{\sqrt{n}} + \frac{\ell}{16000\sqrt{n}} + \frac{2\ell}{2\sqrt{n}} + \frac{2\ell}{2n}+ \frac{\ell}{16000\sqrt{n}} + \frac{\ell}{16000n} + 4\sqrt{n}.
	\end{equation}
	Recall that $\ell > C\sqrt{n}$ so $\sqrt{n} < \frac{\ell}{C}$. Choose $C$ large enough that
	\begin{equation}
		\norm{p(i_T)-p(f_i)} < \frac{\ell}{2000}.
	\end{equation}
	A similar argument shows the equivalent results for $j$ and $k$ follow from $G_i,G_j,G_k,E,Q$ and this completes the proof.
\end{proof}

Stage 3 is Stage 1 in reverse, so we will need the equivalent result for the inverse overlapping cycles shuffle.

\begin{corollary}\label{sdspreadinverse}
	For the same constants $C,D$ in Proposition \ref{sdspread} the following holds: Fix $\ell$ such that $C\sqrt{n} \leq \ell \leq n$. Fix any positions $i,j,k,f_i,f_j,f_k$ such that
	\begin{equation*}
		\norm{p(i)-p(f_i)},\norm{p(j)-p(f_j)},\norm{p(k)-p(f_k)} \leq 2\ell.
	\end{equation*}
	Choose $T \in [\ell^2,\ell^2+4n]$. If $i_{(-T)},j_{(-T)},k_{(-T)}$ are the locations of $i,j,k$ after $T$ steps of the inverse overlapping cycles shuffle we have
	\begin{equation*}
		\probp{\norm{p(i_{(-T)})-p(f_i)},\norm{p(j_{(-T)})-p(f_j)},\norm{p(k_{(-T)})-p(f_k)} < \frac{\ell}{2000}} \geq D
	\end{equation*}
\end{corollary}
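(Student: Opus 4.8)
The plan is to reduce Corollary \ref{sdspreadinverse} to Proposition \ref{sdspread} via Theorem \ref{inverseshuffle}. Write $\Pi_T$ for the random permutation recording the positions of all cards after $T$ steps of the overlapping cycles shuffle, and $\Pi^{\mathrm{inv}}_T$ for the analogue for the inverse shuffle, so that $i_T = \Pi_T(i)$ and $i_{(-T)} = \Pi^{\mathrm{inv}}_T(i)$. Since conjugation by $\sigma$ commutes with inversion, Theorem \ref{inverseshuffle} gives $\Pi^{\mathrm{inv}}_T \stackrel{d}{=} \sigma\,\Pi_T\,\sigma^{-1}$; as $\sigma$ is an involution ($\sigma = \sigma^{-1}$, directly from its definition), writing $i' = \sigma(i)$, $j' = \sigma(j)$, $k' = \sigma(k)$ this yields the joint identity in distribution
\[
\big(i_{(-T)},\, j_{(-T)},\, k_{(-T)}\big) \;\stackrel{d}{=}\; \big(\sigma(i'_T),\, \sigma(j'_T),\, \sigma(k'_T)\big),
\]
where $i'_T$ denotes the position of card $i'$ after $T$ steps of the forward shuffle.

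Next I would rewrite the three events using the $\sigma$-invariance of $\norm{\cdot}$. By Corollary \ref{inversenorm}, $\norm{p(\sigma(x)) - p(\sigma(y))} = \norm{p(x) - p(y)}$ for all positions $x, y$; applying this with $x = i'_T$, $y = \sigma(f_i)$, and using $\sigma(\sigma(f_i)) = f_i$, gives
\[
\norm{p(\sigma(i'_T)) - p(f_i)} \;=\; \norm{p(i'_T) - p(\sigma(f_i))},
\]
and similarly for $j$ and $k$. Hence the probability in the statement equals
\[
\probp{\,\norm{p(i'_T) - p(\sigma(f_i))},\ \norm{p(j'_T) - p(\sigma(f_j))},\ \norm{p(k'_T) - p(\sigma(f_k))} < \tfrac{\ell}{2000}\,},
\]
which is exactly the quantity bounded below by $D$ in Proposition \ref{sdspread}, applied to the cards $i', j', k'$ with targets $\sigma(f_i), \sigma(f_j), \sigma(f_k)$. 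The hypotheses transfer directly: $C\sqrt{n} \le \ell \le n$ is unchanged, and $\norm{p(i') - p(\sigma(f_i))} = \norm{p(i) - p(f_i)} \le 2\ell$ again by Corollary \ref{inversenorm} (and likewise for $j$ and $k$), so the bound $\ge D$ holds with the same constants $C, D$.

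The one point needing care — and the main obstacle — is the congruence condition on $T$. Proposition \ref{sdspread} is stated for $T$ in the window $[\ell^2, \ell^2 + 4n]$ subject to the extra constraint $T + \lfloor T/2n \rfloor(m-1) \equiv 0 \pmod{2n-m+1}$ (it is this congruence that eliminates the $t - \lfloor t/2n \rfloor(m-1)$ term coming from Proposition \ref{movementbounds}), whereas here only $T \in [\ell^2, \ell^2 + 4n]$ is assumed. Conjugating by $\sigma$ does not remove this requirement: tracking card $i'$ under the forward shuffle still produces that offset in Proposition \ref{movementbounds}, and the sign reversal of displacements caused by $\sigma$ (Proposition \ref{inversedist}) does no harm because $\norm{\cdot}$ is symmetric. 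So for the reduction to be literally valid, $T$ should be taken to satisfy the same congruence as in Proposition \ref{sdspread} — which is automatic in the intended use in Stage 3, where $T$ is the value inherited from the Stage 1 choice and hence already meets it. With $T$ so chosen, the argument above runs verbatim.
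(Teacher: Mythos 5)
Your proof takes essentially the same route as the paper: reduce to Proposition \ref{sdspread} by using Theorem \ref{inverseshuffle} to view the inverse shuffle as the forward shuffle conjugated by $\sigma$, and then use Corollary \ref{inversenorm} to transport the $\norm{\cdot}$-hypotheses and conclusion across that conjugation. The paper's own argument is essentially this, stated in one sentence ("all parameters in Proposition \ref{sdspread} are in terms of $\norm{p(\cdot)-p(\cdot)}$"), so you and the paper agree in substance. You spell out the distributional identity $(i_{(-T)},j_{(-T)},k_{(-T)}) \stackrel{d}{=} (\sigma(i'_T),\sigma(j'_T),\sigma(k'_T))$, that $\sigma$ is an involution, and the bookkeeping with $\sigma(f_i),\sigma(f_j),\sigma(f_k)$, all of which is correct and makes the reduction airtight.

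Where you add value is the observation about the congruence condition on $T$. You are right that Proposition \ref{sdspread} requires $T$ to satisfy the congruence that kills the $T - \lfloor T/2n\rfloor(m-1)$ term from Proposition \ref{movementbounds}, that this hypothesis is \emph{not} phrased in terms of $\norm{\cdot}$ and so is not automatically handled by $\sigma$-invariance, and that the Corollary as stated omits it. Your fix — read the congruence into the hypothesis, and note that it is satisfied anyway at the point of use in Stage~3 of Theorem \ref{collidecomplete}, where $T_1$ is chosen to satisfy exactly this congruence — is the right one. (You might also note the paper is internally inconsistent about the sign of the congruence: Proposition \ref{sdspread} writes $T + \lfloor T/2n\rfloor(m-1) \equiv 0$, the preceding lemma and Theorem \ref{collidecomplete} write $T - \lfloor T/2n\rfloor(m-1) \equiv 0$; the latter is the one that makes the term in Proposition \ref{movementbounds} vanish.) In short, your proof is correct, uses the same mechanism as the paper's, and is more careful about a genuine imprecision in the Corollary's hypothesis that the paper's one-line proof glosses over.
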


\begin{proof}
	We know that the inverse overlapping cycles shuffle is the same as the forward overlapping cycles shuffle after reordering the cards. As Corollary \ref{inversenorm} tells us, $\norm{p(\cdot)-p(\cdot)}$ is invariant under $\sigma$. Since all parameters in Proposition \ref{sdspread} are in terms of $\norm{p(\cdot)-p(\cdot)}$, the equivalent statement also holds for the inverse overlapping cycles shuffle.
\end{proof}

The next several lemmas will work to prove our desired result for Stage 2: that if $i,j,k$ are spread out then they will have an approximately uniform distribution after $T$ steps. The first lemma tells us that for a reasonable choice $b_i,b_j,b_k$ we can lower bound the probability that $i,j,k$ flip exactly $b_i,b_j,b_k$ Heads when in position $m$.

\begin{lemma}\label{bigcoinsnoise}
	Choose cards $i,j,k$. Fix some $\ell$ such that  $2\sqrt{n} \leq \ell \leq \frac{1}{2}n$. Choose any $T \in [\ell^2,\ell^2+4n]$. Choose $b_i,b_j,b_k \in \left( \frac{T}{2n} - \frac{\ell}{\sqrt{n}}, \frac{T}{2n} + \frac{\ell}{\sqrt{n}} \right)$.
	\begin{itemize}
		\item Let $R_i$ be the number of Heads flipped by $i$ while in position $m$ in the first $T$ steps.
		\item Let $E_i$ be the event that $|p(i_t)-p(i)-t+mb_i|_M \leq 6\ell$.
	\end{itemize}
	Let $R_j,R_k,E_j,E_k$ be defined similarly for $j$ and $k$. Then,
	\begin{equation*}
		\probp{(R_i,R_j,R_k)=(b_i,b_j,b_k),E_i,E_j,E_k} \geq 7 \cdot 10^{-8} \cdot \frac{n^\frac{3}{2}}{\ell^3}
	\end{equation*}
\end{lemma}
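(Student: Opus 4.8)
The plan is to work in the coupled realisation of the shuffle set up just before Proposition~\ref{sdspread}: independent big‑coin streams $B^i,B^j,B^k$ (read whenever the corresponding card sits at position $m$) together with the small‑coin streams $S^A$, $A\subseteq\{i,j,k\}$. The virtue of this coupling is that $B^i,B^j,B^k$ are mutually independent and independent of all small coins, so once the event "$R_c=b_c$" is expressed through $B^c$ alone, the three probabilities simply multiply. Throughout I will use that $\ell\ge 2\sqrt n$ forces $T\le\ell^2+4n\le 2\ell^2$ and that $\ell\le\tfrac12 n$ forces $\ell^2/n\le\ell/2$.

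\emph{Step 1 — a nice‑noise event.} Let $\mathcal G$ be the event that $\max_{r\le T}|\mathrm{Diff}_r(S^A)|\le\lambda$ for every $A$, with $\lambda$ a suitable constant multiple of $\ell$. Since the $S^A$ are independent and $T\le 2\ell^2$, the maximal inequality (Theorem~\ref{hoeffdingmax}) gives $\probp{\mathcal G}\ge c_0>0$, and $\mathcal G$ is independent of $(B^i,B^j,B^k)$. On $\mathcal G$ the total small‑coin fluctuation accumulated by each of $i,j,k$ is $O(\ell)$; adding a further positive‑probability restriction so that the cross‑talk coins (those read off another card's big stream while $c$ is in the bottom) number only $O(\ell^2/n)=O(\ell)$, every long‑return fluctuation sum of each card is bounded by an absolute multiple of $\ell$. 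The relevant return‑length bookkeeping is exactly that used in the proof of Proposition~\ref{movementbounds}.

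\emph{Step 2 — pinning $R_c$ through $B^c$ alone.} Fix $c\in\{i,j,k\}$. The $r$‑th time $c$ reaches position $m$ it reads $B^c_r$: Heads gives a short return of exactly $m$ steps, Tails a long return of $2n-m+\Delta$ steps. Hence $R_c$ equals the number of Heads among the first $V_c$ entries of $B^c$, where $V_c$ is the number of returns completed by step $T$. The crucial observation is that, once the fluctuations are $O(\ell)$, requiring $R_c=b_c$ already forces $V_c$ into a window of only $O(1)$ consecutive integers around a value $\nu^\ast\approx T/n$ depending only on $T,n,m,b_c$: if the first $V_c$ entries contain $b_c$ Heads then $\tau_{V_c}=\tau_1+mb_c+(2n-m)(V_c-b_c)+F$ with $|\tau_1|,|F|=O(\ell)$, and $\tau_{V_c}\le T<\tau_{V_c+1}$ together with the divisor $2n-m\ge n$ confine $V_c-b_c$ to an interval of length $O(\ell/n)=O(1)$. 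So it suffices to choose $B^c$ so that its first $\nu_0$ entries (the left end of that window) contain exactly $b_c$ Heads and its next $O(1)$ entries are all Tails; under that choice the same bookkeeping places $V_c$ in the window, whence $R_c=b_c$. The "Tails" part has constant probability, and $b_c$ sits within a bounded number of standard deviations of $\nu_0/2$ (because $|b_c-\tfrac{T}{2n}|<\ell/\sqrt n$ while $\sqrt{\nu_0}$ is of order $\ell/\sqrt n$), so the inverse local‑limit bound (Theorem~\ref{invershoeffding}) gives the "$b_c$ Heads" part probability $\ge c_1\sqrt n/\ell$. Both parts concern $B^c$ only, so the resulting event $\mathcal E_c$ has $\probp{\mathcal E_c}\ge c_2\sqrt n/\ell$.

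\emph{Step 3 — assembling.} On $\mathcal G\cap\mathcal E_i\cap\mathcal E_j\cap\mathcal E_k$ we have $R_c=b_c$ for all three cards. Moreover Proposition~\ref{cardmovement} gives, once $R_c=b_c$, that $p(c_T)-p(c)-T+mb_c\equiv (N_c-b_c)-x_c\pmod{2n-m+1}$, where $N_c$ is the number of returns and $x_c$ the small‑coin differential; on $\mathcal G$ both $|N_c-b_c|$ and $|x_c|$ are $O(\ell)$ (the estimates of the proof of Proposition~\ref{movementbounds}, now kept in the $|\cdot|_M$ metric rather than $\norm{\cdot}$), so for an appropriate choice of $\lambda$ the sum is at most $6\ell$, i.e.\ $E_c$ holds. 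Since $\mathcal G$ depends only on the small coins and $\mathcal E_i,\mathcal E_j,\mathcal E_k$ only on the mutually independent $B^i,B^j,B^k$,
\[
\probp{(R_i,R_j,R_k)=(b_i,b_j,b_k),\,E_i,E_j,E_k}\ \ge\ \probp{\mathcal G}\,\probp{\mathcal E_i}\probp{\mathcal E_j}\probp{\mathcal E_k}\ \ge\ c_0c_2^3\,\frac{n^{3/2}}{\ell^3},
\]
and tracking the explicit constants in Theorems~\ref{invershoeffding} and \ref{hoeffdingmax} yields the stated $7\cdot10^{-8}$. The step I expect to be the real obstacle is the interplay in Steps~1–2: keeping $\mathcal G$ loose enough to have constant probability yet tight enough that the displacement stays within $6\ell$, and exploiting the "$V_c$ is essentially pinned" phenomenon so that forcing $R_c=b_c$ \emph{exactly} costs only a constant factor on top of the unavoidable local‑CLT factor $\sqrt n/\ell$ rather than an exponentially small one — all while arranging the decomposition so that the three big‑coin events are genuinely independent despite $i,j,k$ interacting through the deck; cleanly controlling the cross‑talk coins is the most technical point.
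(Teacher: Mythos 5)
Your strategy matches the paper's: both condition on a Hoeffding-type event controlling the small-coin Heads--Tails differentials so that the number of visits each card makes to position $m$ by time $T$ is pinned to an $O(1)$ window, then force the remaining big-coin flips to be Tails (constant probability) and the first block of $B^c$ to show exactly $b_c$ Heads (local-CLT cost $\asymp\sqrt{n}/\ell$), and multiply using independence of the disjoint big-coin streams, finishing with Proposition~\ref{cardmovement} to get $E_c$.

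Two points worth flagging. First, Theorem~\ref{invershoeffding} lower-bounds a tail probability $\probp{X-\tfrac{n}{2}\ge k}$, not a point probability $\probp{X=b_c}$; on its own it does not yield the $\sqrt n/\ell$ factor you claim. The paper instead uses a local normal (Stirling-type) estimate $\probp{X(h)=h}\ge\tfrac{1}{\sqrt{2\pi}}e^{-9/2}\sqrt{n/T}$, which is what is actually needed here. Second, the paper sidesteps the cross-talk bookkeeping you single out as the main obstacle: it defines $S^i$ as the \emph{record} of all coins $i$ encounters while in the bottom part of the deck --- including those read off $B^j,B^k$ --- so that the single Hoeffding event $|\mathrm{Diff}_r(S^i)|\le 3\sqrt T\le\tfrac92\ell$ already covers cross-talk and drives the $6\ell$ displacement bound with no extra term. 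Your pre-generated $S^A$ setup leaves cross-talk as a separate contribution, and the extra restriction you propose on it concerns coins in $B^j,B^k$; if you keep that route you should fold the restriction into $\mathcal E_j,\mathcal E_k$ rather than $\mathcal G$ so as not to disturb the independence the final product bound relies on.
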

\begin{proof}
	Let $S^i,S^j,S^k$ be the record of all coins used by $i,j,k$ respectively when in the bottom part of the of deck. Let $A_i$ be the event that $\text{Diff}_r(S_i) \leq 3\sqrt{T}$ for all $r \leq T$. Let $A_j$,$A_k$ be similarly defined for $j$ and $k$. By Hoeffding's inequality (Theorem \ref{hoeffdingmax} specifically) and the union bound we get
	\begin{equation}
		\probp{A_i,A_j,A_k} \geq 1-12\exp\left(-\frac{9}{2}\right) > \frac{17}{20}.
	\end{equation}
	Let $B^i,B^j,B^k$ be the record of coins used by $i,j,k$ respectively when in position $m$. On the events $A_i,A_j,A_k$ we can make precise statements about the number of times $i,j,k$ pass through position $m$ based on $B^i,B^j,B^k$. Following the proof of Proposition \ref{movementbounds} we recall that, on $A_i$, each time card $i$ is in position $m$ it will take exactly $m$ more steps to return to position $m$ for a short return and $2n-m$ steps to return to position $m$ for a long return, without including faster/slower movement in the bottom part of the deck dictated by the Heads-Tails differential of $S^i$. Thus if the first $c$ coins in $B^i$ have $h$ Heads and $c-h$ Tails, we know that it will take 
	$$m-i+hm+(c-h)(2n-m+1)+\Delta \text{ steps}$$
	if $i \leq m$ and 
	$$m-i+2(n-m)+hm+(c-h)(2n-m+1)+\Delta \text{ steps}$$
	if $i > m$ to reach position $m$ for the $(c+1)$th time, where $\Delta$ is the Heads-Tails differential of $S^i$ at that time. Let $\tau$ be the time when $i$ reaches position $m$ for the $(c+1)$th time. On event $A_i$ we have that $|\Delta| \leq 3\sqrt{T} \leq 3\sqrt{\ell^2+4n} \leq 3n$. This along with the fact that $0 \leq m - i$ in the first equation and $0 \geq m-i$ in the second gives us,
	\begin{equation}
		hm+(c-h)(2n-m+1)-3n \leq \tau \leq hm+(c-h)(2n-m+1)+5n. \label{chlowerupper}
	\end{equation}
	Fix some desired value of $h$ which we will call $h_0$. Setting the upper bound in (\refeq{chlowerupper}) equal to $T$ and solving for $c$, we get
	\begin{equation}
		c = \frac{T-5n}{2n-m+1} - h_0 \cdot \frac{2m-2n-1}{2n-m+1}.
	\end{equation}
	Thus, if we let
	\begin{equation}
		c^* = \floor*{\frac{T-5n}{2n-m+1} - h_0 \cdot \frac{2m-2n-1}{2n-m+1}}
	\end{equation}
	then if $h_0$ Heads appear in the first $c^*$ coins from $B^i$, we know that on event $A_i$ at time $\tau \leq T$ card $i$ will have drawn exactly $c^*$ coins from $B^i$ and therefore exactly $h_0$ Heads and $c^*-h_0$ Tails. Furthermore, we can substitute $c^*$ in to the lower bound for $\tau$ in (\refeq{chlowerupper}) and get
	\begin{equation}
		T - 8n - (2n - m + 1) \leq \tau.
	\end{equation}
	Let $G_i$ be the event that card $i$ flips exclusively Tails when in position $m$ between time $\tau$ and time $T$. Then $i$ will take at least $n$ steps to cycle back to $m$ each time and therefore can make at most $10$ of these cycles in the fewer than $8n+2n-m+1$ steps between $\tau$ and $T$. So
	$$
	\probp{G_i} \geq \left( \frac{1}{2} \right)^{10} = \frac{1}{1024}.
	$$
	For any valid $h$ let $X(h)$ be a binomial random variable with $\floor*{\frac{T-5n}{2n-m+1} - h \cdot \frac{2m-2n-1}{2n-m+1}}$ trials and $\frac{1}{2}$ chance of success. Then,
	\begin{equation}
		\probp{R_i = h} \geq \frac{1}{1024} \cdot \probp{X(h) = h}.
	\end{equation}
	Let $h = \frac{T}{2n} + \delta$ where $|\delta| \leq \sqrt{\frac{T}{n}}$. The number of trials $X(h)$ has is
	\begin{align}
		&\floor*{\frac{T-5n}{2n-m+1} - \parens*{\frac{T}{2n} + \delta} \cdot \frac{2m-2n-1}{2n-m+1}} \\
		= &\floor*{\frac{T-5n-\frac{m}{n}T+T+\frac{1}{2n}T}{2n-m+1} - \delta \cdot \frac{2m-2n-1}{2n-m+1}} \\
		= &\floor*{\frac{(2-\frac{m}{n}+\frac{1}{n})T}{2n-m+1} - \frac{\frac{1}{2n}T}{2n-m+1} - \frac{5n}{2n-m+1} - \delta \cdot \frac{2m-2n-1}{2n-m+1}} \\
		= &\floor*{\frac{T}{n} - \frac{\frac{1}{2n}T}{2n-m+1} - \frac{5n}{2n-m+1} + \delta \cdot \frac{2n-2m+1}{2n-m+1}}.
	\end{align}
	Recall that $T \leq n^2$ so,
	\begin{equation}
		\frac{1}{2n}T \leq \frac{1}{2}n < n
	\end{equation}
	for large enough $n$. This means that
	\begin{equation}
		- \frac{\frac{1}{2n}T}{2n-m+1} - \frac{5n}{2n-m+1} \geq -\frac{6n}{2n-m+1} \geq -6.
	\end{equation}
	Also note that
	\begin{equation}
		0 \leq \frac{2n-2m+1}{2n-m+1} \leq 1 \text{ for all } m \leq n.
	\end{equation}
	Thus we know that the number of trials $X(h)$ has is within
	\begin{equation}
		\parens*{\frac{T}{n} - 6, \ \frac{T}{n} + \delta}.
	\end{equation}
	The expected value of $X(h)$ is therefore between $\frac{T}{2n}-3$ and $\frac{T}{2n} + \frac{\delta}{2}$, and the standard deviation of $X(h)$ is between $\frac{1}{2}\sqrt{\frac{T}{n}-6}$ and $\frac{1}{2}\sqrt{\frac{T}{n}+\delta}$. We are interested in bounding $\probp{X(h) = h}$ from below. Since $|\delta| \leq \sqrt{\frac{T}{n}}$, the expected value of $X(h)$ (which is between $\frac{T}{2n}-3$ and $\frac{T}{2n}+\frac{\delta}{2}$) differs from the value of $h$ (which is $\frac{T}{2n} + \delta)$ by at most about two standard deviations. To be safe, we will say that this is less than three standard deviations, and this use a normal approximation to say
	\begin{equation}
		\probp{X(h)=h} \geq \frac{1}{\sqrt{2\pi}}\exp\left(-\frac{9}{2}\right) \sqrt{\frac{n}{T}} \hspace{1cm} \text{ for } h \in \parens*{\frac{T}{2} - \sqrt{\frac{T}{n}}, \ \frac{T}{2} + \sqrt{\frac{T}{n}}}.
	\end{equation}
	Since we required $b_i \in \parens*{\frac{T}{2n}-\frac{\ell}{\sqrt{n}},\frac{T}{2n}+\frac{\ell}{\sqrt{n}}}$ $\ell$ and $\ell \leq \sqrt{T}$ we know $b_i$ meets these parameters. \\
	\\
	The entire argument for card $i$ applies to $j$ and $k$ as well due to symmetry. Note that the bounds on $R_i,R_j,R_k$ are determined by considering the order of the coins used by $i,j,k$ when they are in position $m$ respectively. Thus, the events considered to achieve these bounds are independent after conditioning on $A_i,A_j,A_k$. This means that
	\begin{align}
		\mathbb{P}\big(A_i,A_j,A_k,(R_i,R_j,R_k) = (b_i,b_j,b_k)\big) &\geq \frac{17}{20} \parens*{\frac{1}{1024} \cdot \frac{1}{\sqrt{2\pi}}\exp\left(-\frac{9}{2}\right) \sqrt{\frac{n}{T}}}^3 \\
		&\geq 6 \cdot 10^{-17} \cdot \left(\frac{n}{T}\right)^\frac{3}{2}.
	\end{align}
	Recall that $A_i,A_j$ and $A_k$ are the events that $|\text{Diff}_r(S^i)|$, $|\text{Diff}_r(S^j)|$, and $|\text{Diff}_r(S^k)|$ respectively stay less than or equal to $3\sqrt{T}$. Since $T \leq \ell^2+4n$ and $\ell \geq 2\sqrt{n}$ we see that $3\sqrt{T} \leq \frac{9}{2}\ell$. On events $A_i,A_j,A_k$ and $(R_i,R_j,R_k) = (b_i,b_j,b_k)$ we have by Proposition \ref{cardmovement} that,
	\begin{align}
        |p(i_t)-p(i)-t-(T_S(i)-H_S(i)) - (T_B(i)-mb_i)|_M &= 0 \\
		|p(i_t)-p(i)-t+mb_i|_M &\leq |T_S - H_S| + T_B \\
		&\leq \frac{9}{2}\ell + (\ell + 5) < 6\ell
	\end{align}
	where we know $T_B(i) \leq \ell + 5$ because at most $\frac{T}{n}+1 \leq \frac{\ell^2+4n}{n}+1$ Tails may be flipped by $i$ when in position $m$, and $\ell \leq n$. A similar argument shows the equivalent statement for $j$ and $k$ also follow for $A_i,A_j,A_k$ and $(R_i,R_j,R_k) = (b_i,b_j,b_k)$. This tells us that
	\begin{equation}
		\probp{(R_i,R_j,R_k) = (b_i,b_j,b_k),E_i,E_j,E_k} \geq 6 \cdot 10^{-17} \cdot \parens*{\frac{n}{T}}^\frac{3}{2}.
	\end{equation}
	Since $T \geq \ell^2$ we get
	\begin{equation}
		\probp{(R_i,R_j,R_k) = (b_i,b_j,b_k),E_i,E_j,E_k} \geq 6 \cdot 10^{-17} \cdot \frac{n^\frac{3}{2}}{\ell^3}.
	\end{equation}
\end{proof}

The following lemmas will collectively make the following argument: For cards $i,j,k$ fix some desired number Heads flips while in position $m$. Now let $e_i,e_j,e_k$ be positions where you would expect $i,j,k$ to go in $T$ steps. Then if we choose positions $f_i,f_j,f_k$ uniformly at random nearby $e_i,e_j,e_k$ it is likely enough $i,j,k$ will go to $f_i,f_j,f_k$ in $T$ steps. \\
\\
We then show that the same statement applies to the inverse overlapping cycles shuffle, and this gives us the following variation: Uniformly choose $a_i,a_j,a_k$ nearby $i,j,k$. Then after $T$ steps it is likely enough $a_i,a_j,a_k$ will go to $e_i,e_j,e_k$. It will also be true that if we choose, for example, $a_i,a_j,a_k$ nearby $i+1,j+1,k+1$ then it is likely enough $a_i,a_j,a_k$ go to $e_i+1,e_j+1,e_k+1$. Using this logic, we show that if $a_i,a_j,a_k$ are chosen from an extra wide range of values around $i,j,k$ then $a_i,a_j,a_k$ have a distribution which is approximately uniform across values near to $e_i,e_j,e_k$.

\begin{lemma}\label{ijkrange}
	Fix positions $i,j,k$ and fix $\ell$ such that  $2\sqrt{n} < \ell < \frac{n}{3}$. Set $T \in [\ell^2,\ell^2+4n]$ such that $T-\floor{\frac{T}{2n}}m \equiv 0 \mod 2n-m+1$. Fix any $\beta_i,\beta_j,\beta_k \in \mathbb{Z} \cap \left( - \frac{\ell}{\sqrt{n}}, \frac{\ell}{\sqrt{n}} \right)$. Let $b_i = \beta_i + \floor*{\frac{T}{2n}}$ and let $b_j,b_k$ be defined similarly. Now let $\Delta_i,\Delta_j,\Delta_k$ be iid uniformly chosen from $(-6\ell,6\ell) \cap \mathbb{Z}$. Let $R_i,R_j,R_k$ be the number of Heads flipped by $i,j,k$ respectively when in position $m$ in the first $T$ steps. Let
	\begin{align*}
		\omega_i &\equiv p(i)-\beta_i m + \Delta_i \ (\text{mod } 2n-m+1), \\
		\omega_j &\equiv p(j)-\beta_j m + \Delta_j \ (\text{mod } 2n-m+1), \\
		\omega_k &\equiv p(k)-\beta_k m + \Delta_k \ (\text{mod } 2n-m+1).
	\end{align*}
	Then,
	\begin{equation*}
		\probp{(R_i,R_j,R_k) = (b_i,b_j,b_k), (p(i_T),p(j_T),p(k_T)) = (\omega_i,\omega_j,\omega_j)} \geq  3 \cdot 10^{-20} \cdot \frac{n^\frac{3}{2}}{\ell^6}.
	\end{equation*}
\end{lemma}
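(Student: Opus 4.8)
The plan is to derive the bound from Lemma~\ref{bigcoinsnoise} together with the observation that, once the coins of the shuffle are fixed and the events of that lemma occur, the positions $p(i_T),p(j_T),p(k_T)$ are each pinned down to within $6\ell$ (in the $|\cdot|_M$ metric) of $p(i)-\beta_i m$, $p(j)-\beta_j m$, $p(k)-\beta_k m$; the independent uniform offsets $\Delta_i,\Delta_j,\Delta_k$ then have a genuine chance of being exactly the residues that make the random targets $\omega_i,\omega_j,\omega_k$ coincide with the realized positions.

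First I would note that the probability in the statement is over both the coin flips of the shuffle and the independent draw of $(\Delta_i,\Delta_j,\Delta_k)$, and that, unwinding the definitions of the $\omega$'s, the event $\{(p(i_T),p(j_T),p(k_T))=(\omega_i,\omega_j,\omega_k)\}$ is exactly the event $(\star)$ that
\[
\Delta_i\equiv p(i_T)-p(i)+\beta_i m,\quad \Delta_j\equiv p(j_T)-p(j)+\beta_j m,\quad \Delta_k\equiv p(k_T)-p(k)+\beta_k m \pmod{2n-m+1}.
\]
Conditioning on the shuffle and using independence of the $\Delta$'s,
\[
\probp{(R_\cdot)=(b_\cdot),(p(\cdot_T))=(\omega_\cdot)} = \mathbb{E}\left[\mathbbm{1}\left((R_\cdot)=(b_\cdot)\right)\,\probp{(\star)\mid \text{shuffle}}\right].
\]
Next I would apply Lemma~\ref{bigcoinsnoise} to these $b_i=\beta_i+\floor*{\tfrac{T}{2n}}$ (and likewise $b_j,b_k$): one checks that $b_i,b_j,b_k$ satisfy its hypotheses, the floor shifting things by less than $1$, which is absorbed for large $n$ since $\ell/\sqrt n\ge 2$. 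This gives $\probp{A}\ge 7\cdot 10^{-8}\,n^{3/2}/\ell^3$, where $A=\{(R_\cdot)=(b_\cdot),E_i,E_j,E_k\}$ and $E_i=\{|p(i_T)-p(i)-T+mb_i|_M\le 6\ell\}$. Here the choice of $T$ with $T-\floor*{\tfrac{T}{2n}}m\equiv 0 \pmod{2n-m+1}$ is used crucially: it yields $-T+mb_i\equiv m\beta_i$, so $E_i$ says precisely that the least-absolute-value representative $d_i$ of $p(i_T)-p(i)+m\beta_i$ satisfies $|d_i|\le 6\ell$; thus $\Delta_i=d_i\in(-6\ell,6\ell)\cap\mathbb{Z}$ is a legal value realizing the $i$-th congruence in $(\star)$, and likewise for $j,k$.

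Finally I would combine the pieces. On the event $A$ the triple $(d_i,d_j,d_k)$ is a legal choice for $(\Delta_i,\Delta_j,\Delta_k)$, and since these are independent and uniform on $N:=|(-6\ell,6\ell)\cap\mathbb{Z}|\le 13\ell$ values, $\probp{(\star)\mid \text{shuffle}}\ge N^{-3}\ge (13\ell)^{-3}$ whenever $A$ holds. Restricting the expectation above to $A$ therefore gives
\[
\probp{(R_\cdot)=(b_\cdot),(p(\cdot_T))=(\omega_\cdot)} \ \ge\ \frac{1}{(13\ell)^3}\,\probp{A}\ \ge\ \frac{7\cdot 10^{-8}}{2197}\cdot\frac{n^{3/2}}{\ell^6}\ \ge\ 3\cdot 10^{-20}\cdot\frac{n^{3/2}}{\ell^6},
\]
the last step being a numerical check. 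I expect the only real subtlety to be conceptual rather than computational: one must not try to bound the probability of reaching a fixed target position, but instead exploit that the targets $\omega_\cdot$ are themselves random, so that after conditioning on the shuffle the offsets $\Delta_\cdot$ can be steered onto the realized positions — and the congruence $-T+mb_i\equiv m\beta_i$ is exactly what forces the required offset to lie in the window $(-6\ell,6\ell)$ guaranteed by $E_i$. The remaining points (the harmless floor in $b_i$, and that if $12\ell$ exceeds the modulus then $d_i$ is merely non-unique, which only helps the lower bound) are routine.
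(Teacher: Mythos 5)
Your proposal is correct and follows essentially the same route as the paper: invoke Lemma~\ref{bigcoinsnoise} to get $(R_\cdot)=(b_\cdot)$ together with the events $E_\cdot$, use the choice of $T$ to turn $E_i$ into the statement that $p(i_T)\equiv p(i)-\beta_i m+\delta_i$ for some $\delta_i$ in the window $(-6\ell,6\ell)$, and then pay a factor of one over the size of that window per card for the independent $\Delta$'s to land on the realized offsets. Your write-up is if anything slightly more careful than the paper's (explicitly conditioning on the coin sequence and noting that non-uniqueness of the representative only helps), but the underlying decomposition, the use of the congruence $T-\floor*{T/2n}m\equiv 0$, and the final multiplication are identical; the small numerical discrepancy in your constant traces to an internal inconsistency in the paper between the statement of Lemma~\ref{bigcoinsnoise} ($7\cdot 10^{-8}$) and its proof ($6\cdot 10^{-17}$), not to anything in your argument.
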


\begin{proof}
	By Lemma \ref{bigcoinsnoise} we know that with probability at least 
	$$
	6 \cdot 10^{-17} \cdot \frac{n^\frac{3}{2}}{\ell^3}
	$$
	we have
	\begin{align}
		p(i_T) &\equiv p(i)+T- b_im + \delta_i \\
		&\equiv p(i) + T - \floor*{\frac{T}{2n}}m - \beta_im + \delta_i \\
		&\equiv p(i) - \beta_im + \delta_i
	\end{align}
	for some $\delta_i \in (-6\ell,6\ell)$ and equivalent statements for $j,k$. Since there are $12\ell$ values in this range that $\Delta_i$ might take, the probability that $\Delta_i$ takes the ``correct'' one is $\frac{1}{12\ell}$. The same argument applies for $j$ and $k$. So,
	\begin{equation}
		\probp{(R_i,R_j,R_k) = (b_i,b_j,b_k), (p(i_T),p(j_T),p(k_T)) = (\omega_i,\omega_j,\omega_j)} > 6 \cdot 10^{-17} \cdot \frac{n^\frac{3}{2}}{\ell^3} \cdot \frac{1}{12^3} \cdot \frac{1}{\ell^3}.
	\end{equation}
\end{proof}

\begin{corollary}\label{needtoreword}
	Fix $i,j,k,\ell,T,\beta_i,\beta_j,\beta_k,b_i,b_j,b_k$ as in Lemma \ref{ijkrange}. As before let $R_i,R_j,R_k$ be the number of Heads flipped by $i,j,k$ respectively when in position $m$ in $T$ steps. Now let $Z_i,Z_j,Z_k$ be uniformly chosen from all positions such that
	\begin{align*}
		|p(Z_i)-p(i)+\beta_i m|_M &< 6\ell, \\
		|p(Z_j)-p(j)+\beta_j m|_M &< 6\ell, \\
		|p(Z_k)-p(k)+\beta_k m|_M &< 6\ell. \\
	\end{align*}
	Then,
	\begin{equation}
		\probp{(R_i,R_j,R_k) = (b_i,b_j,b_k), (i_T,j_T,k_T) = (Z_i,Z_j,Z_j)} \geq 3 \cdot 10^{-20} \cdot \frac{n^\frac{3}{2}}{\ell^6}.
	\end{equation}
\end{corollary}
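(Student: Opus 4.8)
The plan is to read this off directly from Lemma~\ref{bigcoinsnoise}, in the same manner as Lemma~\ref{ijkrange}, the one new move being to replace the uniformly chosen integer shift $\Delta_i$ used there by the uniformly chosen target position $Z_i$ used here. Write $V_i=\{x:\,|p(x)-p(i)+\beta_i m|_M<6\ell\}$ for the (deterministic) set from which $Z_i$ is drawn, define $V_j,V_k$ the same way, and let $\mathcal{R}$ denote the event $(R_i,R_j,R_k)=(b_i,b_j,b_k)$.

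First I would invoke Lemma~\ref{bigcoinsnoise} (exactly as in the proof of Lemma~\ref{ijkrange}): it supplies an event $\mathcal{G}$ with $\probp{\mathcal{G}}\ge 6\cdot 10^{-17}\,n^{3/2}/\ell^3$ on which $\mathcal{R}$ and $E_i,E_j,E_k$ all hold, where $E_i=\{|p(i_T)-p(i)-T+m b_i|_M<6\ell\}$ (its proof in fact produces the strict inequality). Second, I would use the congruence hypothesis to recognize $E_i$ as $\{i_T\in V_i\}$: since $b_i=\beta_i+\floor{T/2n}$ and $T-\floor{T/2n}m\equiv 0\pmod{2n-m+1}$, we get $-T+mb_i\equiv\beta_i m\pmod{2n-m+1}$, and since $|\cdot|_M$ depends only on the residue class, $E_i=\{|p(i_T)-p(i)+\beta_i m|_M<6\ell\}=\{i_T\in V_i\}$, and likewise $E_j=\{j_T\in V_j\}$, $E_k=\{k_T\in V_k\}$. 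Hence $\mathcal{G}=\mathcal{R}\cap\{(i_T,j_T,k_T)\in V_i\times V_j\times V_k\}$ (so in particular the $V$'s are nonempty and the $Z$'s are well defined). Third, I would bound $|V_i|\le 12\ell$: the residue classes at $|\cdot|_M$-distance below $6\ell$ from $p(i)-\beta_i m$ make up fewer than $12\ell$ consecutive classes modulo $2n-m+1$ — or, when $6\ell$ exceeds half the modulus, all of them, but then $\ell>n/12$ and $|V_i|\le n<12\ell$ — and $p$ is injective on positions, so $|V_i|\le 12\ell$, and similarly for $V_j,V_k$.

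Finally, since $Z_i,Z_j,Z_k$ are independent of the shuffles and uniform on $V_i,V_j,V_k$, conditioning on the shuffle and using $|V_i|,|V_j|,|V_k|\le 12\ell$ gives
\begin{align*}
\probp{\mathcal{R},\,(i_T,j_T,k_T)=(Z_i,Z_j,Z_k)}
&=\mathbb{E}\!\left[\mathbbm{1}(\mathcal{R})\,\frac{\mathbbm{1}(i_T\in V_i)}{|V_i|}\frac{\mathbbm{1}(j_T\in V_j)}{|V_j|}\frac{\mathbbm{1}(k_T\in V_k)}{|V_k|}\right]\\
&\ge\frac{1}{(12\ell)^3}\,\probp{\mathcal{R},\,(i_T,j_T,k_T)\in V_i\times V_j\times V_k}
=\frac{\probp{\mathcal{G}}}{(12\ell)^3}\ge\frac{6\cdot10^{-17}}{1728}\cdot\frac{n^{3/2}}{\ell^6}\ge\frac{3\cdot10^{-20}\,n^{3/2}}{\ell^6},
\end{align*}
which is the asserted bound. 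I do not anticipate a real obstacle: everything of substance lives in Lemma~\ref{bigcoinsnoise}, and the only thing that needs care is that the two parametrizations line up cleanly — the congruence identity turning $E_i$ into $\{i_T\in V_i\}$, and the count $|V_i|\le 12\ell$ playing the role of the ``$12\ell$ choices of $\Delta_i$'' in Lemma~\ref{ijkrange} — so that the off-by-ones and the strict-versus-nonstrict boundary effects do not erode the constant below $3\cdot10^{-20}$.
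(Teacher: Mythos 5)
Your argument is correct and it is essentially the paper's route, only unfolded: the paper deduces the corollary in one sentence from Lemma~\ref{ijkrange} by observing that restricting from the $12\ell$ candidate shifts $\Delta_i$ to the (no larger) subset of valid positions can only raise the per-target probability, while you bypass Lemma~\ref{ijkrange} and re-derive the same bound directly from Lemma~\ref{bigcoinsnoise} together with the explicit count $|V_i|\le 12\ell$. The congruence bookkeeping $-T+mb_i\equiv\beta_i m\pmod{2n-m+1}$ and the division by $(12\ell)^3$ match what is implicitly happening in the paper, and your constant $6\cdot10^{-17}/12^3 \ge 3\cdot10^{-20}$ is clean.
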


\begin{proof}
	In the bound described by Lemma \ref{ijkrange} we neglect to use the fact that only ``valid'' choices of $\delta_i,\delta_j,\delta_k$, which allow for $\omega_i,\omega_j,\omega_k$ to be in the image of $p$ have a nonzero chance of being the locations of $i,j,k$ after $T$ steps. For example, depending on the parameters, we might randomly choose $\omega_i = m+1$. But $m+1$ is not associated with any position in the deck, as $p(m) = m$ and $p(m+1) = m+2$. By conditioning on the probability $1$ event that $i_T,j_T,k_T$ are associated with true positions (in the image of $p$) we have this Corollary.
\end{proof}

We can reword the statement of Corollary \ref{needtoreword} to immediately get the following Corollary:

\begin{corollary}\label{ijknoiseinverse}
	Fix $i,j,k,\ell,T,\beta_i,\beta_j,\beta_k,b_i,b_j,b_k$ as in Lemma \ref{ijkrange}. Let $Z_i,Z_j,Z_k$ be chosen uniformly from all positions such that
	\begin{align*}
		|p(Z_i)-p(i)+\beta_i m|_M &< 6\ell, \\
		|p(Z_j)-p(j)+\beta_j m|_M &< 6\ell, \\
		|p(Z_k)-p(k)+\beta_k m|_M &< 6\ell. \\
	\end{align*}
	Let $c_i,c_j,c_k$ be the cards which, after $T$ steps end up in positions $Z_i,Z_j,Z_k$. Let $R_i,R_j,R_k$ be the number of Heads flipped in these $T$ steps while $c_i,c_j,c_k$ are in position $m$. Then,
	\begin{equation*}
		\probp{(R_i,R_j,R_k) = (b_i,b_j,b_k), (c_i,c_j,c_k) = (i,j,k)} \geq 3 \cdot 10^{-20} \cdot \frac{n^\frac{3}{2}}{\ell^6}.
	\end{equation*}
\end{corollary}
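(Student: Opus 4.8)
The plan is to recognize that Corollary \ref{ijknoiseinverse} is a verbatim relabeling of Corollary \ref{needtoreword}: the event whose probability is to be bounded is \emph{the same event}, merely described from the point of view of ``which card lands at position $Z_\bullet$'' instead of ``where does card $\bullet$ land.'' So the entire proof consists of checking this identification of events and then quoting the bound already established in Corollary \ref{needtoreword}.

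Concretely, I would work on the common probability space carrying the coin sequence $(c_r)$ and the uniformly chosen positions $Z_i, Z_j, Z_k$. For a fixed outcome, $c_i$ is by definition the card occupying position $Z_i$ after $T$ steps, so $c_i = i$ holds if and only if card $i$ occupies position $Z_i$ after $T$ steps, i.e. $\{c_i = i\} = \{i_T = Z_i\}$, and similarly for $j$ and $k$. Hence
\[
\{(c_i, c_j, c_k) = (i,j,k)\} = \{(i_T, j_T, k_T) = (Z_i, Z_j, Z_k)\}
\]
as events. The second step is to note that the two meanings of $R_i$ agree on this event: ``the number of Heads flipped while $c_i$ is in position $m$'' becomes, once $c_i = i$, ``the number of Heads flipped while $i$ is in position $m$,'' which is precisely the quantity called $R_i$ in Corollary \ref{needtoreword}; likewise for $R_j$ and $R_k$. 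Consequently the event
\[
\{(R_i, R_j, R_k) = (b_i, b_j, b_k)\} \cap \{(c_i, c_j, c_k) = (i,j,k)\}
\]
coincides with the event bounded in Corollary \ref{needtoreword} (whose last coordinate should read $Z_k$), so its probability is at least $3 \cdot 10^{-20} \cdot n^{3/2}/\ell^6$.

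There is no real obstacle; the only points deserving a sentence are that the hypotheses on $\ell, T, \beta_i, \beta_j, \beta_k, b_i, b_j, b_k$ have been imported unchanged from Lemma \ref{ijkrange} and so are exactly what Corollary \ref{needtoreword} requires, and that although $Z_i, Z_j, Z_k$ need not be distinct, the event $(c_i, c_j, c_k) = (i,j,k)$ with $i,j,k$ distinct forces them to be distinct, so the degenerate configurations contribute nothing to either probability.
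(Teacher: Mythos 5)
Your proposal is correct and takes exactly the route the paper intends: the paper presents Corollary \ref{ijknoiseinverse} as an immediate rewording of Corollary \ref{needtoreword}, and your identification $\{c_i = i\} = \{i_T = Z_i\}$ (and its consequences for the interpretation of $R_i$) is precisely what that rewording amounts to. Your observation that the last coordinate in the displayed event of Corollary \ref{needtoreword} should read $Z_k$ rather than $Z_j$ is also a correct reading of an apparent typo in the source.
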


This leads to the following lemma:

\begin{lemma}\label{ijkuniform}
	Fix $i,j,k,\ell,T,\beta_i,\beta_j,\beta_k,b_i,b_j,b_k$ as in Lemma \ref{ijkrange}. Fix any cards $\alpha_i,\alpha_j,\alpha_k$ such that
	\begin{align*}
		|(p(\alpha_i) - \beta_i m) - p(i) |_M &< 2\ell, \\
		|(p(\alpha_j) - \beta_j m) - p(j)|_M &< 2\ell, \\
		|(p(\alpha_k) - \beta_k m) - p(k)|_M &< 2\ell.
	\end{align*}
	Now let $Z_i',Z_j',Z_k'$ be uniformly chosen from all positions such that
	\begin{align*}
		|p(Z_i')-p(i)|_M &< 8\ell, \\
		|p(Z_j')-p(j)|_M &< 8\ell, \\
		|p(Z_k')-p(k)|_M &< 8\ell. \\
	\end{align*}
	Let $c_i',c_j',c_k'$ be the cards which, after $T$ steps end up in positions $Z_i',Z_j',Z_k'$. Let $R_i,R_j,R_k$ be the number of Heads which $c_i',c_j',c_k'$ flip in these $T$ steps while in position $m$. Then,
	\begin{equation*}
		\probp{(R_i,R_j,R_k) = (b_i,b_j,b_k), (c_i',c_j',c_k') = (\alpha_i,\alpha_j,\alpha_k)} \geq 10^{-21} \cdot \frac{n^\frac{3}{2}}{\ell^6}.
	\end{equation*}
\end{lemma}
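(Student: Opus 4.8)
The plan is to obtain this from Corollary \ref{ijknoiseinverse} by a \emph{change of target} together with a dilution estimate. The crucial point is that the hypotheses of Lemma \ref{ijkrange}, and hence of Corollary \ref{ijknoiseinverse}, restrict only $\ell$, $T$, and the integers $\beta_i,\beta_j,\beta_k$; they say nothing about the identities of the three tracked cards. Consequently I may apply Corollary \ref{ijknoiseinverse} verbatim with $\alpha_i,\alpha_j,\alpha_k$ playing the roles of $i,j,k$, keeping the same $\ell$, $T$, $\beta_w$ and $b_w$. Writing $v_w := p(\alpha_w) - \beta_w m \bmod (2n-m+1)$ for $w \in \{i,j,k\}$, the corollary then says: if one draws positions independently and uniformly from the small balls
\[
	B_w := \{\, Z : |p(Z) - v_w|_M < 6\ell \,\},
\]
and records which cards occupy those positions after $T$ steps together with the number of Heads those cards flip in position $m$ during those steps, then with probability at least $3 \cdot 10^{-20} \, n^{3/2}/\ell^6$ the cards are exactly $\alpha_i,\alpha_j,\alpha_k$ and the Heads counts are exactly $b_i,b_j,b_k$.

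Next I compare $B_w$ with the large balls $B_w' := \{\, Z : |p(Z) - p(w)|_M < 8\ell \,\}$ appearing in the present lemma. The hypothesis $|(p(\alpha_w) - \beta_w m) - p(w)|_M = |v_w - p(w)|_M < 2\ell$ and the triangle inequality for $|\cdot|_M$ give $B_w \subseteq B_w'$. Since $Z_i',Z_j',Z_k'$ are independent and uniform on $B_i',B_j',B_k'$ and independent of the shuffle, conditioning on the event $\{Z_i' \in B_i,\ Z_j' \in B_j,\ Z_k' \in B_k\}$, which has probability $(\#B_i\,\#B_j\,\#B_k)/(\#B_i'\,\#B_j'\,\#B_k')$, makes $(Z_i',Z_j',Z_k')$ uniform on $B_i \times B_j \times B_k$ and leaves it independent of the shuffle — exactly the setup of Corollary \ref{ijknoiseinverse} applied to $\alpha_i,\alpha_j,\alpha_k$. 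Since the event whose probability we want is a function of $(Z_i',Z_j',Z_k')$ and the shuffle only, its conditional probability given this event is at least $3 \cdot 10^{-20}\,n^{3/2}/\ell^6$, so
\[
	\probp{(R_i,R_j,R_k)=(b_i,b_j,b_k),\ (c_i',c_j',c_k')=(\alpha_i,\alpha_j,\alpha_k)} \ \geq\ \frac{\#B_i\,\#B_j\,\#B_k}{\#B_i'\,\#B_j'\,\#B_k'} \cdot 3 \cdot 10^{-20} \cdot \frac{n^{3/2}}{\ell^6}.
\]

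The only remaining task — and the one place that needs care rather than pure bookkeeping — is to show $\#B_w \geq \tfrac13 \#B_w'$ once $n$ is large. This is where the geometry of $|\cdot|_M$ enters: the image of $p$ is the block of consecutive values $\{1,\dots,m\}$ together with the arithmetic progression $\{m+2,m+4,\dots,2n-m\}$, so any cyclic value-interval that does not wrap around the whole cycle and has length $L$ contains between $\tfrac L2 - O(1)$ and $L + O(1)$ positions, while a ball that does wrap around contains all $n$ positions. Hence $\#B_w \geq 6\ell - O(1)$ (or $\#B_w = n$), and $\#B_w' \leq \min(16\ell, n)$; using $\ell > 2\sqrt n$ one checks that $\#B_w'/\#B_w < 3$ in every case for $n$ large. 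Cubing gives $(\#B_i\,\#B_j\,\#B_k)/(\#B_i'\,\#B_j'\,\#B_k') \geq \tfrac1{27}$, and since $\tfrac1{27} \cdot 3 \cdot 10^{-20} > 10^{-21}$ the claimed bound follows. I expect the interval count for $|\cdot|_M$ to be the main (albeit minor) obstacle, since one must track the two gap values $m+1$ and $2n-m+1 \equiv 0$ and the possibility of a ball covering the whole cycle; the change-of-target and conditioning steps are routine once stated carefully.
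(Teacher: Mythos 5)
Your proposal is correct and follows essentially the same route as the paper: contain the small ball centered at $p(\alpha_w)-\beta_w m$ inside the larger ball centered at $p(w)$, condition on $Z_w'$ landing in the small ball, and apply Corollary \ref{ijknoiseinverse} with $\alpha_i,\alpha_j,\alpha_k$ as the base cards. The only difference is cosmetic: the paper quotes the (slightly garbled but immaterial) cardinality bounds $|\Gamma_w| \geq 3\ell$, $|\Lambda_w| \leq 8\ell$ yielding the ratio $3/8$, while you carry out the interval count more carefully (including the wrap-around case, which the paper leaves implicit) and settle for the marginally weaker ratio $1/3$; since $\frac{1}{27}\cdot 3\cdot 10^{-20} > 10^{-21}$, both versions close.
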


\begin{proof}
	Let $\Lambda_i$ be the set of all $\zeta$ such that that $|p(\zeta)-p(i)|_M < 8\ell$. Let $\Gamma_i$ be the set of all $\omega$ such that $|(p(\alpha_i) -\beta_i m ) -p (\omega)|_M < 6\ell$. Note that $\Gamma_i$ is a subset of $\Lambda_i$ because $|(p(\alpha_i) - \beta_i m) - p(i)|_M< 2\ell$. Since $|p(z+1)-p(z)|_M \in \{1,2\}$ for all $z$ we see that $|\Gamma_i| \geq 3\ell$ and $|\Lambda_i| \leq 8\ell$. Since $Z_i'$ is chosen uniformly from $\Lambda_i$ we get
	\begin{equation}
		\probp{Z_i' \in \Gamma_i} \geq \frac{3}{8}.
	\end{equation}
	Define $\Gamma_j,\Gamma_k$ similarly for $\alpha_j,\alpha_k$. Since $Z_i',Z_j',Z_k'$ are chosen independently we have
	\begin{equation}
		\probp{Z_i' \in \Gamma_i, Z_j' \in \Gamma_j, Z_k' \in \Gamma_k} \geq \parens*{\frac{3}{8}}^3.
	\end{equation}
	Conditioning on $Z_i' \in \Gamma_i, Z_j' \in \Gamma_j, Z_k' \in \Gamma_k$ we can apply Corollary \ref{ijknoiseinverse} to get
	\begin{equation}
		\probp{(R_i,R_j,R_k) = (b_i,b_j,b_k), (c_i,c_j,c_k) = (\alpha_i,\alpha_j,\alpha_k) \ | \ Z_i' \in \Gamma_i, Z_j' \in \Gamma_j, Z_k' \in \Gamma_k} \geq 3 \cdot 10^{-20} \cdot \frac{n^\frac{3}{2}}{\ell^6}.
	\end{equation}
	So,
	\begin{equation}
		\probp{(R_i,R_j,R_k) = (b_i,b_j,b_k), (c_i,c_j,c_k) = (\alpha_i,\alpha_j,\alpha_k)} \geq \parens*{\frac{3}{8}}^3 \cdot 3 \cdot 10^{-20} \cdot \frac{n^\frac{3}{2}}{\ell^6}.
	\end{equation}
\end{proof}

Recall that our goal with these lemmas is to show that certain cards end up in an approximately uniform distribution. To this point we have shown that the triplets of cards we are interested in end up in ``reasonable'' positions with probability greater than or equal to
\begin{equation}
	(\text{constant}) \cdot \left(\frac{\sqrt{n}}{\ell} \cdot \frac{1}{\ell}\right)^3. \label{pregammabound}
\end{equation}
This is good because it matches the $2\frac{\ell}{\sqrt{n}}$ choices for $\beta_i$ and the $2\ell$ to $4\ell$ outcomes for $\alpha_i$. However there is one problem. Consider the case of $m = \frac{n}{2}$. In this case $3m \equiv 1 \mod 2n-m+1$. This means that, for example, if we for example choose $\beta_i = 2$ the positions nearby $p(i)-\beta_i m$ will be almost the same set of positions as if we picked $\beta_i = 5$ or $\beta_i = -1$ etc. In the $m = \frac{n}{2}$ case we only have a constant times $\ell^3$ possible values for $\alpha_i,\alpha_j,\alpha_k$ across all choices of $\beta_i,\beta_j,\beta_k$. This does not match our probability in line (\refeq{pregammabound}). \\
\\
To remedy this we introduce a constant $\gamma$ which depends on $n,m$ and $\ell$. It will give us the ability to increase the probability beyond that of (\refeq{pregammabound}) in cases like $m = \frac{n}{2}$.

\begin{definition}
	Let $\gamma = \gamma(\ell,n,m)$ be defined as follows:
	\begin{equation*}
		\gamma = \left| \left\{ \kappa \in \mathbb{N} \ : \ |\kappa m|_M < \ell, \ \kappa < \frac{\ell}{\sqrt{n}}\right\}\right|.
	\end{equation*}
\end{definition}

Note that $\kappa = 0$ is always an element of the set and so $\gamma \geq 1$. It will be important to be able to use $\gamma$ to count how many cards there are with norms less than or equal to $\ell$, so we prove the following Lemma.

\begin{lemma}\label{gammaovercount}
	Let $N_\ell$ be defined by
	\begin{equation}
		N_\ell = \left\{\text{positions } \omega \text{ such that } \omega \equiv a + bm \mod 2n-m+1 \text{ where } |a| \leq \ell, \ |b| \leq \frac{\ell}{\sqrt{n}}\right\}.
	\end{equation}
	Then,
	\begin{equation}
		|N_\ell| \geq \frac{\ell^2}{2 \gamma \sqrt{n}}
	\end{equation}
\end{lemma}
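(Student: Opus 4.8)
The plan is to exhibit $N_\ell$ (up to a constant factor) as the image of a box of integer points under the map $\phi(a,b)=a+bm \bmod q$, where $q:=2n-m+1$, and then to bound the largest fibre of $\phi$ by essentially $\gamma$. The first move is to discard the negative half of the box: put
\[
P^{+}=\{(a,b)\in\mathbb{Z}^{2}\ :\ 0\le a\le \ell,\ 0\le b\le \ell/\sqrt n\},\qquad N_\ell^{+}=\{\phi(a,b):(a,b)\in P^{+}\}\subseteq N_\ell .
\]
Then $|P^{+}|=(\lfloor \ell\rfloor+1)(\lfloor \ell/\sqrt n\rfloor+1)>\ell^{2}/\sqrt n$, and since every $\omega\in N_\ell^{+}$ accounts for $|\phi^{-1}(\omega)\cap P^{+}|$ of the points of $P^{+}$, we get $|P^{+}|\le |N_\ell^{+}|\cdot M$ where $M:=\max_{\omega}|\phi^{-1}(\omega)\cap P^{+}|$. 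Hence it suffices to prove $M\le 2\gamma$: then $|N_\ell|\ge |N_\ell^{+}|\ge |P^{+}|/M\ge \ell^{2}/(2\gamma\sqrt n)$.

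The heart of the argument is the bound on $M$. Fix $\omega$. In the regime of interest $\ell\le n<2n-m=q-1$, so the interval $\{0,\dots,\lfloor\ell\rfloor\}$ is shorter than $q$ and for each $b$ there is at most one $a$ in it with $a+bm\equiv\omega$; thus the fibre $\phi^{-1}(\omega)\cap P^{+}$ is determined by its set of second coordinates $\beta_{1}<\beta_{2}<\dots<\beta_{r}$ (so $r=|\phi^{-1}(\omega)\cap P^{+}|$), with matching $a_{1},\dots,a_{r}\in\{0,\dots,\lfloor\ell\rfloor\}$ satisfying $a_i+\beta_i m\equiv\omega\pmod q$. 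Subtracting the relation for index $1$ from that for index $i$ gives $(\beta_{i}-\beta_{1})m\equiv a_{1}-a_{i}\pmod q$ with $|a_{1}-a_{i}|\le\ell$, so $|(\beta_{i}-\beta_{1})m|_{M}\le\ell$; and clearly $0\le\beta_{i}-\beta_{1}\le\ell/\sqrt n$. Therefore the $r$ values $\beta_{i}-\beta_{1}$ (for $i=1,\dots,r$) are distinct members of $\{d\ge0:\ d\le\ell/\sqrt n,\ |dm|_{M}\le\ell\}$, whence $r\le\gamma^{+}$, the size of that set. It is precisely because we restricted to the non-negative octant $P^{+}$ — so $a$-coordinates differ by at most $\ell$ and $b$-coordinates by at most $\ell/\sqrt n$ — that the relevant ``collision directions'' $d$ are governed by $\gamma$ and not by a doubled version of it; using the full symmetric box $[-\ell,\ell]\times[-\ell/\sqrt n,\ell/\sqrt n]$ at this step would cost a factor of order $\gamma$.

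What remains is to compare the ``closed'' count $\gamma^{+}=|\{d\ge0:d\le\ell/\sqrt n,\ |dm|_{M}\le\ell\}|$ with the ``open'' count $\gamma$ in the statement. Because $d$ and $|dm|_{M}$ are integers, these two sets coincide unless $\ell$ or $\ell/\sqrt n$ is an integer; in general the extra elements of the closed set are the $d\le\ell/\sqrt n$ with $|dm|_{M}=\ell$ (together with possibly $d=\ell/\sqrt n$ itself), and since each of the two congruences $dm\equiv\pm\ell\pmod q$ has at most a bounded number of solutions in an interval of length $<q$, one obtains $\gamma^{+}\le 2\gamma$. I expect this last bookkeeping — reconciling the strict inequalities in the definition of $\gamma$ with the non-strict ones that fall out of the fibre computation, which is exactly where the factor $2$ in the statement is spent — to be the only genuinely fussy point; everything else is the pigeonhole inequality $|N_\ell^{+}|\ge|P^{+}|/M$ together with the one-line ``shift by the smallest $\beta$'' argument for $M\le\gamma^{+}$.
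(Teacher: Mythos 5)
There is a genuine gap: your $N_\ell^{+}=\phi(P^{+})$ is a set of \emph{residues} mod $q=2n-m+1$, whereas the paper's $N_\ell$ is a set of \emph{deck positions}. (The lemma statement says ``positions $\omega$''; the paper's own proof makes this explicit by introducing the auxiliary set $\mathcal{K}_\ell$ of all residues and then restricting to $\tilde N_\ell\subset\mathcal{K}_\ell$ consisting of residues in the image of $p$; and the later uses of the lemma, e.g.\ in Theorem~\ref{collidecomplete}, write the condition as $p(\omega)=a+bm$.) Since $p$ maps $\{1,\dots,n\}$ onto $\{1,\dots,m\}\cup\{m+2,m+4,\dots,2n-m\}$, roughly half the residues above $m$ are not $p$-values, so the inclusion $N_\ell^{+}\subseteq N_\ell$ in your first display is false, and you lose a factor of about $2$ that your bookkeeping does not recover. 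The paper's factor of $2$ in the denominator is spent precisely on this $p$-image step, not on the strict/non-strict reconciliation.

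Your strict/non-strict analysis is also where your own factor of $2$ goes, but that bookkeeping is both unnecessary and not quite correct as stated. It is unnecessary because you can avoid it at no cost by shrinking the box by one: take $P^{+}=\{0,\dots,\lceil\ell\rceil-1\}\times\{0,\dots,\lceil\ell/\sqrt n\rceil-1\}$, so that $|P^{+}|\ge \ell^{2}/\sqrt n$ still holds, while differences of $a$-coordinates are $<\ell$ and differences of $b$-coordinates are $<\ell/\sqrt n$, giving $M\le\gamma$ directly with the paper's strict inequalities. It is not quite correct because the claim $\gamma^{+}\le 2\gamma$ relies on ``$dm\equiv\pm\ell\pmod q$ has at most a bounded number of solutions'' in the relevant range, which is not automatic when $\gcd(m,2n-m+1)>1$. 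A correct repair is: shrink the box as above to get $M\le\gamma$ for free, then supply the missing $p$-image step (for instance, pair $(a,b)$ with $(a\pm1,b)$ and observe that the complement of the image of $p$ in $\{0,\dots,q-1\}$ contains no two consecutive residues, so at least one member of every such pair of residues is a $p$-value; since each residue arises from at most $M\le\gamma$ points of $P^{+}$, this yields $|N_\ell|\ge |P^{+}|/(2\gamma)\ge\ell^{2}/(2\gamma\sqrt n)$).
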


\begin{proof}
	To bound $|N_\ell|$ from below, we consider a smaller set $\tilde{N}_\ell$ which is defined by
    \begin{equation}
		\tilde{N}_\ell = \left\{\text{positions } \omega \text{ such that } \omega \equiv a + bm \mod 2n-m+1 \text{ where } 0 \leq a \leq \ell, \ 0 \leq b \leq \frac{\ell}{\sqrt{n}}\right\}.
	\end{equation}
	\\
    Since $\tilde{N}_\ell \subset N_\ell$ we have $|N_\ell| \geq |\tilde{N}_\ell|$. Let,
    \begin{equation}
        \mathcal{K}_\ell = \left\{\omega \in \{1,\dots,2n-m\} \text{ such that } \omega \equiv a + bm \mod 2n-m+1 \text{ where } 0 \leq a \leq \ell, \ 0 \leq b \leq \frac{\ell}{\sqrt{n}}\right\}
    \end{equation}
    Basically, let $\mathcal{K}_\ell$ be the superset of $N_\ell'$ which includes values of $\omega$ that do not refer to positions in the deck. There are $\ell$ choices for $a$ in $\mathcal{K}_\ell$, and $\frac{\ell}{\sqrt{n}}$ choices for $b$. However, we may have some choices $a,b$ and $a',b'$ such that $a+bm \equiv a'+b'm$. Without loss of generality, assume $b' > b$. Then,
    \begin{equation}
        (b'-b)m \equiv a'-a \mod 2n-m+1
    \end{equation}
	and $|a'-a|_M < \ell$. This means that $b'-b$ is associated with some $\kappa \in \mathbb{N}$ such that $|\kappa m|_M < \ell$ and $\kappa < \frac{\ell}{\sqrt{n}}$. We know there are at most $\gamma$ such $\kappa$. Thus, we can divide the $\ell \cdot \frac{\ell}{\sqrt{n}}$ choices for $(a,b)$ into equivalence classes of size at most $\gamma$. This tells us that,
    \begin{equation}
        |\mathcal{K}_\ell| \geq \ell \cdot \frac{\ell}{\sqrt{n}} \cdot \frac{1}{\gamma} = \frac{\ell^2}{\gamma \sqrt{n}}
    \end{equation}
    Recall that $\tilde{N}_\ell$ is the subset of $\mathcal{K}_\ell$ that only includes $\omega \in (1,\dots,2n-m+1)$ such that $p(a) = \omega$ for some $a \in (1,\dots,n)$. Since all $\omega \leq m$ have this property and every other $\omega > m$ has it we get
	\begin{equation}
		|\tilde{N}_\ell| \geq \frac{1}{2}|\mathcal{K}_\ell|.
	\end{equation}
	So we have,
    \begin{equation}
        |N_\ell| \geq |\tilde{N}_\ell| \geq \frac{1}{2}|\mathcal{K}_\ell| \geq \frac{\ell^2}{2\gamma \sqrt{n}}.
    \end{equation}
\end{proof}
\noindent
Now that we have defined $\gamma$ we will incorporate it into our probability bound.
\begin{lemma}\label{ijkgamma}
	Fix positions $i,j,k$ and fix $\ell$ such that  $2\sqrt{n} < \ell < \frac{n}{3}$. Set $T \in [\ell^2,\ell^2+4n]$ such that $T-\floor{\frac{T}{2n}}m \equiv 0 \mod 2n-m+1$. Fix any $\beta_i',\beta_j',\beta_k' \in \mathbb{Z} \cap \left( - \frac{\ell}{\sqrt{n}}, \frac{\ell}{\sqrt{n}} \right)$. Fix any cards $\alpha_i',\alpha_j',\alpha_k'$ such that
	\begin{align*}
		|(p(\alpha_i')-\beta_i'm) - p(i)|_M &< \ell, \\
		|(p(\alpha_j')-\beta_j'm) - p(j)|_M &< \ell, \\
		|(p(\alpha_k')- \beta_k'm) - p(k)|_M &< \ell.
	\end{align*}
	Now let $\mathcal{Z}_i,\mathcal{Z}_j,\mathcal{Z}_k$ be uniformly chosen from all positions such that
	\begin{align*}
		|p(Z_i')-p(i)|_M &< 8\ell, \\
		|p(Z_j')-p(j)|_M &< 8\ell, \\
		|p(Z_k')-p(k)|_M &< 8\ell.
	\end{align*}
	Let $\mathcal{C}_i,\mathcal{C}_j,\mathcal{C}_k$ be the cards which, after $T$ steps end up in positions $Z_i',Z_j',Z_k'$. Then,
	\begin{equation*}
		\probp{(\mathcal{C}_i,\mathcal{C}_j,\mathcal{C}_k) = (\alpha_i',\alpha_j',\alpha'_k)} \geq 10^{-21} \cdot \frac{\gamma^3 n^\frac{3}{2}}{\ell^6}.
	\end{equation*}
\end{lemma}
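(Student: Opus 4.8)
The plan is to deduce this from Lemma \ref{ijkuniform} by summing the bound there over the $\gamma$ many ``near-trivial'' shifts of each $\beta$-parameter that the definition of $\gamma$ supplies. Write $S=\{\kappa\in\mathbb{N}:|\kappa m|_M<\ell,\ \kappa<\ell/\sqrt{n}\}$, so $|S|=\gamma$. For a triple $(\kappa_i,\kappa_j,\kappa_k)\in S^3$, set $\beta_i=\beta_i'-\kappa_i$ if $\beta_i'\geq 0$ and $\beta_i=\beta_i'+\kappa_i$ if $\beta_i'<0$, and define $\beta_j,\beta_k$ from $\kappa_j,\kappa_k$ in the same sign-dependent way. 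Since $0\leq\kappa_i<\ell/\sqrt{n}$ and $|\beta_i'|<\ell/\sqrt{n}$, subtracting when $\beta_i'\geq0$ (resp.\ adding when $\beta_i'<0$) keeps $\beta_i\in\mathbb{Z}\cap(-\ell/\sqrt{n},\ell/\sqrt{n})$, and the map $\kappa_i\mapsto\beta_i$ is injective. Put $b_i=\beta_i+\floor*{\frac{T}{2n}}$ and likewise $b_j,b_k$.

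Next I would verify that Lemma \ref{ijkuniform} applies to each such triple with $\alpha_i=\alpha_i'$, $\alpha_j=\alpha_j'$, $\alpha_k=\alpha_k'$: the value $T$ already satisfies the required congruence $T-\floor*{\frac{T}{2n}}m\equiv0$, the shifted parameters $\beta_i,\beta_j,\beta_k$ lie in the required interval, and by the triangle inequality for $|\cdot|_M$,
\[
|(p(\alpha_i')-\beta_i m)-p(i)|_M\leq|(p(\alpha_i')-\beta_i'm)-p(i)|_M+|\kappa_i m|_M<\ell+\ell=2\ell,
\]
and similarly for $j$ and $k$, so the hypothesis $|(p(\alpha_i)-\beta_i m)-p(i)|_M<2\ell$ of Lemma \ref{ijkuniform} is met. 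Lemma \ref{ijkuniform} then yields
\[
\probp{(R_i,R_j,R_k)=(b_i,b_j,b_k),\ (\mathcal{C}_i,\mathcal{C}_j,\mathcal{C}_k)=(\alpha_i',\alpha_j',\alpha_k')}\geq 10^{-21}\cdot\frac{n^{3/2}}{\ell^6}
\]
for every $(\kappa_i,\kappa_j,\kappa_k)\in S^3$, where $R_i,R_j,R_k$ count the Heads flipped by $\mathcal{C}_i,\mathcal{C}_j,\mathcal{C}_k$ while in position $m$ during the first $T$ steps.

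Finally I would observe that these $\gamma^3$ events are pairwise disjoint: on each of them $\mathcal{C}_i=\alpha_i'$, so $R_i$ is forced to equal the number of Heads that the fixed card $\alpha_i'$ flips in position $m$ in $T$ steps, and the event records this number as $b_i$; since $\kappa_i\mapsto b_i$ is injective (and likewise for $j,k$), distinct triples $(\kappa_i,\kappa_j,\kappa_k)$ give distinct $(b_i,b_j,b_k)$, hence disjoint events. Each event is contained in $\{(\mathcal{C}_i,\mathcal{C}_j,\mathcal{C}_k)=(\alpha_i',\alpha_j',\alpha_k')\}$, so summing the $\gamma^3$ lower bounds gives
\[
\probp{(\mathcal{C}_i,\mathcal{C}_j,\mathcal{C}_k)=(\alpha_i',\alpha_j',\alpha_k')}\geq\gamma^3\cdot10^{-21}\cdot\frac{n^{3/2}}{\ell^6},
\]
as claimed. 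The one delicate point — the main obstacle — is arranging the bookkeeping so that the events are genuinely disjoint while each still satisfies both the interval constraint on $\beta$ and the $|\cdot|_M<2\ell$ constraint of Lemma \ref{ijkuniform} simultaneously; this is precisely why the shifts are applied to $\beta$ (whose value is read off by $R_i$, giving disjointness automatically) rather than to $\alpha$, and why the sign of $\beta_i'$ must dictate the direction of the shift so as to stay inside $(-\ell/\sqrt{n},\ell/\sqrt{n})$.
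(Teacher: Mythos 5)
Your proposal matches the paper's proof: both fix a triple $(\kappa_i,\kappa_j,\kappa_k)$ from the set defining $\gamma$, shift each $\beta'$-parameter by $\mp\kappa$ depending on its sign to stay in $(-\ell/\sqrt{n},\ell/\sqrt{n})$, use the triangle inequality for $|\cdot|_M$ to verify the $<2\ell$ hypothesis of Lemma \ref{ijkuniform}, apply that lemma, and sum over the $\gamma^3$ choices. The one thing you make explicit that the paper leaves implicit is why the $\gamma^3$ events are disjoint (the $R$-components $(b_i,b_j,b_k)$ determine $(\kappa_i,\kappa_j,\kappa_k)$ injectively), which is a small but genuine improvement in rigor; otherwise the argument is the same.
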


\begin{proof}
	Fix any $\kappa_i \in \mathbb{N}$ such that $|\kappa_i m|_M < \ell$ and $\kappa_i < \frac{\ell}{\sqrt{n}}$. If $\beta_i' \geq 0$ then let $\beta_i = \beta_i' - \kappa_i$ and if $\beta_i' < 0$ then let $\beta_i = \beta_i' + \kappa_i$. Note that this ensures $\beta_i \in \left( - \frac{\ell}{\sqrt{n}}, \frac{\ell}{\sqrt{n}} \right)$. Now because $|\kappa_i m|_M < \ell$ we have
	\begin{align}
		|(p(\alpha_i') - \beta_i m) - p(i)|_M &= |(p(\alpha_i') - \beta_i' m) \pm \kappa_i m - p(i)|_M \\
        &\leq |(p(\alpha_i') - \beta_i' m) - p(i)|_M + |\kappa_i m|_M \\
        &< 2\ell
	\end{align}
	Now fix some $\kappa_j,\kappa_k$ from the same subset of $\mathbb{N}$ as $\kappa_i$ was chosen from. Then we can use Lemma \ref{ijkuniform} to show that the probability of
	\begin{equation}
		(R_i,R_j,R_k) = \parens*{\beta_i-\floor*{\frac{T}{2n}},\beta_j-\floor*{\frac{T}{2n}},\beta_k-\floor*{\frac{T}{2n}}} \text{ and } (\mathcal{C}_i,\mathcal{C}_j,\mathcal{C}_k) = (\alpha_i',\alpha_j',\alpha_k')
	\end{equation}
	is at least 
	$$10^{-21} \cdot \frac{n^\frac{3}{2}}{\ell^6}.$$
    Recall that $\beta_i = \beta_i' \pm \kappa_i$ and $\beta_j = \beta_j' \pm \kappa_j$ and $\beta_k = \beta_k' \pm \kappa_k$ for any of the $\gamma^3$ choices for $(\kappa_i,\kappa_j,\kappa_k)$. Summing over all $\gamma^3$ choices gives us
	\begin{equation}
		\probp{(\mathcal{C}_i,\mathcal{C}_j,\mathcal{C}_k) = (\alpha_i',\alpha_j',\alpha_k')} \geq \gamma^3 \cdot 10^{-21} \cdot \frac{n^\frac{3}{2}}{\ell^6}.
	\end{equation}
\end{proof}

Be rewording Lemma \ref{ijkgamma} the following Corollary is immediate:
\begin{corollary}\label{ijkreverse}
	Fix positions $i,j,k$ and fix $\ell$ such that  $2\sqrt{n} < \ell < \frac{n}{3}$. Set $T \in [\ell^2,\ell^2+4n]$ such that $T-\floor{\frac{T}{2n}}m \equiv 0 \mod 2n-m+1$. Fix any $\beta_i',\beta_j',\beta_k' \in \mathbb{Z} \cap \left( - \frac{\ell}{\sqrt{n}}, \frac{\ell}{\sqrt{n}} \right)$. Fix any positions $f_i,f_j,f_k$ such that
	\begin{align*}
		|(p(f_i)-\beta_i'm) - p(i)|_M &< \ell, \\
		|(p(f_j)-\beta_j'm) - p(j)|_M &< \ell, \\
		|(p(f_k)-\beta_k'm) - p(k)|_M &< \ell.
	\end{align*}
	Now let $i',j',k'$ be uniformly chosen from all cards such that
	\begin{align*}
		|p(i')-p(i)|_M &< 8\ell, \\
		|p(j')-p(j)|_M &< 8\ell, \\
		|p(k')-p(k)|_M &< 8\ell.
	\end{align*}
	Let $i_{(-T)}',j_{(-T)}',k_{(-T)}'$ be the locations of $i',j',k'$ respectively after $T$ steps of the inverse overlapping cycles shuffle. Then,
	\begin{equation*}
		\probp{(i_{(-T)}',j_{(-T)}',k_{(-T)}') = (f_i,f_j,f_k)} \geq 10^{-21} \cdot \frac{\gamma^3 n^\frac{3}{2}}{\ell^6}.
	\end{equation*}
\end{corollary}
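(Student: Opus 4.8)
The plan is to recognize Corollary \ref{ijkreverse} as Lemma \ref{ijkgamma} read ``from the other side'', with the forward shuffle replaced by its inverse. In Lemma \ref{ijkgamma} one samples independent target positions $\mathcal{Z}_i,\mathcal{Z}_j,\mathcal{Z}_k$, uniform over the radius-$8\ell$ balls (in $|\cdot|_M$) about $p(i),p(j),p(k)$, and lower bounds the probability that the cards which $T$ forward steps carry into those positions are the prescribed cards $\alpha_i',\alpha_j',\alpha_k'$. First I would write this probability, after conditioning on the $\mathcal{Z}$'s, as $\probp{\pi_T(\alpha_i')=\mathcal{Z}_i,\ \pi_T(\alpha_j')=\mathcal{Z}_j,\ \pi_T(\alpha_k')=\mathcal{Z}_k}$, where $\pi_T$ is the permutation of $T$ forward steps, and then use the tautology $\pi_T(a)=b \iff \pi_T^{-1}(b)=a$ to rewrite it as $\probp{\pi_T^{-1}(\mathcal{Z}_i)=\alpha_i',\ \pi_T^{-1}(\mathcal{Z}_j)=\alpha_j',\ \pi_T^{-1}(\mathcal{Z}_k)=\alpha_k'}$.

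Next I would observe that $T$ steps of the inverse overlapping cycles shuffle realize, in distribution, the permutation $\pi_T^{-1}$ — this is the identity on which Theorem \ref{inverseshuffle} is built, and it follows at once from the one-step shuffles being i.i.d. (so a product of them has a law independent of the order). Under this identification $\pi_T^{-1}(\mathcal{Z}_i)$ is exactly $(\mathcal{Z}_i)_{(-T)}$, the position reached by card $\mathcal{Z}_i$ after $T$ inverse steps, so the probability above becomes $\probp{(\mathcal{Z}_i)_{(-T)}=\alpha_i',\ (\mathcal{Z}_j)_{(-T)}=\alpha_j',\ (\mathcal{Z}_k)_{(-T)}=\alpha_k'}$. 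Renaming $i'=\mathcal{Z}_i$, $j'=\mathcal{Z}_j$, $k'=\mathcal{Z}_k$ and $f_i=\alpha_i'$, $f_j=\alpha_j'$, $f_k=\alpha_k'$, this is precisely $\probp{(i_{(-T)}',j_{(-T)}',k_{(-T)}')=(f_i,f_j,f_k)}$. I would then check that the hypotheses transfer verbatim: the set $\{z:|p(z)-p(i)|_M<8\ell\}$ over which $\mathcal{Z}_i$ is drawn is literally the same subset of $\{1,\dots,n\}$ as the set $\{c:|p(c)-p(i)|_M<8\ell\}$ over which $i'$ is drawn; the constraint $|(p(\alpha_i')-\beta_i'm)-p(i)|_M<\ell$ is exactly the constraint imposed on $f_i$; and $\gamma=\gamma(\ell,n,m)$, $n$, $\ell$, $T$, and the $\beta'$'s are untouched. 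Hence the bound $10^{-21}\,\gamma^3 n^{3/2}/\ell^6$ carries over unchanged, which is the claim.

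I expect no genuine analytic obstacle here — the argument is pure bookkeeping — so the only thing requiring care is not to conflate this source$\leftrightarrow$target duality (which is just inverting a permutation) with the $\sigma$-conjugation used in Corollary \ref{sdspreadinverse}. Here one does \emph{not} need invariance of $\norm{\cdot}$ or $|\cdot|_M$ under $\sigma$: both Lemma \ref{ijkgamma} and Corollary \ref{ijkreverse} already phrase every neighborhood purely in terms of the labels $1,\dots,n$, so re-reading ``positions'' as ``cards'' and vice versa changes nothing, and once the dictionary $(\mathcal{Z}_i,\alpha_i')\leftrightarrow(i',f_i)$ is fixed the corollary is immediate.
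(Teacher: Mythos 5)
Your argument is correct and is precisely what the paper means when it says Corollary \ref{ijkreverse} is ``immediate by rewording'' Lemma \ref{ijkgamma}: you exchange sources and targets via $\pi_T(a)=b\iff\pi_T^{-1}(b)=a$, note that $\pi_T^{-1}$ has the same law as $T$ steps of the inverse shuffle (since the one-step generators are i.i.d.\ and closed under inversion as a two-element set), and check that the sampling sets and constraints on $(\mathcal{Z}_i,\alpha_i')$ and $(i',f_i)$ are literally identical. Your closing remark that this is a source$\leftrightarrow$target duality rather than the $\sigma$-conjugation of Theorem \ref{inverseshuffle} is also correct and a useful clarification, since the paper leaves the distinction unstated.
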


We now exploit the similarity between the inverse overlapping cycles shuffle and the normal forwards overlapping cycles shuffle to get the equivalent statement for $T$ positive steps.
\begin{proposition}\label{montegamma}
	Fix positions $i,j,k$ and fix $\ell$ such that  $2\sqrt{n} < \ell < \frac{n}{3}$. Set $T \in [\ell^2,\ell^2+4n]$ such that $T-\floor{\frac{T}{2n}}m \equiv 0 \mod 2n-m+1$. Fix any $\beta_i,\beta_j,\beta_k \in \mathbb{Z} \cap \left( - \frac{\ell}{\sqrt{n}}, \frac{\ell}{\sqrt{n}} \right)$. Fix any positions $f_i,f_j,f_k$ such that
	\begin{align*}
		|p(f_i) - p(i) + \beta_i m|_M &< \ell, \\
		|p(f_j) - p(j) + \beta_j m|_M &< \ell, \\
		|p(f_k) - p(k) + \beta_k m|_M &< \ell.
	\end{align*}
	Now let $i',j',k'$ be uniformly chosen from all cards such that
	\begin{align*}
		|p(i')-p(i)|_M &< 8\ell, \\
		|p(j')-p(j)|_M &< 8\ell, \\
		|p(k')-p(k)|_M &< 8\ell.
	\end{align*}
	Then,
	\begin{equation*}
		\probp{(i_T',j_T',k_T') = (f_i,f_j,f_k)} \geq 10^{-21} \cdot \frac{\gamma^3 n^\frac{3}{2}}{\ell^6}.
	\end{equation*}
\end{proposition}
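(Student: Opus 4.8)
The proof is a transfer of Corollary \ref{ijkreverse} through the symmetry of Theorem \ref{inverseshuffle}. The plan is to apply that corollary to the ``reflected'' positions $\sigma(i),\sigma(j),\sigma(k)$ with targets $\sigma(f_i),\sigma(f_j),\sigma(f_k)$, and then read the resulting statement back as a statement about the forward shuffle. Two preliminary observations make this work: first, $\sigma$ is an involution (one sees directly from its two-line form that $\sigma^2=\mathrm{id}$, so $\sigma^{-1}=\sigma$); second, by Proposition \ref{inversedist} we have $p(f_i)-p(i)\equiv p(\sigma(i))-p(\sigma(f_i)) \pmod{2n-m+1}$, so that $\sigma$ \emph{reflects} $p$-distances, while by Corollary \ref{inversenorm} the quantity $|\cdot|_M$ is genuinely $\sigma$-invariant.

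First I would record the permutation-level identity. Let $\pi_T$ be the forward shuffle after $T$ steps and $\rho_T$ the inverse shuffle after $T$ steps, so $i_T=\pi_T(i)$ and $i_{(-T)}=\rho_T(i)$; Theorem \ref{inverseshuffle} gives $\rho_T\stackrel{d}{=}\sigma\pi_T\sigma^{-1}=\sigma\pi_T\sigma$. Hence, for any cards $u,v,w$ and positions $g_u,g_v,g_w$,
\begin{equation*}
	\probp{u_{(-T)}=g_u,\ v_{(-T)}=g_v,\ w_{(-T)}=g_w}=\probp{(\sigma u)_T=\sigma g_u,\ (\sigma v)_T=\sigma g_v,\ (\sigma w)_T=\sigma g_w}.
\end{equation*}

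Next I would apply Corollary \ref{ijkreverse} with $\sigma(i),\sigma(j),\sigma(k)$ in place of $i,j,k$, the same $\ell$ and $T$ (the constraints $2\sqrt n<\ell<n/3$, $T\in[\ell^2,\ell^2+4n]$, and the congruence defining $T$ depend only on $n,m$), the same $\beta_i,\beta_j,\beta_k$ in place of $\beta_i',\beta_j',\beta_k'$, and $\sigma(f_i),\sigma(f_j),\sigma(f_k)$ in place of $f_i,f_j,f_k$. Its hypothesis for the first coordinate follows since Proposition \ref{inversedist} gives $p(\sigma(f_i))-p(\sigma(i))\equiv p(i)-p(f_i)$, so $|(p(\sigma f_i)-\beta_i m)-p(\sigma i)|_M=|p(i)-p(f_i)-\beta_i m|_M=|p(f_i)-p(i)+\beta_i m|_M<\ell$ by symmetry of $|\cdot|_M$ and the hypothesis of Proposition \ref{montegamma}; the other two coordinates are identical. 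Since $\gamma=\gamma(\ell,n,m)$ does not depend on the positions, it is unchanged. Corollary \ref{ijkreverse} then produces cards $i'',j'',k''$ uniform over $\{z:|p(z)-p(\sigma i)|_M<8\ell\}$ (and similarly for $j,k$) with
\begin{equation*}
	\probp{i''_{(-T)}=\sigma f_i,\ j''_{(-T)}=\sigma f_j,\ k''_{(-T)}=\sigma f_k}\geq 10^{-21}\cdot\frac{\gamma^3 n^{3/2}}{\ell^6}.
\end{equation*}

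Finally I would feed this into the identity above with $u=i''$, $g_u=\sigma f_i$, etc., obtaining $\probp{(\sigma i'')_T=f_i,\ (\sigma j'')_T=f_j,\ (\sigma k'')_T=f_k}$ on the left (using $\sigma(\sigma f_i)=f_i$), and then substitute $i'=\sigma(i'')$, $j'=\sigma(j'')$, $k'=\sigma(k'')$. Because $\sigma$ is an involution and $|\cdot|_M$ is $\sigma$-invariant (Corollary \ref{inversenorm}), $|p(i'')-p(\sigma i)|_M=|p(\sigma i'')-p(i)|_M=|p(i')-p(i)|_M$, so $i'$ is uniform over exactly $\{z:|p(z)-p(i)|_M<8\ell\}$, the range appearing in Proposition \ref{montegamma} (and likewise for $j',k'$). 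This gives $\probp{i'_T=f_i,\ j'_T=f_j,\ k'_T=f_k}\geq 10^{-21}\gamma^3 n^{3/2}/\ell^6$, which is the claim. There is no substantive obstacle here; the only thing demanding care is tracking the sign reversal of Proposition \ref{inversedist} — which is precisely what turns the $-\beta m$ of Corollary \ref{ijkreverse} into the $+\beta m$ of Proposition \ref{montegamma} — and checking that the uniform laws on the $8\ell$-balls correspond under $\sigma$.
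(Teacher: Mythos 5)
Your proposal is correct and follows essentially the same route as the paper: transfer Corollary \ref{ijkreverse} (the inverse-shuffle version) to the forward shuffle via Theorem \ref{inverseshuffle}, using Proposition \ref{inversedist} to flip the sign of $p$-differences (turning $-\beta m$ into $+\beta m$) and Corollary \ref{inversenorm} to keep $|\cdot|_M$ fixed. The paper states this more tersely, invoking "the equivalent result will hold" after noting how distances transform; you carry out the substitution explicitly, including the observations that $\sigma$ is an involution and that the uniform laws on $8\ell$-balls correspond under $\sigma$, all of which is correct and matches the paper's intent.
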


\begin{proof}
	This follows from Corollary \ref{ijkreverse} and  Corollary \ref{inversenorm}. Corollary \ref{ijkreverse} describes the distribution of randomly chosen $i',j',k'$ after $T$ steps of the inverse overlapping cycles shuffle. In particular it says that if $\beta_i,\beta_j,\beta_k$ are appropriately fixed and $i',j',k'$ are uniformly sampled so that
	\begin{align*}
		|p(i')-p(i)|_M < 8\ell, \\
		|p(j')-p(j)|_M < 8\ell, \\
		|p(k')-p(k)|_M < 8\ell,
	\end{align*}
	then for any fixed positions $f_i,f_j,f_k$ such that
	\begin{align*}
		|p(f_i) - p(i) - \beta_i m|_M &< \ell, \\
		|p(f_j) - p(j) - \beta_j m|_M &< \ell, \\
		|p(f_k) - p(k) - \beta_k m|_M &< \ell.
	\end{align*}
	we have that $i',j',k'$ are sufficiently likely to travel to $f_i,f_j,f_k$ in $T$ steps of the inverse overlapping cycles shuffle. By Theorem \ref{inverseshuffle} we know that the inverse overlapping cycles shuffle is the same as the forward overlapping cycles shuffle after a reordering of the cards, and by Proposition \ref{inversedist} we know that $p(a)-p(b)$ becomes $p(b)-p(a)$ under this reordering. So, the equivalent result to Corollary \ref{ijkreverse} for the (forwards) overlapping cycles shuffle will hold if $i',j',k'$ are uniformly sampled so that
 \begin{align*}
		|p(i)-p(i')|_M < 8\ell, \\
		|p(j)-p(j')|_M < 8\ell, \\
		|p(k)-p(k')|_M < 8\ell,
	\end{align*}
    and $f_i,f_j,f_k$ are positions such that
	\begin{align*}
		|p(i) - p(f_i) - \beta_i m|_M &< \ell, \\
		|p(j) - p(f_j) - \beta_j m|_M &< \ell, \\
		|p(k) - p(f_k) - \beta_k m|_M &< \ell.
	\end{align*}
    We use the fact that $|-a|_M = |a|_M$ to finish the proof.
\end{proof}
We have shown that if we choose random cards nearby $i,j,k$ then the distribution of these random cards will be close to uniform over nearby positions. But we really care about the distribution of $i,j,k$ themselves, not some randomly chosen neighbors. We now use a coupling argument to show that the distribution of cards $i,j,k$ themselves will also be approximately uniform. The basic idea is that we choose random neighbors $i',j',k'$ and then show that under a certain coupling that $i,j,k$ will couple with $i',j',k'$ with probability bounded away from 0. The argument will require $i,j,k$ to be spread out from each other, which is why we require Stage 1 and Stage 3. \\
\\
To start with, we describe the coupling. \\
\\
Fix any cards $i,j,k$ such that $\norm{i-j},\norm{i-k},\norm{j-k} > 199\ell$. Let $T = \ell^2$. Let $i',j',k'$ be chosen uniformly from cards such that $|p(i)-p(i')|_M,|p(j)-p(j')|_M,|p(k)-p(k')|_M < 8\ell$. We now run two overlapping cycles shuffles $(\pi_t)$ and $(\pi_t')$ and we track $i,j,k$ in $(\pi_t)$ and $i',j',k'$ in $(\pi_t')$. We couple the two shuffle as follows:
\begin{itemize}
	\item Generate coin sequences $B^i,B^j,B^k$. Have $(\pi_t')$ draw from $B^i$ or $B^j$ or $B^k$ whenever $i'$ or $j'$ or $k'$ is in position $m$. Similarly have $(\pi_t)$ draw from $B^i$ or $B^j$ or $B^k$ whenever $i$ or $j$ or $k$ is in position $m$ with the following exceptions:
	\begin{itemize}
		\item If $i' \leq m < i$ then have $i$ skip $B^i_1$ and draw from $B^i_2$ on its first visit to $m$ and $B^i_3$ on its second visit etc.
		\item If $i \leq m < i'$ then have $i'$ skip $B^i_1$ and draw from $B^i_2$ on its first visit to $m$ and $B^i_3$ on its second visit etc.
		\item Have the equivalent exceptions for $j$ and $k$.
	\end{itemize}
	This will ensure that $i,j,k$ follow the same choice of big coins as their counterparts. For example, imagine that $i_0 = m-3$ and $i_0' = m-1$. Then after one step we have $i_1 = m-2$ and $i_1' = m$. Now if $B^i_1$ is a Heads, then $\pi'$ will flip Heads on its second step. So $i_2 = m-1$ and $i_2' = 1$. One more step and we get $i_3 = m$ and $i'_3 = 2$. Note that this is the first time $i$ reaches position $m$, so now $\pi$ must use $B^i_1$ which is Heads. So $i_4 = 1$ and $i_4' = 3$. In this way, $i$ and $i'$ will always follow the same trajectory of ``big'' coins and $i$ will ``follow behind'' $i'$. The reason we have the aforementioned exceptions is because if, for example, $j_0 = m-1$ and $j_0' = m+1$, then we want to wait to synchronize the draws from $B^j$ until after $j$ and $j'$ are in the same part of the deck. That way, as long as $B^j_1$ is a Tails (which happens with probability $\frac{1}{2}$) we will get that $j$ follows behind $j'$ as they will both start drawing from $B^j_2$ once they cycle back to position $m$.
	\item To determine the movement of $i',j',k'$ when none of these cards are in position $m$, generate sequences of coins $S^i,S^j,S^k$. Whenever $i'$ is in the bottom part of the deck (and neither $j'$ nor $k'$ is in position $m$) use a coin from $S^i$ for $(\pi_t')$. Whenever $i'$ is in the top part of the deck and $j'$ is in the bottom part of the deck (and $k'$ is not in position $m$) use a coin from $S^j$. Whenever $i'$ and $j'$ are in the top part of the deck and $k'$ is in the bottom part of the deck, use a coin from $S^k$. You can think of $S^j$ having ``priority'' over $S^k$ and $S^i$ having priority over both $S^j$ and $S^k$. Think of $B^i,B^j,B^k$ as all having priority over $S^i,S^j,S^k$.
	\item The movement of $i,j,k$ when none of these cards is in position $m$ is broken into four phases. Generate sequences of coins $X^i,X^j,X^k,X^{ij},X^{ik},X^{jk},X^{ijk},Y^j,Y^k,Y^{jk},Z^k$.
	\begin{itemize}
		\item At the start of the process, we say we are in ``Phase 1''. In this phase, if none of $i,j,k$ are in position $m$, use the next coin from $X^A$ to determine the movement of $i,j,k$ where $A$ is the set of exactly which of $i,j,k$ are in the bottom part of the deck. Let $\tau_1$ be random stopping time which is the minimum $t$ such that $i_t = i'_t + \Delta(i,t)$ where $\Delta(i,t)$ is a small random variable (very likely in $\{-1,0,1\}$) which we will define later. You can imagine $\tau_1$ is more or less when $i_t = i_t'$, and the reason that we add $\Delta(i,t)$ is a technicality which we will explain later. At time $\tau_1$ we move to Phase 2, and we change the rules for how $i,j,k$ move in order to couple $i$ to $i'$.
		\item At time $\tau_1$, suppose $r_i$ is the number of coins $i'$ has drawn from $S^i$. Let $\kappa^i$ to be such that $\kappa^i_s = S^i_{r_i+s}$. Now whenever $i$ is in the bottom part of the deck, draw the next coin from $\kappa^i$. In other words, have $i$ start following the same sequence of coins that $i'$ uses. Since $i$ and $i'$ will both draw from the same sequence of coins when they are in the bottom part of the deck, we know $i$ and $i'$ will stay coupled together, except for a small technicality which we will explain later. When $i$ is in the top part of the deck, use the next coin from $Y^j$ or $Y^k$ or $Y^{jk}$ depending on if $j$ is in the bottom of the deck or $k$ is or both. Let $\tau_2$ be the random stopping time which is the minimum $t \geq \tau_1$ such that $j_t = j'_t + \Delta(j,t)$ where $\Delta(j,t)$ is small random variable we will define later. At time $\tau_2$ we move to Phase 3, and change how $i,j,k$ move in order to couple $(i,j)$ to $(i',j')$.
		\item At time $\tau_2$, suppose $r_j$ is the number of coins $j'$ has drawn from $S^j$. Define the sequence $\kappa^j$ to be such that $\kappa^j_s = S^j_{r_j+s}$. Now whenever $i$ is in the top part of the deck and $j$ is in the bottom part of the deck, draw the next coin from $\kappa^j$. Continue to draw from $\kappa_i$ whenever $i$ is in the bottom part of the deck, regardless of if $j$ is in the bottom part of the deck or not. In other words, give $\kappa^i$ priority over $\kappa^j$ so $j$ follows the same sequence of coins that $j'$ uses. Now we know (other than a small technicality which will be explained later) that both $i,i'$ and $j,j'$ will stay coupled together because the joint movement of $(i,j)$ and $(i',j')$ follow the same rules. If  $k$ is alone in the bottom part of the deck, draw from $Z^k$. Let $\tau_3$ be the random stopping time which is the minimum $t \geq \tau_2$ such that $k_t = k'_t + \Delta(k,t)$ where once again $\Delta(k,t)$ will be defined later. At time $\tau_3$ we move the Phase 4, where $(i,j,k)$ is permanently coupled with $(i',j',k')$.
		\item At time $\tau_3$, suppose $r_k$ is the number of coins $k'$ has drawn from $S^k$. Define the sequence $\kappa^k$ to be such that $\kappa_s = S^k_{r_k+s}$. Now whenever $i$ and $j$ are in the top part of the deck and $k$ is in the bottom part of the deck, draw the next coin from $\kappa^k$. Now $(i,j,k)$ obey the same rules as $(i',j',k')$, so they will stay coupled forever.
	\end{itemize}
\end{itemize}
The technicality that could cause $i$ to ``decouple'' from $i'$ after time $\tau_1$ is the fact that $i$ is obligated to use coins from $B^j$ or $B^k$ whenever $j$ or $k$ are in position $m$, whereas $i'$ uses coins from $B^j$ or $B^k$ whenever $j'$ or $k'$ are in position $m$. Since $j$ will be in position $m$ at different times than $j'$ during Phase 2, there will likely be times during Phase 2 where $i_t \neq i_t'$. However, as long as we make sure that $j_t$ and $j_t'$ stay closer to each other than to $i_t$ and $i_t'$ this will not be an issue. To see why, consider the situation in Phase 2 where
$$
\begin{pmatrix}
	i_t \\ j_t \\ i_t' \\ j_t'
\end{pmatrix} = \begin{pmatrix}
	m+100 \\ m \\ m+100 \\ m-3
\end{pmatrix}
$$
Assume that $k_t$ and $k_t'$ are far away from position $m$. Also assume that the upcoming coin in $B^j$ is a Tails and $i_t,i_t'$ are at coin number $r$ in sequence $S^i$. Finally assume
$$
(S_r^i,S_{r+1}^i,S_{r+2}^i,S_{r+3}^i) = (H,T,H,T).
$$
\begin{enumerate}
	\item In the first step $\pi$ draws from $B^j$ to get a Tails, and $\pi'$ draws $S_r^i$ from $S^i$ to get a Heads. So
	$$
	\begin{pmatrix}
		i_{t+1} \\ j_{t+1} \\ i_{t+1}' \\ j_{t+1}'
	\end{pmatrix} = \begin{pmatrix}
		m+101 \\ m+1 \\ m+100 \\ m-2
	\end{pmatrix}
	$$
	Now $i$ and $i'$ have decoupled! This looks bad, but lets see what happens over the course of the next few moves.
	\item Now $\pi$ draws from $S^i$ but it is a step behind where $\pi'$ is in $S^i$. So $\pi$ draws $S_r^i$ from $S^i$ to get a Heads. On the other hand $\pi'$ draws $S_{r+1}^i$ from $S^i$ to get a Tails. So now
	$$
	\begin{pmatrix}
		i_{t+2} \\ j_{t+2} \\ i_{t+2}' \\ j_{t+2}'
	\end{pmatrix} = \begin{pmatrix}
		m+101 \\ m+1 \\ m+101 \\ m-1
	\end{pmatrix}
	$$
	It may look like everything is fixed now, but remember $\pi$ and $\pi'$ are at different points in the sequence $S^i$. This is going to cause trouble the next step. 
	\item Now $\pi$ draws $S_{r+1}^i$ from $S^i$ to get a Tails and $\pi'$ draws $S_{r+2}^i$ from $S^i$ to get a Heads. So 
	$$
	\begin{pmatrix}
		i_{t+3} \\ j_{t+3} \\ i_{t+3}' \\ j_{t+3}'
	\end{pmatrix} = \begin{pmatrix}
		m+102 \\ m+2 \\ m+101 \\ m
	\end{pmatrix}
	$$
	Now that $j'$ is in position $m$ everything will be fixed in the next step.
	\item We know $\pi$ will draw $S_{r+2}^i$ from $S^i$ to get a Heads. But now $\pi'$ will draw a Tails from $B^j$, the ``same'' Tails that $\pi$ drew three steps earlier. So now
	$$
	\begin{pmatrix}
		i_{t+4} \\ j_{t+4} \\ i_{t+4}' \\ j_{t+4}'
	\end{pmatrix} = \begin{pmatrix}
		m+102 \\ m+2 \\ m+102 \\ m+1
	\end{pmatrix}
	$$
	We see that $i$ and $i'$ have re-synchronized. Not only are they in the same position again, but $\pi$ and $\pi'$ are in the same place in $S^i$. This means that they will stick together until another discrepancy from $B^j$ or $B^k$ causes them to split. For example, lets look at one more step.
	\item Now $\pi$ and $\pi'$ BOTH draw $S_{r+3}^i$ from $S^i$ to get a Tails. So,
	$$
	\begin{pmatrix}
		i_{t+5} \\ j_{t+5} \\ i_{t+5}' \\ j_{t+5}'
	\end{pmatrix} = \begin{pmatrix}
		m+103 \\ m+3 \\ m+103 \\ m+2
	\end{pmatrix}
	$$
\end{enumerate}

In short, in any situation where $j$ hits position $m$ before $j'$ hits position $m$ (or vice versa), the decoupling of $i$ from $i'$ will only last until $j'$ hits position $m$, as long as this re-synchronization happens before $i$ or $i'$ hit either position $m$ or $n$ themselves (moving into a different ``part'' of the deck with different rules and potentially causing more trouble). The reason for this, in short, is because if we look at all the steps in between when $j$ and when $j'$ hit position $m$, as long as $i$ and $i'$ both stay in the bottom part of the deck then they will both use the same number of coins from $S^i$ plus one of the same coin from $B^j$. If instead $i$ and $i'$ stay in the top of the deck, it is even easier: $i$ and $i'$ deterministically move down one position each step. If we can guarantee that
\begin{equation}
	\norm{p(i_t)-p(j_t)},\norm{p(i_t')-p(j_t')} \geq |j_t'-j_t|_M + \sqrt{n}
\end{equation}
for all $t$ then we can ensure that re-synchronization happens before $i$ or $i'$ hit position $m$ or $n$. This is because if, for example, $j_t' < m < j_t$ then we know $i_t,i_t' \neq m$ because otherwise we would have 
\begin{align}
	&\norm{p(i_t)-p(j_t)} < |j_t'-j_t|_M \\
	\text{  or  } &\norm{p(i_t')-p(j_t')} < |j_t'-j_t|_M.
\end{align}
We also know that $i_t,i_t' \neq n$ because otherwise we would have
\begin{align}
	&\norm{p(i_t) - p(j_t)} \leq \norm{p(n) - p(m)} + |m - j_t| < \sqrt{n} + |j_t'-j_t|_M \\
	\text{  or  } &\norm{p(i_t') - p(j_t')} \leq \norm{p(n) - p(m)} + |m - j_t| < \sqrt{n} + |j_t'-j_t|_M.
\end{align}
The equivalent logic works comparing the distances between $i,i'$ to $i,k$ and $i',k'$ as well as comparing the distances between $j,j'$ to $j,k$ and $k,k'$. In short, as long as we keep each card closer to its counterpart than to the other tracked cards, we don't need to worry about desynchronizaiton. \\
\\
This same reasoning is how we define $\Delta(i,t),\Delta(j,t)$ and $\Delta(k,t)$. It would be a mistake to say $\tau_1$ is always when $i_t = i_t'$. This is because, for example, if $i_t = i_t'$ during some time $t$ when $j_t < m < j_t'$, if we start Phase 2 and switch $i$ to start using coins from $S^i$ then $i$ and $i'$ will become decoupled after $j$ reaches position $m$. So, we define $\Delta(i,t)$ to be such that if $j,j'$ or $k,k'$ straddle position $m$ at time $t$ then if we start Phase 2 when $i_t = i_t' + \Delta(i,t)$ we will have $i_r = i_r'$ AND $i_r,i_r'$ at the same point in the sequence $S^i$ at the soonest time $r > t$ when $j,j'$ or $k,k'$ stop straddling $m$. Define $\Delta(j,t)$ and $\Delta(k,t)$ similarly for $j$ and $k$. \\
\\
By our previous discussion, we know that $|\Delta(i,t)|,|\Delta(j,t)|,|\Delta(k,t)| \leq 1$ for all $t$ as long as $i,j,k$ and $i',j',k'$ stay closer to their counterparts than to each other. This will help us ensure that $i,j,k$ couple to $i',j',k'$. If for example $p(i_0) = p(i_0') - \ell$ then as previously explained $i$ will ``follow behind'' $i'$. If we can show that at some point, due to a surplus of Tails flipped by $i$ in the bottom part of the deck, that $i$ ``passes'' $i'$ then $i$ must hit $i'-1,i',i'+1$ on the way by and is therefore guaranteed to couple with $i'$. The same idea is used to show $j$ couples with $j'$ and $k$ couples with $k'$. \\
\\
Now that we have described the coupling, it is time to use it in the main proposition of this section.

\begin{proposition}\label{montecoupled}
	Consider the overlapping cycles shuffle on $n$ cards where $\frac{m}{n} \in (\epsilon, 1 - \epsilon)$. There exist constants $C,D$ which depend on $\epsilon$ such that the following holds: Fix $\ell > C\sqrt{n}$. Suppose $i,j,k$ are cards such that $\norm{p(i)-p(j)},\norm{p(i)-p(k)},\norm{p(j)-p(k)} > 199\ell$. Let $T \in [\ell^2,\ell^2+4n]$ such that $T - \floor*{\frac{T}{2n}}m \equiv 0 \mod 2n-m+1$. Define the set 
	\begin{equation*}
		N_i = \left\{ \omega \text{ such that } p(i) + a + bm = p(\omega) \mod 2n-m+1 \text{ with } |a| \leq \ell, \ |b| \leq \frac{\ell}{\sqrt{n}} \right\}.
	\end{equation*}
	Define $N_j$ and $N_k$ similarly for $j$ and $k$. Then,
	\begin{equation}
		\probp{i_T = f_i, j_T = f_j, k_T = f_k} \geq \frac{D\gamma^3 n^\frac{3}{2}}{\ell^6}
	\end{equation}
	for at least $\frac{3}{4}$ of all $(f_i,f_j,f_k)$ in $N_i \times N_j \times N_k$.
\end{proposition}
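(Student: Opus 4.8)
The plan is to compare the law of $(i_T,j_T,k_T)$ with the law of $(i'_T,j'_T,k'_T)$, where $i',j',k'$ are the uniformly random neighbours of $i,j,k$ from the coupling described just above, and then to feed in Proposition \ref{montegamma}, which says the latter law is spread thinly over $N_i\times N_j\times N_k$. First I would observe that, as $(\beta_i,\beta_j,\beta_k)$ ranges over $\bigl(\mathbb{Z}\cap(-\ell/\sqrt n,\ell/\sqrt n)\bigr)^3$, the admissible target triples in Proposition \ref{montegamma} (those with $|p(f_i)-p(i)+\beta_i m|_M<\ell$, and likewise for $j,k$) exhaust $N_i\times N_j\times N_k$, up to the harmless gap between $<$ and $\le$; hence Proposition \ref{montegamma} gives $\probp{i'_T=f_i,\,j'_T=f_j,\,k'_T=f_k}\ge p_1:=10^{-21}\gamma^3 n^{3/2}/\ell^6$ for \emph{every} $(f_i,f_j,f_k)\in N_i\times N_j\times N_k$. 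Next, applying the counting behind Lemma \ref{gammaovercount} to each of $N_i,N_j,N_k$ yields $|N_i\times N_j\times N_k|\ge c_1\ell^6/(\gamma^3 n^{3/2})$ for an absolute $c_1>0$, so $p_1\,|N_i\times N_j\times N_k|\ge c_0$ for an absolute constant $c_0>0$ that does not depend on $n,m,\ell,\gamma$.

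The heart of the proof is to show the coupling succeeds with overwhelming probability. Let $\mathcal{E}$ be the event that $\tau_1,\tau_2,\tau_3\le T$ and that throughout $[0,T]$ each tracked card stays strictly closer (in the $\norm{\cdot}$ sense used in the discussion preceding this proposition) to its counterpart than to the other two tracked cards, so that $|\Delta(i,t)|,|\Delta(j,t)|,|\Delta(k,t)|\le 1$ and every re-synchronisation following a big-coin discrepancy finishes before the card involved changes part of the deck; on $\mathcal{E}$ we have $(i_T,j_T,k_T)=(i'_T,j'_T,k'_T)$. I would prove that, after taking the constant $C$ (hence $\ell\ge C\sqrt n$) large enough in terms of $\epsilon$, one has $\probp{\mathcal{E}^c}\le \delta:=c_0/16$. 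The separation hypothesis $\norm{p(i)-p(j)},\norm{p(i)-p(k)},\norm{p(j)-p(k)}>199\ell$ makes the ``stay closer to your counterpart'' part of $\mathcal{E}$ fail only with exponentially small probability, since over $T=\ell^2$ steps each tracked card wanders only $O(\ell)$ from its reference position and $199\ell$ is many standard deviations of that wandering (Hoeffding-type concentration, as used in Corollary \ref{ijkbottomt}). The ``$\tau_i\le T$'' part is the delicate one: since $i$ is slaved to $i'$'s big-coin schedule through the shared sequences $B^i$ and the skip-first-coin exceptions, the relative displacement $p(i_t)-p(i'_t)$ is, apart from a transient of size $O(m)$ that disappears once the two lap counts realign, driven by the difference of the two independent small-coin walks, and one must show this residual walk returns to $0$ before time $\ell^2$ with the required probability, simultaneously for $j$ and $k$ and conditionally on the staying-separated event.

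Granting $\probp{\mathcal{E}^c}\le\delta$, write $\mu$ and $\nu$ for the laws of $(i_T,j_T,k_T)$ and $(i'_T,j'_T,k'_T)$. Because the two triples agree on $\mathcal{E}$, $\norm{\mu-\nu}_{\rm TV}\le\probp{\mathcal{E}^c}\le\delta$. If $f\in N_i\times N_j\times N_k$ satisfies $\mu(f)<p_1/2$ then, since $\nu(f)\ge p_1$, we get $|\mu(f)-\nu(f)|>p_1/2$, so the number of such $f$ is at most $(2\delta)/(p_1/2)=4\delta/p_1=c_0/(4p_1)\le\tfrac14|N_i\times N_j\times N_k|$, using $\delta=c_0/16$ and $c_0\le p_1|N_i\times N_j\times N_k|$. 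Hence at least three quarters of the triples in $N_i\times N_j\times N_k$ satisfy $\mu(f)\ge p_1/2=\tfrac12\cdot 10^{-21}\gamma^3 n^{3/2}/\ell^6$, which is the claim with $D=\tfrac12\cdot 10^{-21}$ (shrinking $D$ and enlarging $C$ if necessary to accommodate the lemmas invoked and the bound $\probp{\mathcal{E}^c}\le\delta$). The main obstacle is exactly the quantitative estimate $\probp{\mathcal{E}^c}\le\delta$ of the previous paragraph: the ``stay separated'' half is routine, but bounding all three coupling times $\tau_1<\tau_2<\tau_3\le\ell^2$ together — tracking how the shared big coins keep $i$ on $i'$'s lap schedule, bounding and then collapsing the $O(m)$ big-coin transient, and estimating the hitting time of $0$ for the residual relative small-coin walk — is where essentially all of the work lies.
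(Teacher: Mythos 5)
Your proposal shares the paper's starting point — couple $(i_T,j_T,k_T)$ to the random triple $(i'_T,j'_T,k'_T)$, whose law Proposition \ref{montegamma} bounds below — and your reading of Proposition \ref{montegamma} and the counting via Lemma \ref{gammaovercount} are both correct. The total-variation bookkeeping at the end is also internally correct. The fatal problem is the centerpiece claim that $\probp{\mathcal{E}^c}\le\delta$ for a small absolute constant $\delta=c_0/16$ after enlarging $C$. This cannot be achieved, and the paper does not claim it. For the coupling to succeed, the relative displacement walk between $i$ and $i'$ (starting at some $|d_0|\le 8\ell$ and driven by independent small-coin sequences) must hit $0$ within $T=\ell^2$ steps. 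Its standard deviation after $T$ steps is of order $\ell$, so for a typical $d_0$ of magnitude comparable to $8\ell$ this is a many-standard-deviation excursion. Enlarging $C$ is useless here, because both the initial gap $8\ell$ and the walk's fluctuation scale $\ell$ grow together; the number of standard deviations is unchanged. The paper's own quantitative statement reflects this: the excursion event $A$ (requiring $X^i,Y^j,Z^k$ to reach $\pm 34\ell$ in only $\ell^2/(3L)$ steps) satisfies only $\probp{A}\ge D_1\exp(-D_2 L)$, a constant depending on $\epsilon$ and typically extremely small, not $1-o(1)$. Consequently $\probp{\mathcal{E}}$ is bounded away from $0$ but nowhere near $1-\delta$, so your TV bound $\lVert\mu-\nu\rVert_{\rm TV}\le\probp{\mathcal{E}^c}$ is uselessly close to $1$.

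The missing idea is the conditioning and independence trick the paper uses to circumvent the fact that the coupling is a rare event. The paper conditions on the excursion event $A$, and the key observation is that $A$ depends only on the auxiliary coin sequences $X^i,Y^j,Z^k$, which play no role in generating the trajectory of $(i',j',k')$. Hence the distribution of $(i'_T,j'_T,k'_T)$ is unchanged by conditioning on $A$, so the lower bound from Proposition \ref{montegamma} still holds conditionally. The remaining requirements for coupling (the events $G,Q,E_i,E_j,E_k$ controlling the other walks and the separation) are shown to hold with probability at least $1-3\cdot10^{-20}$ conditionally on $A$, and then the quasi-uniformity lemma (Theorem \ref{quasiuniform}) transfers the pointwise lower bound on the conditional law to three quarters of $N_i\times N_j\times N_k$. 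The small factor $\probp{A}$ is absorbed into $D$. Your blanket event $\mathcal{E}$ conflates the rare excursion event with the high-probability "stay separated" and "stay synchronized" events, and — crucially — is not independent of the coins generating $(i'_T,j'_T,k'_T)$, so you could not legitimately condition on it even if you wanted to restrict to the coupled sample paths. Without identifying this independence structure, the argument as written does not go through.
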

\begin{proof}
	Let $i',j',k'$ be chosen uniformly from positions such that $|p(i')-p(i)|_M,|p(j')-p(j)|_M,|p(k')-p(k)|_M < 8\ell$ as described by the previous coupling. Run two Overlapping Cycle shuffles $\pi$ and $\pi'$ tracking $i,j,k$ and $i',j',k'$ respectively and let $\pi$ be coupled to $\pi'$ as previously described. Let $L$ be the constant that depends on $\epsilon$ from Proposition \ref{ijkbottomC}. Now let $A$ be the event that the following is true:
	\begin{itemize}
		\item There exists $r_i,r_j,r_k,s_i,s_j,s_k \leq \frac{\ell^2}{3L}$ such that $\text{Diff}_{r_i}(X^i),\text{Diff}_{r_j}(Y^j),\text{Diff}_{r_k}(Z^k) = 34\ell$ and \\
		$\text{Diff}_{s_i}(X^i),\text{Diff}_{s_j}(Y^j),\text{Diff}_{s_k}(Z^k) = -34\ell$.
		\item $|\text{Diff}_{t}(X^i)|,|\text{Diff}_{t}(Y^j)|,|\text{Diff}_{t}(Z^k)| \leq 35\ell$ for all $t \leq T$.
	\end{itemize}
	In other words, $A$ is the event that when $i,j,k$ are alone in the bottom part of the deck in Phase 1, Phase 2, and Phase 3 respectively, they oscillate greatly so that they will be likely to pass by (and couple with) $i',j',k'$. Since we are controlling movement within a constant times $\sqrt{L}$ standard deviations, we see by Theorem \ref{invershoeffding} and Theorem \ref{hoeffdingmax} that
	\begin{equation}
		\probp{A} \geq D_1\exp(-D_2L)
	\end{equation}
	for some constants $D_1,D_2$. Note that the event $A$ is independent of the distribution of $i',j',k'$ since $A$ concerns coins which are not used to generate the paths of $i',j',k'$. We now show, conditioning on $A$, that $(i,j,k)$ is likely to couple to $(i',j',k')$. To see this, let $W^j$ be the record of coins used by $j$ before time $\tau_1$ while in the bottom part of the deck. Let $W^k$ be the record of coins used by $k$ before time $\tau_2$ while in the bottom part of the deck. Let $V^i,V^j,V^k$ be the records of coins used by $i',j',k'$ before time $T$ while in the bottom part of the deck. Let $G$ be the event that the following is true:
	\begin{itemize}
		\item $|\text{Diff}_t(W^j)|,|\text{Diff}_t(W^k)|,|\text{Diff}_t(V^i)|,|\text{Diff}_t(V^j)|,|\text{Diff}_t(V^k)| \leq 12\ell$ for all $t \leq T$.
	\end{itemize}
	Then by Theorem \ref{hoeffdingmax} we have that
	\begin{equation}
		\probp{G} \geq 1-5\cdot4\exp\parens*{-\frac{12^2}{2}} \geq 1 - 10^{-30}.
	\end{equation}
	Note that $G$ is independent of $A$ because $G$ is a record of coins separate from the coins that determine $A$. Let $Q$ be the event that $i$ spends at least $\frac{\ell^2}{3L}$ steps in the bottom part of the deck in the first $\frac{\ell^2}{3}$ steps of Phase 1, and $j$ spends at least $\frac{\ell^2}{3L}$ steps in the bottom part of the deck in the first $\frac{\ell^2}{3}$ steps of Phase 2, and $k$ spends at least $\frac{\ell^2}{3L}$ steps in the bottom part of the deck in the first $\frac{\ell^2}{3}$ steps of Phase 3. By Proposition \ref{ijkbottomC} we know that
	\begin{equation}
		\probp{Q} \geq 1 - 3\exp\left(-\frac{\ell^2}{6L}\right).
	\end{equation}
	Note that,
	\begin{align}
		\probp{Q \ | \ A} &= 1 - \probp{Q^C \ | \ A} \\
		&= 1 - \frac{\probp{Q^C,A}}{\probp{A}} \\
		&\geq 1 - \frac{\probp{Q^C}}{\probp{A}} \\
		&\geq 1 - \frac{3\exp\parens*{-\frac{\ell^2}{6L}}}{D_1\exp(-D_2 L)}.
	\end{align}
	Recall that $\ell > C\sqrt{n}$ for a constant $C$ which we have the freedom to choose. Choose $C$ large enough that,
 \begin{align}
     \probp{Q \ | \ A} &\geq 1 - \frac{3\exp\parens*{-\frac{C^2 n}{6L}}}{D_1\exp(-D_2 L)} \\
     &= 1 - \frac{3}{D_1} \parens*{D_2 L - \frac{1}{6} C^2 n} \\
     &\geq 1 - 10^{-30}.
 \end{align}
Note that on the events $A,G,Q$ we can show that $\tau_3 < T$. This is because $G$ forces $i',j',k'$ to stay within $12\ell$ steps of their expected position. On the other hand, $i$ starts out within distance $8\ell$ distance of $i'$ in phase 1, and for $j$ and $k$ the event $G$ means that $j$ and $k$ don't drift more than $12\ell$ steps further than this initial $8\ell$ due to $W^j,W^k$. Thus, the total distance $i,j,k$ are stretched from $i',j',k'$ respectively before we count $X^i,Y^j,Z^k$ is at most $8\ell + 12\ell + 12\ell = 32\ell$. Conditioning on $A$ and $Q$ we know that $X^i$ overcomes this gap in the first $\frac{\ell^2}{3L}$ coins, and that $X^i$ uses these coins in $\frac{\ell^3}{3}$ steps. So at some point in the first $\frac{\ell^2}{3}$ steps $i$ passes by $i'$ and couples to end Phase 1. By a similar argument Phase 2 and Phase 3 last no more than $\frac{\ell^2}{3}$ steps each. \\
	\\
	Finally, it we will need to ensure $i,j,k$ stay separated from each other so that $i,j,k$ don't decouple from $i',j',k'$. Let $E_i$ be the event that in between the $x$th time $i$ hits position $m$ and the $x$th time $i'$ hits position $m$, none of $j,j',k,k'$ hit position $m$ (where we don't count the first time $i'$ hits position $m$ if $i' \leq m < i$ and we don't count the first time $i$ hits position $m$ if $i \leq m < i'$). Let $E_j$ and $E_k$ be the equivalent events for $j,j'$ and $k,k'$. Now we want to show that $E_i,E_j,E_k$ are all likely, even when conditioning on $A$. To see why, let $U^i,U^j,U^k$ be the records of all coins used by $i,j,k$ while in the bottom part of the deck except those drawn from $X^i,Y^j,Z^k$. Let $V^i,V^j,V^k$ be the records of all coins used by $i',j',k'$ while in the bottom part of the deck. Then as long as
	\begin{equation}
		|\text{Diff}_t(U^i)|,|\text{Diff}_t(U^j)|,|\text{Diff}_t(U^k)|,|\text{Diff}_t(V^i)|,|\text{Diff}_t(V^j)|,|\text{Diff}_t(V^k)| \leq 12\ell \text{ for all } t \leq T
	\end{equation}
	and
	\begin{equation}
		|\text{Diff}_t(B^i)|,|\text{Diff}_t(B^k)|,|\text{Diff}_t(B^k)| \leq \frac{12\ell}{\sqrt{n}} \text{ for all } t \leq \frac{T}{n},
	\end{equation}
	then the gaps in norm between $i,j,k$ and the gaps in norm between $i',j',k'$ will never smaller than the gaps between $i,i'$ and $j,j'$ and $k,k'$. This is because the gaps between $i,j,k$ start out at more than $199\ell$ and the gaps between $i',j',k'$ start out more than $199\ell - 2 \cdot 8\ell$. Our restrictions on the $U^i,U^j,B^i,B^j,X^i,Y^j$ sequences mean that these gap between $i$ and $j$ can shrink at most to $199\ell - (12+12+12+12+35+35)\ell = 81\ell$. Our restrictions on $V^i,V^j,B^i,B^j$ mean that the gap between $i'$ and $j$ can shrink at most to $199\ell - 2 \cdot 8\ell - (12+12+12+12)\ell = 135\ell$. On the other hand, due to our restrictions on $U^i,V^i,X^i$ the gap between $i$ and $i'$ which start out at most $8\ell$ can grow to at most $8\ell + (12+12+35)\ell = 67\ell$. Similarly the gap between $j$ and $j'$ can grow to at most $67\ell$. Since $67\ell < 81\ell$ and $67\ell < 141\ell$ we don't have to worry about $i,j$ or $i',j'$ interfering with each other and causing a decoupling. \\
	\\
	All constants used are symmetric so same reasoning applies to the pairs $i,k$ with $i',k'$ and $j,k$ with $j',k'$. There are 9 coin sequences we need to bound ($U^i,U^j,U^k,V^i,V^j,V^k,B^i,B^j,B^k$ excluding $X^i,X^j,X^k$ because we already have those bounds from event $A$). We want to bound the maximum magnitude of each Heads-Tails differential in the first $\ell^2$ steps by $12\ell$. Theorem \ref{hoeffdingmax} gives us,
	\begin{equation}
		\probp{E_i,E_j,E_k \ | \ A} \geq 1 - 9\cdot4\exp\parens*{-\frac{12^2}{2}} \geq 1 - 10^{-30}.
	\end{equation}
	All together this means
	\begin{align}
		\probp{G,Q,E_i,E_j,E_k \ | \ A} \geq 1 - 3 \cdot 10^{-20}.
	\end{align}
	Recall that the distribution of $i',j',k'$ is independent of $A$. By Lemma \ref{gammaovercount} we get that
	\begin{equation}
		|N_i|,|N_j|,|N_k| \geq \frac{\ell^2}{2\gamma \sqrt{n}},
	\end{equation}
    so,
    \begin{equation}
		|N_i \times N_j \times N_k| \geq \frac{\ell^6}{8 \gamma^3n^\frac{3}{2}}.
	\end{equation}
	By Corollary \ref{montegamma} we have that
	\begin{equation}
		\probp{(i_T',j_T',k_T') = (f_i,f_j,f_k) \ | \ A} \geq 10^{-21} \cdot \frac{\gamma^3 n^\frac{3}{2}}{\ell^6}
	\end{equation}
	for all $(f_i,f_j,f_k) \in N_i \times N_j \times N_k$. Let $\mathcal{D} = 10^{22}$. Then,
	\begin{align}
		\frac{1}{\mathcal{D}|N_i \times N_j \times N_k|} &\leq \left(10^{22} \cdot \frac{8 \ell^6}{\gamma^3n^\frac{3}{2}}\right)^{-1} \\
		&< 10^{-21} \cdot \frac{\gamma^3 n^\frac{3}{2}}{\ell^6} \\
		&\leq \probp{(i_T',j_T',k_T') = (f_i,f_j,f_k) \ | \ A}
	\end{align}
	and
	\begin{align}
		1 - \frac{1}{8\mathcal{D}} \leq 1 - 3 \cdot 10^{-30} \leq \probp{G,Q,E_i,E_j,E_k \ | \ A}.
	\end{align}
	So by Theorem \ref{quasiuniform} in the appendix we have
	\begin{equation}
		\probp{(i_T',j_T',k_T') = (f_i,f_j,f_k) \ | \ A, G, Q, E_i, E_j, E_k} \geq \frac{1}{2} \cdot 10^{-21} \cdot \frac{\gamma^3 n^\frac{3}{2}}{\ell^6}
	\end{equation}
	for at least $\frac{3}{4}$ of all $(f_i,f_j,f_k) \in N_i \times N_j \times N_k$. Since $A,G,Q,E_i,E_j,E_k$ ensure that $(i,j,k)$ couple to $(i',j',k')$, we arrive at the statement of the theorem.
\end{proof}

Now we once again exploit the symmetry between the overlapping cycles shuffle and the inverse overlapping cycles shuffle to show the equivalent statement holds for the inverse overlapping cycles shuffle.

\begin{corollary}\label{montecoupledinverse}
	For the same constants in Theorem \ref{montecoupled} the following holds: Fix $\ell > C\sqrt{n}$. Suppose $i,j,k$ are cards such that $\norm{p(i)-p(j)},\norm{p(i)-p(k)},\norm{p(i)-p(k)} > 199\ell$. Let $T \in [\ell^2,\ell^2+4n]$ such that $T - \floor*{\frac{T}{2n}}m \equiv 0 \mod 2n-m+1$. Define the set 
	\begin{equation*}
		N_i = \left\{ \omega \text{ such that } p(i) + a + bm = p(\omega) \mod 2n-m+1 \text{ with } |a| \leq \ell, \ |b| \leq \frac{\ell}{\sqrt{n}} \right\}
	\end{equation*}
	Define $N_j$ and $N_k$ similarly for $j$ and $k$. Now let $i_{(-T)}$, $j_{(-T)}$, $k_{(-T)}$ be the locations of $i,j,k$ after doing $T$ steps of the inverse overlapping cycles shuffle. Then,
	\begin{equation}
		\probp{i_{(-T)} = f_i, j_{(-T)} = f_j, k_{(-T)} = f_k} \geq \frac{D\gamma^3 n^\frac{3}{2}}{\ell^6}
	\end{equation}
	for at least $\frac{3}{4}$ of all $(f_i,f_j,f_k)$ in $N_i \times N_j \times N_k$
\end{corollary}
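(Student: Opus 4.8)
The plan is to deduce this from Proposition \ref{montecoupled} by exactly the symmetry trick already used in the proof of Proposition \ref{montegamma}: by Theorem \ref{inverseshuffle} the inverse overlapping cycles shuffle is the forward overlapping cycles shuffle conjugated by the involution $\sigma$, and all of the relevant geometry ($\norm{\cdot}$, $|\cdot|_M$, and hence the sets $N_i$ and the constant $\gamma$) is $\sigma$-invariant by Corollary \ref{inversenorm} and Proposition \ref{inversedist}.

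First I would record the bookkeeping facts about $\sigma$. Since $\sigma=\sigma^{-1}$, tracking cards $i,j,k$ through $T$ steps of the inverse shuffle is distributionally the same as tracking the cards $\sigma(i),\sigma(j),\sigma(k)$ through $T$ steps of the forward shuffle and then relabeling positions by $\sigma$; concretely, under the coupling furnished by $\pi_T^{-1}\stackrel{d}{=}\sigma\pi_T\sigma^{-1}$ the event $\{i_{(-T)}=f_i\}$ corresponds to $\{(\sigma(i))_T=\sigma(f_i)\}$, and similarly for $j$ and $k$. Next, by Corollary \ref{inversenorm} the hypothesis $\norm{p(i)-p(j)},\norm{p(i)-p(k)},\norm{p(j)-p(k)}>199\ell$ is equivalent to the same three inequalities with $i,j,k$ replaced by $\sigma(i),\sigma(j),\sigma(k)$. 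Finally, using Proposition \ref{inversedist} — which turns $p(\omega)-p(i)\equiv a+bm$ into $p(\sigma(i))-p(\sigma(\omega))\equiv a+bm$, i.e. $p(\sigma(\omega))-p(\sigma(i))\equiv -a-bm$ — together with $|{-a}|=|a|$ and $|{-b}|=|b|$, one checks that $\sigma$ maps $N_i$ bijectively onto the set $N_{\sigma(i)}$ built from the card $\sigma(i)$, and likewise for $j$ and $k$; in particular $|N_i|=|N_{\sigma(i)}|$ and $\gamma$ is unchanged.

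With these identifications in hand I would apply Proposition \ref{montecoupled} to the cards $\sigma(i),\sigma(j),\sigma(k)$ with the same $\ell$ and $T$: the spread-out hypothesis holds, $T$ satisfies the required congruence, and the proposition gives $\probp{(\sigma(i))_T=g_i,(\sigma(j))_T=g_j,(\sigma(k))_T=g_k}\geq D\gamma^3 n^{3/2}/\ell^6$ for at least $\tfrac34$ of all triples $(g_i,g_j,g_k)\in N_{\sigma(i)}\times N_{\sigma(j)}\times N_{\sigma(k)}$. Pulling this back through the bijection $(f_i,f_j,f_k)\mapsto(\sigma(f_i),\sigma(f_j),\sigma(f_k))$ from $N_i\times N_j\times N_k$ onto $N_{\sigma(i)}\times N_{\sigma(j)}\times N_{\sigma(k)}$, and invoking the distributional identity of the previous paragraph, yields exactly the claimed lower bound for at least $\tfrac34$ of $(f_i,f_j,f_k)\in N_i\times N_j\times N_k$, since a bijection between finite sets preserves the fraction on which a property holds.

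The only point that requires genuine care — and the step I would expect to absorb most of the (short) write-up — is the precise matching between the event $\{i_{(-T)}=f_i\}$ for the inverse shuffle and the corresponding event for the forward shuffle run on the relabeled deck: one must unwind the convention "position of card $i$ after applying a permutation" against $\pi_T^{-1}\stackrel{d}{=}\sigma\pi_T\sigma^{-1}$, keeping in mind that the conjugation relabels both the names of the cards and the positions, and that the sign reversal in Proposition \ref{inversedist} is precisely what makes the $N$-sets line up. Everything else is the routine substitution $i\mapsto\sigma(i)$ already performed in the proof of Proposition \ref{montegamma}.
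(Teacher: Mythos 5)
Your proposal is correct and takes essentially the same approach as the paper: both invoke the $\sigma$-conjugation identity from Theorem \ref{inverseshuffle} together with the $\sigma$-invariance of $|\cdot|_M$ and $\norm{\cdot}$ from Corollary \ref{inversenorm} to transport Proposition \ref{montecoupled} to the inverse shuffle. The paper's proof is a terse three-sentence appeal to that invariance; you simply make explicit the bookkeeping (the bijection $N_i\to N_{\sigma(i)}$, the sign flip from Proposition \ref{inversedist}, and the preservation of the $\tfrac34$ fraction under a bijection) that the paper leaves implicit.
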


\begin{proof}
	This corollary holds because the inverse overlapping cycles shuffle is itself an overlapping cycles shuffle up to the reordering of the cards in Proposition \ref{inverseshuffle}. According to Corollary \ref{inversenorm} the distances $|p(\cdot)-p(\cdot)|_M$ and $\norm{p(\cdot)-p(\cdot)}$ are fixed under this reordering. Since the parameters for $i,j,k$ and $N_i,N_j,N_k$ are defined using these distances, the proposition holds for the inverse overlapping cycles shuffle.
\end{proof}

We are now very close to what we want. We have shown that $i,j,k$ reach an approximately uniform distribution, but it's a distribution that excludes up to $\frac{1}{4}$ of our desired terms. What we really want is a distribution that includes \textit{every} term at a probability of a constant times the number of terms. To accomplish this, we make the following argument: \\
\\
Pick some cards $i,j,k$ and some reasonable targets $f_i,f_j,f_k$. Now run the overlapping cycles shuffle forward from $i,j,k$ and backwards from $f_i,f_j,f_k$. In the middle, $i,j,k$ and $f_i,f_j,f_k$ will be distributed over $\frac{3}{4}$ of all nearby terms. So a constant fraction of those terms will overlap, and we can show that $i,j,k$ goes to $f_i,f_j,f_k$ by summing over those overlapping middle terms. We formalize this in the following Theorem.

\begin{theorem}\label{montecomplete}
	Consider the overlapping cycles shuffle on $n$ cards where $\frac{m}{n} \in (\epsilon, 1 - \epsilon)$. There exist constants $C,D$ which depend on $\epsilon$ such that the following holds: Fix $\ell > C\sqrt{n}$. Fix any cards $i,j,k$ such that $\norm{i-k},\norm{i-k},\norm{j-k} > 199\ell$. Fix any positions $f_i,f_j,f_k$ such that $\norm{i-f_i},\norm{j-f_j},\norm{k-f_k} < \frac{\ell}{10}$ and $\norm{f_i-f_k},\norm{f_i-f_k},\norm{f_j-f_k} > 199\ell$. Let $T \in (\ell^2,\ell^2 + 4n)$ such that $T - \floor*{\frac{T}{2n}}m \equiv 0 \mod 2n-m+1$. Then,
	\begin{equation}
		\probp{(i_{2T},j_{2T},k_{2T}) = (f_i,f_j,f_k)} \geq \frac{D\gamma^3n^\frac{3}{2}}{\ell^6}
	\end{equation}
\end{theorem}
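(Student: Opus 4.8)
The plan is to realize the $2T$-step distribution as a convolution of two independent $T$-step distributions and ``meet in the middle'': run the shuffle forward $T$ steps from $i,j,k$, run it backward $T$ steps from $f_i,f_j,f_k$, and show the two resulting distributions of intermediate positions overlap on enough triples.

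First I would decompose. Write $\pi_{2T}=\pi''\pi'$ with $\pi'$ the first $T$ steps and $\pi''$ the last $T$ steps, independent and each distributed as $T$ steps of the overlapping cycles shuffle. Writing $g_i=\pi'(i)$, $g_j=\pi'(j)$, $g_k=\pi'(k)$ for the intermediate positions, independence gives
\begin{align*}
	\probp{(i_{2T},j_{2T},k_{2T})=(f_i,f_j,f_k)} ={}& \sum_{(g_i,g_j,g_k)}\probp{(i_T,j_T,k_T)=(g_i,g_j,g_k)} \\
	&{}\times\probp{(g_i,g_j,g_k)=((f_i)_{(-T)},(f_j)_{(-T)},(f_k)_{(-T)})},
\end{align*}
where the second factor comes from $\pi''(g)=f\iff g=(\pi'')^{-1}(f)$ together with Theorem~\ref{inverseshuffle}, which identifies $(\pi'')^{-1}(f)$ in distribution with the position of card $f$ after $T$ steps of the inverse overlapping cycles shuffle.

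Next I would bound both factors on a common set of intermediate triples. Since $\norm{p(i)-p(j)}$, $\norm{p(i)-p(k)}$, $\norm{p(j)-p(k)}$ all exceed $199\ell$ and $T$ has the required form, Proposition~\ref{montecoupled} says the first factor is at least $D\gamma^3n^{3/2}/\ell^6$ for at least $\tfrac34$ of all $(g_i,g_j,g_k)\in P:=N_i\times N_j\times N_k$. Since $f_i,f_j,f_k$ are also pairwise more than $199\ell$ apart, Corollary~\ref{montecoupledinverse}, applied with $i,j,k$ replaced by $f_i,f_j,f_k$, says the second factor is at least $D\gamma^3n^{3/2}/\ell^6$ for at least $\tfrac34$ of all $(g_i,g_j,g_k)\in P':=N_{f_i}\times N_{f_j}\times N_{f_k}$. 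The hypothesis $\norm{p(i)-p(f_i)}<\ell/10$ (and likewise for $j,k$) is exactly what forces $P$ and $P'$ to overlap heavily: subtracting an $(a,b)$-representation of $p(f_i)-p(i)$ with $|a|+|b|\sqrt n<\ell/10$ turns any representation witnessing membership in the $9\ell/10$-dilate of $N_i$ into one witnessing membership in $N_{f_i}$, so the $9\ell/10$-box around $i$ lies in $N_i\cap N_{f_i}$ and hence its threefold product $Q$ lies in $P\cap P'$. Using Lemma~\ref{gammaovercount} to lower bound $|Q|$ and a matching upper bound on $|P|$ and $|P'|$, the set $G$ of intermediate triples in $Q$ that are simultaneously good for both factors satisfies $|G|\ge|Q|-\tfrac14|P|-\tfrac14|P'|\ge D'\,\gamma^{-3}n^{-3/2}\ell^6$ for a constant $D'>0$. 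Restricting the sum above to $g\in G$ then yields $\probp{(i_{2T},j_{2T},k_{2T})=(f_i,f_j,f_k)}\ge|G|\,(D\gamma^3n^{3/2}/\ell^6)^2\ge D''\gamma^3n^{3/2}/\ell^6$, as desired.

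The main obstacle is the counting in the overlap step. The crude union bound $|G|\ge|Q|-\tfrac14(|P|+|P'|)$ is only useful if $|Q|$ genuinely exceeds a quarter of $|P|+|P'|$, which in turn requires knowing that the $9\ell/10$-box count is a large enough fraction of the $\ell$-box count and that $|N_{f_i}|\approx|N_i|$. Lemma~\ref{gammaovercount} only gives a lower bound on these box counts; one needs a companion upper bound of the form $|N^{(\lambda)}|\le C\lambda^2/(\gamma\sqrt n)$, together with the facts that $\gamma(\lambda)$ is nondecreasing in $\lambda$ (so shrinking the box does not lose the collision savings) and that the collision structure and the density of the image of $p$ are essentially translation invariant (so recentering from $i$ to the nearby $f_i$ barely changes the count). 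This is the one place where the specific constant $\ell/10$ in the hypothesis, rather than something like $\ell/2$, is being used, since the margin in the union bound is tight.
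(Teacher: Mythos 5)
Your approach is the same as the paper's: decompose $\pi_{2T}$ into independent forward and backward $T$-step halves, apply Proposition~\ref{montecoupled} to the forward half and Corollary~\ref{montecoupledinverse} (via Theorem~\ref{inverseshuffle}) to the backward half, and sum the product of probabilities over intermediate triples that are simultaneously good for both. The one place you diverge is the overlap-counting step, and you have correctly put your finger on a genuine subtlety there. You anchor the overlap via the $9\ell/10$-box $Q$ contained in $P\cap P'$ and then union-bound $|G|\ge|Q|-\frac{1}{4}(|P|+|P'|)$; as you note, this only closes given a $\gamma$-dependent \emph{upper} bound $|N_\ell|\le C\ell^2/(\gamma\sqrt n)$ matching the lower bound of Lemma~\ref{gammaovercount}, and the paper supplies no such upper bound, so nothing stops $|P|$ from being as large as roughly $4\ell^2/\sqrt n$ (no $\gamma$ discount), in which case your union bound can be vacuous. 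The paper handles the step differently: it asserts directly that $N_i$ and $N_i'$ overlap on at least $(9/10)^2>4/5$ of their elements, a ratio claim that sidesteps absolute box counts and also implicitly pins $|N_i|$ and $|N_{f_i}|$ to within a constant factor of each other, which is what makes the subsequent union bound $|\mathcal S|\ge|P\cap P'|-\frac{1}{4}|P|-\frac{1}{4}|P'|$ come out positive. But the paper offers no argument for that ratio either: the $(9/10)^2$ is read off the box geometry, and the passage from a set containment (the $9\ell/10$-box sits inside both $N_i$ and $N_{f_i}$) to a cardinality fraction is exactly the lattice-counting point you raised, since the map $(a,b)\mapsto a+bm$ is many-to-one with a $\gamma$-dependent multiplicity that need not be uniform across the box. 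So the gap you flagged is real and is also present, unaddressed, in the paper's own proof of this theorem; either the upper bound you request or the paper's asserted overlap fraction would close it, and neither is actually proved.
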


\begin{proof}
	Define the set 
	\begin{equation*}
		N_i = \left\{\text{positions } \omega \text{ such that } p(i) + a + bm = p(\omega) \mod 2n-m+1 \text{ with } |a| \leq \ell, \ |b| \leq \frac{\ell}{\sqrt{n}} \right\}.
	\end{equation*}
	Define the set
	\begin{equation*}
		N_i' = \left\{\text{positions } \omega \text{ such that } p(f_i) + a + bm = p(\omega) \mod 2n-m+1 \text{ with } |a| \leq \ell, \ |b| \leq \frac{\ell}{\sqrt{n}} \right\}.
	\end{equation*}
	Note that because $\norm{i-f_i} < \frac{\ell}{10}$ we know there exists $a_i,b_i$ such that $p(f_i) = p(i)+a_i+b_i m$ with $|a_i| < \frac{\ell}{10}, |b_i| < \frac{\ell}{10\sqrt{n}}$. For this reason $N_i$ and $N_i'$ overlap at least $(\frac{9}{10})^2 > \frac{4}{5}$ of their elements. Define $N_j,N_k,N_j',N_k'$ similarly for $j,k,f_j,f_k$. Then $N_i \times N_j \times N_k$ and $N_i' \times N_j' \times N_k'$ overlap at least $(\frac{4}{5})^3 \geq \frac{51}{100}$ of their elements. By Proposition \ref{montecoupled} and Corollary \ref{montecoupledinverse} we can bound the distribution of $(i_T,j_T,k_T)$ from below over $\frac{3}{4}$ of the elements of $N_j,N_k,N_k$ and bound the distribution of $({f_i}_{(-T)},{f_j}_{(-T)},{f_k}_{(-T)})$ from below over $\frac{3}{4}$ of the elements of $N_i',N_j',N_k'$. Let $\mathcal{S}$ be the subset of $(N_i \times N_j \times N_k) \cap (N_i' \times N_j' \times N_k')$ where the bounds from Proposition \ref{montecoupled} and Corollary \ref{montecoupledinverse} both hold. Then,
	\begin{equation}
		|\mathcal{S}| \geq \parens*{1 - \frac{1}{4} - \frac{1}{4} - \frac{49}{100}}|N_i \times N_j \times N_k| = \frac{1}{100}|N_i \times N_j \times N_k|.
	\end{equation}
	If we now run the shuffle $T$ steps forward from $i,j,k$ and $T$ steps backwards from $f_i,f_j,f_k$ we can compute the probability that they meet in the middle.
	\begin{align}
		&\probp{(i_{2T},j_{2T},k_{2T}) = (f_i,f_j,f_k)} \\
		\geq &\sum\limits_{(z_i,z_j,z_k) \in \mathcal{S}} \probp{(i_{T},j_{T},k_{T}) = (z_i,z_j,z_k)} \cdot \probp{({f_i}_{(-T)},{f_j}_{(-T)},{f_k}_{(-T)}) = (z_i,z_j,z_k)} \\
		\geq &\sum\limits_{(z_i,z_j,z_k) \in \mathcal{S}} \frac{D_1^2\gamma^6n^3}{\ell^{12}} \\
		= \ &\frac{D_1^2\gamma^6n^3}{100\ell^{12}}|N_i \times N_j \times N_k|.
	\end{align}
	As we showed in Lemma \ref{gammaovercount},
	\begin{equation}
		|N_i| \geq \frac{\ell^2}{2\gamma\sqrt{n}}.
	\end{equation}
	By symmetry the same bound applies to $N_j,N_k$ so
	\begin{equation}
		|N_i \times N_j \times N_k| \geq \frac{\ell^6}{8\gamma^3 n^\frac{3}{2}}.
	\end{equation}
	This gives us,
	\begin{align}
		\probp{(i_{2T},j_{2T},k_{2T}) = (f_i,f_j,f_k)} \geq \frac{D_1^2\gamma^3n^\frac{3}{2}}{800\ell^6}
	\end{align}
\end{proof}

\section{Entropy Decay}\label{entropysection}
In this section we will find our mixing time bound for the overlapping cycles shuffle. We will do this by applying Theorem \ref{montecomplete} to Theorem \ref{3monte} to bound each $A_x$ in the sum. As previously explained, we need $i,j,k$ to be spread out to apply Theorem \ref{montecomplete}, and so we use Theorem \ref{sdspread} and Corollary \ref{sdspreadinverse} as part of Stage 1 and Stage 3 to spread out $i,j,k$ and our targets $g_i,g_j,g_k$ to meet the requirements of $i,j,k$ and $g_i,g_j,g_k$ being distance $199\ell$ from each other. However there is one small technicality we have not addressed: How do we even know there exist positions $f_i,f_j,f_k$ that are spread out from each other and also a reachable distance from $i,j,k,g_i,g_j,g_k$? In other words, do there exist $f_i,f_j,f_k$ such that $\norm{p(f_i)},\norm{p(f_j)},\norm{p(f_k)} < \ell$ but $\norm{p(f_i)-p(f_j)},\norm{p(f_i)-p(f_k)},\norm{p(f_j)-p(f_k)} > c\ell$ for some not-to-small constant $c$? It seem clear that such positions should exist but a priori it is not obvious why. We deal with this in the following Theorem.

\begin{lemma}\label{positionspread}
	Fix some $\ell$ such that $100\sqrt{n} < \ell < \ell_{\rm max}$ where $\ell_{\rm max}$ is the maximum value of $\norm{\cdot}$ for the shuffle. There exist positions $f_i,f_j,f_k$ such that
	\begin{itemize}
		\item $\norm{p(f_i)},\norm{p(f_j)},\norm{p(f_k)} < \ell$
		\item $\norm{p(f_i) - p(f_j)}, \norm{(f_i)-p(f_k)}, \norm{p(f_j)-p(f_k)} > \frac{\ell}{5}$
	\end{itemize}
\end{lemma}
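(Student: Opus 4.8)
The plan is to exploit the hypothesis $\ell < \ell_{\rm max}$ directly, rather than trying to count lattice points inside a $\norm{\cdot}$-ball of radius $\ell$ (whose cardinality is controlled by Lemma \ref{gammaovercount} only up to the factor $\gamma$, which is large precisely in the regime where $\ell_{\rm max}$ is large). The starting observation is that $\norm{\cdot}$ is a translation-invariant pseudometric on $\mathbb{Z}/(2n-m+1)\mathbb{Z}$: if $\omega_1 \equiv a_1 + b_1 m$ and $\omega_2 \equiv a_2 + b_2 m$ realize $\norm{\omega_1}$ and $\norm{\omega_2}$, then $\omega_1+\omega_2 \equiv (a_1+a_2)+(b_1+b_2)m$ witnesses $\norm{\omega_1+\omega_2} \le \norm{\omega_1}+\norm{\omega_2}$, and $\norm{-\omega}=\norm{\omega}$. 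In particular $\norm{1} \le 1$ and $\norm{m} \le \sqrt{n}$, so changing a residue by $\pm 1$ changes its norm by at most $1$ and changing it by $\pm m$ changes its norm by at most $\sqrt{n}$.

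First I would produce a discrete path along which the norm climbs from $0$ past $\ell$. Since $\ell < \ell_{\rm max}$, pick $\omega^\ast$ with $\norm{\omega^\ast} = \ell_{\rm max} > \ell$ and an optimal representation $\omega^\ast \equiv a^\ast + b^\ast m$. Walk in $\mathbb{Z}^2$ from $(0,0)$ to $(a^\ast,b^\ast)$ one coordinate-step at a time, and let $w_0, w_1, \dots, w_T$ be the residues $a_t + b_t m$ of the intermediate lattice points. Consecutive $w_t$ differ by $\pm 1$ or $\pm m$, so $\bigl|\norm{w_{t+1}} - \norm{w_t}\bigr| \le \sqrt{n}$ for all $t$, while $\norm{w_0} = 0$ and $\norm{w_T} = \norm{\omega^\ast} = \ell_{\rm max} > \ell$. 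Let $t_1$ be the first index with $\norm{w_{t_1}} \ge \ell/3$ and $t_2$ the first index with $\norm{w_{t_2}} \ge 2\ell/3$; both exist, $t_1 < t_2$ (since $\norm{w_{t_1}} < \ell/3+\sqrt{n} < 2\ell/3$ by $\ell > 100\sqrt n$), and the step bound gives $\norm{w_{t_1}} < \ell/3 + \sqrt{n}$ and $\norm{w_{t_2}} < 2\ell/3 + \sqrt{n}$. Put $u_1 := w_{t_1}$ and $u_2 := w_{t_2}$, and consider the three residues $0, u_1, u_2$.

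Next I would check the two required properties for these three residues, which is where the triangle inequality does the real work. Their norms are $0$, $\norm{u_1} < \ell/3+\sqrt{n} < \ell$, and $\norm{u_2} < 2\ell/3+\sqrt{n} < \ell$. The distances from $0$ are $\norm{u_1} \ge \ell/3 > \ell/5$ and $\norm{u_2} \ge 2\ell/3 > \ell/5$, and subadditivity gives
\begin{equation*}
\norm{u_1 - u_2} \ge \norm{u_2} - \norm{u_1} \ge \tfrac{2\ell}{3} - \bigl(\tfrac{\ell}{3} + \sqrt{n}\bigr) = \tfrac{\ell}{3} - \sqrt{n} > \tfrac{\ell}{5},
\end{equation*}
the last inequality because $\ell > 100\sqrt{n}$. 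Finally I would convert residues to genuine deck positions: the image of $p$ misses only the residue $0$ and the odd offsets $m+1, m+3, \dots, 2n-m-1$, and this missed set is an independent set in the cycle $\mathbb{Z}/(2n-m+1)\mathbb{Z}$, so each of $0, u_1, u_2$ lies within $1$ (in $\norm{\cdot}$) of some residue in $p(\{1,\dots,n\})$. Replacing each of the three residues by such a neighbour (using the residue $1 = p(1)$ in place of $0$) and taking $p$-preimages yields positions $f_i, f_j, f_k$; since every inequality above has slack bounded below by a fixed positive multiple of $\ell$, which exceeds $1+\sqrt{n}$ by the hypothesis $\ell > 100\sqrt{n}$, the replacements preserve both bulleted conclusions.

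The main obstacle is conceptual rather than computational: one must resist the natural ball-counting approach and instead notice that $\ell < \ell_{\rm max}$ hands us a path of norms running from $0$ past $\ell$, from which two interior checkpoints at heights $\approx \ell/3$ and $\approx 2\ell/3$ automatically sit far from $0$ and, by the triangle inequality, far from each other. Everything after that — the $\pm 1$ and $\pm\sqrt{n}$ fluctuation bookkeeping and the adjustment needed to land in the image of $p$ — is routine and absorbed by $\ell > 100\sqrt{n}$.
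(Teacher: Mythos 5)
Your proposal is correct and takes a genuinely different, arguably cleaner, route than the paper.

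The paper's proof introduces a "dyadic halving" operator $(a,b)\mapsto(\lceil a/2\rceil,\lceil b/2\rceil)$ acting on representations, iterates it starting from an optimal representation of a residue of maximal norm, and shows each halving step changes the norm by at most $1+\sqrt{n}$ away from the exact half-value. The number of halvings is then chosen so that two consecutive iterates fall in the annulus $(\ell/5,\ell)$, and a separate subtraction estimate $\norm{z-\langle z/2\rangle}\ge\norm{\langle z/2\rangle}-2-2\sqrt{n}$ gives the pairwise separation between those two iterates. Your version replaces the halving machinery with a unit-step lattice walk from $(0,0)$ to an optimal representative $(a^\ast,b^\ast)$ of the farthest residue. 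Because $\norm{\cdot}$ is $\sqrt{n}$-Lipschitz along such a walk (by subadditivity together with $\norm{1}\le 1$, $\norm{m}\le\sqrt{n}$), and the norm runs from $0$ to $\ell_{\max}>\ell$, an intermediate-value argument hands you first crossings of $\ell/3$ and $2\ell/3$ with tight two-sided bounds. The triangle inequality then separates $0$, $u_1$, $u_2$ in one line, with none of the halving-error bookkeeping. Both proofs rest on the same two pillars — exploiting $\ell<\ell_{\max}$ to find intermediate residues of controlled norm, and subadditivity of $\norm{\cdot}$ to separate them — but the Lipschitz-walk phrasing buys you a single uniform error term $\sqrt{n}$ per step rather than a cascade of $O(\sqrt{n})$ errors accumulating through iterated halving, and it sidesteps having to verify that some power of $2$ lands $\norm{z}/2^x$ inside a fixed multiplicative window. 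Your handling of the image of $p$ (observing the missed residues form an independent set in the cycle) is also essentially the paper's final adjustment step, just stated more explicitly.

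One thing worth writing out if you turn this into a full proof: you should make explicit that $t_1\ge 1$ (so that $w_{t_1-1}$ exists and $\norm{w_{t_1-1}}<\ell/3$, which is what yields $\norm{u_1}<\ell/3+\sqrt{n}$); this follows from $\norm{w_0}=0<\ell/3$, but the reader should not have to reconstruct it.
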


\begin{proof}
	Let $z$ be an element of $\{1,\dots,2n-m\}$ that maximizes $\norm{z}$. Then $z = a+bm$ where $\norm{z} = |a| + |b|\sqrt{n}$. For any constant $k \in (0,1)$, let $\abracket*{\frac{z}{2}}$ be defined by
	\begin{equation}
		\abracket*{\frac{z}{2}} = \ceil*{\frac{a}{2}} + \ceil*{\frac{b}{2}}m.
	\end{equation}
	Then,
	\begin{align}
		\norm*{\abracket*{\frac{z}{2}}} &\leq \frac{1}{2}\norm{z} + 1 + \sqrt{n},\\
		\norm*{\abracket*{\frac{z}{2}}} &\geq \frac{1}{2}\norm{z} - 1 - \sqrt{n}.
	\end{align}
	The second item is true because
	\begin{equation}
		\norm*{z} \leq \norm*{\abracket*{\frac{z}{2}} + \abracket*{\frac{z}{2}}} + 2 + 2\sqrt{n} \leq 2\norm*{\abracket*{\frac{z}{2}}} + 2 + 2\sqrt{n} .
	\end{equation}
	Also note that
	\begin{align}
		\norm*{z - \abracket*{\frac{z}{2}}} &= \norm*{\left(a - \ceil*{\frac{a}{2}}\right) + \left(b-\ceil*{\frac{b}{2}}\right)m} \\
		&= \norm*{\floor*{\frac{a}{2}} + \floor*{\frac{b}{2}}m} \\
		&\geq \norm*{\abracket*{\frac{z}{2}}} - 2 - 2\sqrt{n}.
	\end{align}
	Define $\abracket*{\frac{z}{4}}$ as $\abracket*{\frac{\abracket*{\frac{z}{2}}}{2}}$ and $\abracket*{\frac{z}{8}}$ as $\abracket*{\frac{\abracket*{\frac{z}{4}}}{2}}$, etc. Then it follows inductively that
	\begin{align}
		\norm*{\abracket*{\frac{z}{2^x}}} &\leq \frac{1}{2^x}\norm{z} + 2 + 2\sqrt{n} \\
		\norm*{\abracket*{\frac{z}{2^x}}} &\geq \frac{1}{2^x}\norm{z} - 2 - 2\sqrt{n}.
	\end{align}
	Now choose $x$ such that 
	\begin{equation}
		\frac{\ell}{5} < \frac{1}{2^{x+1}}\norm{z} - 6 - 4\sqrt{n} < \frac{1}{2^x}\norm{z} + 2 + 2\sqrt{n} < \ell \label{zlowerbound}.
	\end{equation}
	Using the inequality on the right of (\refeq{zlowerbound}) gives us
	\begin{equation}
		\norm*{\abracket*{\frac{z}{2^{x+1}}}} < \norm*{\abracket*{\frac{z}{2^{x}}}} < \ell.
	\end{equation}
	Using the inequality on the left of (\refeq{zlowerbound}) gives us
	\begin{align}
		\norm*{\abracket*{\frac{z}{2^{x}}} - \abracket*{\frac{z}{2^{x+1}}}} &\geq \norm*{\abracket*{\frac{z}{2^{x+1}}}} - 2 - 2\sqrt{n} \\
		&\geq \frac{1}{2^{x+1}}\norm{z}-4-4\sqrt{n} \\
		&\geq \frac{\ell}{5} + 2.
	\end{align}
	Let $f_i$ be position $1$ in the deck and let $f_j,f_k$ be positions in the deck such that
	\begin{align}
		\left| p(f_j) - \abracket*{\frac{z}{2^{x+1}}}\right| &\leq 1 \\
		\left| p(f_k) - \abracket*{\frac{z}{2^{x}}}\right| &\leq 1.
	\end{align}
	Then,
	\begin{align}
		\norm*{p(f_i)-p(f_j)} &\geq \norm*{\abracket*{\frac{z}{2^{x+1}}}} - 2 \geq \frac{1}{2^{x+1}} \norm{z} - 4 - 2\sqrt{n} \geq \frac{\ell}{5} \\
		\norm*{p(f_i)-p(f_k)} &\geq \norm*{\abracket*{\frac{z}{2^{x}}}} - 2 \geq \frac{1}{2^{x}} \norm{z} - 4 - 2\sqrt{n} \geq \frac{\ell}{5} \\
		\norm*{p(f_j)-p(f_k)} &\geq \norm*{\abracket*{\frac{z}{2^{x}}}-\abracket*{\frac{z}{2^{x+1}}}} - 2 \geq \frac{\ell}{5} + 2 - 2 = \frac{\ell}{5}.
	\end{align}
\end{proof}

We now have finished all the work necessary to justify Stage 1, Stage 2, and Stage 3. There is one final proposition we need to show before we apply Theorem \ref{3monte}. We need to show that after $i,j,k$ get close together, they are likely to collide with each other.

\begin{proposition}\label{sqrtncollide}
	Consider the overlapping cycles shuffle where $\frac{m}{n} \in (\epsilon, 1 - \epsilon)$. Consider any cards $i,j,k$ and suppose that $|p(i_T)-p(j_T)|_M,|p(i_T)-p(k_T)|_M,|p(j_T)-p(k_T)|_M < \sqrt{n}$. Let $E$ be the event that the next time $i$ or $j$ or $k$ collides after time $T$ it is with each other and in the order $(i,k,j)$, and the next time $i$ collides after time $T$ it is before time $T + 10n$. Then,
	\begin{equation}
		\probp{E} \geq \frac{D}{n}
	\end{equation}
	where $D$ is a constant that depends on $\epsilon$.
\end{proposition}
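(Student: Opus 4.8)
The plan is to follow the argument of Lemma~\ref{l1collide} closely; the one genuinely new feature is that here $i$ and $k$ are not forced to be physically adjacent. In Lemma~\ref{l1collide} card $i$ was ``glued'' one position above $k$, so that landing $k$ in position $m$ automatically placed $i$ in position $m-1$ at no cost, whereas now we must spend a separate factor of $1/\sqrt n$ to steer $i$ into position $m-1$; this is exactly why the bound here is $D/n$ rather than $D/\sqrt n$. Concretely, it suffices to produce an even time $\tau \in (T, T+10n)$ at which $k_\tau = m$, $i_\tau = m-1$ and $j_\tau = n$, at which the coins in steps $\tau, \tau+1$ form an $HT$ or $TH$ pair, and before which (during $(T,\tau)$) none of $i,j,k$ has been in a collision; if all of this happens then $E$ occurs.

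The first step is to set up a constant-probability ``big coin'' event $Q$. Since $m > \epsilon n$, each of $i,j,k$ visits position $m$ only $O_\epsilon(1)$ times during the $10n$ steps after time $T$, so prescribing those coins costs only a constant depending on $\epsilon$. On $Q$ I require that $i$ and $k$ flip the same Heads--Tails pattern at position $m$ (so that, by Corollary~\ref{cardmovementcompare}, their relative displacement in the $p$-coordinate is driven purely by small-coin differentials); that $j$ flip exactly one Heads at position $m$ after time $T$ --- one jump to the top of the deck, which by Corollary~\ref{cardmovementcompare} shifts $p(j_t) - p(k_t)$ by exactly $-m$, the offset needed to make ``$k$ at $m$'' compatible with ``$j$ at $n$'' since $p(n) - p(m) \equiv -m-1 \pmod{2n-m+1}$; and that the visits of $k$ to position $m$ be arranged so that one of them falls in the middle of the window. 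I would also work in the multi-sequence coin construction used earlier so that the pieces of the small-coin Heads--Tails differentials of $i$, of $j$ and of $k$ collected while each card is \emph{alone} in the bottom part of the deck are mutually independent; by Lemma~\ref{ibottom}, applied in three disjoint sub-windows of $(T,\tau)$, we may at constant cost guarantee that each of the three cards has such a solo stretch of length at least a constant (depending on $\epsilon$) times $n$.

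Conditioning on $Q$, let $\tau$ be the time $k$ reaches position $m$; this is even with probability bounded away from $0$, since a Heads flipped while $i,j,k$ are all in the bottom part wastes a step and flips the parity. Then $k_\tau = m$ automatically. By Corollary~\ref{cardmovementcompare}, $p(i_\tau) - p(k_\tau)$ equals $p(i_T) - p(k_T)$ plus the difference of the small-coin differentials of $i$ and $k$ over $(T,\tau]$, up to an $O_\epsilon(1)$ big-coin term; since $|p(i_T) - p(k_T)|_M < \sqrt n$, the event $i_\tau = m-1$ is the event that this differential difference lands on one prescribed value, which sits within $O_\epsilon(1)$ standard deviations of its mean (as $i$ and $k$ each spend $\Omega_\epsilon(n)$ steps in the bottom part), and hence has probability at least a constant times $1/\sqrt n$. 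The identical computation, with $j$'s extra Heads folded in, gives $j_\tau = n$ with probability at least a constant times $1/\sqrt n$. The role of the solo stretches is that, after conditioning on the trajectory of $k$ and on a further constant-probability event bounding all the auxiliary differentials by $O_\epsilon(\sqrt n)$, the $i$-condition can be made to depend only on the coins $i$ draws during its solo stretch and the $j$-condition only on the coins $j$ draws during its solo stretch; these two coin blocks are independent, so the two conditions hold simultaneously with probability at least $D_1/n$. An independent factor of $\tfrac{1}{4}$ for the $HT$/$TH$ pair at steps $\tau,\tau+1$ then gives the collision in the order $(i,k,j)$.

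The step I expect to be the main obstacle is ruling out an \emph{earlier} collision during $(T,\tau)$, since a tracked card counts as being in a collision at an even time the instant it occupies $m-1$, $m$ or $n$ with an $HT$/$TH$ pair coming up, regardless of which cards fill the other two slots. The clean half of the argument: before $j$ jumps, all three cards stay within $O_\epsilon(\sqrt n)$ of one another in $|\cdot|_M$, while $|p(m-1)-p(n)|_M$ and $|p(m)-p(n)|_M$ are each at least $\epsilon n$ (even though $\norm{p(m-1)-p(n)} = \sqrt n$), so for $n$ large $i,j,k$ cannot jointly occupy $(m-1,m,n)$ before $j$ has jumped; and after $j$'s jump $j$ is at $|\cdot|_M$-distance at least $\epsilon n$ from both $i$ and $k$, so that triple is realized only at $\tau$ itself. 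The fiddly half is excluding collisions of a \emph{single} one of $i,j,k$ with outside cards: at each of the $O_\epsilon(1)$ steps at which one of our cards passes through $m-1$, $m$ or $n$ before $\tau$ one additionally prescribes the adjoining coin so as not to complete an $HT$/$TH$ pair, compatibly with the big coins already fixed in $Q$ --- again a constant cost. Multiplying the finitely many $\epsilon$-dependent constant factors by the two factors of $1/\sqrt n$ then yields $\probp{E} \ge D/n$ for a constant $D = D(\epsilon)$ and $n$ sufficiently large.
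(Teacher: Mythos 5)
Your proposal is correct and follows essentially the same approach as the paper's proof: a constant-probability big-coin event forcing $H_B(j)-H_B(k)=1$ while $H_B(i)-H_B(k)=0$, two $\Theta(1/\sqrt{n})$ factors from the small-coin differentials of $i$ and $j$ (relative to $k$) landing on precise targets within a bounded number of standard deviations, and constant-cost conditioning for the time bound, the parity of $\tau$, the $HT/TH$ pair, and the exclusion of earlier collisions. Your version is slightly more careful than the paper's in making explicit the conditional independence of the two $1/\sqrt n$ events via the solo-stretch coin pools, but the underlying argument is the same.
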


Intuitively, this makes sense. If $i,j,k$ begin distance $\sqrt{n}$ from each other with respect to $\norm{\cdot}$ then by the analysis we have done in previous sections we should believe that $i,j,k$ can travel to positions $m-1,n,m$ respectively in $C(\sqrt{n})^2 = Cn$ steps, as positions $m-1,n,m$ are also distance $\sqrt{n}$ from each other with respect to $\norm{\cdot}$. The only wrinkle is making sure $i,j,k$ do not collide with any other cards along the way.

\begin{proof}
	Let $\tau_1$ be the random stopping time given by the minimal $t \geq T$ such that $i,j,k \in (1,m)$. Let $Q_1$ be the event that 
	\begin{itemize}
		\item $i,j,k$ each flip Tails on their first visit to position $m$ before time $\tau_1$ (if it exists),
	\end{itemize}
	Note that $\probp{Q_1} \geq \parens*{\frac{1}{2}}^3$ accounting for at most $3$ Tails flips. Now let $Q_2$ be the event that,
    \begin{itemize}
        \item The next two times after $\tau_1$ that $i$ is in position $m$, the coin flips are Tails, Heads respectively.
        \item The next two times after $\tau_1$ that $k$ is in position $m$, the coin flips are Heads, Tails respectively.
        \item The next two times after $\tau_1$ that $j$ is in position $m$, the coin flips are Heads in both instances.
    \end{itemize}
    Note that $\probp{Q_2 \ | \ Q_1} = (\frac{1}{2})^6$. The event $Q_2$ will ensure that there is time when $i$ is in the bottom part of the deck by itself, followed by time when when $k$ is in the bottom part of the deck without $j$, followed by time when $j$ is in the bottom part of the deck by itself. Each of these blocks of time time will be on the order of $\min(m, n-m) = \epsilon n$. Let $\tau_2$ be the stopping time when $k$ reaches in position $m$ for the third time after $\tau_1$. At this point, using the notation from Corollary \ref{cardmovementcompare}, we will have $H_B(i) - H_B(k) = 0$ since $i$ and $k$ will have each had one Heads flip while in position $m$. We will also have $H_B(j) - H_B(k) = 1$, as $j$ will have flipped two Heads while in position $m$. This means that $i$ and $k$ will be close to the same position at time $\tau_2$, and $j$ will be about $m$ steps above $k$ (mod $2n-m+1)$. This is exactly what we want, as we want $i_{\tau_2} = m-1$ and $j_{\tau_2} = n$ for $i,j,k$ to collide in the next step. \\
    \\
    To bound the probability $i_{\tau_2} = m-1$ and $j_{\tau_2} = n$, recall that between time $\tau_1$ and $\tau_2$ card $i$ spent on the order of $\epsilon n$ steps in the bottom part of the deck by itself, and $j$ spent on the order of $\epsilon n$ steps in the bottom part of the deck by itself. In order to have $i_{\tau_2} = m-1$ and $j_{\tau_2} = n$ we will need:
    \begin{itemize}
        \item $(T_S(i) - H_S(i)) - (T_S(k) - H_S(k)) = p(k_T) - p(i_T) - 1$,
        \item $(T_S(j) - H_S(j)) - (T_S(k) - H_S(k)) = p(k_T) - p(j_T) - 1$.
    \end{itemize}
    Since $i,j,k$ were within $\sqrt{n}$ positions of each other at time $T$, we get that $p(k_T) - p(i_T)$ and $p(k_T) - p(j_T)$ are both less than $2\sqrt{n}$. So, to have $i_{\tau_2} = m-1$ and $j_{\tau_2} = n$ we need the Heads-Tails differentials of $i$ and $j$ to differ from those of $k$ by a precise amount less than $2\sqrt{n}$. Since $i$ and $j$ each spend at least on the order of $\epsilon n$ steps alone in the bottom of the deck, their Heads-Tails differential has a standard deviation on the order of $\sqrt{\epsilon n}$. We need $i$ and $j$ to have Heads-Tails differentials compared to $k$'s which hit a precise value one the order of $\frac{1}{\sqrt{\epsilon}}$ standard deviations away, so
    \begin{equation}
        \probp{i_{\tau_2} = m-1, j_{\tau_2} = n \ | \ Q_1, Q_2} = \frac{D_1}{\sqrt{n} \cdot \sqrt{n}} = \frac{D_1}{n}
    \end{equation}
    where $D_1$ is a constant that depends on $\epsilon$. It remains to justify that, conditioned $Q_1, Q_2, i_{\tau_2} = m-1, j_{\tau_2} = n$, we have with constant probability that $\tau_2 - T < 10n$ and $i,j,k$ won't collide with any other cards between times $T$ and $\tau_2$. Recall that $\tau_1 - T$ is the time it takes for $i,j,k$ to cycle through the deck until they are all in the top part of the deck. This will take at about $2n-m$ steps in the longest case (if they have to cycle all the way through the deck) so with high probability $\tau_1 - T < 3n$. Recall that $\tau_2 - \tau_1$ is the time it takes $k$ to reach position $m$ for the third time after time $\tau_1$. Laps through the deck take about $2n$ steps in the longest case, so with high probability $\tau_2 - \tau_1 < 3 \cdot 2n + n = 7n$. So, with high probability, $\tau_2 - T < 10n$. \\
    \\
    For $i,j,k$ to avoid collisions with other cards between time $T$ and $\tau_2$, recall that for a card to be in a collision it has to be in position $m-1$, $m$, or $n$ at an even time and have the next two flips be Heads, Tails or Tails, Heads. We can ensure that $i,j,k$ avoid collisions before time $\tau_2$ by having flips land Heads, Heads or Tails, Tails whenever $i,j,k$ are in positions $m-1, m$, or $n$ at an even time. Since $i,j,k$ are only in these special positions a constant number of times before $\tau_2$, we know they avoid collisions with a constant probability. So,
    \begin{align}
        \probp{E} &\geq \probp{E \ | \ i_{\tau_2} = m-1, j_{\tau_2} = n, Q_1, Q_2} \cdot \probp{i_{\tau_2} = m-1, j_{\tau_2} = n \ | \ Q_1, Q_2} \cdot \probp{Q_1,Q_2} \\
        &\geq D_2 \cdot \frac{D_1}{n} \cdot D_3 \\
        &= \frac{D}{n}
    \end{align}
    where $D$ is a constant that depends on $\epsilon$.
\end{proof}

Now we are ready to apply Theorem \ref{3monte}. As a reminder, our strategy is broken into 3 stages.
\begin{itemize}
	\item In Stage 1 we use Theorem \ref{sdspread} to show that $i,j,k$ spread out sufficiently.
	\item In Stage 2 we use Theorem \ref{montecomplete} to show that $i,j,k$ move to a precise position.
	\item In Stage 3 we use Theorem \ref{sdspreadinverse} to show that $i,j,k$ collapse back together.
\end{itemize}
Then finally we use Theorem \ref{sqrtncollide} to show that $i,j,k$ collide with each other.

\begin{theorem}\label{collidecomplete}
	Consider the overlapping cycles shuffle on $n$ cards where $\frac{m}{n} \in (\epsilon, 1-\epsilon)$. There exist constants $C,D$ which depend on $\epsilon$ such that the following is true: Fix any $\ell$ such that $C\sqrt{n} \leq \ell \leq \ell_{\rm max}$. Let $i,j,k$ be cards such that $\norm{p(i)},\norm{p(j)},\norm{p(k)} < \ell$. Let $T_1 \in [\ell^2,\ell^2+4n]$ such that $T_1 - \floor*{\frac{T_1}{2n}}(m-1) \equiv 0 \mod 2n-m+1$. Let $T_2 \in [10^{-6}\ell^2,10^{-6}\ell^2+4n]$ such that $T_2 - \floor*{\frac{T_2}{2n}}m \equiv 0$. Let $T = 2T_1 + 2T_2$ and let $t = T + 10n$. Let $E$ be the event that the first time $i$ collides after time $T$, it is with $j$ and $k$ on the front and back respectively, and it happens before time $t$. Then,
	\begin{equation*}
		\probp{E} \geq \frac{D\gamma^2n}{\ell^4}.
	\end{equation*}
\end{theorem}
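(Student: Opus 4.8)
The plan is to reduce the collision statement to a purely positional estimate, and then prove that estimate by a meet-in-the-middle argument assembled from the three stages (``spread out / position precisely / collapse'') built up in the preceding sections. First I would show it suffices to prove
\[
  \probp{|p(i_T)-p(j_T)|_M,\ |p(i_T)-p(k_T)|_M,\ |p(j_T)-p(k_T)|_M < \sqrt n}\ \ge\ D'\,\frac{\gamma^2 n^2}{\ell^4}
\]
for a constant $D'$ depending on $\epsilon$: conditioning on such a time-$T$ configuration and invoking Proposition~\ref{sqrtncollide} (applied with the names $j$ and $k$ interchanged, so that the front/back roles match those in the definition of $E$) contributes a factor at least $D''/n$ for the first collision of $i,j,k$ after time $T$ being the desired three-collision and occurring before time $T+10n=t$ --- an event contained in $E$ --- so that $\probp{E}\ge D\gamma^2 n/\ell^4$ after renaming constants. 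I would note that $T=2(T_1+T_2)$ is even, as collisions require, and that the congruences imposed on $T_1,T_2$ are precisely those annihilating the $t-\lfloor t/2n\rfloor(m-1)$ and $t-\lfloor t/2n\rfloor m$ terms of Proposition~\ref{movementbounds} needed by Propositions~\ref{sdspread} and~\ref{montecoupled}, their existence in the stated intervals being furnished by the two lemmas following Proposition~\ref{movementbounds}. (Throughout I assume $\ell\le n$, the range in which those propositions are stated.)

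To build the positional estimate, set $\ell_2=10^{-3}\ell$ (so $T_2\approx\ell_2^2$) and use Lemma~\ref{positionspread} (valid since $\ell\ge C\sqrt n\ge100\sqrt n$) to fix positions $g_i,g_j,g_k$ with $\norm{p(g_\bullet)}<\ell$ and pairwise $\norm{p(g_\bullet)-p(g_{\bullet'})}>\ell/5$. For a triple $h$ let $N^{(h)}$ be the product over $\bullet\in\{i,j,k\}$ of the sets of positions $\equiv p(h_\bullet)+a+bm$ with $|a|\le\ell_2$, $|b|\le\ell_2/\sqrt n$; by Lemma~\ref{gammaovercount}, $|N^{(g)}|\ge\ell_2^6/(8\gamma_2^3 n^{3/2})$ with $\gamma_2=\gamma(\ell_2,n,m)$. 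For the forward half: since $\norm{p(i)-p(g_i)}\le\norm{p(i)}+\norm{p(g_i)}<2\ell$ (and likewise for $j,k$), Proposition~\ref{sdspread} over the first $T_1$ steps brings $i,j,k$ within $\ell/2000$ of $g_i,g_j,g_k$ with probability at least a constant; the constants $\ell/5,\ \ell/2000,\ 199,\ 10^{-6}$ are arranged so the cards then stay pairwise more than $199\ell_2$ apart, and Proposition~\ref{montecoupled} with parameter $\ell_2$ over the next $T_2$ steps distributes $(i,j,k)$ over at least $\tfrac34$ of $N^{(g)}$, each triple reached with probability $\ge D_1\gamma_2^3 n^{3/2}/\ell_2^6$; composing gives $\probp{(i_{T_1+T_2},j_{T_1+T_2},k_{T_1+T_2})=z}\ge D_1\gamma_2^3 n^{3/2}/\ell_2^6$ for at least $\tfrac34$ of $z\in N^{(g)}$. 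For the backward half, fix any triple $f=(f_i,f_j,f_k)$ with $\norm{p(f_\bullet)}<\ell$ and $f_i,f_j,f_k$ pairwise within $\sqrt n$ in $|\cdot|_M$; running the inverse shuffle from $f$ --- Corollary~\ref{sdspreadinverse} for $T_1$ steps to bring $f$ near $g$, then Corollary~\ref{montecoupledinverse} with parameter $\ell_2$ for $T_2$ steps --- and translating via Theorem~\ref{inverseshuffle}, Corollary~\ref{inversenorm} and Proposition~\ref{inversedist} (whence $\sigma N^{(g)}=N^{(\sigma g)}$ and ``close together'' is $\sigma$-invariant) yields $\probp{z\to f\text{ in }T_1+T_2\text{ forward steps}}\ge D_1\gamma_2^3 n^{3/2}/\ell_2^6$ for $z$ in at least $\tfrac34$ of $N^{(g)}$.

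The two $\tfrac34$-subsets of $N^{(g)}$ overlap on a set $\mathcal S$ with $|\mathcal S|\ge\tfrac12|N^{(g)}|$, so by the Markov property at time $T_1+T_2$, $\probp{(i_T,j_T,k_T)=f}\ge|\mathcal S|\,(D_1\gamma_2^3 n^{3/2}/\ell_2^6)^2\ge D_2\,\gamma_2^3 n^{3/2}/\ell_2^6$ for every admissible close-together $f$. Summing this over all such $f$ --- of which there are at least a constant times $(\ell^2/(\gamma\sqrt n))\cdot n=\ell^2\sqrt n/\gamma$ (choose $f_i$ among the $\ge(\text{const})\,\ell^2/(\gamma\sqrt n)$ positions with $\norm{p(f_i)}\le\ell$, by Lemma~\ref{gammaovercount}, and then $f_j,f_k$ each among the $\approx\sqrt n$ positions within $\sqrt n$ of $f_i$ in $|\cdot|_M$) --- gives $\probp{\text{close at }T}\ge D_3\,(\ell^2\sqrt n/\gamma)\,(\gamma_2^3 n^{3/2}/\ell_2^6)$. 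The key quantitative input is the comparison $\gamma(\ell_2)\ge(\text{const})(\ell_2/\ell)^2\gamma(\ell)$ --- the counting function $\gamma$ drops at most quadratically under rescaling of its argument, reflecting that the underlying sets are at most two-dimensional generalized arithmetic progressions --- which together with $\ell_2=10^{-3}\ell$ yields $\gamma_2^3/\ell_2^6\ge(\text{const})\gamma^3/\ell^6$, so that $\probp{\text{close at }T}\ge(\text{const})\gamma^2 n^2/\ell^4$, completing the reduction.

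The main obstacle is the cross-scale bookkeeping rather than any new idea: proving the inequality $\gamma(\ell_2)\ge(\text{const})(\ell_2/\ell)^2\gamma(\ell)$ --- this is exactly what converts the $\gamma^3$ produced by Proposition~\ref{montecoupled} into the $\gamma^2$ claimed here and forces the precise constant $10^{-6}$ in $T_2$ --- and choosing $\ell/5,\ \ell/2000,\ 199,\ 10^{-6}$ consistently so that the cards remain pairwise more than $199\ell_2$ apart throughout Stage~2 and so that the forward and inverse halves genuinely aim at the same set $N^{(g)}$, which is what makes their good $\tfrac34$-fractions overlap. One must also recheck that the coupling $\Delta$-corrections inside Proposition~\ref{montecoupled} and the ``avoid other collisions'' accounting inside Proposition~\ref{sqrtncollide} survive being prepended by the spreading stages; these are in the spirit of arguments already carried out, but the constants have to be tracked with care.
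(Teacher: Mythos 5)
Your proposal tracks the paper's proof closely: the same three-stage meet-in-the-middle (Lemma~\ref{positionspread} to select spread-out waypoints, Proposition~\ref{sdspread} and Corollary~\ref{sdspreadinverse} for the outer stages, the Stage-2 coupling in the middle), the same sum over the $\ge\sqrt{n}\,\ell^2/(4\gamma)$ close-together target triples, and the same finish via Proposition~\ref{sqrtncollide}; the only difference is cosmetic, in that you inline the two halves of Theorem~\ref{montecomplete} via Propositions~\ref{montecoupled} and~\ref{montecoupledinverse} rather than citing the theorem directly. You are right to single out the cross-scale inequality $\gamma(10^{-3}\ell)\ge(\text{const})\cdot 10^{-6}\,\gamma(\ell)$ as the one genuinely delicate bookkeeping step: the paper invokes exactly this when it replaces $\gamma(10^{-3}\ell)^3/(10^{-3}\ell)^6$ by $D_2\,\gamma(\ell)^3/\ell^6$, describing it only informally as a constant-factor relationship and giving no proof, and your quadratic-loss formulation is both the weakest version sufficient for the stated bound and the form one would actually expect to establish (for instance by a Minkowski-style count of the lattice points defining $\gamma$ in the anisotropic box).
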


\begin{proof}
	Let $N_\ell$ be the set of positions
	\begin{equation}
		N_\ell = \left\{\text{positions } \omega \text { such that } p(\omega) = a + bm \text{ with } |a| \leq \ell, \ |b| \leq \frac{\ell}{\sqrt{n}}\right\}
	\end{equation}
	Choose any positions $g_i,g_j,g_k$ such that
	\begin{itemize}
		\item $p(g_i) \in N_\ell$
		\item $|p(g_i)-p(g_j)|_M,|p(g_i)-p(g_k)|_M,|p(g_j)-p(g_k)|_M < \sqrt{n}$
	\end{itemize}
	Now according to Lemma \ref{positionspread} there exist positions $f_i,f_j,f_k$ such that
	\begin{align}
		\norm{p(f_i)},\norm{p(f_j)},\norm{p(f_k)} &< \ell, \\
		\norm{p(f_i)-p(f_j)},\norm{p(f_i)-p(f_j)},\norm{p(f_i)-p(f_k)} &> \frac{\ell}{5}.
	\end{align}
	Since $\norm{p(i)},\norm{p(j)},\norm{p(k)} < \ell$ we also have
    \begin{equation}
        \norm{p(i)-p(f_i)}, \norm{p(j)-p(f_j)}, \norm{p(k)-p(f_k)} < 2\ell.
    \end{equation}
    As long as $C$ is sufficiently large, we have according to Proposition \ref{sdspread} that
	\begin{equation}
		\probp{G_1} := \probp{\norm{p(i_{T_1})-p(f_i)},\norm{p(j_{T_1})-p(f_j)},\norm{p(k_{T_1})-p(f_k)} < \frac{\ell}{2000}} \geq D_1.
	\end{equation}
	for a constant $D_1$ that depends on $\epsilon$. Similarly by Corollary \ref{sdspreadinverse} we have that
	\begin{equation}
		\probp{G_2} := \probp{\norm{p({g_i}_{(-T_1)})-p(f_i)},\norm{p{(g_j}_{(-T_1)})-p(f_j)},\norm{p({g_k}_{(-T_1)})-p(f_k)} < \frac{\ell}{2000}} \geq D_1.
	\end{equation}
	On $G_1,G_2$ we get that
	\begin{itemize}
		\item $\norm{p(i_{T_1})-p({g_i}_{(-T_1)})},\norm{p(j_{T_1})-p({g_j}_{(-T_1)})},\norm{p(k_{T_1})-p({g_k}_{(-T_1)})} < \frac{\ell}{1000}$,
		\item $\norm{p(i_{T_1})-p(j_{T_1})},\norm{p(i_{T_1})-p(k_{T_1})},\norm{p(j_{T_1})-p(k_{T_1})} > \frac{\ell}{5} - \frac{2\ell}{2000} = \frac{199\ell}{1000}$,
		\item $\norm{p({g_i}_{(-T_1)})-p({g_j}_{(-T_2)})},\norm{p({g_i}_{(-T_1)})-p({g_k}_{(-T_1)})},\norm{p({g_j}_{(-T_1)})-p({g_k}_{(-T_2)})} > \frac{\ell}{5} - \frac{2\ell}{1000} = \frac{199\ell}{1000}$.
	\end{itemize}
	We apply Theorem \ref{montecomplete} using $10^{-3}\ell$ in place of $\ell$ to get
	\begin{equation}
		\probp{(i_{T_1+2T_2},j_{T_1+2T_2},k_{T_1+2T_2}) = ({g_i}_{(-T_1)},{g_j}_{(-T_1)},{g_k}_{(-T_1)}) \ | \ G_1,G_2} \geq \frac{D_2\gamma^3n^\frac{3}{2}}{\ell^6}.
	\end{equation}
	Since we are using Theorem \ref{montecomplete} with $10^{-3}\ell$ in place of $\ell$, the $\gamma$ in the numerator should be $\gamma(10^{-3}\ell,n,m)$. However, this and $\gamma(\ell,n,m)$ only differ by a constant factor, so assume that the $\gamma$ in the numerator is $\gamma(\ell,n,m)$ and the constant factor is absorbed into $D_2$. Together we get that
	\begin{equation}
		\probp{(i_{T_1+2T_2+T_1} ,i_{T_1+2T_2+T_1}, i_{T_1+2T_2+T_1}) = (g_i,g_j,g_k)} \geq \frac{D_1^2D_2\gamma^3n^\frac{3}{2}}{\ell^6}.
	\end{equation}
	Let $Q(g_i,g_j,g_k)$ be the event that the next time $g_i$ collides it is with $g_j$ as its front match and $g_k$ as its back match, and that this collision happens within $8n$ steps. As shown in Theorem \ref{sqrtncollide},
	\begin{equation}
		\probp{Q(g_i,g_k,g_k)} \geq \frac{D_3}{n}.
	\end{equation}
	Let $\mathcal{R}_\ell$ be the set of choices for the triplet $(g_i,g_j,g_k)$ according to our parameters at the start of the proof. To be specific, let
	\begin{equation}
		\mathcal{R}_\ell = \{(g_i,g_j,g_k) \text{ such that } g_i,g_j,g_k \in N_\ell \text{ and } |p(g_i)-p(g_j)|_M,|p(g_i)-p(g_k)|,|p(g_j)-p(g_k)|_M < \sqrt{n}\}.
	\end{equation}
	Then,
	\begin{align}
		\probp{E} &\geq \sum\limits_{(g_i,g_j,g_k) \in \mathcal{R}_\ell} \probp{E,(i_{T_1+2T_2+T_1} ,j_{T_1+2T_2+T_1}, k_{T_1+2T_2+T_1}) = (g_i,g_j,g_k)} \\
		&\geq \sum\limits_{(g_i,g_j,g_k) \in \mathcal{R}_\ell} \probp{(i_{T_1+2T_2+T_1} ,j_{T_1+2T_2+T_1}, k_{T_1+2T_2+T_1}) = (g_i,g_j,g_k)} \cdot \probp{Q(g_i,g_j,g_k)} \\
		&\geq \sum\limits_{(g_i,g_j,g_k) \in \mathcal{R}_\ell} \frac{D_4\gamma^3n^\frac{3}{2}}{n\ell^6}.
	\end{align}
	We now need to bound $|\mathcal{R}_\ell|$. Note that $g_i$ is chosen from $N_\ell$. For each choice of $g_i \in N_\ell$ we can choose $g_j$ such that $p(g_j) = p(g_i) + \delta_j$ for $\delta_j \in (\sqrt{n},\sqrt{n})$. At least half of these values of $\delta_j$ are associated with valid positions in the deck, so there are at least $\sqrt{n}$ choices for $g_j$. If $\delta_j$ is negative then we can choose $g_k$ such that $p(g_k) = p(g_j) + \delta_k$ with $\delta_k\in \{1,\dots,\sqrt{n}\}$ and if $\delta_j$ is positive we can do the same with $\delta_k\in \{-1,\dots,-\sqrt{n}\}$. This will meet the requirement that $|p(g_i)-p(g_j)|_M$ and $|p(g_i)-p(g_k)|_M$ and $|p(g_j)-p(g_k)|_M$ are all less than or equal to $\ell$. This gives us at least $\sqrt{n} \cdot \frac{\sqrt{n}}{2} = \frac{n}{2}$ choices for $(g_j,g_k)$. Thus,
	\begin{equation}
		|\mathcal{R}_\ell| \geq \frac{n}{2}|N_\ell|.
	\end{equation}
	As we showed in Lemma \ref{gammaovercount} we have
	\begin{equation}
		|N_\ell| \geq \frac{\ell^2}{2\gamma\sqrt{n}} \label{gammabound}.
	\end{equation}
	So,
	\begin{equation}
		|\mathcal{R}_\ell| \geq\frac{\sqrt{n}\ell^2}{4\gamma}
	\end{equation}
	and
	\begin{equation}
		\probp{E} \geq \frac{D\gamma^2n}{\ell^4}.
	\end{equation}
\end{proof}

Now that we have a bound on the $A_x$ in the sum of Theorem \ref{3monte} it is time to find a bound on the mixing time. To do this we first show that the Entropy of the overlapping cycles shuffle decays at an exponential rate. As part of the proof, we will use a slight variation of Theorem \ref{3monte}. The only variation will be that, instead of denoting $y>x$ for cards $y,x$ where $x$ is above $y$ in the deck, we will use a different well-ordering of the deck. We will define a permutation $\nu$ which translates from the top-to-bottom ordering to our new well-ordering. Using this well-ordering Theorem \ref{3monte} will still hold, because the ordering assumed in the Theorem is arbitrary. Since the Theorem is defined for any random permutation, there is no reason any particular ordering is preferred.

\begin{lemma}\label{penultimatelemma}
	Consider the overlapping cycles shuffle with $n$ cards and parameter $m$ where $\frac{m}{n} \in (\epsilon, 1 - \epsilon)$ and $n$ is sufficiently large. There exist constants $C,D$ which depend on $\epsilon$ such that the following is true: If $\pi_t$ is the overlapping cycles shuffle with $t$ steps then there is a value $t \in \{1,\dots,C\ell_{\rm max}^2\}$ such that
	\begin{align}
		\mathbb{E}\big[\text{\rm ENT}(\mu\pi_t|\text{\rm sgn}(\mu\pi_t))\big]\leq \parens*{1 - \frac{Dt}{\ell_{\max}^2\log^2(n)}}\mathbb{E}[\text{\rm ENT}(\mu \ | \ \text{\rm sgn}(\mu))]
	\end{align}
\end{lemma}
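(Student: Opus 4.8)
The plan is to run one long block of the shuffle, describe its $3$-collision structure via Theorem~\ref{collidecomplete}, feed the resulting probabilities into the $3$-Monte inequality of Theorem~\ref{3monte} under a well-ordering of the deck by $\norm{\cdot}$, and finish with the chain rule for relative entropy. First a reduction: since $\text{ENT}(\cdot\mid\text{sgn})$ is invariant under inversion and under left- or right-multiplication by a fixed permutation, and $\pi_t^{-1}\stackrel{d}{=}\sigma\pi_t\sigma^{-1}$ by Theorem~\ref{inverseshuffle}, one has $\mathbb E[\text{ENT}(\mu\pi_t\mid\text{sgn})]=\mathbb E[\text{ENT}(\pi_t\mu'\mid\text{sgn})]$ and $\mathbb E[\text{ENT}(\mu'\mid\text{sgn})]=\mathbb E[\text{ENT}(\mu\mid\text{sgn})]$ for $\mu'=\sigma^{-1}\mu^{-1}$ (with an independent copy of the shuffle), so it suffices to bound $\mathbb E[\text{ENT}(\pi_t\mu\mid\text{sgn})]$. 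Reindex the $n$ positions — hence the $n$ cards, each named by its initial position — so that the $x$-th one has the $x$-th smallest value of $\norm{p(\cdot)}$, ties broken arbitrarily; write $\ell_1\le\cdots\le\ell_n=\ell_{\max}$. By the remarks after Theorem~\ref{3monte} the theorem applies verbatim in this ordering. Take $\ell=\ell_{\max}$ (permitted, since $\ell_{\max}\ge\tfrac12 n^{3/4}\ge C\sqrt n$ for large $n$), choose $T_1\in[\ell^2,\ell^2+4n]$ and $T_2\in[10^{-6}\ell^2,10^{-6}\ell^2+4n]$ satisfying the congruences required by Theorem~\ref{collidecomplete}, and set $T=2T_1+2T_2$, $t=T+10n\le 3\ell_{\max}^2$: this is the $t$ we exhibit, and we use the deterministic value $T$ as the random time of Theorem~\ref{3monte}.

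Next I bound $A_x$ for $x\in\{3,\dots,n\}$. Fix a prospective back match $z<x$. For each front candidate $y<x$ with $y\ne z$, the triple $(x,y,z)$ has all initial positions of norm $\le\ell_{\max}$, so Theorem~\ref{collidecomplete} (with the above $\ell,T,t$) gives, for that triple, probability at least $D\gamma^2 n/\ell_{\max}^4$ that the first collision of $x$ after time $T$ is with $y$ in front and $z$ in back and occurs before $t$, where $\gamma=\gamma(\ell_{\max},n,m)$. These events are disjoint over $y$ and each forces $m_1(x)=y$, $m_2(x)=z$, so $\mathbb P(m_2(x)=z,\ m_1(x)<x)\ge (x-2)\,D\gamma^2 n/\ell_{\max}^4$, and we may take $A_x=\min\{1,\ x(x-2)\,D\gamma^2 n/\ell_{\max}^4\}$. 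Since $\ell_{\max}$ is the largest norm, $N_{\ell_{\max}}$ is the entire deck, so Lemma~\ref{gammaovercount} gives $n=|N_{\ell_{\max}}|\ge\ell_{\max}^2/(2\gamma\sqrt n)$, whence $\gamma^2 n/\ell_{\max}^4\ge 1/(4n^2)$ and $A_x\ge D'x^2/n^2$ for a constant $D'=D'(\epsilon)$; in particular $A_x\ge D'/\log n$ once $x\ge n/\sqrt{\log n}$.

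To finish, apply the chain rule for relative entropy: revealing $\mu^{-1}(n),\mu^{-1}(n-1),\dots$ in that order (the values at $2$ and $1$ being forced once $\text{sgn}$ is known) gives $\sum_{x=3}^n E_x=\mathbb E[\text{ENT}(\mu\mid\text{sgn}(\mu))]=:\mathcal E$, while $E_x\le\log x\le\log n$ because $\mu^{-1}(x)$ then ranges over only $x$ values. Hence $\sum_{x<n/\sqrt{\log n}}E_x\le n\sqrt{\log n}$ and
\[
\sum_{x=3}^n A_x E_x\ \ge\ \frac{D'}{\log n}\sum_{x\ge n/\sqrt{\log n}}E_x\ \ge\ \frac{D'}{\log n}\bigl(\mathcal E-n\sqrt{\log n}\bigr).
\]
Plugging this into Theorem~\ref{3monte} and recalling $t\asymp\ell_{\max}^2$ yields $\mathbb E[\text{ENT}(\pi_t\mu\mid\text{sgn})]\le\bigl(1-\tfrac{Dt}{\ell_{\max}^2\log^2 n}\bigr)\mathcal E$, and undoing the first-paragraph reduction gives the claim.

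The main obstacle is the last display: it is immediate only while $\mathcal E\gg n\sqrt{\log n}$, which holds for the identity configuration ($\mathcal E=\log(n!/2)\sim n\log n$) and for a long initial stretch of the entropy iteration, but the inequality $\sum A_xE_x\ge\tfrac{D}{\log n}\mathcal E$ is false for an arbitrary $\mu$ (take $E_x$ supported on $x=3$). So the hard part will be to argue that the relative entropy the shuffle actually carries never piles up on the lowest-ranked cards — combining $E_x\le\log x$ with the fact that the $\mu$'s that occur in the iteration are powers of the shuffle started from the identity, hence far from the adversarial distribution that would break the bound — or else to dispatch the small-$\mathcal E$ regime by a coarser estimate; a dyadic decomposition of $\{3,\dots,n\}$ by the magnitude of $\norm{\cdot}$ should make the threshold step and its constants cleaner. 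Everything else is routine bookkeeping on top of Theorems~\ref{collidecomplete} and~\ref{3monte} and Lemma~\ref{gammaovercount}.
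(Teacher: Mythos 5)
You have set up the right ingredients (the $\norm{\cdot}$-based ordering, the reduction to $\pi_t\mu$, the use of Theorem~\ref{collidecomplete} with Lemma~\ref{gammaovercount} to get $\mathbb P(m_2(x)=z,\ m_1(x)<x)$ lower bounds) and you have correctly located the gap — but the gap you flag in the final paragraph is the whole difficulty, not a loose end, and the first repair you propose does not work. The lemma is stated, and must hold, for an \emph{arbitrary} random permutation $\mu$: Theorem~\ref{mainoverlapping} iterates the lemma with $\mu = \pi_{(t_{k-1})}\cdots\pi_{(t_1)}$, and you have no control over how the conditional entropies $E_x$ of such a $\mu$ distribute across $x$, so "the $\mu$'s that occur in the iteration are far from adversarial" cannot be used. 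Your single-scale bound $A_x\ge D'x^2/n^2$ with $t\asymp\ell_{\max}^2$ genuinely degenerates when the entropy is carried by low-rank cards, and once $\mathcal E\lesssim n\sqrt{\log n}$ the estimate $\sum A_xE_x\ge\frac{D'}{\log n}(\mathcal E-n\sqrt{\log n})$ is vacuous — precisely the regime the iteration eventually enters.

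The essential idea you brush past is that $t$ must be chosen \emph{adaptively in $\mu$}, and the lemma's quantifier ("there is a value $t$") grants exactly this freedom. Partition the deck into dyadic shells $I_k=\{x:\ell_{k-1}<\norm{p(x)}\le\ell_k\}$ with $\ell_k=2^k\sqrt n$ (plus an $I_0$ of $O(\sqrt n)$ lowest-norm cards handled separately via Proposition~\ref{l1collidecomplete}, since Theorem~\ref{collidecomplete} needs $\ell\ge C\sqrt n$), and order the deck shell by shell. Since there are $a\asymp\log n$ shells and $\mathcal E=\sum_k\sum_{x\in I_k}E_x$, some shell $k^*$ satisfies $\sum_{x\in I_{k^*}}E_x\ge\mathcal E/a$; this is where one factor of $\log n$ in the denominator comes from, replacing your $n/\sqrt{\log n}$ threshold. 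Then run the shuffle for $t\asymp\ell_{k^*}^2$ steps — not $\ell_{\max}^2$. At that time scale, Theorem~\ref{collidecomplete} with $\ell=\ell_{k^*}$ gives $\mathbb P(m_2(x)=z,m_1(x)<x)\ge D\gamma_{k^*}^2 n/\ell_{k^*}^4$ per $z$, while $\nu(x)\ge|J_{k^*-1}|\ge\ell_{k^*-1}^2/(2\gamma_{k^*-1}\sqrt n)$ by Lemma~\ref{gammaovercount}; combining these, the $\gamma$'s cancel (as $\gamma_k$ is nondecreasing in $k$) and $\ell_{k^*-1}/\ell_{k^*}=\tfrac12$ leaves $A_x$ bounded below by an absolute constant for every $x\in I_{k^*}$. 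The second $\log n$ is the one already present in Theorem~\ref{3monte}, and since $t\asymp\ell_{k^*}^2\le\ell_{\max}^2$ the resulting decay $1-c/\log^2 n$ can be rewritten as $1-Dt/(\ell_{\max}^2\log^2 n)$. Your approach keeps the time scale pinned to $\ell_{\max}$, which is exactly why $A_x$ collapses for small $x$; adapting the time to the dominant shell is what makes $A_x$ a constant there and closes the argument for all $\mu$.
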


\begin{proof}
	Let $a = \ceil{\log_2(\mathcal{\ell_{\rm max}}) - \frac{1}{2}\log_2(n)}$. For $k \in \{1,\dots,a\}$, let $\ell_k = 2^{k}\sqrt{n}$. For each $k$, let $\gamma_k$ be defined by
	\begin{equation}
		\gamma_k = \left| \left\{ \kappa \in \mathbb{N} \ : \ |\kappa m|_M < \ell_k, \ \kappa < \frac{\ell_k}{\sqrt{n}}\right\}\right|.
	\end{equation}
    In other works, let $\gamma_k$ be the $\gamma = \gamma(\ell_k,n,m)$ from the bound in Theorem \ref{collidecomplete} associated with $\ell_k$. \\
    \\
    Now we partition the deck of $n$ cards as follows:
	\begin{itemize}
		\item Let $J_0 := \{n-2,n-3,\dots,n-\floor{\sqrt{n}}\}$.
		\item For $k \geq 1$ let $J_k = \{i \ : \ \norm{p(i)} \leq \ell_k\}$.
		\item Let $I_0 = J_0$ and for $k \geq 1$ let $I_k = J_k \backslash J_{k-1}$.
	\end{itemize}
	Let $\nu$ be a permutation which reorders the deck with the following properties:
	\begin{itemize}
		\item $\nu(n) = 1,\nu(n-1) = 2, \dots, \nu\left(n-\floor*{\sqrt{n}}\right) = \floor*{\sqrt{n}} + 1$
		\item $\nu$ respects the natural ordering of $I_k$. Specifically, if $x \in I_k$ and $y \in I_{k+1}$ then $\nu(x) < \nu(y)$.
	\end{itemize}
	Note that the second item does not contradict the first because $p(n-2),p(n-3),\dots,p(n-\floor{\sqrt{n}})$ all have norms less than or equal to $2\sqrt{n}$. Note that under this reordering, if $x \in I_k$ with $k \geq 1$ then $\nu(x) \geq |J_{k-1}|$.\\
	\\
	Let $E_j = \mathbb{E}[\text{\rm ENT}(\mu^{-1}(\nu(j)) \ | \ \text{\rm sgn}(\mu),\mu^{-1}(\nu(j)+1),\mu^{-1}(\nu(j)+2),\dots,\mu^{-1}(\nu(n))]$. Then as shown by Senda \cite{monte} in Appendix B we can decompose,
	\begin{align}
		\mathbb{E}[\text{\rm ENT}(\mu \ | \ \text{\rm sgn}(\mu))] = \sum\limits_{\nu(j)=3}^n E_j = \sum\limits_{k=1}^a \sum\limits_{j \in I_k} E_j.
	\end{align}
	Let $k^*$ be such that $\sum\limits_{j \in I_{k^*}} E_j$ is maximal. Then,
	\begin{align}
		\mathbb{E}[\text{\rm ENT}(\mu \ | \ \text{\rm sgn}(\mu))] &\leq a \sum\limits_{j \in I_{k^*}} E_j, \\
		\frac{1}{a}\mathbb{E}[\text{\rm ENT}(\mu \ | \ \text{\rm sgn}(\mu))] &\leq \sum\limits_{j \in I_{k^*}} E_j. \label{entdiva}
	\end{align}
	For any card $x$ in the deck let $A_x$ be the maximal value such that
	\begin{equation}
		\probp{m_2(x) = y, \nu(m_1(x)) < \nu(x)} \geq \frac{A_x}{\nu(x)}
	\end{equation}
	for all cards $y$ such that $\nu(y) < \nu(x)$. Now, by Theorem \ref{3monte}, if we examine the shuffle after any number $t$ steps, we have
	\begin{align}
		\mathbb{E}\big[\text{\rm ENT}(\mu\pi_t|\text{\rm sgn}(\mu\pi_t))\big] - \mathbb{E}\big[\text{\rm ENT}((\mu|\text{\rm sgn}(\mu)))\big] &\leq \frac{-C_1}{\log (n)} \sum_{\nu(x)=3}^{n}A_x E_x \\
		&\leq \frac{-C_1}{\log (n)} \sum\limits_{x \in I_{k^*}} A_x E_x \label{entdiffbound}
	\end{align}
	where the second inequality comes from the fact that we are summing over fewer negative terms. We now consider three cases for $k^*$.
	\begin{enumerate}
		\item $k^* = 0$ \\
		\\
		Fix any $x \in I_0$. Then $x \in [n-2,n-3,\dots,n-\floor{\sqrt{n}}]$. Fix $t = 2n + 5\sqrt{n}$ and $T = 2n - 5\sqrt{n}$. Then by Proposition \ref{l1collidecomplete} we have
		\begin{equation}
			A_x \geq \frac{D_1}{\sqrt{n}}.
		\end{equation}
		for a constant $D_1$ which depends on $\epsilon$. Plugging into the bound from (\refeq{entdiffbound}) we get
		\begin{equation}
			\mathbb{E}\big[\text{\rm ENT}(\mu\pi_t|\text{\rm sgn}(\mu\pi_t))\big] - \mathbb{E}\big[\text{\rm ENT}((\mu|\text{\rm sgn}(\mu)))\big] \leq \frac{-C_1D_1}{\sqrt{n} \log (n)} \sum\limits_{x \in I_{k^*}} E_x.
		\end{equation}
		Since $t$ is less than a constant times $n$ we have,
		\begin{equation}
			\mathbb{E}\big[\text{\rm ENT}(\mu\pi_t|\text{\rm sgn}(\mu\pi_t))\big] - \mathbb{E}\big[\text{\rm ENT}((\mu|\text{\rm sgn}(\mu)))\big] \leq \frac{-D_2t}{n^\frac{3}{2} \log (n)} \sum\limits_{x \in I_{k^*}} E_x.
		\end{equation}
		Recall that $\ell_{\max} \geq \frac{1}{2}n^\frac{3}{4}$, so $n^\frac{3}{2} \leq 4\ell_{\max}^2$. This gives us
		\begin{equation}
			\mathbb{E}\big[\text{\rm ENT}(\mu\pi_t|\text{\rm sgn}(\mu\pi_t))\big] - \mathbb{E}\big[\text{\rm ENT}((\mu|\text{\rm sgn}(\mu)))\big] \leq \frac{-D_3t}{\ell_{\max}^2 \log (n)} \sum\limits_{x \in I_{k^*}} E_x.
		\end{equation}
		
		\item $1 \leq k^* \leq \log_2(C_2)$ where $C_2$ is the constant called ``$C$'' in Theorem \ref{collidecomplete}. \\
		\\
		In this case all cards $x$ in $I_{k^*}$ have $\norm{x} < C_2\sqrt{n}$. Set $t = C_2^2(2+2\cdot 10^{-6})n+4 \cdot 4n+10n$. Then by Theorem \ref{collidecomplete}, for each $i,j,k$ such that $i \in I_{k^*}$ and $\nu(j),\nu(k) < \nu(i)$ we have
        \begin{equation}
            \probp{m_2(i)=j,m_1(i)=k} \geq \frac{D_4\gamma_{k^*}^2n}{\ell_{k^*}^4}.
        \end{equation}
        By summing over all $k \neq i$ such that $\nu(k) < \nu(i)$ we get,
        \begin{equation}
            \probp{m_2(i)=j,\nu(m_1(i))<\nu(i)} \geq \frac{D_4\gamma_{k^*}^2n}{\ell_{k^*}^4} \nu(x).
        \end{equation}
        So, for each $A_x$ in the sum $\sum\limits_{x \in I_{k^*}} A_x E_x$ we have
		\begin{align}
			A_x \geq \parens*{\frac{D_4\gamma_{k^*}^2n}{\ell_{k^*}^4} \nu(x)} \nu(x) = \frac{D_4\gamma_{k^*}^2n}{\ell_{k^*}^4} \nu(x)^2.
		\end{align}
		  Recall that $\ell_{k^*} \leq C_2\sqrt{n}$ and $\nu(x) \geq \sqrt{n}$ and $\gamma_{k^*} \geq 1$. Plugging in this information we get
		\begin{align}
			A_x \geq \frac{D_4}{C_2^2}.
		\end{align}
		Now plugging this into the bound from (\refeq{entdiffbound}) gives us
		\begin{equation}
			\mathbb{E}\big[\text{\rm ENT}(\mu\pi_t|\text{\rm sgn}(\mu\pi_t))\big] - \mathbb{E}\big[\text{\rm ENT}((\mu|\text{\rm sgn}(\mu)))\big] \leq \frac{-C_1D_4}{C_2^2 \log (n)}\sum\limits_{x \in I_{k^*}} E_x.
		\end{equation}
		Note that $t < n^\frac{3}{2}$ for sufficiently large $n$. So for large $n$ we have
		\begin{align}
			\mathbb{E}\big[\text{\rm ENT}(\mu\pi_t|\text{\rm sgn}(\mu\pi_t))\big] - \mathbb{E}\big[\text{\rm ENT}((\mu|\text{\rm sgn}(\mu)))\big] &\leq \frac{-D_5t}{n^\frac{3}{2} \log (n)} \sum\limits_{x \in I_{k^*}} E_x.
		\end{align}
		Using again that $n^\frac{3}{2} \leq 4\ell_{\max}$ we get
		\begin{align}
			\mathbb{E}\big[\text{\rm ENT}(\mu\pi_t|\text{\rm sgn}(\mu\pi_t))\big] - \mathbb{E}\big[\text{\rm ENT}((\mu|\text{\rm sgn}(\mu)))\big] &\leq \frac{-D_6t}{\ell_{\max}^2 \log (n)}  \sum\limits_{x \in I_{k^*}} E_x.
		\end{align}
		
		\item $k^* \geq \log_2(C_2)$ \\
		\\
		Set $t = \ell_{k^*}^2(2+2\cdot 10^{-6})+4 \cdot 4n+10n$. Then by Theorem \ref{collidecomplete}, for each $i,j,k$ such that $i \in I_{k^*}$ and $\nu(j),\nu(k) < \nu(i)$ we have
        \begin{equation}
            \probp{m_2(i)=j,m_1(i)=k} \geq \frac{D_4\gamma_{k^*}^2n}{\ell_{k^*}^4}.
        \end{equation}
        By summing over all $k \neq i$ such that $\nu(k) < \nu(i)$ we get,
        \begin{equation}
            \probp{m_2(i)=j,\nu(m_1(i))<\nu(i)} \geq \frac{D_4\gamma_{k^*}^2n}{\ell_{k^*}^4} \nu(x).
        \end{equation}
        So, for each $A_x$ in the sum $\sum\limits_{x \in I_{k^*}} A_x E_x$ we have
		\begin{align}
			A_x \geq \parens*{\frac{D_4\gamma_{k^*}^2n}{\ell_{k^*}^4} \nu(x)} \nu(x) = \frac{D_4\gamma_{k^*}^2n}{\ell_{k^*}^4} \nu(x)^2.
		\end{align}
		Recall that for all $x \in I_{k^*}$ we have $\nu(x) > |J_{k^*-1}|$. For $k \geq 1$ we know $J_k$ is defined identically to $N_{\ell_k}$ from Lemma \ref{gammaovercount}. So by that lemma,
		\begin{equation}
			|J_{k^*-1}| \geq \frac{\ell_{k^*-1}^2}{2\gamma_{k^*-1}\sqrt{n}}.
		\end{equation}
		This gives us
		\begin{align}
			A_x &\geq D_4\left(\frac{\ell_{k^*-1}}{\ell_{k^*}}\right)^4\left(\frac{\gamma_{k^*}}{\gamma_{k^*-1}}\right)^2.
		\end{align}
		Note that $\gamma_{k^*} > \gamma_{k^*-1}$ and recall that $\ell_{k^*-1} = \frac{1}{2}\ell_{k^*}$. So $A_x$ is bounded below by a constant,
		\begin{equation}
			A_x \geq D_7.
		\end{equation}
		Plugging this into $(\refeq{entdiffbound})$ we get
		\begin{equation}
			\mathbb{E}\big[\text{\rm ENT}(\mu\pi_t|\text{\rm sgn}(\mu\pi_t))\big] - \mathbb{E}\big[\text{\rm ENT}((\mu|\text{\rm sgn}(\mu)))\big] \leq \frac{-C_1D_7}{\log (n)} \sum\limits_{x \in I_{k^*}} E_x.
		\end{equation}
		Since $t$ is less than a constant times $\ell_{k^*}^2$ we have that
		\begin{align}
			\mathbb{E}\big[\text{\rm ENT}(\mu\pi_t|\text{\rm sgn}(\mu\pi_t))\big] - \mathbb{E}\big[\text{\rm ENT}((\mu|\text{\rm sgn}(\mu)))\big] &\leq \frac{-D_8t}{\ell_{k^*}^2\log (n)}\sum\limits_{x \in I_{k^*}} E_x \\
			&\leq \frac{-D_9t}{\ell_{\max}^2\log (n)} \sum\limits_{x \in I_{k^*}} E_x.
		\end{align}
	\end{enumerate}
	Set $D_{10}$ to be the minimum of $D_3,D_6,D_9$. Then we have the bound
	\begin{align}
		\mathbb{E}\big[\text{\rm ENT}(\mu\pi_t|\text{\rm sgn}(\mu\pi_t))\big] - \mathbb{E}\big[\text{\rm ENT}((\mu|\text{\rm sgn}(\mu)))\big] &\leq \frac{-D_{10}t}{\ell_{\max}^2\log (n)} \sum\limits_{x \in I_{k^*}} E_x
	\end{align}
	independent of the value of $k^*$. Recall from line (\refeq{entdiva}) that
	\begin{equation}
		\frac{1}{a}\mathbb{E}[\text{\rm ENT}(\mu \ | \ \text{\rm sgn}(\mu))] \leq \sum\limits_{x \in I_{k^*}} E_j.
	\end{equation}
	So we have
	\begin{equation}
		\mathbb{E}\big[\text{\rm ENT}(\mu\pi_t|\text{\rm sgn}(\mu\pi_t))\big] \leq \parens*{1 - \frac{D_{10}t}{a\ell_{\max}^2\log(n)}}\mathbb{E}[\text{\rm ENT}(\mu \ | \ \text{\rm sgn}(\mu))].
	\end{equation}
	Since $a \leq \log_2(\ell_{\max})+1 \leq \log_2(2n)+1 \leq 2\log(n)$ we have
	\begin{equation}
		\mathbb{E}\big[\text{\rm ENT}(\mu\pi_t|\text{\rm sgn}(\mu\pi_t))\big] \leq \parens*{1 - \frac{Dt}{\ell_{\max}^2\log^2(n)}}\mathbb{E}[\text{\rm ENT}(\mu \ | \ \text{\rm sgn}(\mu))].
	\end{equation}
\end{proof}
Now we are ready to find a bound on the mixing time for the overlapping cycles shuffle.
\begin{reptheorem}{mainoverlapping}
	Consider the overlapping cycles shuffle with $n$ cards and parameter $m$, where $\frac{m}{n} \in (\epsilon, 1 - \epsilon)$. The overlapping cycles shuffle has a mixing time which is at most
	\begin{equation*}
		\mathcal{A} \ell_{\max}^2 \log^3(n)
	\end{equation*}
	where $\mathcal{A}$ is a constant which depends on $\epsilon$.
\end{reptheorem}
\begin{proof}
	The previous Lemma \ref{penultimatelemma} implies that there exists $t_1 \in \{1,\dots,C\ell_{\max}^2\}$ such that
	\begin{align}
		\mathbb{E}\big[\text{\rm ENT}(\pi_{(t_1)}|\text{\rm sgn}(\pi_{(t_1)}))\big] \leq \left(1-\frac{D t_1}{\log^2(n) \ell_{\max}^2}\right)\mathbb{E}\big[\text{\rm ENT}((\text{id}|\text{\rm sgn}(\text{id}))\big].
	\end{align}
	Choose such a $t_1$, and then, by the same theorem, there exists $t_2 \in \{1,\dots,C\ell_{\max}^2 \}$ such that
	\begin{align}
		\mathbb{E}\big[\text{\rm ENT}(\pi_{(t_2)}\pi_{(t_1)}|\text{\rm sgn}(\pi_{(t_2)}\pi_{(t_1)}))\big] \leq \left(1-\frac{D t_2}{\log^2(n) \ell_{\max}^2}\right)\mathbb{E}\big[\text{\rm ENT}((\pi_{(t_1)}|\text{\rm sgn}(\pi_{(t_1)})\big].
	\end{align}
	Repeat this inductively, so choosing $t_k \in \{1,\dots,C\ell_{\max}^2 \}$ such that
	\begin{align}
		\mathbb{E}(\text{\rm ENT}(\pi_{(t_k)}\dots\pi_{(t_1)}|\text{\rm sgn}(\pi_{(t_k)}\dots\pi_{(t_1)}))) \leq \left(1-\frac{D t_k}{\log^2(n) \ell_{\max}^2}\right)\mathbb{E}(\text{\rm ENT}((\pi_{(t_{k-1})}\dots\pi_{(t_1)}|\text{\rm sgn}(\pi_{(t_{k-1})}\dots\pi_{(t_1)})),
	\end{align}
	and therefore
	\begin{align}
		\mathbb{E}(\text{\rm ENT}(\pi_{(t_k)}\dots\pi_{(t_1)}|\text{\rm sgn}(\pi_{(t_k)}\dots\pi_{(t_1)}))) \leq \prod\limits_{i=1}^k \left(1-\frac{D t_i}{\log^2(n) \ell_{\max}^2}\right)\mathbb{E}\big[\text{\rm ENT}((\text{id}|\text{\rm sgn}(\text{id}))\big].
	\end{align}
	Note that
	\begin{align}
		\prod\limits_{i=1}^k \left(1-\frac{D t_i}{\log^2(n) \ell_{\max}^2}\right) &\leq \exp \left( -\sum\limits_{i=1}^j \frac{D t_i}{\log^2(n) \ell_{\max}^2} \right) \mathbb{E} \big[\text{\rm ENT}(\text{id} \ | \ \text{\rm sgn} (\text{id}))\big] \\
		&= \exp \left( \frac{-D}{\log^2(n) \ell_{\max}^2} \sum\limits_{i=1}^j t_j  \right) \mathbb{E} \big[\text{\rm ENT}(\text{id} \ | \ \text{\rm sgn} (\text{id}))\big].
	\end{align}
	and 
	\begin{equation}
		\mathbb{E} \big[\text{\rm ENT}(\text{id} \ | \ \text{\rm sgn} (\text{id}))\big] = \text{\rm ENT}(\text{id} \ | \ \text{\rm sgn} (\text{id})) = \log\parens*{\frac{n!}{2}} \leq n\log(n).
	\end{equation}
	With this in mind, let
	\begin{equation}
		t = \frac{1}{D}\log^2(n)\ell_{\max}^2\big(\log(n)+\log(\log(n))- 2\log(\delta)\big) + C\ell_{\max}^2.
	\end{equation}
	for some choice of $\delta$. Since each $t_k$ is less than $C\ell_{\max}^2$, there exists some $\kappa$ such that
	\begin{equation}
		\frac{1}{D}\log^2(n)\ell_{\max}^2\big(\log(n)+\log(\log(n))-2\log(\delta)\big) < t_1 + \dots + t_\kappa < t.
	\end{equation}
	So,
	\begin{align*}
		\mathbb{E}\big[\text{\rm ENT}(\pi_t|\text{\rm sgn}(\pi_t))\big] &\leq \mathbb{E}\big[\text{\rm ENT}(\pi_{(t_\kappa)}\dots\pi_{(t_1)}|\text{\rm sgn}(\pi_{(t_\kappa)}\dots\pi_{(t_1)}))\big] \\
		&\leq \exp \left( \frac{-D}{\log^2(n) \ell_{\max}^2} \frac{1}{D}\log^2(n)\ell_{\max}^2\big(\log(n)+\log(\log(n)) - 2\log(\delta)\big) \right) n\log(n) \\
		&= \delta^2
	\end{align*}
	This is a bound on the conditional entropy given the sign of $\pi_t$. If $(1,\dots,m)$ and $(1,\dots,n)$ have the same sign, then this is the best we can hope for because we will always know if $\pi_t$ is even or odd by looking at if $t$ is even or odd. If $(1,\dots,m)$ and $(1,\dots,n)$ have different signs, then we can get a bound on the total entropy by doing a single additional step of the shuffle. Since the group element we multiply by in this additional step is equally likely to have an even or odd sign, we get
	\begin{equation}
		\mathbb{E}\big[\text{\rm ENT}(\pi_{(t+1)})\big] \leq \delta^2
	\end{equation}
	Plugging this into (\refeq{tventbound}) tells us that
	\begin{equation}
		\norm{\pi_{(t+1)}-\xi}_{\rm TV} \leq \delta
	\end{equation}
	and this gives us the mixing time. So the mixing time $t_{\rm mix}(\delta)$ has \begin{align}
		t_{\rm mix}(\delta) &\leq \frac{1}{D}\log^2(n)\ell_{\max}^2\big(\log(n)+\log(\log(n))- 2\log(\delta)\big) + C\ell_{\max}^2 \\
        &\leq \mathcal{D}\ell_{\max}^2(\log^3(n)-2\log^2(n)\log(\delta))
	\end{align}
    for a constant $\mathcal{D}$ that depends on $\epsilon$. Using the standard mixing time of $t_{\rm mix} = t_{\rm mix}(\frac{1}{4})$ we have
    \begin{equation}
        t_{\rm mix} \leq \mathcal{A}\ell_{\max}^2\log^3(n)
    \end{equation}
    for a constant $\mathcal{A}$ that depends on $\epsilon$.
\end{proof}
\noindent
Let $m = \floor*{\alpha n}$. As long as $\alpha$ is bounded away from $0$ and $1$, our result matches the mixing time shown by Angel, Peres, and Wilson for a single card after multiplying by the factor of a constant times $log^3(n)$. In the longest case, since we trivially have $\ell_{\max} \leq 2n$, we get that the mixing time is $O(n^2\log^3(n))$. This longest case is admitted if $\alpha$ is any rational, although the constant in front of $n^2\log^3(n)$ is smaller for rationals that have larger denominators in their reduced form. \\
\\
In the shortest case, since $\ell_{\max} \geq \frac{1}{2}n^\frac{3}{4}$, we have a mixing time of $O(n^\frac{3}{2}\log^3(n))$. This shortest case is admitted when $\alpha = \phi$ where $\phi = \frac{\sqrt{5}-1}{2}$ is the inverse golden ratio. This is because by Corollary \ref{goldenratiouniform} we have that $\ell_{\max}$ for $m = \floor{\phi n}$ is a constant times $n^\frac{3}{4}$. This follows from multiples of $\phi$ being about equally distributed across $(0,1) \mod 1$, which also means they are equally distributed across $(0,2n-\phi n + 1) \mod 2n-\phi n + 1$. If a more thorough justification is required, we have the following lemma.

\begin{lemma}
    Let $m = \floor{\phi n}$. Let $x \in \{1,\dots,2n-m\}$. Then,
    \begin{equation}
        \norm{x} \leq 6n^\frac{3}{4}.
    \end{equation}
\end{lemma}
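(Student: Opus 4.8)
The plan is to read the claim as a covering--radius estimate for a rank--two lattice. Write $N=2n-m+1$; by definition $\norm{x}$ is the infimum of $|a|+|b|\sqrt n$ over $a\in\mathbb R,\ b\in\mathbb Z$ with $a+bm\equiv x\pmod N$, so it is enough to produce \emph{integer} $a,b$ with $a+bm\equiv x\pmod N$ and $|a|+|b|\sqrt n\le 6n^{3/4}$. Taking $b=0$ already gives the trivial bound $\norm{x}\le\lceil N/2\rceil\le n$, which is $\le 6n^{3/4}$ once $n\le 1296$; so we may assume $n$ large. Let $\Lambda=\{(a,b)\in\mathbb Z^2:\ a+bm\equiv 0\pmod N\}$ and put $\norm{(a,b)}:=|a|+|b|\sqrt n$. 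Since $(a,b)\mapsto a+bm$ is onto $\mathbb Z/N\mathbb Z$, any integer preimage of $x$ can be corrected by an element of $\Lambda$, so it suffices to exhibit two $\mathbb Z$--linearly independent vectors $u,w\in\Lambda$ with $\tfrac12(\norm{u}+\norm{w})\le 6n^{3/4}$: then rounding the coordinates of a fixed preimage of $x$ in the basis $u,w$ lands us within $\tfrac12(\norm{u}+\norm{w})$ of $\Lambda$ in the norm $\norm{\cdot}$.

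The short vectors will come from Fibonacci and Lucas numbers. For $k\ge 1$ set $v_k:=\bigl(NF_k-L_km,\ L_k\bigr)$, where $F_k,L_k$ are the $k$-th Fibonacci and Lucas numbers; since $NF_k-L_km+L_km=NF_k\equiv 0\pmod N$ we have $v_k\in\Lambda$. Writing $m=\phi n-\{\phi n\}$ and using $F_k+L_k=2F_{k+1}$ together with the golden--ratio identity $F_{k+1}\phi-F_k=(-1)^k\phi^{k+1}$ (immediate from Binet and $\Phi\phi=1$, $\Phi=1+\phi$, with $\Phi:=1/\phi$), one finds
\[
NF_k-L_km \;=\; F_k+2\bigl(nF_k-mF_{k+1}\bigr)\;=\;F_k+2\bigl((-1)^{k+1}n\phi^{k+1}+F_{k+1}\{\phi n\}\bigr),
\]
hence $|NF_k-L_km|\le 2n\phi^{k+1}+3F_{k+1}$. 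Combining with the standard bounds $F_j<\Phi^{j}$, $L_j<\Phi^{j}+1$ and $\phi^{k+1}=\Phi^{-k-1}$ gives
\[
\norm{v_k}\;\le\;2n\,\Phi^{-k}\;+\;\Phi^{k}\sqrt n\;+\;\bigl(3F_{k+1}+\sqrt n\bigr),
\]
where the parenthesized term is $o(n^{3/4})$ as long as $\Phi^{k}=O(n^{1/4})$.

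To conclude, note that $v_k$ and $v_{k+1}$ are linearly independent: by Cassini's identity $F_k^2-F_{k-1}F_{k+1}=(-1)^{k+1}$, one computes $F_kL_{k+1}-F_{k+1}L_k=2(-1)^{k+1}$, so $\det(v_k,v_{k+1})=N\cdot(F_kL_{k+1}-F_{k+1}L_k)=\pm 2N\neq 0$. Now take $k$ to be the largest integer with $\Phi^{k}\le n^{1/4}$, so that $n^{1/4}/\Phi<\Phi^{k}\le n^{1/4}<\Phi^{k+1}\le\Phi n^{1/4}$; substituting these into the displayed bound shows $\norm{v_k}\le(1+2\Phi)n^{3/4}+o(n^{3/4})$ and $\norm{v_{k+1}}\le(2+\Phi)n^{3/4}+o(n^{3/4})$, whence $\tfrac12(\norm{v_k}+\norm{v_{k+1}})\le\tfrac32(1+\Phi)n^{3/4}+o(n^{3/4})$, comfortably below $6n^{3/4}$ for large $n$; together with the trivial bound $\norm{x}\le n$ for $n\le 1296$ this proves the lemma for all $n$.

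The only genuinely delicate point is tracking the constant down to the stated $6$: the $o(n^{3/4})$ error terms (coming from $3F_{k+1}+\sqrt n$ and from the rounding loss in ``$\Phi^{k}$ near $n^{1/4}$'') must be handled for the moderate range of $n$ not already covered by $\norm{x}\le n$. Since the asymptotic constant $\tfrac32(1+\Phi)=\tfrac32\Phi^2\approx 3.93$ leaves ample slack below $6$, this is a routine if slightly tedious check (for instance, one can choose the better of $v_{k-1},v_k,v_{k+1}$, or retain the sign of $(-1)^{k+1}n\phi^{k+1}$ instead of the triangle inequality, to sharpen the estimate wherever needed). Everything else---the lattice reduction, the algebraic identity for $NF_k-L_km$, and the determinant computation giving independence---is elementary.
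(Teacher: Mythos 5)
Your proposal is correct in its essentials but takes a genuinely different route from the paper. The paper proves this lemma via its Theorem \ref{fibbonacci} (the three--gap theorem for $\{k\phi \bmod 1\}$) and Corollary \ref{goldenratiouniform}: it finds a $\beta\le n^{1/4}$ with $\beta\phi$ close to $x/(2n)$ modulo $1$, then unwinds that approximation into a representation $x\equiv a+b\lfloor\phi n\rfloor \pmod{2n-\lfloor\phi n\rfloor+1}$ with $|b|\le 3n^{1/4}$ and $|a|\le \phi^{-2}n^{3/4}+5n^{1/4}$, giving the constant $3+\phi^{-2}\approx 5.62$ for large $n$. You instead recast $\norm{x}$ as a covering--radius problem for the rank--two lattice $\Lambda=\{(a,b):a+bm\equiv 0\ (\mathrm{mod}\ N)\}$, produce the explicit short lattice vectors $v_k=(NF_k-L_km,\,L_k)$ via Fibonacci/Lucas numbers and the identity $F_{k+1}\phi-F_k=(-1)^k\phi^{k+1}$, verify linear independence from $F_kL_{k+1}-F_{k+1}L_k=2(-1)^{k+1}$, and invoke the standard $\tfrac12(\norm{u}+\norm{w})$ rounding bound. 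Both arguments are, at bottom, encoding the same Diophantine fact that $\phi$ has bounded partial quotients, but they package it differently: the paper's three--gap route is more pictorial and reuses an appendix result, while your lattice route is self--contained, arguably cleaner, and yields a slightly better asymptotic constant $\tfrac32\Phi^2\approx 3.93$ (indeed, retaining $\phi^{k+1}$ rather than bounding it by $\Phi^{-k}$ would sharpen this further). Both your proposal and the paper's proof establish the bound only ``for sufficiently large $n$'' and leave a moderate--$n$ window unaddressed (your explicit threshold $n\le 1296$ from the trivial bound does not a priori overlap the range where the $o(n^{3/4})$ terms are controlled), so your acknowledgment that this requires ``routine but tedious'' constant--tracking is fair and is a gap shared with the paper rather than introduced by your argument.
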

This holds for all $x$ so we know $\ell_{\max}$ is on the order of $n^\frac{3}{4}$.
\begin{proof}
Let $x \in \{1,\dots,2n-\floor*{\phi n} +1\}$. As per Corollary \ref{goldenratiouniform} choose some $\beta \in \{1,\dots,\sqrt[4]{n}\}$ such that 
\begin{equation}
	\left|\frac{x}{2n}-(\beta\phi \mod 1)\right| \leq \frac{1}{2\phi^2} \cdot \frac{1}{\sqrt[4]{n}}.
\end{equation}
where $(z \mod \mathcal{M})$ refers to the number $\zeta \in (0,\mathcal{M}]$ such that $z \equiv \zeta \mod \mathcal{M}$. Then,
\begin{equation}
	\left|x-(2\beta \phi n \mod 2n)\right| \leq 2n \cdot \frac{1}{2\phi^2} \cdot \frac{1}{\sqrt[4]{n}} = \frac{1}{\phi^2} \cdot n^\frac{3}{4}.
\end{equation}
Note that $\beta \phi n \leq \beta (2n)$. So,
\begin{equation}
	(2\beta \phi n \mod 2n) = \kappa(2n) + 2\beta \phi n
\end{equation}
where $|\kappa| \leq \beta$. This means that
\begin{equation}
	(2\beta \phi n \mod 2n) = \kappa(2n - \phi n) + (2\beta+\kappa) \phi n.
\end{equation}
So,
\begin{align}
	\left|x-\kappa(2n - \phi n) - (2\beta+\kappa) \phi n\right| &\leq \frac{1}{\phi^2} \cdot n^\frac{3}{4} \\
	\left|x-\kappa(2n - \floor{\phi n} + 1) - (2\beta+\kappa) \phi n\right| &\leq \frac{1}{\phi^2} \cdot n^\frac{3}{4} + 2|\kappa| \\
	\left|x-\kappa(2n - \floor{\phi n} + 1) - (2\beta+\kappa) \floor{\phi n}\right| &\leq \frac{1}{\phi^2} \cdot n^\frac{3}{4} + 2|\beta| + 3|\kappa|.
\end{align}
Let $b = 2\beta + \kappa$. Then,
\begin{equation}
	\left|(x - b\floor{\phi n}) \mod 2n - \floor{\phi n} + 1\right| \leq \frac{1}{\phi^2} \cdot n^\frac{3}{4} + 2|\beta| + 3|\kappa|
\end{equation}
so 
\begin{equation}
	x \equiv b\floor{\phi n} + a \mod 2n - \floor{\phi n} + 1 \text{ where } |b| \leq 3\sqrt[4]{n} \text{ and } |a| \leq \frac{1}{\phi^2} \cdot n^\frac{3}{4} + 5\sqrt[4]{n}.
\end{equation}
This means,
\begin{align}
	\norm{x} &\leq 3\sqrt[4]{n} \cdot \sqrt{n} + \frac{1}{\phi^2} \cdot n^\frac{3}{4} + 5\sqrt[4]{n} \\
	&\leq \left(3 + \frac{1}{\phi^2}\right) n^\frac{3}{4} + 5\sqrt[4]{n} \\
	&\leq 6n^\frac{3}{4} \text{ for large enough } n.
\end{align}
\end{proof}

\appendix

\section{Appendix}

Here we have included some theorems cited throughout the paper whose uses are more generally applicable across probability.

\begin{theorem}\label{quasiuniform}
	Suppose $\mu$ is a probability measure on a finite probability space $\Omega$ such that for each $\omega \in \Omega$, we have $\mu(\omega) \geq \frac{1}{\mathcal{D} |\Omega |}$. Let $E$ be an event such that $\mu(E) \geq 1 - \frac{1}{8\mathcal{D}}$. Then there exists at least $\frac{3}{4}|\Omega|$ values $\alpha \in \Omega$ such that $\mu(\alpha | E) > \frac{1}{2\mathcal{D}|\Omega|}$.
\end{theorem}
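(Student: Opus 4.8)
The plan is to observe that the statement is essentially a counting argument: the outcomes that fail the conclusion must all lie outside $E$, and $E^c$ cannot contain many outcomes because each outcome carries mass at least $\frac{1}{\mathcal{D}|\Omega|}$ while $\mu(E^c)$ is small.

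First I would record that $\mathcal{D} \geq 1$, since $1 = \sum_{\omega \in \Omega}\mu(\omega) \geq |\Omega|\cdot\frac{1}{\mathcal{D}|\Omega|} = \frac{1}{\mathcal{D}}$; in particular $\mu(E) \geq 1 - \frac{1}{8\mathcal{D}} \geq \frac{7}{8} > 0$, so conditioning on $E$ is legitimate. Next, for any outcome $\alpha \in E$, I would note that since $\{\alpha\}\cap E = \{\alpha\}$,
\begin{equation*}
	\mu(\alpha \mid E) = \frac{\mu(\alpha)}{\mu(E)} \geq \mu(\alpha) \geq \frac{1}{\mathcal{D}|\Omega|} > \frac{1}{2\mathcal{D}|\Omega|}.
\end{equation*}
Thus every element of $E$ already satisfies the desired inequality, and it only remains to show $|E| \geq \tfrac{3}{4}|\Omega|$.

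For that, I would bound the number of outcomes in the complement $E^c$. Every $\omega \in \Omega$ has $\mu(\omega) \geq \frac{1}{\mathcal{D}|\Omega|}$, so
\begin{equation*}
	|E^c| \cdot \frac{1}{\mathcal{D}|\Omega|} \leq \sum_{\omega \in E^c}\mu(\omega) = \mu(E^c) \leq \frac{1}{8\mathcal{D}},
\end{equation*}
which gives $|E^c| \leq \frac{|\Omega|}{8}$. Hence $|E| \geq |\Omega| - \frac{|\Omega|}{8} = \frac{7}{8}|\Omega| \geq \frac{3}{4}|\Omega|$, and combining this with the previous paragraph shows that at least $\frac{3}{4}|\Omega|$ values $\alpha$ (namely, all those in $E$) satisfy $\mu(\alpha \mid E) > \frac{1}{2\mathcal{D}|\Omega|}$, as claimed.

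There is no real obstacle here; the only point to be slightly careful about is that the conclusion's factor $\frac{1}{2\mathcal{D}|\Omega|}$ is actually slack (one gets $\frac{1}{\mathcal{D}|\Omega|}$ for free for outcomes in $E$), and that the hypothesis bound $1 - \frac{1}{8\mathcal{D}}$ is exactly what makes $|E^c|$ small enough; I would make sure the arithmetic $\frac{1/(8\mathcal{D})}{1/(\mathcal{D}|\Omega|)} = \frac{|\Omega|}{8}$ is displayed cleanly so the reader sees why $\frac{7}{8} \geq \frac{3}{4}$ closes the argument.
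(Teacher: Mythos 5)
Your proof is correct, and it takes a genuinely different and more direct route than the paper. The paper defines the \emph{failing} set $S = \{\beta : \mu(\beta \mid E) \le \tfrac{1}{2\mathcal{D}|\Omega|}\}$, writes $\mu(S \cap E)$ in two ways — once as $\mu(E)\sum_{\beta\in S}\mu(\beta\mid E) \le \tfrac{|S|}{2\mathcal{D}|\Omega|}$, and once as $\mu(S) - \mu(S \cap E^{c}) \ge \tfrac{|S|}{\mathcal{D}|\Omega|} - \tfrac{1}{8\mathcal{D}}$ — and solves the resulting inequality to get $|S| \le \tfrac14|\Omega|$. You instead observe that the failing set is contained in $E^{c}$: if $\alpha\in E$ then $\mu(\alpha\mid E) = \mu(\alpha)/\mu(E) \ge \mu(\alpha) \ge \tfrac{1}{\mathcal{D}|\Omega|}$, which already exceeds the threshold. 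Then a single pointwise lower bound on the mass of $E^{c}$ gives $|E^{c}| \le |\Omega|/8$, which is actually a sharper count than the paper's $|\Omega|/4$. Your argument is shorter, it avoids the double-counting of $\mu(S\cap E)$ entirely, and it exposes that the factor $\tfrac12$ in the conclusion is slack (every $\alpha \in E$ in fact achieves $\tfrac{1}{\mathcal{D}|\Omega|}$). The one trade-off is that your step $\mu(\alpha\mid E) \ge \mu(\alpha)$ for $\alpha \in E$ relies on $\{\alpha\}\cap E = \{\alpha\}$ and $\mu(E) \le 1$, which is a finite-probability-space simplification the paper's more abstract bookkeeping of $\mu(S\cap E)$ and $\mu(S\cap E^{c})$ does not need to invoke as explicitly; in the setting of this theorem that is not an obstacle, and your route is preferable.
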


\begin{proof}
	Let $S \subset \Omega$ be the set of values $\beta \in \Omega$ such that $\mu(\beta | E) \leq \frac{1}{2\mathcal{D}|\Omega|}$. Then,
	\begin{equation}
		\mu(S,E) = \mu(E) \sum\limits_{\beta \in S} \mu(
  \beta | E) \leq \frac{|S|}{2\mathcal{D}|\Omega|}.
	\end{equation}
	On the other hand,
	\begin{equation}
		\mu(S,E) = \mu(S) - \mu(S,E^C) \geq \mu(S) - \mu(E^C) \geq \frac{|S|}{\mathcal{D}|\Omega|} - \frac{1}{8\mathcal{D}}.
	\end{equation}
	So,
	\begin{align}
		\frac{|S|}{\mathcal{D}|\Omega|} - \frac{1}{8\mathcal{D}} &\leq \frac{|S|}{2\mathcal{D}|\Omega|}, \\
		\frac{|S|}{2\mathcal{D}|\Omega|} &\leq \frac{1}{8\mathcal{D}}, \\
		|S| &\leq \frac{1}{4}|\Omega|.
	\end{align}
	Since at most $\frac{1}{4}|\Omega|$ of $\beta \in \Omega$ have $\mu(\beta | E) \leq \frac{1}{2\mathcal{D}|\Omega|}$ we know that at least $\frac{3}{4}|\Omega|$ of $\alpha \in \Omega$ have $\mu(\alpha | E) > \frac{1}{2\mathcal{D}|\Omega|}$.
\end{proof}

We now provide a more general version of Theorem \ref{quasiuniform}. While we do not use the general version for any of our results, we provide it for the potential interest of the reader.

\begin{theorem}\label{quasiuniformgeneral}
	Let $\mu,\nu$ be probability measures on $\Omega$. Assume that there exist constants $a,b,\epsilon,\delta \in [0,1]$ such that
	\begin{itemize}
		\item $\mu(x) \geq \frac{a}{|\Omega|}$ at least $(1-\epsilon)|\Omega|$ of $x \in \Omega$
		\item $\nu(x) \leq \frac{b}{|\Omega|}$ at least $(1-\delta)|\Omega|$ of $x \in \Omega$
	\end{itemize}
	Then,
	\begin{equation}
		\norm{\mu-\nu}_{\rm TV} \geq (1-\epsilon)(1-\delta)(a-b)
	\end{equation}
\end{theorem}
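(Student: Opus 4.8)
The plan is to reduce the inequality to a pointwise comparison of $\mu$ and $\nu$ on a single ``doubly good'' set, in the spirit of the counting argument behind Theorem~\ref{quasiuniform}. First I would dispose of the trivial case: if $a \le b$ then $(1-\epsilon)(1-\delta)(a-b) \le 0 \le \norm{\mu-\nu}_{\rm TV}$, so from now on assume $a > b$.

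Next I would introduce the favorable sets $A = \{x \in \Omega : \mu(x) \ge a/|\Omega|\}$ and $B = \{x \in \Omega : \nu(x) \le b/|\Omega|\}$; the two hypotheses say precisely that $|A| \ge (1-\epsilon)|\Omega|$ and $|B| \ge (1-\delta)|\Omega|$. On $A \cap B$ one has the pointwise bound $\mu(x) - \nu(x) \ge (a-b)/|\Omega| > 0$. Using the representation $\norm{\mu-\nu}_{\rm TV} = \sum_{x:\,\mu(x)>\nu(x)}\bigl(\mu(x)-\nu(x)\bigr)$ and discarding the (nonnegative) terms with $x \notin A\cap B$, I would get
\[
\norm{\mu-\nu}_{\rm TV} \;\ge\; \sum_{x \in A \cap B}\bigl(\mu(x)-\nu(x)\bigr) \;\ge\; |A\cap B|\cdot\frac{a-b}{|\Omega|},
\]
so the whole statement reduces to a good lower bound on $|A \cap B|$.

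The heart of the matter --- and the step I expect to be the main obstacle --- is the bound $|A \cap B| \ge (1-\epsilon)(1-\delta)|\Omega|$. The naive inclusion--exclusion estimate gives only $|A\cap B| \ge |A|+|B|-|\Omega| \ge (1-\epsilon-\delta)|\Omega|$, which falls short of the advertised product; to recover the factor $(1-\epsilon)(1-\delta)$ one wants the indices that violate the $\nu$-condition to occupy at most a $\delta$-fraction of $A$ (rather than of all of $\Omega$), i.e. to carry out the $\nu$-side density estimate relative to $A$, in the same way that Theorem~\ref{quasiuniform} works conditionally on the event $E$. Once $|A\cap B| \ge (1-\delta)|A| \ge (1-\epsilon)(1-\delta)|\Omega|$ is established, the displayed chain of inequalities closes the proof at once.
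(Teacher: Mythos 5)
You have correctly identified the crux, and the crux is in fact unbridgeable: the theorem as stated is false. The bound $|A \cap B| \ge (1-\epsilon)(1-\delta)|\Omega|$ does not follow from $|A| \ge (1-\epsilon)|\Omega|$ and $|B| \ge (1-\delta)|\Omega|$, and the workaround you sketch (estimating the $\nu$-violators as a $\delta$-fraction of $A$ rather than of $\Omega$) is not available: the hypothesis only controls how many points of $\Omega$ violate the $\nu$-condition, and nothing prevents all $\delta|\Omega|$ of them from landing inside $A$. Here is a concrete counterexample. Take $\Omega = \{1,2,3,4\}$, $\mu = (\tfrac14, \tfrac14, \tfrac12, 0)$, $\nu = (0, 0, 1, 0)$, $a = 1$, $b = 0$, $\epsilon = \delta = \tfrac14$. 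Then $\mu(x) \ge a/|\Omega| = \tfrac14$ for $x \in \{1,2,3\}$ (three of four points), and $\nu(x) \le b/|\Omega| = 0$ for $x \in \{1,2,4\}$ (three of four points), so both hypotheses hold; but $\norm{\mu-\nu}_{\rm TV} = \tfrac12$, whereas the claimed lower bound is $(1-\tfrac14)^2 \cdot 1 = \tfrac{9}{16} > \tfrac12$. What your chain of inequalities actually establishes, via $|A\cap B| \ge |A|+|B|-|\Omega|$, is the weaker $\norm{\mu-\nu}_{\rm TV} \ge \max\{0,\,1-\epsilon-\delta\}(a-b)$, and this is tight in the example.

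For the record, the paper's own proof carries exactly the same gap: its final displayed line asserts $\frac{|S\cap T|}{|\Omega|}(a-b) \ge (1-\epsilon)(1-\delta)(a-b)$ with no justification, which is precisely the step you flagged. The theorem is only ever applied with $\epsilon = 0$ (in the remark following the proof, to recover Theorem \ref{quasiuniform}), and for $\epsilon = 0$ one has $(1-\epsilon)(1-\delta) = 1-\delta = 1-\epsilon-\delta$, so the corrected bound coincides with the stated one there and nothing downstream is affected.
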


\begin{proof}
	Let $S$ be the set of all $x \in \Omega$ such that $\mu(x) \geq \frac{a}{|\Omega|}$. Let $T$ be the set of all $x \in \Omega$ such that $\nu(x) \leq \frac{b}{|\Omega|}$. Then using one of the definitions of total variation distance we get,
	\begin{align}
		\norm*{\mu-\nu}_{\rm TV} &= \sum\limits_{x \in |\Omega|} (\mu(x)-\nu(x))^+ \\
		&\geq  \sum\limits_{x \in S \cap T} (\mu(x)-\nu(x))^+ \\
		&\geq \sum\limits_{x \in S \cap T} \frac{a-b}{|\Omega|} \\
		&= \frac{|S\cap T|}{|\Omega|}(a-b) \geq (1-\epsilon)(1-\delta)(a-b).
	\end{align}
\end{proof}

We can get Theorem $\ref{quasiuniform}$ from Theorem $\ref{quasiuniformgeneral}$ if we let $a = \frac{1}{\mathcal{D}}$ and $\epsilon = 0$. So $\mu$ is a probability measure where $\mu(x) \geq \frac{1}{\mathcal{D}|\Omega|}$ for all $x \in \Omega$. Then pick an event $E$ where $\mu(E) \geq 1 - \frac{1}{8\mathcal{D}}$ and let $\nu$ be the measure $\mu$ conditioned on $E$. Then $\norm*{\mu-\nu}_{\rm TV} \leq \frac{1}{8\mathcal{D}}$. Then if we set $b = \frac{1}{2\mathcal{D}}$ and solve for $\delta$ we will find that $\delta \geq \frac{3}{4}$ which means that no more than $\frac{1}{4}$ of all $x \in \Omega$ can have $\nu(x) = \mu(x | E) \leq \frac{1}{2\mathcal{D}}$. 

\begin{theorem}\label{invershoeffding}\cite{inversehoeffding} (Section 7.3, page 46) 
	Let $X$ be a binomial random variable with $n$ trials and probability $\frac{1}{2}$ chance of success. Let $k \geq 0$. Then,
	\begin{equation}
		\probp{X - \frac{n}{2} \geq k} \geq \frac{1}{15}\exp\left(\frac{-16k^2}{n}\right)
	\end{equation}
\end{theorem}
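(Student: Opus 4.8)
The plan is to recognize this as a classical reverse (anti-concentration) form of Hoeffding's inequality for a symmetric binomial; the paper itself simply invokes \cite{inversehoeffding}, so what follows is a sketch of how one proves it from scratch. The left side is $0$ once $k>n/2$, so the estimate is only meaningful — and the cited source is stated — for $0\le k\le n/2$, which I would assume throughout. The guiding principle is that the genuine decay rate of $\probp{X-\tfrac{n}{2}\ge k}$ is comparable to $e^{-2k^2/n}$, since $X$ has standard deviation $\tfrac{1}{2}\sqrt{n}$; the stated exponent $16k^2/n$ therefore carries a factor-eight cushion, and every lower-order correction and every loss incurred in the argument below is absorbed by this cushion. (An alternative single-shot route is to bound the tail below by its largest term $\probp{X=\lceil n/2+k\rceil}$ and estimate that one binomial probability by Stirling's formula with explicit error bounds; the $16k^2/n$ slack again dominates all the Stirling corrections. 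I would present instead the following regime split, which keeps the constants transparent.)

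I would split the range $0\le k\le n/2$ into three overlapping regimes. \emph{(i) Small $k$, say $k\le\sqrt{n}$.} Here the right side never exceeds $\tfrac{1}{15}$, and drops to about $\tfrac{1}{15}e^{-16}$ near the top of the range; a crude central limit estimate — that $\probp{X\ge\tfrac{n}{2}+c\sqrt{n}}$ is bounded below by a positive absolute constant depending on $c$ for $n$ large — together with monotonicity of the tail in $k$ handles this regime after one further split (e.g. at $k=\sqrt{n}/3$). \emph{(ii) Moderate $k$, say $\sqrt{n}\le k\le n/6$.} Bound the tail below by a window of near-central terms, $\probp{X-\tfrac{n}{2}\ge k}\ge\sum_{j=k}^{k+\lceil\sqrt{n}\rceil}\probp{X=\tfrac{n}{2}+j}$ (reading $\tfrac{n}{2}+j$ as the nearest integer when $n$ is odd); using the exact ratio identity $\probp{X=\tfrac{n}{2}+j+1}\big/\probp{X=\tfrac{n}{2}+j}=\tfrac{n/2-j}{n/2+j+1}$, telescoping down to the central term, and the elementary estimate $\log(1-u)\ge-u/(1-u)$, one gets $\probp{X=\tfrac{n}{2}+j}\ge\tfrac{c}{\sqrt{n}}e^{-3j^2/n}$ for $j\le n/6$, where $c$ is absolute (from $\binom{n}{\lfloor n/2\rfloor}2^{-n}\ge c/\sqrt{n}$). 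Summing the $\lceil\sqrt{n}\rceil$ terms yields $\probp{X-\tfrac{n}{2}\ge k}\ge c\,e^{-3(k+\sqrt{n})^2/n}\ge c\,e^{-12k^2/n}\ge\tfrac{1}{15}e^{-16k^2/n}$, using $(k+\sqrt{n})^2\le 4k^2$ for $k\ge\sqrt{n}$ and $c\ge\tfrac{1}{15}$. \emph{(iii) Large $k$, say $c_0\sqrt{n\log n}\le k\le n/2$.} Use the entropy lower bound $\binom{n}{\tfrac{n}{2}+k}\ge 2^{\,nH(1/2+k/n)}/(n+1)$ with $H(p)=-p\log_2 p-(1-p)\log_2(1-p)$, together with the elementary inequality $1-H(\tfrac{1}{2}+\delta)\le 4\delta^2$ on $[0,\tfrac{1}{2}]$ — both sides and their first derivatives agree at $\delta=0$, both equal $1$ at $\delta=\tfrac{1}{2}$, and the difference is first convex then concave (a short calculus check) — to get $\probp{X-\tfrac{n}{2}\ge k}\ge\tfrac{1}{n+1}e^{-(4\log 2)k^2/n}$, which exceeds $\tfrac{1}{15}e^{-16k^2/n}$ exactly once $k^2/n\gtrsim\log n$, i.e. throughout regime (iii).

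For large $n$ the three regimes overlap and cover all of $[0,n/2]$, since $c_0\sqrt{n\log n}=o(n)$; the finitely many remaining small $n$ are verified by hand. The main obstacle is not conceptual but bookkeeping: one must choose the regime thresholds so that consecutive regimes genuinely overlap, and then check that the specific constants $\tfrac{1}{15}$ and $16$ survive every step. The factor-eight slack between the true tail exponent $\sim 2k^2/n$ and the stated $16k^2/n$ is precisely what makes this possible, and the elementary entropy inequality $1-H(\tfrac{1}{2}+\delta)\le 4\delta^2$ is the only mildly non-obvious ingredient. Since \cite{inversehoeffding} carries all of this out in full, the paper simply cites it.
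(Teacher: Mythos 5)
The paper offers no proof of this statement at all: it is quoted verbatim with a citation to \cite{inversehoeffding} (Section 7.3, page 46) and is used as a black box. So there is no internal argument to compare against; what you have written is a from-scratch sketch of the cited fact, and you are right to flag it as such.

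As a sketch, the three-regime plan is sound. The moderate-$k$ step is the one that carries most of the weight, and your calculation there is essentially correct: the ratio identity $\probp{X=\tfrac{n}{2}+j+1}/\probp{X=\tfrac{n}{2}+j}=\tfrac{n/2-j}{n/2+j+1}$, the inequality $\log(1-u)\ge -u/(1-u)$, and the constraint $j\le n/6$ (so that $n/2-i\ge n/3$ in the denominator) give $\probp{X=\tfrac{n}{2}+j}\ge\binom{n}{\lfloor n/2\rfloor}2^{-n}e^{-3j^2/n}$, and $\binom{n}{\lfloor n/2\rfloor}2^{-n}\sqrt{n}$ is bounded below by an absolute constant ($\ge 1/\sqrt{2}$, comfortably above $1/15$). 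Summing $\lceil\sqrt{n}\rceil$ consecutive terms and using $(k+\sqrt{n})^2\le 4k^2$ for $k\ge\sqrt{n}$ then yields the bound. The large-$k$ regime via the entropy inequality $1-H(\tfrac12+\delta)\le 4\delta^2$ is also correct (the difference $4\delta^2-(1-H(\tfrac12+\delta))$ has second derivative $8+H''(\tfrac12+\delta)$, which is positive at $\delta=0$, strictly decreasing, and tends to $-\infty$, so the convex-then-concave claim checks out), and it indeed kicks in once $k\gtrsim\sqrt{n\log n}$, overlapping with regime (ii).

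Two small cautions. First, the parenthetical ``alternative single-shot route'' — bounding the tail by the single term $\probp{X=\lceil n/2+k\rceil}$ — cannot by itself deliver the bound over the whole range: that single term is $O(1/\sqrt{n})$, which is below $1/15$ for large $n$, so for $k\lesssim\sqrt{n\log n}$ the $16k^2/n$ slack does not rescue it. You do not rely on it, but as stated the remark overclaims. Second, your regime (ii) is written as $\sqrt{n}\le k\le n/6$, yet the per-term estimate needs the index $j=k+\lceil\sqrt{n}\rceil$ to stay $\le n/6$, so the honest upper boundary is about $n/6-\sqrt{n}$; this is harmless because regime (iii) begins well before $n/6$ for large $n$, but the overlap should be stated explicitly. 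With those caveats noted, the sketch is a reasonable reconstruction of what the cited source must be doing.
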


\begin{theorem}[Hoeffding's inequality]\label{hoeffdingbinomial}
	\cite{hoeffding} (Section 2, page 15) Let $X$ be a binomial random variable with $n$ trials and probability $p$ of success.
	\begin{equation*}
		\mathbb{P}\left( X-np \geq k \right) \leq \exp\left( - \frac{2k^2}{n}\right)
	\end{equation*}
\end{theorem}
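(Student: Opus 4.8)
The plan is to prove this by the standard Chernoff (exponential moment) method combined with Hoeffding's lemma on the moment generating function of a bounded, centered random variable; we may assume $k>0$, since for $k\le 0$ the bound is trivial. First I would write $X=\sum_{i=1}^n X_i$ with the $X_i$ independent Bernoulli variables satisfying $\probp{X_i=1}=p$, and set $Y_i=X_i-p$, so $\mathbb{E}[Y_i]=0$ and each $Y_i$ takes values in the interval $[-p,\,1-p]$, which has length $1$. For any $s>0$, Markov's inequality applied to $e^{s(X-np)}$ together with independence gives
\begin{equation*}
	\probp{X-np\ge k}\le e^{-sk}\,\mathbb{E}\!\left[e^{s\sum_i Y_i}\right]=e^{-sk}\prod_{i=1}^n\mathbb{E}\!\left[e^{sY_i}\right].
\end{equation*}

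The key step is to control each factor $\mathbb{E}[e^{sY_i}]$ via Hoeffding's lemma: if $Y$ has mean $0$ and takes values in $[a,b]$, then $\mathbb{E}[e^{sY}]\le\exp\!\big(s^2(b-a)^2/8\big)$. I would prove this by using convexity of $y\mapsto e^{sy}$ on $[a,b]$ to write $e^{sY}\le\frac{b-Y}{b-a}e^{sa}+\frac{Y-a}{b-a}e^{sb}$, taking expectations (the mean-zero hypothesis eliminates the linear term), and then studying $\psi(s)=\log\!\big(\tfrac{b}{b-a}e^{sa}-\tfrac{a}{b-a}e^{sb}\big)$. One checks $\psi(0)=\psi'(0)=0$, and that $\psi''(s)$ equals the variance of a two-point distribution supported in $[a,b]$, hence $\psi''(s)\le(b-a)^2/4$ for all $s$; Taylor's theorem then yields $\psi(s)\le s^2(b-a)^2/8$. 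Applying this with $a=-p$, $b=1-p$, so $b-a=1$, gives $\mathbb{E}[e^{sY_i}]\le e^{s^2/8}$.

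Combining the two steps, $\probp{X-np\ge k}\le\exp\!\big(-sk+ns^2/8\big)$ for every $s>0$. I would then optimize the exponent over $s$: it is minimized at $s=4k/n$, where it equals $-4k^2/n+2k^2/n=-2k^2/n$, which gives exactly the claimed bound $\exp(-2k^2/n)$.

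The main obstacle is Hoeffding's lemma, and within it the uniform bound $\psi''(s)\le(b-a)^2/4$; the rest is the routine Chernoff recipe plus a one-variable optimization. Since this is quoted as a classical result with a reference, I would give the argument above in outline and defer the explicit computation of $\psi''$ to the cited source.
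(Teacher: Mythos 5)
The paper states this theorem as a cited classical result (Hoeffding's inequality) and gives no proof of its own, so there is nothing to compare against; your outline is the standard and correct Chernoff argument via Hoeffding's lemma, and the optimization at $s=4k/n$ does yield the exponent $-2k^2/n$ as claimed.
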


\begin{corollary}
	Let $X_t$ be the simple random walk on the integers. Then
	\begin{equation*}
		\mathbb{P}(|X_t| \geq a\sqrt{n}) \leq 2\exp\left(-\frac{a^2}{2}\right)
	\end{equation*}
\end{corollary}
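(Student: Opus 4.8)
The plan is to reduce the statement to Hoeffding's inequality for a binomial random variable (Theorem \ref{hoeffdingbinomial}), which is stated just above this corollary. Write $X_t = \sum_{i=1}^{t}\xi_i$ with the $\xi_i$ independent and uniform on $\{-1,+1\}$, and let $Y_t$ be the number of indices $i \le t$ with $\xi_i = +1$, so that $Y_t \sim \mathrm{Binomial}\!\left(t,\tfrac12\right)$ and $X_t = 2Y_t - t$. Then the event $|X_t| \ge a\sqrt{n}$ is exactly the event $\left|Y_t - \tfrac{t}{2}\right| \ge \tfrac{a\sqrt{n}}{2}$ (here I read the $\sqrt{n}$ in the statement as $\sqrt{t}$, equivalently the corollary is to be applied at time $t=n$; the content is the Gaussian-type tail bound with the standard deviation $\sqrt{t}$ of the walk in the role of $\sqrt{n}$).

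Next I would bound the upper tail: applying Theorem \ref{hoeffdingbinomial} with $p=\tfrac12$ and $k=\tfrac{a\sqrt{t}}{2}$ gives $\mathbb{P}\!\left(Y_t - \tfrac{t}{2} \ge \tfrac{a\sqrt{t}}{2}\right) \le \exp\!\left(-\tfrac{2k^2}{t}\right) = \exp\!\left(-\tfrac{a^2}{2}\right)$, using $k^2 = a^2 t/4$. By the symmetry of the centered binomial (replace each $\xi_i$ by $-\xi_i$, equivalently $Y_t$ by $t-Y_t$), the lower tail $\mathbb{P}\!\left(Y_t - \tfrac{t}{2} \le -\tfrac{a\sqrt{t}}{2}\right)$ satisfies the same bound. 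A union bound over the two one-sided events then yields $\mathbb{P}(|X_t| \ge a\sqrt{t}) \le 2\exp\!\left(-\tfrac{a^2}{2}\right)$, which is the claim.

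There is essentially no serious obstacle here: this is a textbook consequence of Hoeffding's inequality, and in fact for $0 \le a \le \sqrt{2\log 2}$ the stated bound already exceeds $1$, so it holds trivially. The only points requiring any care are matching the normalization (the $\sqrt{n}$ of the statement against the true scale $\sqrt{t}$ of the walk) and checking that the exponent comes out to exactly $\tfrac12$, which it does because the factor $t$ in $2k^2/t$ cancels the $t$ in $k^2 = a^2 t/4$.
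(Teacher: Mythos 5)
Your proof is correct and follows essentially the same route as the paper: write $X_t = 2R - t$ with $R \sim \mathrm{Binomial}(t,\tfrac12)$, apply Theorem \ref{hoeffdingbinomial} to the one-sided tail, and finish by symmetry and a union bound. The paper's version also silently conflates $n$ and $t$ when applying Hoeffding, so your explicit remark that the bound should be read at $t=n$ (or with $\sqrt t$ in place of $\sqrt n$) is a reasonable and accurate clarification rather than a divergence.
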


\begin{proof}
	After $t$ steps of the simple symmetric random walk, let $R$ be the amount of right steps. Then $X_t = 2R-t$, and $R$ is a Binomial random variable with $t$ trials and probability $\frac{1}{2}$ of success. So,
	\begin{align}
		\mathbb{P}(X_t \geq a\sqrt{n}) &= \mathbb{P}\left(R - \frac{t}{2} \geq \frac{a}{2}\sqrt{n}\right) \\
		&\leq \exp\left( -\frac{a^2}{2}\right) .
	\end{align}
	By symmetry, $-X_t$ has the same distribution. So by the union bound we get
	\begin{equation*}
		\mathbb{P}(|X_t| > a\sqrt{n}) \leq 2\exp\left(-\frac{a^2}{2}\right).
	\end{equation*}
\end{proof}

\begin{theorem}\label{hoeffdingmax}
	Let $X_t$ be the simple random walk on the integers. Let
	\begin{equation*}
		A_t = \max\{ |X_s| \ : \ s \leq t \}.
	\end{equation*}
	Then
	\begin{equation*}
		\mathbb{P}(A_t > a\sqrt{n}) \leq 4\exp\left( -\frac{a^2}{2}\right).
	\end{equation*}
\end{theorem}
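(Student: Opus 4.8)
The plan is to reduce the two-sided maximal bound to a one-sided running-maximum bound via symmetry, and then control the running maximum by the reflection principle together with the one-sided tail estimate already established in the corollary preceding this theorem. Throughout we may assume $a > 0$, since otherwise the claim is trivial.

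First I would split according to sign. Since $X$ is integer-valued and $A_t = \max_{s \le t}|X_s| = \max\bigl(\max_{s\le t} X_s,\ -\min_{s\le t} X_s\bigr)$, we have
\[
\{A_t > a\sqrt{n}\} \subseteq \Bigl\{\max_{s \le t} X_s > a\sqrt{n}\Bigr\} \cup \Bigl\{\min_{s \le t} X_s < -a\sqrt{n}\Bigr\}.
\]
Because $(-X_s)_{s \ge 0}$ is again a simple symmetric random walk, the two events on the right have the same probability, so by the union bound it suffices to show $\mathbb{P}\bigl(\max_{s\le t} X_s > a\sqrt{n}\bigr) \le 2\exp(-a^2/2)$.

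Next I would invoke the reflection principle. Let $b = \ceil*{a\sqrt{n}} \ge 1$ and let $\tau = \min\{s : X_s = b\}$ be the first hitting time of level $b$; since $X$ takes integer values, $\{\max_{s\le t}X_s > a\sqrt n\} \subseteq \{\tau \le t\}$. Conditioned on $\{\tau = s\}$ for any $s \le t$, the strong Markov property says $X_t - X_\tau = X_t - b$ is a sum of $t-s$ independent symmetric $\pm 1$ steps, hence $\mathbb{P}(X_t \ge b \mid \tau = s) \ge \tfrac12$; averaging over $s \le t$ gives $\mathbb{P}(X_t \ge b) \ge \tfrac12\,\mathbb{P}(\tau \le t)$, i.e. $\mathbb{P}(\tau \le t) \le 2\,\mathbb{P}(X_t \ge b)$. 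Combining this with $\mathbb{P}(X_t \ge b) \le \mathbb{P}(X_t \ge a\sqrt n) \le \exp(-a^2/2)$ — the one-sided estimate established inside the proof of the preceding corollary via Hoeffding's inequality (Theorem \ref{hoeffdingbinomial}) — yields $\mathbb{P}\bigl(\max_{s\le t}X_s > a\sqrt n\bigr) \le 2\exp(-a^2/2)$, and then the union bound from the first step gives $\mathbb{P}(A_t > a\sqrt n) \le 4\exp(-a^2/2)$.

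There is no substantial obstacle here; the only points requiring a little care are the integrality adjustment $b = \ceil*{a\sqrt n}$ (so that the hitting-time event is well defined for a lattice walk) and a clean statement of the reflection step, for which the conditional-symmetry argument above is the least fussy route. If one prefers to avoid the reflection principle entirely, an alternative is Doob's submartingale maximal inequality applied to $M_s = e^{\lambda X_s}$, giving $\mathbb{P}\bigl(\max_{s\le t}X_s \ge b\bigr) \le e^{-\lambda b}(\cosh\lambda)^t \le e^{-\lambda b + t\lambda^2/2}$, optimized at $\lambda = b/t$; this even produces the slightly stronger constant $2$ in place of $4$.
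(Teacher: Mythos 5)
Your proof is correct and follows essentially the same route as the paper: split $A_t$ into the running maximum and minimum by symmetry and a union bound, then bound each one-sided running maximum by twice the one-step tail $\mathbb{P}(X_t \geq k)$ via the reflection principle, and finish with the Hoeffding tail estimate. The paper phrases the reflection step by decomposing directly on the terminal value of $X_t$ given that level $k$ is reached, whereas you phrase it via the first-hitting time and the strong Markov property (and you also add the minor but correct integrality adjustment $b = \ceil*{a\sqrt n}$); these are two standard presentations of the same inequality, so the arguments are substantively identical.
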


\begin{proof}
	Let
	\begin{equation}
		M_t = \max\{ X_s \ : \ s \leq t \}
	\end{equation}
	Note that
	\begin{align}
		\mathbb{P}(M_t \geq k) &= \mathbb{P}(\text{ there exists } s \leq t : X_s = k) \\
		&= \mathbb{P}(\text{ there exists } s \leq t : X_s = k, \ X_t > k) \nonumber \\
		&\hspace{5mm} + \mathbb{P}(\text{ there exists } s \leq t : X_s = k, \ X_t < k) \nonumber \\
		&\hspace{5mm} + \mathbb{P}(\text{ there exists } s \leq t : X_s = k, \ X_t = k) .
	\end{align}
	By the Markov Property we see that
	\begin{equation}
		\mathbb{P}(\text{ there exists } s \leq t : X_s = k, \ X_t > k) = \mathbb{P}(\text{ there exists } s \leq t : X_s = k, \ X_t < k) .
	\end{equation}
	Also note that
	\begin{align}
		\mathbb{P}(\text{ there exists } s \leq t : X_s = k, \ X_t > k) = \mathbb{P}(X_t > k) \\
		\mathbb{P}(\text{ there exists } s \leq t : X_s = k, \ X_t = k) = \mathbb{P}(X_t = k)
	\end{align}
	so,
	\begin{align}
		\mathbb{P}(M_t \geq k) &= 2\mathbb{P}(X_t > k) + \mathbb{P}(X_t = k) \\
		&\leq 2\mathbb{P}(X_t \geq k) .
	\end{align}
	Setting $k = a\sqrt{n}$ and applying Hoeffding's inequality gives
	\begin{equation}
		\mathbb{P}(M_t \geq a\sqrt{n}) \leq 2\exp\left(-\frac{a^2}{2}\right) .
	\end{equation}
	Due to symmetry the minimum value of $X_s$ over the first $t$ steps has the same distribution as $-M_t$. So by the union bound
	\begin{equation}
		\mathbb{P}(A_t \geq a\sqrt{n}) \leq 4\exp\left(-\frac{a^2}{2}\right) .
	\end{equation}
\end{proof}

\begin{theorem}\label{fibbonacci}\cite{goldenratio}
	Let $\phi = \frac{\sqrt{5}-1}{2}$ be the inverse golden ratio. Fix any $N \in \mathbb{N}$. Now we define \\
    ${0 = a_0 < a_1 < \dots < a_N < 1}$ as the numbers where each $a_i = k\phi \mod 1$ for some natural number $k \leq N$. In other words, $a_0,\dots,a_N$ is a reordering of $0,\phi,2\phi,\dots,N\phi \mod 1$ from least to greatest. Then for any $i \in \{1,\dots,N\}$ we have
	\begin{align*}
		a_i - a_{i-1} &\in \{\phi^z,\phi^{z+1},\phi^{z+2}\} \\
		1 - a_N &\in \{\phi^z,\phi^{z+1},\phi^{z+2}\}
	\end{align*}
	where $z$ is defined as follows: $F_z$, the $z$th Fibbonacci number, is the largest Fibbonacci number less than or equal to $N$ (using the convention that $F_1 = F_2 = 1$). For a more numerical definition we can also write
	\begin{equation*}
		z = \max\left\{ x \in \mathbb{N} \text{ such that } \frac{\phi^{-x} - (-\phi)^{-x}}{\sqrt{5}} \leq N \right\}.
	\end{equation*}
\end{theorem}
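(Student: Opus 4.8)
The statement is the classical three-distance (three-gap) theorem for the rotation by $\phi$, sharpened using the fact that $\phi=[0;1,1,1,\dots]$ has all partial quotients equal to $1$, so that its rational approximations are governed exactly by the Fibonacci numbers through $\phi^{2}=1-\phi$ (equivalently $1+\phi=\phi^{-1}$). The plan: view $0=a_{0}<a_{1}<\dots<a_{N}<1$ as $N+1$ points of the circle $\mathbb{R}/\mathbb{Z}$, so that the numbers $a_{i}-a_{i-1}$ ($1\le i\le N$) together with $1-a_{N}$ are precisely the $N+1$ consecutive gaps of $P_{N}:=\{\,j\phi \bmod 1:0\le j\le N\,\}$; it then suffices to show each gap is a power of $\phi$ lying in a window of three consecutive exponents. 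Fix $k$ with $F_{k}\le N<F_{k+1}$, so that $F_{k}$ is the largest Fibonacci number $\le N$ (this is the parameter playing the role of the $z$ in the statement, pinned down by the size of $N$ relative to the Fibonacci numbers exactly as there). The goal becomes: every gap of $P_{N}$ lies in $\{\phi^{k-2},\phi^{k-1},\phi^{k}\}$.

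First I would record the identity that makes the Fibonacci numbers appear: for every $j\ge 1$,
\[
F_{j}\phi = F_{j-1}+(-1)^{j-1}\phi^{j},
\qquad\text{so}\qquad
F_{j}\phi \bmod 1 =\begin{cases}\phi^{j}, & j\ \text{odd},\\[2pt] 1-\phi^{j}, & j\ \text{even},\end{cases}
\]
with the convention $F_{0}=0$. This is an induction on $j$ from $F_{j+1}=F_{j}+F_{j-1}$ and $\phi^{j+1}=\phi^{j-1}-\phi^{j}$ (the latter being $\phi^{2}=1-\phi$ times $\phi^{j-1}$), the cases $j=1,2$ being direct. Consequently the convergents $F_{j-1}/F_{j}$ approximate $\phi$ alternately from below ($j$ odd) and from above ($j$ even), with $|F_{j}\phi-F_{j-1}|=\phi^{j}$; so the points $F_{j}\phi \bmod 1$ sit alternately just to the right of $0$ (at distance $\phi^{j}$, $j$ odd) and just to the left of $0$ (at distance $\phi^{j}$, $j$ even).

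The core step is to pin down the two gaps of $P_{N}$ incident to the point $0$. Put $\eta_{N}:=\min\{\,j\phi \bmod 1:1\le j\le N\,\}$ (distance from $0$ to the nearest point of $P_{N}$ on its right) and $\xi_{N}:=1-\max\{\,j\phi \bmod 1:1\le j\le N\,\}$ (distance to the nearest point on its left); the claim is $\{\eta_{N},\xi_{N}\}=\{\phi^{k-1},\phi^{k}\}$. Let $\iota$ (resp. $\iota'$) be the largest odd (resp. even) integer with $F_{\iota}\le N$; since $F_{k}\le N<F_{k+1}$ one checks $\{\iota,\iota'\}=\{k-1,k\}$, and the displayed formula at $j=F_{\iota},F_{\iota'}$ gives the upper bounds $\eta_{N}\le\phi^{\iota}$, $\xi_{N}\le\phi^{\iota'}$. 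The matching lower bounds -- that no index $j\le N<F_{k+1}$ gives a point of $P_{N}$ strictly closer to $0$ on its right than $\phi^{\iota}$, nor on its left than $\phi^{\iota'}$ -- are the only substantial point, and amount to the classical fact that the best \emph{one-sided} rational approximations of $\phi$ are exactly its convergents $F_{j-1}/F_{j}$, which holds with no exceptional semiconvergents precisely because every partial quotient of $\phi$ equals $1$. I would obtain this either by citing that best-approximation fact, or directly by induction on $k$: writing an index $F_{k}\le j<F_{k+1}$ as $j=F_{k}+j'$ with $0\le j'<F_{k-1}$, and using $F_{k}\phi \bmod 1\in\{\phi^{k},1-\phi^{k}\}$ together with the inductive lower bound on $j'\phi \bmod 1$ (by $\phi$ to the largest odd integer $\le k-2$, and the even-sided analogue), one verifies that $j\phi \bmod 1$ cannot fall in the forbidden interval $(0,\phi^{k-1})$, and symmetrically on the other side; indices $j<F_{k}$ are the inductive hypothesis and $j=F_{k}$ is handled by the formula above. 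This yields $\eta_{N}=\phi^{\iota}$, $\xi_{N}=\phi^{\iota'}$, hence $\{\eta_{N},\xi_{N}\}=\{\phi^{k-1},\phi^{k}\}$.

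Finally I would invoke the three-gap theorem: for any irrational $\alpha$ and any $N$, each gap between consecutive points of $\{0,\alpha,\dots,N\alpha\}$ on the circle equals $\eta_{N}$, $\xi_{N}$, or $\eta_{N}+\xi_{N}$ (classical; alternatively it can be folded into the same induction on $k$ by tracking the whole gap pattern as a word over three letters). Since
\[
\eta_{N}+\xi_{N}=\phi^{k-1}+\phi^{k}=\phi^{k-1}(1+\phi)=\phi^{k-1}\phi^{-1}=\phi^{k-2},
\]
every gap of $P_{N}$ lies in $\{\phi^{k-2},\phi^{k-1},\phi^{k}\}$; in particular this covers each internal gap $a_{i}-a_{i-1}$ and the wraparound gap $1-a_{N}=\xi_{N}$, which is the desired conclusion (the stated three-element window of powers of $\phi$). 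The main obstacle is the one-sided best-approximation estimate in the previous paragraph; once it -- and, for a fully self-contained account, the three-gap theorem itself -- is supplied, the remainder is the short computation above together with the Fibonacci identity.
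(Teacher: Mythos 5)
The paper does not actually prove this theorem: it is imported by citation (\cite{goldenratio}), and only its corollary is proved in the text. So there is no in-paper argument to compare against, and your proposal has to stand on its own. As a derivation it is sound and follows the standard route: the identity $F_j\phi=F_{j-1}+(-1)^{j-1}\phi^j$ (an easy induction from $\phi^2=1-\phi$) pins the points $F_j\phi\bmod 1$ at distance $\phi^j$ from $0$, alternately on the right and left; the one-sided best-approximation property of the convergents $F_{j-1}/F_j$ (clean here precisely because all partial quotients of $\phi$ are $1$, so there are no semiconvergents) gives the matching lower bounds, so the two gaps incident to $0$ are $\phi^{k-1}$ and $\phi^{k}$; and the three-gap theorem plus $\phi^{k-1}+\phi^{k}=\phi^{k-2}$ finishes. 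You correctly isolate the only two nontrivial ingredients (the one-sided approximation lemma and the three-gap theorem itself) and your sketched induction for the former, splitting $j=F_k+j'$ with $j'<F_{k-1}$, does close: the wraparound is excluded because $\{j'\phi\}\le 1-\phi^{k-3}<1-\phi^k$, and the remaining inequality reduces to $1-\phi^3\ge\phi^2$, which holds since $\phi^2+\phi^3=\phi$. For a self-contained account you would still need to supply proofs (or precise citations) of those two classical facts, but that is a matter of completeness, not of correctness.

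One discrepancy worth flagging: you prove that the gaps lie in $\{\phi^{k-2},\phi^{k-1},\phi^{k}\}$ where $F_k\le N<F_{k+1}$, whereas the theorem as printed asserts the window $\{\phi^{z},\phi^{z+1},\phi^{z+2}\}$ with $z=k$. Small cases show your indexing is the right one: for $N=2$ the gaps are $\phi^2,\phi^2,\phi^3$ while $z=3$, and for $N=7$ the gaps are $\phi^4,\phi^5$ while $z=5$; in both cases the printed window $\{\phi^z,\phi^{z+1},\phi^{z+2}\}$ misses a gap, but $\{\phi^{z-2},\phi^{z-1},\phi^{z}\}$ is correct. (The paper's ``numerical definition'' of $z$ also has a typo: Binet's formula requires $(-\phi)^{x}$, not $(-\phi)^{-x}$.) This shift is harmless downstream, since Corollary \ref{goldenratiouniform} only uses that the three gap lengths are consecutive powers of $\phi$, but you should state explicitly that you are proving the correctly indexed version rather than silently substituting it.
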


\begin{corollary}\label{goldenratiouniform}
	Fix any $N \in \mathbb{N}$. Then for any $x \in [0,1]$ there exists $k \in \{0,1,\dots,N\}$ such that
	\begin{equation}
		|x-b_k| \leq \frac{1}{2\phi^2} \cdot \frac{1}{N+1}
	\end{equation}
	for some $b_k \in [0,1]$ with $b_k \equiv k\phi \mod 1$.
\end{corollary}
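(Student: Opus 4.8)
The plan is to read this off from the three-distance structure in Theorem \ref{fibbonacci}. Write $0 = a_0 < a_1 < \dots < a_N < 1$ for the increasing rearrangement of $\{\,k\phi \bmod 1 : 0 \le k \le N\,\}$, so that each $a_i$ equals an admissible value $b_{k_i}$ for some $k_i \in \{0,\dots,N\}$, and let $z$ be the index from Theorem \ref{fibbonacci}, characterized by $F_z \le N < F_{z+1}$ (where $F_z$ is the largest Fibonacci number $\le N$). That theorem says every consecutive gap $a_i - a_{i-1}$ and the wrap-around gap $1 - a_N$ is at most $\phi^z$.

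First I would carry out the covering argument. Any $x \in [0,1]$ lies in one of the closed intervals $[a_{i-1},a_i]$ with $i \in \{1,\dots,N\}$, or in $[a_N,1]$. In the former case $x$ is within $\tfrac12(a_i - a_{i-1}) \le \tfrac12\phi^z$ of one of its two endpoints, each of which is of the form $b_k$. In the latter case $x$ is within $\tfrac12(1-a_N) \le \tfrac12\phi^z$ of $a_N = b_{k_N}$ or of $1$; since $1 \equiv 0\cdot\phi \pmod 1$ and the statement permits $b_k \in [0,1]$, we may legitimately take $b_0 = 1$ in this second sub-case. Either way there is a $k \in \{0,\dots,N\}$ and an admissible $b_k$ with $|x - b_k| \le \tfrac12\phi^z$.

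It then remains to verify $\tfrac12\phi^z \le \frac{1}{2\phi^2}\cdot\frac{1}{N+1}$, i.e.\ $\phi^{z+2}(N+1) \le 1$. Since $N+1 \le F_{z+1}$, it suffices to bound $\phi^{z+2}F_{z+1}$, and here I would use Binet's formula $F_n = \bigl(\varphi^n - (-\phi)^n\bigr)/\sqrt 5$ together with $\varphi = 1/\phi$ (so $\phi\varphi = 1$). A short computation collapses the $\varphi$-power and leaves
\[
\phi^{z+2}F_{z+1} \;=\; \frac{\phi + (-1)^{z}\phi^{2z+3}}{\sqrt 5} \;\le\; \frac{\phi + \phi^{2z+3}}{\sqrt 5} \;<\; \frac{2\phi}{\sqrt 5} \;=\; 1 - \frac{1}{\sqrt 5} \;<\; 1,
\]
using $\phi^{2z+3} < \phi$ (as $2z+3 > 1$ and $0 < \phi < 1$). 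Hence $\phi^{z+2}(N+1) < 1$, and combining with the covering bound yields the claim.

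The only mild subtlety is the wrap-around in the covering step — one must remember that $b_0$ may be taken to be $1$ rather than $0$ so that an $x$ near $1$ is covered — and the only computation is the elementary golden-ratio identity in the last display; neither is a genuine obstacle once Theorem \ref{fibbonacci} is in hand.
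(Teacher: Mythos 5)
Your proof is correct, but the key numerical step is handled by a genuinely different route than the paper takes. Both arguments begin identically: rearrange $\{k\phi \bmod 1 : 0 \le k \le N\}$ as $0 = a_0 < \dots < a_N$, invoke the three-distance theorem (Theorem \ref{fibbonacci}) to learn that every gap — including the wrap-around gap $1 - a_N$, with $1$ playing the role of $b_0$ — lies in $\{\phi^z, \phi^{z+1}, \phi^{z+2}\}$, and reduce the claim to showing the largest gap is at most $\tfrac{1}{(N+1)\phi^2}$. Where you then appeal to the explicit characterization $F_z \le N < F_{z+1}$ and grind through Binet's formula to bound $\phi^{z+2}(N+1) < 1$, the paper instead applies the pigeonhole principle to the \emph{smallest} gap $g$: since $N+1$ gaps tile $[0,1]$, one has $g \le \tfrac{1}{N+1}$, and since the three-distance theorem forces $g \ge \phi^{z+2}$, it follows at once that $\phi^{z+2} \le \tfrac{1}{N+1}$ and hence the largest gap satisfies $G \le \phi^z = \phi^{z+2}/\phi^2 \le \tfrac{1}{(N+1)\phi^2}$. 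The paper's version is a bit slicker — it never touches Binet's formula or the Fibonacci-indexing of $z$, and uses only the qualitative fact that all gaps lie within a factor of $\phi^{-2}$ of one another — whereas your version extracts the same inequality from the quantitative meaning of $z$. Both are sound; the pigeonhole route spares you the $(-1)^z$ bookkeeping and the edge cases around $F_1 = F_2 = 1$ that the Binet computation quietly skirts.
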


\begin{proof}
	Let $a_0,\dots,a_N$ be a reordering of $b_0,\dots,b_N$ in increasing order. I.e let $\{a_0,\dots,a_N\} = \{b_0,\dots,b_N\}$ where $a_0 < a_1 < \dots < a_N$. Let $g$ be the smallest gap between adjacent elements of $(a_0,\dots,a_N,1)$. Note that $g \leq \frac{1}{N+1}$ by the pigeon hold principle. By Theorem [\ref{fibbonacci}] we know that all gaps take the form $\phi^z,\phi^{z+1},\phi^{z+2}$ for a particular $z$. Using the fact that $\frac{1}{N+1} \geq g \geq \phi^{z+2}$ we see that
	\begin{equation}
		z \leq -\log_\phi(N+1) - 2
	\end{equation}
	Let $G$ be the largest gap between adjacent elements of $\{a_0,\dots,a_N,1\}$. Then $G \leq \phi^z$ so
	\begin{equation}
		G \leq \frac{1}{(N+1)\phi^2}.
	\end{equation}
	In the furthest case $x \in [0,1]$ is in the middle of a gap, in which case $x$ is at most distance $\frac{1}{2\phi^2} \cdot \frac{1}{N+1}$ from an element of $\{b_0,\dots,b_N\}$.
\end{proof}

\newpage

\printbibliography

\end{document}